\tikzstyle{startstop} = [rectangle, rounded corners, minimum width=3cm, minimum height=1cm,text centered, draw=black, fill=red!30]
\newtheorem{lemma}{Lemma}[section]
\newtheorem{theorem}[lemma]{Theorem}
\newtheorem*{theorem*}{Theorem}
\newtheorem{corollary}[lemma]{Corollary}
\newtheorem{proposition}[lemma]{Proposition}
\newtheorem*{proposition*}{Proposition}
\newtheorem*{problem*}{Problem}
\theoremstyle{definition}
\newtheorem*{claim*}{Claim}
\newtheorem{definition}[lemma]{Definition}
\newtheorem*{remark}{Remark}
\DeclareMathOperator*{\E}{\mathbb{E}}
\newcommand{\C}{{\mathbb C}}
\newcommand{\F}{{\mathbb F}}
\newcommand{\N}{{\mathbb N}}
\newcommand{\R}{{\mathbb R}}
\newcommand{\Z}{{\mathbb Z}}
\newcommand{\CC}{{\mathcal C}}
\newcommand{\CD}{{\mathcal D}}
\newcommand{\CV}{{\mathcal V}}
\newcommand{\be}{{\mathbf{e}}}
\newcommand{\bh}{{\mathbf{h}}}
\newcommand{\bu}{{\textbf{u}}}
\newcommand{\bv}{{\textbf{v}}}
\newcommand{\x}{{\textbf{x}}}
\newcommand{\bx}{{\textbf{x}}}
\newcommand{\bbeta}{{\boldsymbol{\beta}}}
\newcommand{\uh}{{\underline{h}}}
\newcommand{\ubh}{{\underline{\textbf{h}}}}
\newcommand{\eps}{\varepsilon}
\newcommand{\ueps}{{\underline{\epsilon}}}
\newcommand{\norm}[1]{\left\Vert #1\right\Vert}
\DeclareMathOperator{\poly}{poly}
\newcommand{\veps}{\varepsilon}
\newcommand{\model}{{\mathrm{Model}}}
\DeclareMathOperator{\corners}{Corners}
\newcommand{\abs}[1]{\mathopen{}\left| #1\mathclose{}\right|}
\newcommand{\Bigabs}[1]{\Bigl| #1 \Bigr|}
\newcommand{\brac}[1]{\mathopen{}\left( #1 \mathclose{}\right)}
\newcommand{\Bigbrac}[1]{\Bigl( #1 \Bigr)}
\newcommand{\floor}[1]{\left \lfloor #1 \right \rfloor}
\newcommand{\ceil}[1]{\left \lceil #1 \right \rceil}
\title[]{Corners with polynomial side length}
\author{Noah Kravitz, Borys Kuca, James Leng}
\date{}
\address[Noah Kravitz]{Department of Mathematics, Princeton University, Princeton, NJ 08540, USA}
\email{nkravitz@princeton.edu}
\address[Borys Kuca]{Faculty of Mathematics and Computer Science, Jagiellonian University, 30-348 Krak\'ow, Poland}
\email{borys.kuca@uj.edu.pl}
\address[James Leng]{Department of Mathematics, UCLA, Los Angeles, CA 90095, USA}
\email{jamesleng@math.ucla.edu}
\begin{document}

\maketitle

\begin{abstract}
A \emph{$P$-polynomial corner}, for $P \in \mathbb{Z}[z]$ a polynomial, is a triple of points $(x,y),\; (x+P(z),y),\; (x,y+P(z))$ for $x,y,z \in \mathbb{Z}$.  In the case where $P$ has an integer root of multiplicity $1$, we show that if $A \subseteq [N]^2$ does not contain any nontrivial $P$-polynomial corners, then $$|A| \ll_P \frac{N^2}{(\log\log\log N)^c}$$ for some absolute constant $c>0$.  This simultaneously generalizes a result of Shkredov about corner-free sets and a recent result of Peluse, Sah, and Sawhney about sets without $3$-term arithmetic progressions of common difference $z^2-1$.  The main ingredients in our proof are a multidimensional quantitative concatenation result from our companion paper \cite{KKL24a} and a novel degree-lowering argument for box norms.
\end{abstract}

\section{Introduction}

In recent years, there has been a surge of interest in obtaining ``reasonable'' upper bounds for the Polynomial Szemer\'edi Theorem of Bergelson and Leibman \cite{BL96}. 
This theorem asserts that if $\mathbf{P}_1, \ldots, \mathbf{P}_\ell \in \Z^D[z]$ are polynomials satisfying some mild local conditions, then every subset of $\Z^D$ with upper positive Banach density must contain a (nontrivial) occurrence of the polynomial progression 
$$\bx,\; \bx+\mathbf{P}_1(z),\; \ldots,\; \bx+\mathbf{P}_\ell(z).$$
This result provided a unified proof for several earlier results in arithmetic Ramsey theory over $\Z$, including Roth's pioneering theorem \cite{R53} on sets avoiding $3$-term arithmetic progressions, Szemer\'edi's later generalization \cite{Sz75} to sets avoiding $\ell$-term arithmetic progressions for arbitrary fixed $\ell \geq 3$, and work of S\'ark\"ozy \cite{Sa78a,Sa78b} and Furstenberg \cite{Fu77} (independently) on sets avoiding length-$2$ polynomial progressions.

The Polynomial Szemer\'edi Theorem, despite its useful generality, comes with a drawback: Because the proof uses methods from ergodic theory, the result is ineffective and provides no  bounds on sets avoiding polynomial progressions.  Roth and S\'ark\"ozy proved their theorems with fairly good bounds.  Szemer\'edi provided bounds of tower-type which, although strictly-speaking ``effective'', are not particularly ``reasonable''.  The work of Gowers on obtaining improved bounds for Szemer\'edi's Theorem \cite{Go98a, Go01} gave birth to the field of higher-order Fourier analysis, and since then there has been substantial work on obtaining quantitative versions of various instances of the Polynomial Szemer\'edi Theorem.

Of particular interest is an early result due to Shkredov \cite{Sh06b, Sh06a}, who obtained reasonable bounds for subsets of $[N]^2$ avoiding the ``corner'' configuration
$$(x,y),\; (x+z,y),\; (x,y+z).$$
His argument involved a suitable notion of ``pseudorandom sets'' with which one can run a modification of Roth's original density-increment strategy.  Shkredov's approach was later simplified by Green \cite{G05b, G05} for corner-free subsets of $\mathbb{F}_p^D \times \mathbb{F}_p^D$; see also the work of Lacey and McClain \cite{LM07} for an improvement in $\mathbb{F}_2^D \times \mathbb{F}_2^D$.  Peluse's recent work on subsets of $\F_p^D\times\F_p^D$ avoiding the ``$L$-shaped configuration'' $(x, y),\; (x + z, y),\; (x, y + z),\; (x, y + 2z)$ is an extension of Shkredov's approach.

Until recently, Shkredov's work on corners was the only instance of a ``reasonable bound'' for a multidimensional configuration over the integers. Our main result is a polynomial extension of Shkredov's theorem.
\begin{theorem}\label{maintheorem}
Let $P \in \mathbb{Z}[z]$ be a polynomial with an integer root of multiplicity $1$. If $A \subseteq [N]^2$ does not contain a configuration of the form $(x, y),\; (x + P(z), y),\; (x, y + P(z))$ with $P(z) \neq 0$, then
$$|A| \ll_P \frac{N^2}{(\log\log\log N)^c}$$
for some absolute constant $c > 0$.
\end{theorem}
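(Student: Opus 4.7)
My plan is to adapt the Peluse-type paradigm to the two-dimensional polynomial-corner setting, using the two ingredients announced in the abstract. After a harmless translation we may assume that $0$ is a simple integer root of $P$, so $P(z) = zQ(z)$ with $Q(0)\neq 0$; restrict $z$ to a scale $M$ chosen so that $P([M]) \subseteq [N]$, set $\delta := |A|/N^2$ and $f := \mathbf{1}_A - \delta \cdot \mathbf{1}_{[N]^2}$, and consider the counting operator
$$\Lambda(f_0,f_1,f_2) := \mathbb{E}_{(x,y) \in [N]^2,\, z \in [M]} f_0(x,y)\, f_1(x+P(z),y)\, f_2(x,y+P(z)).$$
The strategy is to show that, unless $\delta \ll_P (\log\log\log N)^{-c}$, the deviation $\Lambda(\mathbf{1}_A,\mathbf{1}_A,\mathbf{1}_A) - \delta^3$ must be small, and from this to extract a density increment of $A$ on a structured subset of $[N]^2$; iterating $O(\log\log\log N)$ times then produces the theorem.

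The first two steps reduce control of $\Lambda(f,\mathbf{1}_A,\mathbf{1}_A)$ to a genuine two-dimensional box norm. First, carry out a PET-style Cauchy--Schwarz in the $z$-variable to eliminate each of the two non-pivot factors in turn. Because the first $A$-factor depends only on $x$ (and $z$) and the second only on $y$ (and $z$), one Cauchy--Schwarz per coordinate suffices, and the outcome---up to polynomial losses in $\delta$---is an average over quadruples of $z$-values of a box-type norm of $f$ along polynomial differences $P(z_1)-P(z_2)$ in the first coordinate and $P(z_3)-P(z_4)$ in the second. Second, feed this averaged ``polynomial-shifted'' box norm into the multidimensional quantitative concatenation theorem of \cite{KKL24a}, which converts it into control of $f$ by a single genuine box norm $\lVert\cdot\rVert_{\square^{k,k}(\mathbb{Z}^2)}$ for some $k=k(\deg P)$, again with only polynomial losses.

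The third step is degree lowering for these two-dimensional box norms, which I expect to be the main obstacle. The aim is to show inductively that the order $k$ of the controlling box norm may be dropped by one, and ultimately that $\Lambda$ is already controlled by $\square^{1,1}$. Following the analogy with Peluse's one-variable degree-lowering, the mechanism is to attach a dual function to the high-order box norm, apply an inverse theorem to correlate it in each coordinate with a structured phase, and reinsert this structure into $\Lambda$: the symmetry of $\Lambda$ in the two coordinates $(x,y)$ together with the linearization $P(z) \approx P'(0) z$ near the simple root should force the phase to be of strictly lower order along each fiber, which peels off one layer of the box norm. The difficulty specific to the corner setting is that two-dimensional box norms entangle their coordinates asymmetrically, so each round of degree lowering must be performed in each coordinate separately while preserving the other, and the accumulated quantitative losses must remain polynomial in $\delta$; this bookkeeping is ultimately what will drive the triple-logarithmic final bound.

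Once $\Lambda$ is controlled by $\square^{1,1}$---essentially a product of Gowers--Cauchy--Schwarz inner products, one in each coordinate---the final step is a Shkredov-style density increment: either $\delta$ is already of the claimed size, or $f$ correlates with the indicator of a product of long arithmetic progressions in $[N]^2$, on which $A$ enjoys a density increment of $\Omega(\delta^{O(1)})$. Composing the polynomial losses from Steps 1--3 with the density-increment iteration bounds the number of rounds by $O(\log\log\log N)$, giving the claimed $|A| \ll_P N^2/(\log\log\log N)^c$.
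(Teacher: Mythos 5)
Your proposal follows a direct density-increment route, which is genuinely different from the paper's argument (the paper proves a comparison/transference theorem between the $W$-tricked polynomial-corner operator and a weighted model corner operator, and then invokes a supersaturation form of Shkredov's corners theorem as a black box), and the density-increment route as you describe it has a real gap at its final step. Once you have $\square^{1,1}$-control, the obstruction is correlation of $f$ with an arbitrary product $b_1(x)b_2(y)$ of bounded functions, not with the indicator of a product of long arithmetic progressions; converting this into a usable density increment is precisely the hard content of Shkredov's corners argument, which requires working with products of structured (pseudorandom/Bohr-type) sets rather than grids, and even for ordinary corners it only yields a $(\log\log N)^{-c}$ bound, not a polynomial-type increment scheme. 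Worse, after one increment the set no longer lives in $[N]^2$ but in a subset of such a pseudorandom product, and the PET/concatenation input from \cite{KKL24a} and the degree-lowering machinery (Theorem \ref{thm:degree lowering-lambda}) are proved only for functions supported on boxes $[N]^2$; this is exactly the obstruction the authors name when explaining why they abandoned the density-increment approach in favor of transference. Your bookkeeping also does not cohere: with genuinely polynomial losses and $\Omega(\delta^{O(1)})$ increments the iteration count would be $\delta^{-O(1)}$, and the triple logarithm in the theorem does not arise from counting increment rounds at all — in the paper two logarithms come from the double-exponential Shkredov supersaturation bound (Proposition \ref{prop:supersat}) and the third from the transference step (Theorem \ref{P: Lambda*}), whose losses are quasipolynomial, $\exp(-\log(1/\delta)^{O_P(1)})$, not polynomial.

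Two further points would need repair even within your framework. First, the claim that $\Lambda(\mathbf{1}_A,\mathbf{1}_A,\mathbf{1}_A)$ should be close to $\delta^3$ is false without pre-sieving: the image $P([M])$ is biased modulo small primes, so sets concentrated on congruence classes have skewed counts; the paper handles this with the $W$-trick (passing to a coset of $(\beta_1^2W\Z)^2$ and replacing $P$ by $\widetilde P$), and also inserts the Archimedean weight $\nu$ to account for the sparsity of $P([M])$ at different scales — neither appears in your outline. Second, your degree-lowering sketch understates the difficulty: the paper's argument is not a fiberwise application of one-dimensional degree-lowering but a new box-norm degree-lowering that needs the $U^2\times U^1$ inverse theorem, an analysis of the resulting phase $a(y)$ via Sárközy-type estimates and quantitative equidistribution of nilsequences, and the quasipolynomial $U^{k+1}$ inverse theorem; the "linearization $P(z)\approx P'(0)z$ near the simple root" plays no such role (the simple-root hypothesis enters only through the $W$-trick and the fraction-comparison Lemma \ref{lem:composing-polynomials-2}).
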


Theorem \ref{maintheorem} and the concurrently released work of Peluse, Prendiville and Shao \cite{PPS24} on the configuration $(x, y),\; (x + z,\; y),\; (x,\; y + z^2)$ provide the first examples of reasonable bounds for multidimensional polynomial (i.e., non-linear) progressions over the integers.

Using a calculation of Shkredov \cite{Sh06b}, we can take any $c<1/73$. The reader may notice that our statement has an artificial-looking constraint on the polynomial $P$.  This constraint, which is slightly stronger than intersectivity, arises from a technical point related to the use of the $W$-trick, as discussed later.

A key analytic step in our argument is the following inverse theorem for polynomial corners, which may be of independent interest. 
\begin{theorem}[{Inverse theorem for polynomial corners}]\label{thm:non-w-tricked-degree lowering-lambda}
Let $d\in\N$, let $P\in\Z[z]$ be a degree-$d$ polynomial with leading coefficient $\beta_d$, let $\delta\in(0, 1/10)$, and let $f_0, f_1, f_2: \Z^2 \to \mathbb{C}$ be $1$-bounded functions supported on $[N]^2$ for some integer $N \geq \exp(\log(1/\delta)^{\Omega_P(1)})$. If
$$\abs{\sum_{x,y}\E_{z\in[(N/\beta_d)^{1/d}]}f_0(x,y)f_1(x+P(z), y) f_2(x, y + P(z))} \geq \delta N^2,$$
then
\begin{align*}
    &\sum_{x,y}f_0(x,y)b_1(x)b_2(y) \geq \exp(-\log(1/\delta)^{O_P(1)}) N^2,\quad \text{for some $1$-bounded $b_1,b_2: \Z \to \C$},\\
    &\sum_{x,y}f_1(x,y)b_1(x+y)b_2(y) \geq \exp(-\log(1/\delta)^{O_P(1)}) N^2\quad \text{for some $1$-bounded $b_1,b_2: \Z \to \C$},\\
    &\sum_{x,y}f_2(x,y)b_1(x)b_2(x+y) \geq \exp(-\log(1/\delta)^{O_P(1)}) N^2\quad \text{for some $1$-bounded $b_1,b_2: \Z \to \C$}.
\end{align*}
\end{theorem}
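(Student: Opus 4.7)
The plan is to prove all three conclusions by the same scheme, focusing on the statement for $f_0$; the conclusions for $f_1$ and $f_2$ then follow by permuting which of the three vertices of the corner plays the ``distinguished'' role, which is what produces the different linear forms $(x+y, y)$ and $(x, x+y)$ appearing in the other two bounds. The argument proceeds in three stages: a PET-style Cauchy-Schwarz reduction to a polynomial box norm, an application of the multidimensional concatenation theorem of \cite{KKL24a}, and a novel degree-lowering procedure for box norms.

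\emph{Stage 1 (PET reduction).} Starting from $|\sum_{x,y} \E_z f_0(x,y) f_1(x+P(z),y) f_2(x,y+P(z))| \geq \delta N^2$, apply a short sequence of Cauchy--Schwarz inequalities in the coordinate variables (each duplicating the $z$-parameter into a fresh copy) to eliminate first $f_1$ and then $f_2$. The output is a lower bound on a polynomial box norm of $f_0$ of schematic form
$$\E_{z_1,z_2,z_3,z_4} \sum_{x,y} \prod_{\omega \in \{0,1\}^2} \tilde f_0\bigl(x + \omega_1(P(z_1)-P(z_2)),\; y + \omega_2(P(z_3)-P(z_4))\bigr) \gtrsim \delta^{O(1)} N^2,$$
where $\tilde f_0$ denotes $f_0$ with conjugations inserted according to the parity of $|\omega|$.

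\emph{Stage 2 (Concatenation).} Invoke the multidimensional concatenation theorem of \cite{KKL24a} to upgrade this polynomial-difference box norm of $f_0$ into a standard arithmetic box norm $\|f_0\|_{\Box^{s}} \gtrsim \delta^{O_P(1)}$ for some $s = s(P)$, where the differencing is now in the two coordinate directions $x$ and $y$. The hypothesis that $P$ has a simple integer root is what guarantees that the set $\{P(z)-P(z')\}$ carries enough additive structure to be concatenated into ordinary box-norm control.

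\emph{Stage 3 (Degree lowering).} The novel step is to show that $\|f_0\|_{\Box^{k+1}} \gtrsim \eta$ implies $\|f_0\|_{\Box^{k}} \gtrsim \mathrm{poly}(\eta)$ for every $k \geq 2$. Mimicking Peluse's degree-lowering scheme for Gowers norms but adapted to the box setting, one passes to the $\Box^{k+1}$-dual function $\CD f_0$, feeds the resulting correlation back through a variant of the Stage 1 identity, and extracts a large $\Box^{k}$-correlation of $f_0$ after some rearrangement. Iterating from $s$ down to $2$ yields $\|f_0\|_{\Box^{2}} \gtrsim \exp(-\log(1/\delta)^{O_P(1)})$, and unfolding the definition of the $\Box^{2}$-norm in directions $x$ and $y$ directly produces the $1$-bounded $b_1, b_2$ with $\sum_{x,y} f_0(x,y) b_1(x) b_2(y) \gtrsim \exp(-\log(1/\delta)^{O_P(1)}) N^2$.

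The principal obstacle is Stage 3. Peluse's degree-lowering for Gowers norms is well-established, but transposing the scheme to box norms will require a new ``dual-difference interchange'' argument that respects both the asymmetric roles of the two coordinates in the corner configuration and the coordinate-aligned structure of the box-norm differencing. The quantitatively delicate point is to arrange that each iteration of the degree-lowering loses at most a polynomial factor in $\eta$, so that the composition across the $O_P(1)$ iterations required to reduce $s$ to $2$ remains in the quasi-polynomial regime needed for the final $\exp(-\log(1/\delta)^{O_P(1)})$ bound.
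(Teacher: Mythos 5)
Your Stages 1--2 match the paper's starting point (PET plus the quantitative concatenation theorem of the companion paper, quoted here as Proposition \ref{P: preliminary box control for Lambda^W}), and your final unfolding of the degree-$2$ box norm is just the $U^1\times U^1$-inverse theorem, so those parts are fine in outline. The gap is Stage 3, and it sits exactly where the paper's real work lies. First, the statement you propose to prove -- that $\|f_0\|_{\Box^{k+1}}\gtrsim\eta$ implies $\|f_0\|_{\Box^{k}}\gtrsim \mathrm{poly}(\eta)$ for an arbitrary $1$-bounded function -- is false: monotonicity of box norms runs the other way, and a function such as $f_0(x,y)=e(\alpha xy)$ has essentially maximal $U^2\times U^1$-type box norm while its $U^1\times U^1$ norm is negligible for generic $\alpha$. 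Degree-lowering is not a property of the norms of a single function; it is a statement about norm-control of the counting operator (Proposition \ref{prop:degreeloweringdual}): \emph{if} largeness of $\Lambda$ implies largeness of the $(k,\ell)$ box norm of $f_0$ for all $1$-bounded inputs, \emph{then} it also implies largeness of a $(k-1,\ell)$ norm, and the proof must re-use the counting-operator hypothesis through stashing and the dual--difference interchange, not merely the box-norm lower bound.

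Second, even after that reformulation, your phrase ``feeds the resulting correlation back through a variant of the Stage 1 identity \ldots after some rearrangement'' conceals the actual obstacle. After the dual--difference interchange, the $U^2\times U^1$-inverse theorem produces an obstruction of the form $g(x)e(a(y)x+b(y))$ with a completely arbitrary phase function $a:\Z\to\R/\Z$, and the heart of the matter is showing that $a$ is essentially constant on many long progressions (Propositions \ref{prop:popular-a-value} and \ref{prop:iterate-popular-a-value}). In the paper this requires the auxiliary operator $\Lambda'$, a S\'ark\"ozy-type circle-method estimate, a mini-PET giving Gowers-norm control of an auxiliary function, the quasipolynomial $U^{s}$-inverse theorem, quantitative equidistribution of nilsequences together with an iterative step-reduction, and Weyl's inequality; none of this is ``rearrangement'', and without it the degree-lowering loop has no proof. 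Two smaller corrections: the simple-integer-root hypothesis is not what makes concatenation work -- Theorem \ref{thm:non-w-tricked-degree lowering-lambda} carries no root hypothesis at all, the root condition entering only through the $W$-trick needed for Theorem \ref{maintheorem}; and the per-iteration losses are quasipolynomial, of shape $\exp(\log(1/\delta)^{O(1)})$, rather than polynomial in $\eta$, because the inverse theorems enter every iteration.
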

In the proof of Theorem \ref{maintheorem}, we will end up using a slightly different version of this inverse theorem that is better suited to $W$-tricks (Theorem~\ref{thm:degree lowering-lambda}); the proof simplifies in several ways for the $W$-trick-free version stated here.  Theorem~\ref{thm:degree lowering-lambda} leads to a ``comparison theorem'' with a ``model'' counting operator (Theorem \ref{P: Lambda*}), which in Section~\ref{S: finishing} we will use to deduce the following improvement on the result of Peluse, Sah, and Sawhney \cite{PSS23}.

\begin{corollary}\label{cor: PSS improvement}
Let $P \in \mathbb{Z}[z]$ be a polynomial with an integer root of multiplicity $1$. If $A \subseteq [N]$ does not contain a configuration of the form $x,\; x + P(z),\; x + 2P(z)$ with $P(z) \neq 0$, then
$$|A| \ll \frac{N}{\exp((\log\log N)^{c_P})}$$
for some constant $c_P > 0$.
\end{corollary}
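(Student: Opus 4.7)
The plan is to reduce the one-dimensional polynomial three-term progression problem to the two-dimensional polynomial corner problem and then invoke the comparison theorem, Theorem~\ref{P: Lambda*}. Given $A \subseteq [N]$ of density $\alpha := |A|/N$ containing no nontrivial polynomial progression $x,\; x+P(z),\; x+2P(z)$, I would define the lifted set
\[
B := \{(x,y) \in [N/3]^2 : x + 2y \in A\}.
\]
A direct calculation shows that any polynomial corner $(x,y),\; (x+P(z),y),\; (x,y+P(z))$ contained in $B$ with $P(z)\neq 0$ yields the nontrivial polynomial progression $x+2y,\; x+2y+P(z),\; x+2y+2P(z)$ in $A$; hence $B$ contains no nontrivial polynomial corners. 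Because the distribution of $x+2y$ for $(x,y)\in[N/3]^2$ is close to uniform on an interval of length $\asymp N$, a standard counting argument gives $|B| \gtrsim \alpha (N/3)^2$.

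Next I would apply Theorem~\ref{P: Lambda*} to the corner counting operator $\Lambda^{\mathrm{corner}}_P(1_B, 1_B, 1_B)$, with $z$ averaged over $[M]$ where $M \asymp N^{1/d}$ and $d := \deg P$. The comparison theorem asserts that this true count differs from the associated model count $\Lambda^{\ast}_P(1_B,1_B,1_B)$ by at most $\varepsilon N^2$, provided $N \geq \exp(\exp(\log(1/\varepsilon)^{O_P(1)}))$. The model (which is a ``linear'' corner counting operator obtained by decoupling the polynomial dependence on $z$) satisfies the pointwise lower bound $\Lambda^{\ast}_P(1_B, 1_B, 1_B) \gtrsim \alpha^3 N^2$. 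Meanwhile, since $B$ has no nontrivial polynomial corners, the true count is at most the trivial contributions from the $O_P(1)$ integer roots of $P$ in $[M]$, which are bounded by $\alpha N^2 / M \asymp \alpha N^{2-1/d}$.

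Combining these two estimates yields $\alpha^3 \lesssim \alpha / M + \varepsilon$. In the easy regime $\alpha \leq N^{-1/(2d)}$ the conclusion is already immediate, so I may assume $\alpha > N^{-1/(2d)}$, in which case the trivial term is dominated by the model term. Setting $\varepsilon := \alpha^3/2$, the threshold $N \geq \exp(\exp(\log(1/\alpha)^{O_P(1)}))$ rearranges to $\log(1/\alpha) \leq (\log\log N)^{1/c_P}$ for some $c_P > 0$ depending only on $P$, i.e., $\alpha \leq \exp(-(\log\log N)^{c_P})$, which is the claimed bound.

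The main technical burden is verifying the lower bound $\Lambda^{\ast}_P(1_B, 1_B, 1_B) \gtrsim \alpha^3 N^2$ for whichever model is supplied by Theorem~\ref{P: Lambda*}. If the model is the natural linear corner operator, this reduces to an elementary counting lemma for dense sets in $[N/3]^2$; if the model has a more subtle form involving box-norm projections, one must verify a suitable positivity or pigeonhole statement that extracts a factor of $\alpha$ from each copy of $1_B$. The remaining components of the argument --- the reduction to corners, the parameter optimization, and the handling of the $O_P(1)$ trivial contributions at integer roots of $P$ --- are straightforward.
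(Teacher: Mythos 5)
Your lift $B=\{(x,y)\in[N/3]^2:\ x+2y\in A\}$ is essentially the projection trick the paper uses, and the final parameter dichotomy is sound, but two steps in the middle do not work as written. First, the asserted lower bound $\Lambda^{\ast}_P(1_B,1_B,1_B)\gg \alpha^3N^2$ for the model operator cannot follow from any ``elementary counting lemma'': the model is a genuine (weighted) count of linear corners, and if a trivial $\alpha^3$-type lower bound held for all dense sets it would force $\alpha\ll N^{-1/2}$ for corner-free (or AP-free) sets, contradicting Behrend-type constructions. What rescues the argument is precisely the special structure of your $B$: it is a union of lines $x+2y=\mathrm{const}$, so the model corner count in $B$ collapses, up to a factor of $N$ and the pointwise bound $\nu(z)\geq d^{-1}$, to the count of three-term progressions in $A$ itself. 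Lower-bounding that count requires a supersaturated density theorem for $3$-APs, and to obtain the claimed $\exp(-(\log\log N)^{c_P})$ strength it must be the Kelley--Meka bound in Varnavides form (as in the paper); an older Roth-type input would only yield a weaker, iterated-logarithm-type exponent, and no unconditional $\alpha^3$ bound exists.

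Second, Proposition \ref{P: Lambda*} does not compare the raw $P$-corner operator with the model: it compares the $W$-tricked operator $\Lambda^W$, built from the auxiliary polynomial $\widetilde P$ after restricting to a congruence class modulo $\beta_1^2W$, with $\Lambda^{\mathrm{Model}}$. For a general admissible $P$ the image $P([M])$ is biased modulo small primes, so a comparison between the un-tricked count and a uniform-weight model with error $\varepsilon N^2$ for arbitrarily small $\varepsilon$ is false, and your invocation of the comparison theorem has no valid target. The repair is the paper's route: pigeonhole $A$ onto a residue class modulo $\beta_1^2W$ (keeping the density), rescale, lift the resulting one-dimensional set to a union of lines in $[2N]^2$, apply Proposition \ref{P: Lambda*} to $\Lambda^W$ with $\widetilde P$, and observe that nontrivial $\widetilde P$-corners in the lifted set pull back to nontrivial progressions $x,\ x+P(z),\ x+2P(z)$ in $A$; the condition $W\geq\exp\exp(\log(1/\delta)^{\Omega_P(1)})$ together with $N\gg_P W^{\Omega_P(1)}$ is then what produces the $(\log\log N)^{c_P}$ exponent. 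With these two repairs --- Kelley--Meka supersaturation for the model lower bound and the $W$-trick before the comparison step --- your outline becomes the paper's proof.
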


Before we say more about our main theorem, let us briefly survey related results.  Previous work has focused on $1$-dimensional polynomial progressions over the integers and both $1$-dimensional and multidimensional polynomial progressions in finite-field settings.  One of the many important differences between the integers and finite fields is that for a fixed polynomial $P \in \Z[z]$ of degree at least $2$, the image of $P$ is an equidistributed set of positive density in (a sequence of) finite fields, while in the integers it is a sparse set with local biases; since there are more ``opportunities'' to find polynomial progressions in finite fields $\mathbb{F}_p$, it is generally easier to prove bounds on the sizes of sets avoiding polynomial progressions in finite-field settings, and much more is known here than over the integers.

Early results for $1$-dimensional polynomial progressions in $\mathbb{F}_p$ ($p$ a prime) are due to Bourgain and Chang \cite{BC17}, Dong, Li, and Sawin \cite{DLS17}, and Peluse \cite{Pel18}.  In a major breakthrough, Peluse \cite{Pel19} proved power-saving upper bounds for subsets of $\mathbb{F}_p$ avoiding the polynomial progression
$$x,\; x+P_1(z),\; \ldots,\; x+P_\ell(z)$$
for any given linearly independent polynomials $P_1, \ldots, P_m \in \Z[z]$ with constant coefficients all zero.  Her proof introduced the now-well-used technique of degree-lowering for Gowers norms.  Peluse's work has been generalized in several ways.  For instance, the second author \cite{Kuc21a} proved reasonable upper bounds for subsets of $\mathbb{F}_p$ avoiding a given progression of the form
$$(x,\; x + z,\; \dots,\; x + (\ell - 1)z,\; x + z^\ell,\;\dots,\; x + z^{\ell + k - 1}).$$
In \cite{Leng22}, the third author used a $U^3$-version of degree-lowering to obtain reasonable upper bounds for a large family of so-called ``complexity-$1$'' polynomial configurations. Lastly, Bergelson and Best \cite{BB23} generalized Peluse's result to general finite commutative rings.

Less is known about multidimensional progressions in finite fields.  Regarding linear configurations, we have already mentioned results on corners and L-shaped configurations.  In the higher-degree case, the second author \cite{Kuc23, Kuc21} proved a general Szemer\'edi-type theorem for patterns of the form 
$$\bx,\; \bx + \bv_1 P_1(z),\; \dots,\; \bx + \bv_\ell P_\ell(z),$$ where $P_1, \dots, P_\ell$ are linearly independent polynomials with constant coefficients all zero and $\bv_1, \ldots, \bv_\ell$ are arbitrary vectors in $\mathbb{F}_p^D$. When $\ell=2$, special cases of this result had been established earlier by Han, Lacey, and Yang \cite{HLY21}.

Let us now turn to polynomial progressions in $\Z$.  In a sequence of papers \cite{Pel19, PP19, PP20}, Peluse and Prendiville adapted Peluse's original degree-lowering argument for progressions of the form $x,\; x+P_1(z),\; \ldots,\; x+P_m(z)$ where the polynomials $P_i$ have distinct degrees.  Prendiville \cite{Pre17} also proved reasonable bounds for sets avoiding arithmetic progressions with common difference of the form $z^d$ for fixed $d$; the homogeneity of the polynomial $z^d$ allowed him to use a density-increment argument.  Peluse, Sah, and Sawhney \cite{PSS23}, adapting a $U^3$-degree-lowering argument developed in \cite{Leng22}, addressed the case of three-term arithmetic progressions with common difference $z^2-1$.  They proved a ``transference'' result relating the number of $3$-term arithmetic progressions to the number of $3$-term arithmetic progressions with common difference $z^2-1$.

By a simple projection argument (see the proof of Corollary~\ref{cor: PSS improvement} below), Theorem \ref{maintheorem} contains the setting studied by Peluse, Sah, and Sawhney as a special case. Hence, our main theorem can be understood as a common generalization of Shkredov's corners theorem and the result of Peluse, Sah, and Sawhney.  To prove Theorem~\ref{maintheorem}, we will establish a transference result for corners, in the style of \cite{PSS23}, and then use Shkredov's work as a black box.  The starting point for our argument is a PET and quantitative concatenation result carried out in our companion paper \cite{KKL24a}.  The bulk of the present paper is devoted to obtaining degree-lowering for the polynomial corners configuration.  Our degree-lowering argument is the first to use box norms instead of Gowers norms, and its success attests to the robustness of degree-lowering as a tool in arithmetic combinatorics.

\subsection{Future directions}
Our results and methods suggest several avenues for future inquiry.  The most natural open problem is extending Theorem~\ref{maintheorem} to a larger class of polynomials $P$.  A particularly appealing example, recently highlighted by Peluse \cite[Problem 3.17]{Pel23}, is $P(z)=z^2$.  (Theorem~\ref{maintheorem} fails to cover $P(z)=z^2$ because $0$, the only integer root, has multiplicity $2>1$.)  One strategy, which we attempted initially before turning to transference arguments, is obtaining $U^1 \times U^1$-control for the $P$-polynomial corner counting operator and then running Shkredov's original density-increment argument for corners.  Our degree-lowering argument goes through with $\widetilde P$ replaced by an arbitrary (non-$W$-tricked) polynomial $P \in \mathbb{Z}[z]$, which gives the norm-control necessary for starting Shkredov's iterative argument.  The pitfall of this approach, however, is that in later stages of the iteration one must deal with subsets of pseudorandom sets rather than subsets of $[N]^2$, and this creates complications in both the arguments of this paper and the PET and concatenation results of our companion paper \cite{KKL24a}.  It seems possible that one could generalize our arguments to the setting of pseudorandom sets as in \cite{Sh06b, Sh06a}, but we have not pursued this.

The polynomial progressions investigated in this paper belong to the class of linear patterns (in our case, corners) with polynomial differences. Other works on obtaining bounds for such configurations have proceeded either by modifying arguments for the corresponding linear patterns (as in, e.g., \cite{Pre17}) or by using transference results to relate the counts of linear and polynomial patterns (as in the present work and \cite{Kuc21a, PSS23}).  Reasonable bounds are known for $L$-shaped configurations and arithmetic progressions of length $\ell\geq 4$, but their polynomial analogues are currently unknown, so it would be interesting to extend our methods to establish transference results for these two particular configurations.  It would also be interesting to use this machinery to obtain more general transference principles, even for cases where there are not currently reasonable bounds for linear configurations.

\subsection{Acknowledgments} NK was supported in part by the NSF Graduate Research Fellowship Program under grant DGE–203965. BK is supported by the NCN Polonez Bis 3 grant No. 2022/47/P/ST1/00854 (H2020 MSCA GA No. 945339). For the initial stages of the project, BK was supported by the ELIDEK grant No: 1684.  JL is suppored by the NSF Graduate Research Fellowship Grant No. DGE-2034835. 

For the purpose of Open Access, the authors have applied a CC-BY public copyright licence to any Author Accepted Manuscript (AAM) version arising from this submission.

We would like to thank Terry Tao, Sarah Peluse, and Mehtaab Sawhney for helpful discussions and for comments on a draft of this paper.

\section{Proof strategy}\label{S: proof strategy}

We now give an overview of the proof of Theorem~\ref{maintheorem}. Our overarching goal is to relate the number of polynomial corners to the number of ordinary corners; once this is accomplished, we can ``transfer'' Shkredov's result (in a standard supersaturation version) from ordinary corners to polynomial corners.

Let $A \subseteq [N]^2$ be a set of density $\delta$, and let $P \in \mathbb{Z}[z]$ be a polynomial of degree $d$ with an integer root $\rho$ of multiplicity $1$. Then $$P(z-\rho)=\beta_dz^d+\cdots+\beta_1 z$$ for some $\beta_1, \ldots,\; \beta_d\in\Z$ with $\beta_1,\; \beta_d \neq 0$; without loss of generality we may assume that $\beta_d>0$.  The number of corners in $A$ is $N\Lambda^{\corners}(1_A,1_A,1_A)$, where $\Lambda^{\corners}$ is the normalized {corner-counting operator} at scale $N$ given by
$$\Lambda^{\corners}(f_0,f_1,f_2):=\sum_{x,y \in \Z} \E_{z \in [N]}f_0(x,y)f_1(x+z,y),f_2(x,y+z).$$
The number of $P$-polynomial corners in $A$ is $$(N/\beta_d)^{1/d}\Lambda(1_A,1_A,1_A),$$
where $\Lambda$ is the {normalized $P$-polynomial corner-counting operator} at scale $N$ given by
$$\Lambda(f_0,f_1,f_2):=\sum_{x,y \in \mathbb{Z}}\E_{z \in [(N/\beta_d)^{1/d}]} f_0(x,y)f_1(x+P(z),y)f_2(x,y+P(z)).$$
In order to show that $|\Lambda^{\corners}-\Lambda|$ is small, we must make both an Archimedean and a non-Archimedean modification.

First, since the set $\{P(z): z \in \N\}$ becomes sparser at larger scales, we introduce a similarly-decaying weight
$$\nu(z):=d^{-1} 1_{[N]}(z) (N/(z+1))^{(d-1)/d}.$$
into $\Lambda^{\corners}$ to obtain the {model corner-counting operator}
$$\Lambda^{\model}(f_0,f_1,f_2):=\sum_{x,y\in \mathbb{Z}}  \E_{z \in [N]} f_0(x,y)f_1(x+z,y)f_2(x,y+z)\nu(z).$$
Second, since the set $\{P(z): z \in \N\}$ may be biased with respect to small moduli, we ``pre-sieve'' the set $A$ before attempting to compare the counts of corners and of $P$-polynomial corners.  More precisely, for $W$ a very slowly-growing function of $N$, we restrict our attention to the elements of $A$ lying in a single coset of $(\beta_1^2 W\Z)^2$; by the pigeonhole principle, there is some such coset $(b_1+\beta_1^2 W\Z) \times (b_2+\beta_1^2 W\Z)$ in which $A$ has relative density at least $\delta$ (at scale $N$).  Let
$$A':=\{(x',y') \in \Z^2: (\beta_1^2 W x'+b_1,\; \beta_1^2 W y'+b_2) \in A\} \subseteq [N/(\beta_1^2 W)]^2$$
be this ``slice'' of $A$.  We claim that in order to find a $P$-polynomial corner in $A$, it suffices to find a $\widetilde P$-polynomial corner in $A'$, where $\widetilde P$ is the \emph{$W$-tricked} auxiliary polynomial
\begin{align}\label{E: tilde P}
    \widetilde P(z):=\frac{P(\beta_1 Wz - \rho)}{\beta_1^2 W}=\beta_d \beta_1^{d-2}W^{d-1}z^d+\beta_{d-1} \beta_1^{d-3} W^{d-2}z^{d-1}+\cdots+z \in \mathbb{Z}[z].
\end{align}
(The linear coefficient $1$ of $\widetilde P$ will be convenient later for technical reasons.)  Indeed, if $$(x',y'),\; (x'+\widetilde P(z'),y'),\;(x',y'+\widetilde P(z'))$$ form a $\widetilde P$-polynomial corner in $A'$, then $A$ contains the $P$-polynomial corner $$(x,y),\; (x+P(z),y),\; (x,y+P(z))$$ with
$$x:=\beta_1^2 Wx'+b_1, \quad y:=\beta_1^2 Wy'+b_2, \quad \text{and} \quad z:=\beta_1 Wz'-\rho.$$
Hence, we replace $\Lambda$ with the \emph{$W$-tricked counting operator}
\begin{align*}
    \Lambda^W(f_0,f_1,f_2):=\sum_{x,y} \E_{z \in [(N/W_d)^{1/d}]} f_0(x,y)f_1(x+\widetilde{P}(z),y)f_2(x,y+\widetilde{P}(z)),
\end{align*}
where $W_d := \beta_d \beta_1^{d-2}W^{d-1}$ is the leading coefficient of $\widetilde P$.  The main work of this paper is showing that, with these modifications made, the counting operators $\Lambda^{\model}$ and $\Lambda^W$ are very close (Proposition \ref{P: Lambda*}).

This comparison theorem boils down to showing that both $\Lambda^{\model}$ and $\Lambda^W$ are ``controlled'' by appropriate $U^1 \times U^1$-box norms of their arguments.  The control for $\Lambda^{\model}$ (Proposition \ref{P: box norm control of model}) is fairly straightforward and follows from an argument similar to the proof of \cite[Lemma 4.1]{PSS23}, so we will focus on the control for $\Lambda^W$.  We wish to show that for all $1$-bounded functions $f_0,f_1,f_2$ supported on $[N]^2$, the largeness of $|\Lambda^W(f_0,f_1,f_2)|$ implies (with quantitative control) the largeness of a box norm of $f_0$ in the directions $\be_1:=(1,0)$ and $\be_2:=(0,1)$.  The control in terms of $f_1$ 
(with the directions $\be_1,\; \be_1-\be_2$) and $f_2$ (with the directions $\be_2,\; \be_2-\be_1$) is completely symmetrical. The precise statement of this result is in Theorem \ref{thm:degree lowering-lambda}, which constitutes a $W$-tricked version of Theorem \ref{thm:non-w-tricked-degree lowering-lambda}.

The starting point of this endeavor is the PET induction scheme and a quantitative concatenation result, as carried out in our companion paper \cite{KKL24a}.  The output of these arguments is the existence of a natural number $t=O_d(1)$ such that the lower bound $|\Lambda^W(f_0,f_1,f_2)| \geq \delta N^2$ implies that
\begin{equation}\label{eq:PET-output-sketch}
\|f_0\|_{(\be_1 W_d\cdot [\pm N/W_d])^t,\; (\be_2 W_d\cdot [\pm N/W_d])^t}^{2^{2t}}\gg_d \delta^{O_d(1)} N^2,
\end{equation}
and likewise for $f_1,f_2$ (cf. Proposition \ref{P: preliminary box control for Lambda^W}).  
For simplicity of exposition, we will temporarily pretend that $W=1$; at the end of this sketch, we will mention how to deal with the additional technical complications arising from the presence of $W$.  For notational simplicity, we will also make the unimportant assumption that $\beta_d=\beta_1=1$ (so that $W_d = 1$ and the $z$-variable ranges over $[N^{1/d}]$ in the definition of $\Lambda^W$).  

In order to replace $t$ with $1$, we will carry out an iterative degree-lowering procedure (see Proposition \ref{prop:degreeloweringdual} below): We will show that if the largeness of $|\Lambda^W(f_0,f_1,f_2)|$ is already known to imply the largeness of $\|f_0\|_{(\be_1\cdot [\pm N])^k,\; (\be_2\cdot [\pm N])^\ell}^{2^{k+\ell}}$ for some $k,\ell$ with $k \geq 2$, then it also implies the largeness of $\|f_0\|_{(\be_1\cdot [\pm N])^{k-1},\; (\be_2\cdot [\pm N])^\ell}^{2^{k+\ell-1}}$ (and likewise with the roles of $k,\ell$ reversed).  We will arrive at our goal after $2t-2$ iterations of this degree-lowering procedure. This is the first time that degree-lowering has been carried out using box norms; all previous degree-lowering arguments in the literature aim at lowering degrees of Gowers norms. Our argument thus testifies to the versatility of the degree-lowering approach pioneered by Peluse \cite{Pel19}.

By the dual--difference interchange (a consequence of the Cauchy--Schwarz inequality; see Lemma \ref{lem:dual-diff}), it suffices to obtain our degree-lowering result when $k=2$ and $\ell=1$.  Suppose that $|\Lambda^W(f_0,f_1,f_2)| \geq \delta N^2$.  We begin by applying the ``stashing''\footnote{The name for this trick is due to Manners \cite{Man21}.} trick: Since $f_0$ is supported on $[N]^2$, using the Cauchy--Schwarz inequality to eliminate $f_0$ from $\Lambda^W$ gives
$$\delta^2 N^4 \leq \abs{\Lambda^W(f_0,f_1,f_2)}^2 \leq N^2 \Lambda^W(\mathcal{D}_0(f_1,f_2),f_1,f_2),$$
where $\mathcal{D}_0(f_1,f_2):\mathbb{Z}^2 \to \mathbb{C}$, the \emph{dual function} of the first argument of $\Lambda^W$, is given by
$$\mathcal{D}_0(f_1,f_2)(x,y):=\E_{z \in [N^{1/d}]} f_1(x+\widetilde P(z),y)f_2(x,y+\widetilde P(z)).$$
The hypothesis of the degree-lowering statement tells us that
$$\norm{\mathcal{D}_0(f_1,f_2)}_{\be_1\cdot [\pm N],\; \be_1\cdot [\pm N],\; \be_2\cdot [\pm N]}^8$$
is large (i.e., at least $c(\delta)N^2$ for some explicit $c(\delta)>0$).  Then the inverse theorem for the $U^2 \times U^1$-norm (see Lemma~\ref{L: U^2 x U^1 inverse} below) implies that $\mathcal{D}_0(f_1,f_2)$ correlates with a ``$U^2 \times U^1$-obstruction'' of the form $g(x)e(a(y)x+b(y))$, where $g: \Z \to \C$ is a $1$-bounded function, $a:\Z \to \R/\Z$ is a phase, and $b:\Z \to \R/\Z$ is a shift.\footnote{To understand where this inverse theorem comes from, notice that differentiating once in the $y$-direction removes the $g(x)$ term and differentiating twice in the $x$-direction removes the $e(a(y)x+b(y))$ term.}  
Unraveling the definition of $\mathcal{D}_0(f_1,f_2)$ and ignoring the terms $g(x),b(y)$ (which are of secondary importance), we find that
$$\Lambda'(a,f_1,f_2):=\sum_{x,y} \E_{z \in [N^{1/d}]}e(a(y)x)f_1(x+\widetilde P(z),y)f_2(x,y+\widetilde P(z))$$
is large in absolute value.

We now aim to show that the phase $a$ is close to some constant $\alpha$ for many values of $y$ (see Proposition \ref{prop:popular-a-value} below); this step ``lowers'' the degree-$2$ obstruction $e(a(y)x)$ to the degree-$1$ obstruction $e(\alpha x)$.  Before sketching the proof of this structural result for $a$, let us see how to use it to complete the degree-lowering procedure.  Substituting $\alpha$ for $a(y)$ and shifting $x \mapsto x-\widetilde P(z)$ in the expression for $\Lambda'(a,f_1,f_2)$, we find that
$$\sum_{x,y} \E_{z \in [N^{1/d}]} e(\alpha (x-\widetilde P(z))f_1(x,y)f_2(x-\widetilde P(z),y+\widetilde P(z))$$
is large.  Grouping the exponential with $f_2$, we recognize this expression as an instance of the counting operator for the $2$-point configuration $(x,y),\; (x-\widetilde P(z),y+\widetilde P(z))$.  Adapting a classical Fourier-analytic argument of S\'ark\"ozy \cite{Sa78a, Sa78b} for $2$-point polynomial configurations, we conclude that $$\norm{f_1}_{(\be_2-\be_1)\cdot[\pm N],\; \be_2\cdot [\pm N]}^4$$
is large.  A further sequence of manipulations in the flavor of stashing let us transfer this ``degree-$2$ control'' from $f_1$ back to $f_0$.  

It remains to analyze the phase function $a$.  It turns out that it suffices to understand the case where $f_2$ is a ``$U^1 \times U^1$-obstruction'' of the form $b_1(y)b_2(x+y)$ for some $1$-bounded functions $b_1,b_2:\Z \to \C$, in which case we are dealing with the expression
\begin{align*}
    \sum_{x,y} \E_{z \in [N^{1/d}]}e(a(y)x)f_1(x+\widetilde P(z),y)b_1(y+\widetilde P(z))b_2(x+y+\widetilde P(z)).
\end{align*}
Shifting $x\mapsto x-\widetilde P(z)$, we reach an expression in which the functions $f_1$ and $b_2$ are independent of $z$. Then, by applying the triangle inequality in $x,y$ and pigeonholing in $x$, we deduce that
\begin{align*}
    \sum_{y \in [N]} \abs{\E_{z \in [N^{1/d}]} e(-a(y)\widetilde P(z)) b_1(y+\widetilde P(z))}
\end{align*}
is large.  A variant of the PET argument from our companion paper \cite{KKL24a} lets us control this expression by a Gowers norm of $b_1$, and the inverse theorem for the Gowers norms produces a nilsequence $\varphi$ of bounded degree, dimension, and complexity such that
\begin{equation}\label{eq:nilsequence-sketch}
\sum_{y \in [N]} \abs{\E_{z \in [N^{1/d}]} e(a(y)\widetilde P(z)) \varphi(y+\widetilde P(z))}
\end{equation}
is large.  To analyze \eqref{eq:nilsequence-sketch}, we require input from the theory of nilsequences.

We use an iterative process, itself a microcosm of degree-lowering, to show that the nilsequence in \eqref{eq:nilsequence-sketch} can be replaced by nilsequences of progressively smaller step (see Proposition \ref{prop:Wtricknilsequenceprop}).  This argument makes use of recent quantitative equidistribution results for polynomial sequences on nilmanifolds due to the third author \cite{Leng23b}.  At the end of this iteration, we have replaced $\varphi(y)$ by a step-$1$ nilsequence, namely, a function of the form $e(p(y))$ for some polynomial $p$.  We then use Weyl's inequality to reduce to the case where $p(y)=\alpha_1 y+\alpha_0$ is linear.  Another application of Weyl's inequality shows that there are many values of $y$ such that $a(y)-\alpha_1$ is major-arc, i.e., close to a rational of small height; pigeonholing in these rational shifts finally gives that $a(y)$ is nearly constant for many $y$'s.

Figure \ref{fig:flowchart} provides a visual overview of the main steps of our argument.

Our degree-lowering arguments must be modified in several ways to account for the parameter $W$.  Recall that the initial norm-control in \eqref{eq:PET-output-sketch} involves differencing with respect to boxes with common difference $W$ (rather than $1$).  In order to manipulate such expressions and use inverse theorems for box norms, we must split all of the parameters in sight (e.g., $x,y,z$) into arithmetic progressions with common difference $W$.  This maneuver, which often requires introducing iterated $W$-tricks, is quite a nuisance but does not change the essential nature of our arguments.

More importantly, the miniature PET induction in the lead-up to \eqref{eq:nilsequence-sketch} yields  Gowers-norm control where the differencing parameters are boxes with side lengths equal to the leading coefficient of $\widetilde P$.  Since this leading coefficient is a constant multiple of a fixed positive integer power of $W$, this step of the argument requires us to replace $W$ with some $W^{O(1)}$, and we must keep track of the accumulation of these exponents in all of the iterations of the degree-lowering.  (We also have to perform $W^{O(1)}$-tricks instead of $W$-tricks.)  

We conclude with a short discussion of the shape of bounds in Theorem \ref{maintheorem}. Of the three logarithms in the bound in Theorem \ref{maintheorem}, two come from the supersaturation version of Shkredov's result \cite{Sh06a, Sh06b}, and the third arises in the transference step of our argument. That we lose only one logarithm in comparing the counts of polynomial and linear corners is reliant on a recent equidistribution result for nilsequences due to the third author \cite{Leng23b} {and the quasipolynomial inverse theorem of Gowers norms \cite{LSS24} (which is itself reliant on \cite{Leng23b})}. If we instead used bounds for equidistribution of nilsequences provided by Tao and Ter\"av\"ainen \cite{TT21} (who quantified earlier equidistribution results of Green and Tao \cite{GT12}), we would end up with a large number of iterated logarithms bounded in terms of the degree of the polynomial $P$.

\subsection{Organization of the paper}

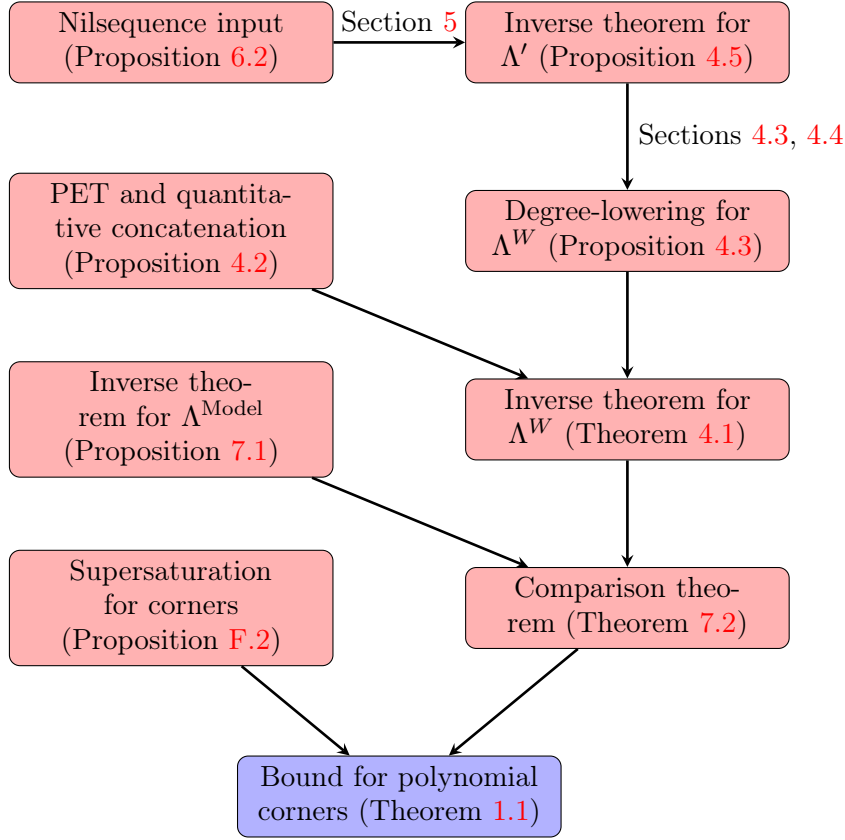
\begin{figure}[h!]
  \centering
\begin{tikzpicture}[mynode/.style={draw, startstop, text width=4cm, align=center}]
    \node[draw, mynode] (A) at (0,0) {Nilsequence input (Proposition \ref{prop:Wtricknilsequenceprop})};
    \node[draw, mynode] (B) at (6,0) {Inverse theorem for $\Lambda'$ (Proposition \ref{prop:iterate-popular-a-value})};
    \node[draw, mynode] (C) at (6,-2.5) {Degree-lowering for $\Lambda^W$ (Proposition \ref{prop:degreeloweringdual})};
    \node[draw, mynode] (D) at (0,-2.5) {PET and quantitative concatenation (Proposition \ref{P: preliminary box control for Lambda^W})};
    \node[draw, mynode] (E) at (6,-5) {Inverse theorem for $\Lambda^W$ (Theorem \ref{thm:degree lowering-lambda})};
    \node[draw, mynode] (F) at (0,-5) {Inverse theorem for $\Lambda^\model$ (Proposition \ref{P: box norm control of model})};
    \node[draw, mynode] (G) at (6,-7.5) {Comparison theorem (Theorem \ref{P: Lambda*})};
    \node[draw, mynode] (H) at (0,-7.5) {Supersaturation for corners (Proposition \ref{prop:supersat})};
    \node[draw, mynode, fill=blue!30] (I) at (3,-10) {Bound for polynomial corners (Theorem \ref{maintheorem})};
    
    \draw[->, >=stealth, line width=1pt] (A) -- node[above] {Section \ref{S: lambda'}} (B);
    \draw[->, >=stealth, line width=1pt] (B) -- node[right] {Sections \ref{S: auiliary}, \ref{S: completing}} (C);
    \draw[->, >=stealth, line width=1pt] (C) -- (E);
    \draw[->, >=stealth, line width=1pt] (D) -- (E);
    \draw[->, >=stealth, line width=1pt] (E) -- (G);
    \draw[->, >=stealth, line width=1pt] (F) -- (G);
    \draw[->, >=stealth, line width=1pt] (G) -- (I);
    \draw[->, >=stealth, line width=1pt] (H) -- (I);
\end{tikzpicture}

    \caption{The main steps and logical implications in the proof of Theorem \ref{maintheorem}.}\label{fig:flowchart}
\end{figure}

After recalling definitions and preliminaries in Section \ref{S: definitions}, we embark on the proof of the main technical result of the paper, Theorem \ref{thm:degree lowering-lambda}, which asserts that the counting operator $\Lambda^W$ is controlled by suitable degree-$2$ box norms. In Section \ref{S: degree lowering}, we show how to use our degree-lowering result, Proposition~\ref{prop:degreeloweringdual}, to deduce Theorem \ref{thm:degree lowering-lambda} from the PET and quantitative concatenation results of the companion paper \cite{KKL24a}. We also reduce Proposition~\ref{prop:degreeloweringdual} to Proposition \ref{prop:popular-a-value}, an inverse theorem for the auxiliary operator $\Lambda'$. The lion's share of the proof of Proposition \ref{prop:popular-a-value} is then carried out in Section \ref{S: lambda'}. The key input in this argument is Proposition \ref{prop:Wtricknilsequenceprop}, a structural result on \eqref{eq:nilsequence-sketch}, which we prove in Section \ref{S: nilsequence} using nilsequence machinery.

The arguments in Sections \ref{S: degree lowering}--\ref{S: nilsequence} make use of several standard results which we collect in the appendices: inverse theorems for various box norms  in Appendix \ref{A: inverse theorems}; a summary of nilsequence theory in Appendix \ref{A: nilsequence theory}; a fraction comparison argument in Appendix \ref{A: fraction comparison}; and a S\'ark\"ozy-style inverse theorem for a $2$-point polynomial progressions (derived with circle-method techniques)  in Appendix \ref{A: Sarkozy}.

We conclude the main body of the paper with Section \ref{S: finishing}, where we finally complete the proof of Theorem \ref{maintheorem}.   Proposition \ref{P: box norm control of model} establishes the desired box-norm control for $\Lambda^\model$, and we compare $\Lambda^W$ and $\Lambda^\model$ in Proposition \ref{P: Lambda*}. The latter proposition requires two more ingredients from the appendices: a Fourier-uniformity result for certain polynomial weights from Appendix \ref{A: Fourier uniformity}; and a supersaturation version of Shkredov's corners theorem from Appendix \ref{A: supersaturation}.

\section{Definitions and preliminaries}\label{S: definitions}

\subsection{Basic notation} 
We write $\N,\Z, \R, \C$ for the sets of positive integers, integers, real numbers, and complex numbers (respectively). For reals $a<b$, we set $[a,b]:= \{\ceil{a}, a+1, \ldots, \floor{b}\}$, and we abbreviate $[0,N-1]$ as $[N]$ and $[-N+1,N-1]$ as $[\pm N]$.  We remark that our somewhat unusual convention for $[N]$ (the norm would be $[N]=[1,N]$) behaves well under passing to arithmetic progressions: For $q|N$, we have the partition $[N] = \bigcup_{j\in[q]}(q\cdot[N/q]+j)$.

We write $\x = (x_1, ..., x_D)$ for an element of $\Z^D$ (for fixed $D\in\N$) and $x$ for an element of $\Z$. When the context is clear, we often write a sum over $\Z^D$ or $\Z$ as $\sum_{\bx}$ or $\sum_x$ instead of $\sum_{\bx \in \Z^D}$ or $\sum_{x \in \Z}$. We also write $\uh=(h_1, \ldots, h_s)$ for a tuple of $s$ integers, and we denote its $\ell^1$-norm by $|\uh|:=|h_1|+\cdots + |h_s|$. We will often combine these two pieces of notation; we use $\uh \bv$ to indicate the tuple $(h_1 \bv, \ldots, h_s\bv)$. 

We denote the indicator function of a set $X$ by $1_X$.  If $X$ is finite and nonempty, we let $\E_{x\in X} := \frac{1}{|X|}\sum_{x\in X}$ be the average over $X$.

We call a function $f:X\to \C$ \textit{$1$-bounded} if $\sup\limits_{x\in X}|f(x)|\leq 1$. 

We use the following standard asymptotic notation. Let $f,g: X\to\C$, with $g$ taking positive real values.  We write $f=O(g)$, $f\ll g$, $g\gg f$, or $g = \Omega(f)$ if there exists $C>0$ such that $|f(x)|\leq C g(x)$ for all $x\in X$, and we write $f\asymp g$ or $f = \Theta(g)$ if $f\ll g$ and $f\gg g$.  If the constant $C$ depends on a parameter, we record this dependence with a subscript, e.g., $f=O_P(g)$, $f \ll_P g$. 

We let $\CC: \C \to \C$ denote the complex conjugation operator $\CC z=\overline{z}$.

\subsection{$W$-trick notation}
Throughout the paper, we let $w=w(N)>10$ be a (fixed) slowly-growing function of $N$; we will later take $w(N)=\exp((\log\log\log\log N)^{C_P})$ for some $C_P>0$.  Let
$$W =W(N):= \prod_{\substack{p < w \\ p \text{ prime}}} p$$
be the product of the primes up to $w$. Standard results on the growth rate of Chebyshev's function give $\log W \asymp w$.

It will often be useful to restrict functions to arithmetic progressions, which we capture in the following definition.
\begin{definition}[$V$-tricked function]\label{d:Vtrickfunction}
For $V \in \mathbb{N}$ and $r \in \mathbb{Z}$, we can parameterize the arithmetic progression $V\mathbb{Z}+r \subseteq \mathbb{Z}$ by $\{Vx+r: x \in \mathbb{Z}\}$.  If $f$ is a function on $\mathbb{Z}$, then its restriction to $V\mathbb{Z}+r$, with this parametrization, is
$$f_{(r,V)}(x):=f(Vx+r).$$
Likewise, for $r_1,r_2 \in \mathbb{Z}$ and $f$ a function on $\mathbb{Z}^2$, the restriction of $f$ to $(V \mathbb{Z} +r_1) \times (V\mathbb{Z}+r_2)$ can be parameterized as
$$f_{(r_1,r_2,V)}(x,y):=f(Vx+r_1,Vy+r_2).$$
We refer to the operation of passing to such a subprogression as a \emph{$V$-trick}, and we say that the resulting functions are \emph{$V$-tricked}.  
\end{definition}
In this paper, we will usually take $V$ to be a small (nonnegative integer) power of $W$ and $r,r_1,r_2$ to be elements of $[V]$. We remark that a composition of two such $V$-tricks is itself a $V$-trick.  To see this symbolically for $f: \mathbb{Z} \to \mathbb{C}$, one can calculate that
\begin{align*}
(f_{(r_1,V_1)})_{(r_2,V_2)}(x) &=f_{(r_1,V_1)}(V_2 x+r_2)\\
 &=f(V_1(V_2 x+r_2)+r_1)\\
 &=f_{(V_1 r_2+r_1,V_1V_2)}(x),
\end{align*}
where $V_1V_2$ is a power of $W$ if $V_1,V_2$ are. The case of functions on $\mathbb{Z}^2$ is analogous.
\begin{definition}[$V$-tricked polynomial]\label{d: Vtrickpolynomial}
For $V$-tricks of polynomials, we will use slightly different notation that will turn out to be helpful for keeping track of divisibility properties of various coefficients.  Let $Q \in \mathbb{Z}[z]$ be a polynomial.  For $V \in \mathbb{N}$ and $r \in\Z$, define the \emph{$V$-tricked} polynomial of $Q$ to be
$$Q_{[r, V]}(z) := \frac{Q(Vz + r) - Q(r)}{V} \in \mathbb{Z}[z].$$
That $Q_{[r,V]}$ has integer coefficients can be easily verified using the Binomial Theorem.    
\end{definition}
As in the case of $V$-tricks for functions, the composition of two $V$-tricks for a polynomial is again a $V$-trick.  In particular, we have
\begin{align*}
(Q_{[r_1, V_1]})_{[r_2, V_2]}(z) &= \frac{Q_{[r_1, V_1]}(V_2z + r_2) - Q_{[r_1, V_1]}(r_2)}{V_2} \\
&= \frac{Q(V_1(V_2z + r_2) + r_1) - Q(r_1) - Q(V_1r_2 + r_1) + Q(r_1)}{V_1V_2} \\
&= Q_{[V_1r_2 + r_1, V_1V_2]}(z).
\end{align*}
Note that iterated $V$-tricks (in both the function setting and the polynomial setting) do not in general commute.

\subsection{Van der Corput Inequalities} 

If $E\subseteq \Z^D$ is finite {and nonempty}, we define the \textit{Fej\'er kernel}
\begin{align*}
    \mu_E(\bh) := \E_{\bh_1, \bh_2\in E}1_{\bh_1 - \bh_2}(\bh). 
\end{align*}
Note that $\sum_\bh \mu_E(\bh) = 1$ and that $0\leq \mu_E(\bh)\leq \frac{1_{E-E}(\bh)}{|E|}$ pointwise. If $E = [\pm H]$, we set
\begin{align*}
    \mu_H(h):=\mu_{[\pm H]}(h) = \E_{h_1, h_2\in[\pm H]} 1_{h_1 - h_2}(h) = \frac{1}{2H-1}\brac{1-\frac{|h|}{2H-1}}_+.
\end{align*}
This definition differs slightly from those in \cite{Pel20, PP19, PP20, Pre17} in that we take the ranges of $h_1, h_2$ to be $[\pm H]$ rather than $[H]$. Making the range of $h_1, h_2$ symmetric around the origin will be convenient for us later.

We will use the following form of the 
standard van der Corput inequality.  For a more thorough discussion, see \cite[Section 3]{KKL24a} (which follows from, e.g., \cite[Lemma 3.2]{Pre17}).

\begin{lemma}[The van der Corput inequality]\label{L: vdC}
    Let $\delta\in(0, 1]$, and let $H, N\in\N$ with $H\leq \frac{\delta^2}{4}N$. If $f:\Z\to\C$ is a $1$-bounded function satisfying 
    \begin{align}\label{E: assumption in vdC}
        \abs{\E_{x\in[N]}f(x)}\geq \delta,
    \end{align}
    then
    \begin{align*}
        \E_{h,h'\in[\pm H]}\E_{x\in[N]}f(x+h)\overline{f(x+h')}\geq \frac{\delta^2}{4}
    \end{align*}
    and
    \begin{align*}
        {\sum_{h}\mu_H(h)\E_{x\in[N]}f(x)\overline{f(x+h)}}\geq \frac{\delta^2}{4}.
    \end{align*}
\end{lemma}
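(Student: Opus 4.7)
The plan is to derive both inequalities from Cauchy--Schwarz, carefully tracking the boundary errors that arise from shifts of $[N]$ by amounts $O(H)$. The hypothesis $H \leq \delta^2 N/4$ together with $\delta \leq 1$ is what will keep these errors from swamping the main term.

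For the first inequality, I will introduce the shifted average $F(x) := \E_{h \in [\pm H]} f(x+h)$. For each $h \in [\pm H]$, a change of variable and $1$-boundedness show $\abs{\E_{x\in [N]} f(x+h) - \E_{x \in [N]} f(x)} \leq 2|h|/N$, so averaging over $h$ will give $\abs{\E_{x \in [N]} F(x)} \geq \delta - 2H/N \geq \delta - \delta^2/2 \geq \delta/2$. Cauchy--Schwarz will then yield
\begin{equation*}
\frac{\delta^2}{4} \leq \abs{\E_{x \in [N]} F(x)}^2 \leq \E_{x \in [N]} |F(x)|^2 = \E_{h,h' \in [\pm H]} \E_{x \in [N]} f(x+h)\overline{f(x+h')},
\end{equation*}
which is the claimed bound.

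For the second inequality, a direct change-of-variable argument would lose too much in the shift error, so I will instead exploit the positive-definiteness of $\mu_H$. Writing $\mu_H = u * u$ with $u := \frac{1}{2H-1} \one_{[\pm H]}$ and setting $g := f \cdot \one_{[N]}$, expansion of the convolution will yield the identity
\begin{equation*}
\sum_k \mu_H(k) \sum_{x \in \Z} g(x) \overline{g(x+k)} = \norm{g * u}_{\ell^2(\Z)}^2.
\end{equation*}
The original expression $\sum_k \mu_H(k) \sum_{x \in [N]} f(x) \overline{f(x+k)}$ differs from the left-hand side only through pairs $(x,k)$ with $x \in [N]$ but $x + k \notin [N]$, and these contribute at most $\sum_k \mu_H(k)|k| \leq H$ in absolute value (using $\abs{h_1 - h_2} \leq \abs{h_1} + \abs{h_2}$ under the average defining $\mu_H$). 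A second Cauchy--Schwarz will give
\begin{equation*}
\norm{g*u}_{\ell^2}^2 \geq \frac{\abs{\sum_x (g*u)(x)}^2}{\abs{\supp(g*u)}} \geq \frac{\delta^2 N^2}{N + 2H} \geq \frac{2\delta^2 N}{3},
\end{equation*}
using $\sum_x (g*u)(x) = \sum_{x\in[N]} f(x)$, $\supp(g*u) \subseteq [-H+1, N+H-2]$, and finally $H \leq \delta^2 N/4$ with $\delta \leq 1$. Dividing by $N$ and subtracting the boundary error will produce $\tfrac{2\delta^2}{3} - \tfrac{\delta^2}{4} = \tfrac{5\delta^2}{12} \geq \tfrac{\delta^2}{4}$.

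The main subtlety to watch is the bookkeeping of constants: with $H$ permitted to be as large as $\delta^2 N/4$, each individual shift error is itself of order $\delta^2$, so a naive comparison between the two sides of van der Corput would not close. Factoring $\mu_H = u * u$ and reducing to the nonnegative quantity $\norm{g * u}_{\ell^2}^2$ is what will provide enough slack: the Cauchy--Schwarz main term $2\delta^2/3$ comfortably exceeds the boundary correction $\delta^2/4$ and leaves exactly the target constant $\delta^2/4$ after subtraction.
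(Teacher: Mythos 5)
Your proof is correct: the first bound is the standard shift-and-average plus Cauchy--Schwarz argument, and for the second your factorization $\mu_H=u*u$ with $g=f\cdot\one_{[N]}$ is exactly the usual way the Fej\'er kernel arises as an autocorrelation, with the hypothesis $H\le \tfrac{\delta^2}{4}N$ absorbing the $O(H)$ boundary terms just as you track. The paper does not prove Lemma \ref{L: vdC} itself but defers to \cite{KKL24a} (following \cite[Lemma 3.2]{Pre17}), and your argument is essentially the same standard van der Corput proof used there.
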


\subsection{Box norms}
We now briefly review the theory of box norms on $\Z^D$; we will later work with the groups $\Z$ and $\Z^2$.
Given $\bh, \bh'\in \Z^D$ and a finitely supported function $f: \Z^D\to\C$, we define
the (multiplicative) discrete derivatives
$\Delta_\bh f(\bx) := f(\bx)\overline{f(\bx+\bh)}$ and $\Delta'_{(\bh,\bh')} f(\bx) := f(\bx+\bh)\overline{f(\bx+\bh')}$.  Similarly, compositions of such discrete derivatives are given by
\begin{align*}
    \Delta_{\bh_1, \ldots, \bh_s}f(\bx) &= \Delta_{\bh_1}\cdots \Delta_{\bh_s}f(\bx) = \prod_{\ueps\in\{0,1\}^s}\CC^{|\ueps|}f(\bx+\ueps\cdot \ubh),\\
    \Delta'_{(\bh_1, \bh_1'), \ldots,\; (\bh_s,\bh_s')}f(\bx) &= \Delta'_{(\bh_1, \bh_1')}\cdots\Delta'_{(\bh_s, \bh_s')}f(\bx) = \prod_{\ueps\in\{0,1\}^s}\CC^{|\ueps|}f(\bx+(\underline{1}-\ueps)\cdot\ubh + \ueps\cdot\ubh'),
\end{align*}
where $\ubh = (\bh_1, \ldots, \bh_s)$ and again $\CC z = \overline{z}$ for $z\in\C$.

For finite sets  $E_1, \ldots, E_s\subseteq \Z^D$, we define the \textit{box norm} of $f$ {along}  $E_1, \ldots, E_s$ to be
\begin{align*}
    \norm{f}_{E_1, \ldots, E_s}:=\brac{\E_{\bh_1, \bh_1'\in E_1}\cdots\E_{\bh_s, \bh_s'\in E_s}\sum_\bx \Delta'_{(\bh_1, \bh_1'), \ldots,\; (\bh_s,\bh_s')}f(\bx)}^{1/2^s}.
\end{align*}
We can equivalently express this as
\begin{align}\label{E: box norms with Fejer kernels}
    \norm{f}_{E_1, \ldots, E_s} = \brac{\sum_{\bx, \bh_1, \ldots, \bh_s} \mu_{E_1}(\bh_1)\cdots \mu_{E_s}(\bh_s) \Delta_{\bh_1, \ldots, \bh_s}f(\bx)}^{1/2^s}
\end{align}
after shifting $\bx\mapsto \bx-\bh'_1-\cdots - \bh_s'$ and removing the extraneous averages over $\bh'_1, \ldots, \bh'_s$.
For the sake of completeness, we also define $\norm{f}_\emptyset:= \sum_\bx f(\bx)$ (but note that this is not a seminorm).  If a set $E$ repeats $t$ times, we often write $E^t$ instead of $E, \ldots, E$.  We also write $\norm{f}_{U^s(E)} := \norm{f}_{E^s}$ for the degree-$s$ \textit{Gowers norm} of $f$ along $E$.
Box norms satisfy a number of well known properties, listed in the following lemma.
\begin{lemma}[{Properties of box norms, \cite[Lemma 3.5]{KKL24a}}]\label{L: properties of box norms}
Let $\delta\in(0, 1)$, let $d,D,N, s\in\N$,
and let $E_1, \ldots, E_s\subseteq \Z^D$ be finite {and nonempty} sets. Let $f:\Z^D\to\C$ be a $1$-bounded function supported on $[N]^D$. Then the following properties hold:
\begin{enumerate}
    \item\label{i: inductive formula} (Inductive formula) For any $i\in\{1, \ldots, s\}$, we have
    \begin{align*}
        \norm{f}_{E_1, \ldots, E_s}^{2^s} = \E_{\bh_1, \bh_1'\in E_1}\cdots \E_{\bh_i, \bh_i'\in E_i}\norm{\Delta'_{(\bh_1, \bh_1'), \ldots,\; (\bh_i, \bh_i')}f}_{E_{i+1}, \ldots, E_s}^{2^{s-i}}.
    \end{align*}
    \item\label{i: permutation invariance} (Permutation invariance) For any permutation $\sigma$ on $\{1, \ldots, s\}$, we have 
    \begin{align*}
        \norm{f}_{E_1, \ldots, E_s} = \norm{f}_{E_{\sigma(1)}, \ldots, E_{\sigma(s)}}.
    \end{align*}
    \item\label{i: monotonicity} (Monotonicity) If $f$ is $1$-bounded and $|[N]^D-E_s|\leq C N^D$ for some $C>0$, then 
    \begin{align}\label{E: monotonicity}
        \norm{f}_{E_1, \ldots, E_{s-1}}^{2^{s-1}}\geq \delta N^D \quad \Longrightarrow \quad \norm{f}_{E_1, \ldots, E_{s}}^{2^{s}}\geq \delta^2 N^D/C.
    \end{align}
    \item\label{i: enlarging} (Enlarging the sets) If $s\geq 2$ and $E'_i\subseteq \Z^D$ is a finite set satisfying $E'_i \supseteq E_i$ for each $i\in\{1,\ldots, s\}$, then 
    \begin{align*}
        \norm{f}_{E_1, \ldots, E_s}\leq \brac{\frac{|E_1'|\cdots|E_s'|}{|E_1|\cdots|E_s|}}^{1/2^{s-1}}\norm{f}_{E_1', \ldots, E_s'}.
    \end{align*}
    \item\label{i: trimming 2} (Trimming the lengths of boxes)  
    Let
    \begin{align*}
        E_i = \bbeta_{i}\cdot[\pm H_{i}]\quad \textrm{and}\quad E_i' = \bbeta_{i}\cdot[\pm \kappa_{i} H_{i}]
    \end{align*}
    for some $\bbeta_{i}\in \Z^D$, $\kappa_{i}>0$, and $H_{i}\in \N$. If $f$ is $1$-bounded, then
    \begin{align*}
        \norm{f}_{E_1, \ldots, E_s}^{2^s}\geq \delta N^D\quad \Longrightarrow\quad        \norm{f}_{E_1', \ldots, E_s'}^{2^s}\gg_{s}\delta^{2^s}\min(1, \delta^{2^s}/\kappa)^{2s} N^D
    \end{align*}
    for $\kappa = \max_{i}\kappa_{i}.$
    \item\label{i: passing to APs} (Passing to sub-arithmetic progressions) Suppose that  $f$ is $1$-bounded and
    \begin{align*}
        E_i = \bbeta_{i}\cdot[\pm H_{i}]\quad \textrm{and}\quad E_i' = q_{i}\bbeta_{i}\cdot[\pm H_{i}/q_{i}]
    \end{align*}
    for some $\bbeta_{i}\in \Z^D$ and $q_{i}, H_{i}\in\N$ satisfying $q_{i}\leq C \delta^{2^s}H_{i}$ for all $i$ and some constant $C>0$.
    Then
    \begin{align*}
        \norm{f}_{E_1, \ldots, E_s}^{2^s}\geq \delta N^D \quad \Longrightarrow\quad        \norm{f}_{E_1', \ldots, E_s'}^{2^s}\gg_{s,C} \delta N^D.
    \end{align*}
\end{enumerate}
\end{lemma}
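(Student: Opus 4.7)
The plan is to dispatch each of the six properties separately, relying on direct manipulations of the box-norm definition and its Fej\'er-kernel form \eqref{E: box norms with Fejer kernels}.

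Items (i) and (ii) are formal. The box norm is an iterated expectation over pairs $(\bh_i,\bh_i')\in E_i\times E_i$ of the symmetric multilinear form $\Delta'_{(\bh_1,\bh_1'),\ldots,(\bh_s,\bh_s')}f$: the inductive formula follows from grouping the first $i$ expectations and recognizing the inner object as a box norm of the derivative $\Delta'_{(\bh_1,\bh_1'),\ldots,(\bh_i,\bh_i')}f$, while the permutation invariance follows from commuting the expectations.

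For (iii) monotonicity, I would express $\norm{f}_{E_1,\ldots,E_{s-1}}^{2^{s-1}} = \sum_\bx f(\bx)\,\tilde F(\bx)$, where $\tilde F$ is the standard dual function of the $(s-1)$-fold box norm (essentially supported in a translate of $[N]^D$). Applying Cauchy--Schwarz in $\bx$ against the indicator of $[N]^D$ gives
\begin{equation*}
(\delta N^D)^2 \leq N^D \sum_\bx |\tilde F(\bx)|^2,
\end{equation*}
and re-expanding $|\tilde F|^2$ via \eqref{E: box norms with Fejer kernels} converts the right-hand side into the $s$-fold box norm with the new $E_s$ slot, up to the constant $C$ absorbing $|[N]^D - E_s| \leq CN^D$.

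Properties (iv), (v), and (vi) all rely on pointwise control of $\mu_E$, together with pigeonhole. For (iv), I would use the inductive formula combined with the triangle inequality to bound $\norm{f}_{E_1,\ldots,E_s}^{2^s}$ by an expectation of $\norm{\Delta'_{(\bh_s,\bh_s')}f}_{E_1,\ldots,E_{s-1}}^{2^{s-1}}$ over the larger sets $E_i'\supseteq E_i$, with the ratios $|E_i'|/|E_i|$ appearing as we renormalize from averages over the smaller sets to averages over the larger. For (v), decompose $\bbeta_i\cdot[\pm H_i]$ into roughly $\kappa_i^{-1}$ translates of $\bbeta_i\cdot[\pm\kappa_iH_i]$ and pigeonhole to find a translate along which the box norm remains large; the loss factor $\min(1,\delta^{2^s}/\kappa)^{2s}$ reflects an iterated pigeonhole when $\kappa$ exceeds the density $\delta^{2^s}$. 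For (vi), decompose each $[\pm H_i]$ into $q_i$ arithmetic progressions of common difference $q_i$ and pigeonhole; the hypothesis $q_i \leq C\delta^{2^s}H_i$ ensures that the boundary contributions are absorbed into the implied constant.

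The main obstacle is property (iii), where the Cauchy--Schwarz step has to be engineered so that squaring the dual function produces a genuinely new $E_s$-slot rather than merely a doubled copy of the $E_1,\ldots,E_{s-1}$ slots. The hypothesis $|[N]^D - E_s|\leq CN^D$ is precisely what is needed to dominate the auxiliary averaging variable created by the Cauchy--Schwarz and to identify it with the Fej\'er kernel $\mu_{E_s}$ up to the promised constant.
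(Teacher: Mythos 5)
The paper itself does not prove this lemma (it is quoted from the companion paper \cite{KKL24a}), so I am judging your sketch on its own terms. Items (i) and (ii) are indeed formal, and your outline of (iv) is in the right spirit: keep one slot to preserve nonnegativity of the inner square, use the pointwise kernel bound $\mu_{E_i}\leq(|E_i'|/|E_i|)^2\mu_{E_i'}$ on the remaining slots, then permute and repeat; the squared ratios account for the exponent $1/2^{s-1}$. Your sketches of (v) and (vi) point in a workable direction, although the mechanism is not really "pigeonhole among translates'' (translates of a box give the same Fej\'er kernel): one bounds the average over the long box by the mean of averages over the short sub-boxes via Cauchy--Schwarz and absorbs the translates using the shift-invariance of $\sum_{\bx}$, which is also where the $\delta$-losses and the divisibility hypotheses enter.

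The genuine gap is in (iii), and your own closing paragraph does not repair it. In your argument you write $\norm{f}_{E_1,\ldots,E_{s-1}}^{2^{s-1}}=\sum_\bx f(\bx)\tilde F(\bx)$ and apply Cauchy--Schwarz against $1_{[N]^D}$, claiming that re-expanding $\sum_\bx|\tilde F(\bx)|^2$ yields the $s$-fold box norm with the new $E_s$ slot. It cannot: expanding $|\tilde F|^2$ only produces shifts built from differences of elements of $E_1,\ldots,E_{s-1}$ (already in the case $s=2$ one gets $\sum_{\bk,\bk'}\mu_{E_1}(\bk)\mu_{E_1}(\bk')\sum_\bx \overline{f(\bx+\bk)}f(\bx+\bk')$, a two-factor expression with no second slot), and since $E_s$ is an arbitrary finite nonempty subset of $\Z^D$ that appears nowhere in the expression, no amount of re-expansion can "identify'' the resulting kernel with $\mu_{E_s}$. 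The missing idea is to insert the $E_s$-averaging \emph{before} the Cauchy--Schwarz, using translation invariance of the sum over $\Z^D$: for each fixed $\uh=(\bh_1,\ldots,\bh_{s-1})$ one has $\sum_\bx\Delta_{\bh_1,\ldots,\bh_{s-1}}f(\bx)=\sum_\bx\E_{\bh_s\in E_s}\Delta_{\bh_1,\ldots,\bh_{s-1}}f(\bx+\bh_s)$, and the averaged function of $\bx$ is supported on $[N]^D-E_s$, a set of size at most $CN^D$. Applying Cauchy--Schwarz with respect to the measure $\mu_{E_1}\times\cdots\times\mu_{E_{s-1}}\times(\text{counting measure on }[N]^D-E_s)$ gives
\begin{align*}
(\delta N^D)^2\leq |[N]^D-E_s|\cdot\sum_{\uh}\mu_{E_1}(\bh_1)\cdots\mu_{E_{s-1}}(\bh_{s-1})\sum_\bx\E_{\bh_s,\bh_s'\in E_s}\Delta_{\bh_1,\ldots,\bh_{s-1}}f(\bx+\bh_s)\overline{\Delta_{\bh_1,\ldots,\bh_{s-1}}f(\bx+\bh_s')},
\end{align*}
and the right-hand side is exactly $|[N]^D-E_s|\cdot\norm{f}_{E_1,\ldots,E_s}^{2^s}\leq CN^D\norm{f}_{E_1,\ldots,E_s}^{2^s}$. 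It is the doubling of this inserted $\bh_s$-variable, not the squaring of the dual function, that creates the new slot, and it is the support bound $|[N]^D-E_s|\leq CN^D$ (Cauchy--Schwarz against $1_{[N]^D-E_s}$, not $1_{[N]^D}$) that produces the constant $C$ in $\delta^2N^D/C$. With this correction the rest of your plan goes through.
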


\subsection{Dual-difference interchange}

On several occasions, we apply a standard dual-difference interchange trick which has been a linchpin in all previous degree-lowering arguments. Its proof rests on the following observation.

\begin{lemma}\label{lem:dual-diff preliminary}
Let $D, t\in\N$, let $C>0$, 
let $E\subseteq\Z^D$ be finite and nonempty,
and for each $i\in\{1, \ldots, t\}$, let $F_i(\bx) = \E_{z_i\in Z_i} f_i(\bx, z_i)$ be a function from $\Z^D$ to $\C$ supported on a set of size at most $CN^D$, where $Z_i$ is a nonempty finite set and $f_i: \Z^D \times Z_i \to \mathbb{C}$ is $1$-bounded. 
If
$$\abs{\sum_{\bx, \bh} \mu_E(\bh) \Delta_{(\bh,0)}\prod_{i=1}^t\E_{z_i \in Z_i}f_i(\bx,z_i)} \geq \delta N^D,$$
then
$${\sum_{\bx,\bh} \mu_E(\bh)\prod_{i=1}^t\E_{z_i \in Z_i}\Delta_{(\bh,0)}f_i(\bx,z_i)} \geq (\delta^2/C) N^D.$$
\end{lemma}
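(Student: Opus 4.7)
The plan is to express both quantities as sums of squares, after which a single Cauchy--Schwarz inequality yields the desired comparison.

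I will set $H(\bx) := \prod_{i=1}^t F_i(\bx)$ and, for each $\bz = (z_1, \ldots, z_t)$, set $H_\bz(\bx) := \prod_{i=1}^t f_i(\bx, z_i)$; since the $z_i$'s are independent, $H(\bx) = \E_\bz H_\bz(\bx)$. Writing $T$ for the expression inside the absolute value in the hypothesis and $U$ for the quantity on the left-hand side of the conclusion, I expand $\mu_E(\bh) = \E_{\bh_1, \bh_2 \in E} \one_{\bh_1 - \bh_2 = \bh}$ and shift $\bx \mapsto \bx + \bh_2$ to rewrite
\[T = \sum_\bx \abs{\E_{\bh \in E} H(\bx + \bh)}^2;\]
in particular, $T \geq 0$, so the hypothesis simplifies to $T \geq \delta N^D$. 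The same maneuver performed with $H_\bz$ for each fixed $\bz$, followed by averaging over $\bz$ and swapping $\E_\bz$ with the product over $i$ (using independence of the $z_i$'s once more), produces $U = \sum_\bx \E_\bz \abs{\E_{\bh \in E} H_\bz(\bx+\bh)}^2$.

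The comparison $T \leq U$ then drops out of Cauchy--Schwarz in $\bz$: the identity $\E_\bh H(\bx+\bh) = \E_\bz \E_\bh H_\bz(\bx+\bh)$ gives $\abs{\E_\bz \E_\bh H_\bz(\bx+\bh)}^2 \leq \E_\bz \abs{\E_\bh H_\bz(\bx+\bh)}^2$ pointwise in $\bx$, and summing over $\bx$ yields $T \leq U$. Combined with $T \geq \delta N^D$, this gives $U \geq \delta N^D$; the stated conclusion $U \geq (\delta^2/C) N^D$ follows once one observes that the trivial estimate $|T| \leq \sum_\bx |H(\bx)| \leq CN^D$ forces $\delta \leq C$. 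No single step is a serious obstacle; the main work lies in the algebraic bookkeeping needed to recognize the sum-of-squares structure after expanding $\mu_E$, and the support hypothesis on the $F_i$ enters only in the final comparison $\delta \leq C$.
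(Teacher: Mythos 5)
Your proof is correct, and it is a genuine (if modest) refinement of the paper's argument rather than a verbatim reproduction of it. The paper squares the hypothesis and applies Cauchy--Schwarz over the variables $(\bx,\bz)$, bounding one of the two resulting factors by $CN^D$ via the $1$-boundedness and support hypothesis; this is what produces the loss $\delta \mapsto \delta^2/C$. You instead observe that, after expanding $\mu_E$ and shifting, \emph{both} quantities are sums of squares, $T=\sum_\bx\bigl|\E_{\bh\in E}H(\bx+\bh)\bigr|^2$ and $U=\sum_\bx\E_{\bz}\bigl|\E_{\bh\in E}H_{\bz}(\bx+\bh)\bigr|^2$, and a single pointwise application of Cauchy--Schwarz (convexity) in $\bz$ gives $T\le U$ directly. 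Equivalently, you are noticing that in the paper's Cauchy--Schwarz the first factor is exactly the hypothesis quantity $T$, so one gets $T^2\le T\cdot U$ rather than $T^2\le CN^D\cdot U$. This buys the stronger conclusion $U\ge \delta N^D$, with the support hypothesis entering only through the trivial observation $\delta\le C$ needed to recover the weaker stated bound $(\delta^2/C)N^D$; since the lemma is only invoked qualitatively inside Lemma \ref{lem:dual-diff}, either bound suffices for the paper's purposes. One shared caveat: both your argument and the paper's implicitly reorder sums and shift $\bx$, which requires the relevant sums to be finite (in the application the $f_i$ are finitely supported in $\bx$, so this is harmless), and this is not a defect specific to your write-up.
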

\begin{proof}
Expanding and applying the Cauchy--Schwarz inequality gives
\begin{align*}
\delta^2 N^{2D} &\leq 
 \abs{\sum_\bx \E_{\bh\in E} \prod_{i=1}^t\E_{z_i \in Z_i}f_i(\bx+\bh,z_i)\cdot  \E_{\bh'\in E} \prod_{i=1}^t\E_{z'_i \in Z_i}f_i(\bx+\bh',z'_i)}^2\\
 &\leq C N^D \cdot \sum_\bx \E_{z_1 \in Z_1, \ldots, z_t\in Z_t} \Bigabs{\E_{\bh \in E} \prod_{i=1}^t f_i(\bx+\bh,z_i)}^2\\
 &=CN^D \cdot \sum_{\bx, \bh} \mu_E(\bh) \prod_{i=1}^t \E_{z_i \in Z_i} \Delta_{(\bh,0)}f_i(\bx,z_i),
\end{align*}
as desired.
\end{proof}

Iterating Lemma \ref{lem:dual-diff preliminary}, we obtain the following.
\begin{lemma}[Dual--difference interchange]\label{lem:dual-diff}
    Let $C>0$, let $D, r,s\in\N$ with $1 \leq r \leq s$, 
    let $E_1, \ldots, E_s\subseteq [\pm CN^D]$ be nonempty sets, 
and let $F(\bx) = \E_{z\in Z} f(\bx, z)$ be a function from $\Z^D$ to $\C$,
where $Z$ is a nonempty finite set and $f: \Z^D \times Z \to \mathbb{C}$ is a $1$-bounded function supported on $[\pm CN^D]\times Z$. If
\begin{align*}
    \norm{F}_{E_1, \ldots, E_s}^{2^s}\geq \delta N^D,
\end{align*}
then 
\begin{align*}
    \sum_{\bh_1, \ldots, \bh_r}
    \mu_{E_1}(\bh_1)\cdots \mu_{E_r}(\bh_r)\norm{F^{\bh_1, \ldots \bh_r}}^{2^{s-r}}_{E_{r+1}, \ldots, E_s}\gg_{C, r} \delta^{2^r} N^D,
\end{align*}
where 
\begin{align*}
    F^{\bh_1, \ldots, \bh_r}(\bx) := \E_{z\in Z}\Delta_{(\bh_1,0), \ldots,\; (\bh_r,0)}f(\bx, z).
\end{align*}
\end{lemma}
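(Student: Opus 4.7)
My plan would be to iterate Lemma~\ref{lem:dual-diff preliminary} exactly $r$ times, pulling one discrete derivative at a time from outside the $\E_{z\in Z}$-expectation to inside. For each $0 \leq j \leq r$ I would introduce the interpolating quantity
\[Q_j := \sum_{\bh_1, \ldots, \bh_j} \mu_{E_1}(\bh_1) \cdots \mu_{E_j}(\bh_j)\, \norm{F^{\bh_1, \ldots, \bh_j}}_{E_{j+1}, \ldots, E_s}^{2^{s-j}},\]
so that $Q_0 = \norm{F}_{E_1, \ldots, E_s}^{2^s} \geq \delta N^D$ is the hypothesis while $Q_r$ is precisely the quantity to be bounded from below. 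The proof then reduces to establishing the one-step recursion $Q_{j+1} \geq Q_j^2/(C' N^D)$ for $0 \leq j < r$ with some $C' = O_C(1)$; iterating this $r$ times would deliver $Q_r \geq \delta^{2^r} N^D /(C')^{2^r - 1} \gg_{C, r} \delta^{2^r} N^D$.

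\textbf{The recursion.} To prove the one-step bound I would use \eqref{E: box norms with Fejer kernels} to expand the box norm in $Q_j$ and isolate the $\bh_{j+1}$-derivative via $\Delta_{\bh_{j+1}, \ldots, \bh_s} F^{\bh_1, \ldots, \bh_j}(\bx) = \Delta_{(\bh_{j+1}, 0)}\Psi(\bx)$, where
\[\Psi(\bx) := \prod_{\bepsilon \in \{0,1\}^{s-j-1}} \CC^{|\bepsilon|} F^{\bh_1, \ldots, \bh_j}\bigl(\bx + \bepsilon \cdot (\bh_{j+2}, \ldots, \bh_s)\bigr)\]
is, for each fixing of $(\bh_1, \ldots, \bh_j, \bh_{j+2}, \ldots, \bh_s)$, a product of $t := 2^{s-j-1}$ shifted (possibly conjugated) copies of $F^{\bh_1, \ldots, \bh_j}$, each itself an average $\E_{z\in Z}\tilde f(\bx, z)$ of a $1$-bounded function. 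A standard Fejer-kernel manipulation shows that the inner sum
\[A := \sum_{\bx, \bh_{j+1}} \mu_{E_{j+1}}(\bh_{j+1})\, \Delta_{(\bh_{j+1}, 0)}\Psi(\bx) = \sum_\bx \bigabs{\E_{\bh_{j+1} \in E_{j+1}} \Psi(\bx + \bh_{j+1})}^2 \geq 0\]
is automatically non-negative, so the absolute-value hypothesis of Lemma~\ref{lem:dual-diff preliminary} is satisfied for free. Since shifts preserve support sizes and $E_i \subseteq [\pm CN^D]$, the $\tilde f$'s have supports of size $O_C(N^D)$, and the single-step lemma applied in the $\bh_{j+1}$-variable (with $t = 2^{s-j-1}$ and $E = E_{j+1}$) yields the pointwise bound $A^{\text{in}} \geq A^2/(C'N^D)$, where $A^{\text{in}}$ denotes the same sum with $\Delta_{(\bh_{j+1}, 0)}$ pulled inside each $\E_z$-expectation. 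Integrating this bound against the probability measure $\prod_{i \neq j+1} \mu_{E_i}$ and applying Jensen's inequality $\E[A^2] \geq \E[A]^2$ then produces exactly $Q_{j+1} \geq Q_j^2/(C'N^D)$, completing the induction step.

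\textbf{Main obstacle.} The argument is essentially bookkeeping and I do not anticipate any conceptual obstacle. The one subtlety will be verifying that pulling $\Delta_{(\bh_{j+1}, 0)}$ inside the product of shifted $\E_z$'s appearing in $\Psi$ genuinely assembles into the combined derivative $\Delta_{(\bh_1, 0), \ldots, (\bh_{j+1}, 0)} f$ (with appropriate shifts) demanded by the definition of $F^{\bh_1, \ldots, \bh_{j+1}}$, so that integrating $A^{\text{in}}$ against the remaining Fejer kernels correctly recovers $Q_{j+1}$; this is a direct expansion of the multiplicative discrete derivative, but requires some care matching the shifts that arise from $\bepsilon \cdot (\bh_{j+2}, \ldots, \bh_s)$ inside $\Psi$ with the argument of $F^{\bh_1, \ldots, \bh_{j+1}}$ that appears in the definition of $Q_{j+1}$.
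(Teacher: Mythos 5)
Your proposal is correct and follows essentially the same route as the paper: the paper also iterates Lemma~\ref{lem:dual-diff preliminary} one differencing variable at a time, viewing the remaining iterated derivative of $F^{\bh_1,\ldots,\bh_j}$ as a product of shifted averages and absorbing the quadratic loss at each of the $r$ steps to arrive at $\delta^{2^r}$. Your write-up merely makes explicit the interpolating quantities $Q_j$ and the Jensen/averaging step over the remaining Fej\'er kernels, which the paper leaves implicit.
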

\begin{proof}
    Expanding the definition of the box norm, we get that
    \begin{align*}
        \sum_{\bh_2, \ldots, \bh_s}
        \mu_{E_2}(\bh_2)\cdots \mu_{E_s}(\bh_s)\cdot \sum_{\bx, \bh_1}\mu_{E_1}(\bh_1) \Delta_{(\bh_1,0), \ldots,\; (\bh_s, 0)}\E_{z\in Z}f(\bx, z)\geq \delta N^D.
    \end{align*}
    We apply Lemma \ref{lem:dual-diff preliminary} for a fixed tuple $(\bh_2, \ldots, \bh_s)\in E_2 \times E_s$ with the role of the product over $i\in \{1, \ldots, t\}$ played by the iterated multiplicative derivative $\Delta_{(\bh_2,0), \ldots,\; (\bh_s, 0)}$.  This lets us move the differencing over $\bh_1$ inside the expectation, which gives 
    \begin{align*}
        \sum_{\bh_1, \ldots, \bh_s}
        \mu_{E_1}(\bh_1)\cdots \mu_{E_s}(\bh_s)\cdot \sum_{\bx}\Delta_{(\bh_2,0), \ldots,\; (\bh_s, 0)}\E_{z\in Z}\Delta_{(\bh_1, 0)}f(\bx, z)\gg_C \delta^2 N^D.
    \end{align*}
    We continue in the same fashion to move the differencings over $\bh_2, \ldots, \bh_r$ inside the expectation; here we are implicitly using the fact that the functions $\Delta_{(\bh_1, 0), \ldots,\; (\bh_j, 0)}f$ for $\bh_i\in E_i$, $1\leq i\leq j\leq r$ are supported on $[\pm C N^D]\times Z$.
\end{proof}

\subsection{Fourier analysis}
We conclude the preliminaries section with a brief summary of Fourier analysis on $\Z$, which we shall use on occasion. For a finitely supported function $f:\Z\to\C$ and $\xi\in\R/\Z$, we define the \emph{Fourier transform} $\hat f: \R/\Z \to \C$ via
\begin{align*}
    \hat{f}(\xi):= \sum_x f(x)e(\xi x).
\end{align*}
The Fourier inversion formula gives $f(\xi) = \int_{\R/\Z} \hat{f}(\xi)e(-\xi x)\, d\xi$, and the Parseval Identity gives $\norm{f}_2 = \norm{\hat{f}}_2$.  Here the $L^p$-norm of a function $f:\Z\to\C$ is defined by $\norm{f}_p:= \Bigbrac{\sum_x |f(x)|^p}^{1/p}$ for $1\leq p<\infty$ and $\norm{f}_\infty :=\sup_{x\in\Z}f(x)$, the $L^p$-norm of a (Lebesgue-measurable) function $F:\R/\Z\to\C$ is defined by $\norm{F}_p:= \Bigbrac{\int_{\R/\Z}|F(\xi)|^p\, d\xi}^{1/p}$ for $1\leq p<\infty$ and $\norm{F}_\infty :=\sup\{r\geq 0:\; \mathrm{Leb}(|F|^{-1}(r,\infty))>0\}$.

\section{Degree-lowering}\label{S: degree lowering}
For the remainder of the paper, we fix the integer polynomial $P$ together with its degree $d$ and the auxiliary $W$-tricked polynomial $\widetilde{P}$ defined in \eqref{E: tilde P}.  Recall that for $1$-bounded functions $f_0,f_1,f_2: \Z^2 \to \mathbb{C}$ supported on $[N]^2$, we have the counting operator
$$\Lambda^W(f_0,f_1,f_2)=\sum_{x,y \in \mathbb{Z}} \E_{z \in [(N/W_d)^{1/d}]}f_0(x,y)f_1(x+\widetilde{P}(z),y)f_2(x,y+\widetilde{P}(z)),$$
where $W_d$ is the leading coefficient of $\widetilde P$.  Recall also that $\be_1:=(1,0)$ and $\be_2:=(0,1)$ denote the usual basis vectors in $\mathbb{Z}^2$. As described in the proof outline above, the main work of this paper is showing that $\Lambda^W$ is ``controlled'' by appropriate degree-$2$ box norms of its arguments.  
\begin{theorem}[{Inverse theorem for $\Lambda^W$}]\label{thm:degree lowering-lambda}
There is a constant $C=C_P>0$ such that the following holds.  Let $\delta\in(0, 1/10)$, and let $f_0, f_1, f_2: \Z^2 \to \mathbb{C}$ be $1$-bounded functions supported on $[N]^2$ for some integer $N  \geq C W^{C}\exp(\log(1/\delta)^{C})$.  If
$$|\Lambda^W(f_0,f_1,f_2)| \geq \delta N^2,$$
then 
there exists some positive integer power $V = W^{O_P(1)}$ such that
\begin{align*}
    &\|f_0\|_{\be_1V \cdot [N/V],\; \be_2V \cdot [N/V]}^4 \geq \exp(-\log(1/\delta)^{O_P(1)}) N^2,\\ 
    &\|f_1\|_{\be_1V \cdot [N/V],\; (\be_2-\be_1)V \cdot [N/V]}^4 \geq \exp(-\log(1/\delta)^{O_P(1)}) N^2,\\
    \quad \text{and} \quad &\|f_2\|_{\be_2V \cdot [N/V],\; (\be_2-\be_1)V \cdot [N/V]}^4 \geq \exp(-\log(1/\delta)^{O_P(1)}) N^2.
\end{align*}
\end{theorem}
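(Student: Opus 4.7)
The plan is to deduce Theorem~\ref{thm:degree lowering-lambda} by combining two inputs: the preliminary PET/concatenation bound from the companion paper \cite{KKL24a} (which is quoted as Proposition~\ref{P: preliminary box control for Lambda^W}) and an iterative degree-lowering procedure for box norms (Proposition~\ref{prop:degreeloweringdual}). By symmetry it suffices to deduce box-norm control for $f_0$; control of $f_1,f_2$ will follow from the same scheme after suitable shifts of variables that permute the roles of $\be_1,\be_2,\be_2-\be_1$. The PET/concatenation result will give some $t=O_P(1)$ and some $V=W^{O_P(1)}$ such that the hypothesis $|\Lambda^W(f_0,f_1,f_2)|\geq \delta N^2$ produces $\|f_0\|_{(\be_1 V\cdot[\pm N/V])^t,\,(\be_2 V\cdot[\pm N/V])^t}^{2^{2t}}\geq c(\delta)N^2$ with $c(\delta)=\exp(-\log(1/\delta)^{O_P(1)})$. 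The goal of the degree-lowering step is to reduce $(t,t)$ to $(1,1)$ by peeling off one direction at a time, losing only $\exp(\log(1/\delta)^{O_P(1)})$ in the bound at each iteration, so after $2t-2$ iterations one reaches the claimed $U^1\times U^1$-type bound.

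For the degree-lowering step itself, the reduction is from $(k,\ell)$-control with $k\ge 2$ to $(k-1,\ell)$-control (and symmetrically in $\ell$). By the dual--difference interchange (Lemma~\ref{lem:dual-diff}) one reduces to handling the base case $k=2, \ell=1$. Here I would apply the stashing trick: use Cauchy--Schwarz to eliminate $f_0$ from $\Lambda^W(f_0,f_1,f_2)$, bounding $\delta^2 N^4$ from above by $N^2\Lambda^W(\mathcal{D}_0(f_1,f_2),f_1,f_2)$ for the dual function $\mathcal{D}_0(f_1,f_2)(x,y):=\E_{z}f_1(x+\widetilde P(z),y)f_2(x,y+\widetilde P(z))$. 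The hypothesis of the inductive step of degree-lowering says that a box norm of $\mathcal{D}_0(f_1,f_2)$ (with two $\be_1$-directions and one $\be_2$-direction) is large. Invoking the $U^2\times U^1$-inverse theorem recorded in Appendix~\ref{A: inverse theorems} produces an obstruction of the form $g(x)e(a(y)x+b(y))$ correlating with $\mathcal{D}_0(f_1,f_2)$ on long arithmetic progressions compatible with the $V$-trick scale.

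The heart of the matter is controlling the phase $a(y)$: we need to show that $a(y)$ is approximately constant on a large subset of $y$'s, so that after absorbing one value $\alpha$ and shifting $x\mapsto x-\widetilde P(z)$ the resulting expression reduces to a two-point polynomial configuration count weighted by $e(\alpha x)$, at which point a S\'arközy-type inverse theorem (Appendix~\ref{A: Sarkozy}) converts the largeness into $U^2$-control of $f_1$ in the direction $\be_2-\be_1$, and a further stashing step transfers this back to degree-$2$ box control of $f_0$. The structural statement we need about $a$ is Proposition~\ref{prop:popular-a-value}, whose proof goes by reducing to the case where $f_2$ is itself a $U^1\times U^1$-obstruction $b_1(y)b_2(x+y)$, making a change of variables to free $f_1,b_2$ from $z$, pigeonholing in $x$, and then controlling the remaining expression $\sum_y|\E_z e(-a(y)\widetilde P(z))b_1(y+\widetilde P(z))|$ by a Gowers norm of $b_1$ via a mini-PET argument, inverting with the quasipolynomial Gowers inverse theorem to obtain a nilsequence, and finally running the iterative nilsequence reduction of Proposition~\ref{prop:Wtricknilsequenceprop} to step down to a linear phase.

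The main obstacle will be the nilsequence analysis underlying Proposition~\ref{prop:popular-a-value}: after reducing to a linear phase $e(\alpha_1 y+\alpha_0)$, one must use Weyl-type inequalities twice---once to reduce to the linear case, and once to force $a(y)-\alpha_1$ to be major-arc for many $y$---and then pigeonhole among small-height rationals to extract a dense set of $y$'s on which $a(y)$ is essentially constant. All of this must be executed with quantitative dependencies that are polylogarithmic in $1/\delta$, which is where the recent equidistribution bounds of \cite{Leng23b} are essential. A secondary technical burden is the bookkeeping forced by $W$: the scales produced by PET carry side lengths proportional to powers of $W$, so every stashing, Cauchy--Schwarz, and pigeonhole step must be compatible with splitting $x,y,z$ into progressions of common difference $W^{O_P(1)}$ and with iterated $W^{O_P(1)}$-tricks on $\widetilde P$. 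Tracking that the exponent on $W$ grows only by $O_P(1)$ per iteration of degree-lowering, so that after $O_P(1)$ iterations we still have $V=W^{O_P(1)}$ as claimed, is mostly routine but essential to the statement.
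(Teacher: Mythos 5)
Your proposal follows the paper's own route exactly: quote the PET/concatenation bound (Proposition~\ref{P: preliminary box control for Lambda^W}) to get $(t,t)$-box control at scale $W^{O_P(1)}$, iterate the degree-lowering step (Proposition~\ref{prop:degreeloweringdual}) $2t-2$ times to reach the $U^1\times U^1$-type bound for $f_0$, and obtain the $f_1,f_2$ statements by the change-of-variables symmetry exchanging the roles of $\be_1,\be_2,\be_2-\be_1$; your further sketch of the degree-lowering machinery (stashing, dual--difference interchange, $U^2\times U^1$ inverse theorem, the structure of $a(y)$ via nilsequences and S\'ark\"ozy-type estimates) likewise matches the paper's argument. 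No substantive differences or gaps.
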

Theorem \ref{thm:degree lowering-lambda} is a $W$-tricked version of Theorem \ref{thm:non-w-tricked-degree lowering-lambda} from the introduction. While we prove only the former, the latter follows from a simplified version of the proof of Theorem \ref{thm:degree lowering-lambda} in which we take $W=V=1$ in all places and replace $\widetilde{P}$ with $P$. We remark that while Theorem \ref{thm:degree lowering-lambda} has the same restriction on $P$ as Theorem \ref{maintheorem}, no such restriction is present in the statement of Theorem \ref{thm:non-w-tricked-degree lowering-lambda}. This is because the fraction-comparison argument Lemma \ref{lem:composing-polynomials-2}, whose proof in the $W$-tricked setting uses the special structure of ${P}$, holds trivially for all non-$W$-tricked polynomials.

 To prove Theorem \ref{thm:degree lowering-lambda}, we will use a degree-lowering argument.  The starting point for the argument is the following special case of \cite[Theorem 1.2]{KKL24a}. 
\begin{proposition}\label{P: preliminary box control for Lambda^W}
There exist a constant $C=C_d>0$ and a positive integer $t = O_d(1)$ such that for all $\delta\in(0, 1/10)$ and $1$-bounded functions $f_0,f_1,f_2: \Z^2 \to \mathbb{C}$ supported on $[N]^2$ {for some integer $N\geq C W_d\delta^{-C}$,} the bound
    $$|\Lambda^W(f_0,f_1,f_2)| \geq \delta N^{2}$$
implies that
\begin{align*}
&\|f_0\|_{(\be_1 W_d\cdot [\pm N/W_d])^t,\; (\be_2 W_d\cdot [\pm N/W_d])^t}^{2^{2t}} \gg_d \delta^{O_d(1)} N^2\\
&\|f_1\|_{(\be_1 W_d\cdot [\pm N/W_d])^t,\; ((\be_2-\be_1) W_d\cdot [\pm N/W_d])^t}^{2^{2t}} \gg_d \delta^{O_d(1)} N^2\\
\quad \text{and} \quad &\|f_2\|_{(\be_2 W_d\cdot [\pm N/W_d])^t,\; ((\be_2-\be_1) W_d\cdot [\pm N/W_d])^t}^{2^{2t}} \gg_d \delta^{O_d(1)} N^2.
\end{align*}
\end{proposition}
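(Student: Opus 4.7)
The plan is to deduce this statement directly from \cite[Theorem 1.2]{KKL24a}, which establishes, for a broad class of multidimensional polynomial progressions, that the associated counting operator is controlled by suitable box norms of each of its arguments. The underlying mechanism there is the standard combined PET induction plus quantitative concatenation. In the present setting, the configuration is the three-point polynomial corner with affinely independent coefficient vectors $\bzero,\be_1,\be_2\in\Z^2$ and a single polynomial $\widetilde P$ of degree $d$ and positive leading coefficient $W_d$, so the hypotheses of the general theorem are met.

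To obtain the bound for $f_0$, I would let the PET scheme iteratively eliminate $f_1$ and $f_2$ via $d$-fold applications of Cauchy--Schwarz and van der Corput (Lemma~\ref{L: vdC}) in the $z$-variable. Each Cauchy--Schwarz step that eliminates an $f_i$ introduces a difference direction $\be_i-\bzero=\be_i$, and after $d$ such steps the polynomial is linearized into its leading term, whose range scales like $W_d\cdot[\pm N/W_d]$. The raw PET output is a large norm of $f_0$ along many copies of $\be_1 W_d\cdot[\pm N/W_d]$ and $\be_2 W_d\cdot[\pm N/W_d]$, with the multiplicities on the two sides determined by the PET weight-vector tally. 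The quantitative concatenation result of \cite{KKL24a} then consolidates these into a clean box norm of the stated form, with some uniform $t=O_d(1)$, at the cost of only a polynomial loss in $\delta$. The bounds for $f_1$ and $f_2$ follow by running the same argument after the change of variables $x\mapsto x-\widetilde P(z)$ and $y\mapsto y-\widetilde P(z)$, respectively; these substitutions rearrange $\Lambda^W$ so that $f_1$ (resp.\ $f_2$) sits at the distinguished vertex, and the difference directions become $-\be_1$ and $\be_2-\be_1$ (resp.\ $-\be_2$ and $\be_1-\be_2$), which coincide with the claimed directions since the intervals $[\pm N/W_d]$ are symmetric about the origin and box norms are invariant under negating difference vectors.

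The main non-conceptual obstacle is bookkeeping: verifying that the PET complexity $t$ can be chosen uniformly across the three functions, that every intermediate box produced during PET carries the correct $W_d$-factor, and that the quantitative concatenation step outputs box norms with exactly the direction-scale pairs asserted. I do not expect any genuine analytic difficulty beyond what has already been packaged into \cite[Theorem 1.2]{KKL24a}; the present proposition is a direct specialization of that result to the polynomial-corner pattern, and the only task is to match up notation and read off the three symmetric conclusions.
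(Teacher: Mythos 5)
Your proposal matches the paper's treatment: Proposition~\ref{P: preliminary box control for Lambda^W} is not proved from scratch in this paper but is stated as a direct special case of \cite[Theorem 1.2]{KKL24a} (the PET plus quantitative concatenation result), exactly as you propose, with the $f_1,f_2$ cases handled by the symmetric changes of variables you describe. Your additional sketch of the PET/concatenation mechanism is consistent with how that companion result is obtained, so no gap here.
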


Our goal in the next few sections is to reduce $t$ to $2$ in the box norms appearing in Proposition \ref{P: preliminary box control for Lambda^W}. The following proposition captures the main iterative step of this process.
\begin{proposition}[Degree-lowering]\label{prop:degreeloweringdual}
Let $k,\ell \in \mathbb{N}$ with $\max(k,\ell) \geq 2$. For every $C_1>0$ there exists $C_2 = C_2(C_1, P, k, \ell)>0$ such that the following holds. Let $V \leq W^{C_1}$ be a nonnegative integer power of $W$, and suppose that for all $\delta\in(0, 1/10)$, all positive integers $N \geq C_2 W^{C_2}\exp(\log(1/\delta)^{C_2})$, and all $1$-bounded functions $f_0, f_1, f_2: \Z^2 \to \mathbb{C}$ supported on $[N]^2$, the lower bound
$$|\Lambda^W(f_0, f_1, f_2)| \ge \delta N^2$$
implies that
$$\|f_0\|_{(\be_1V \cdot [\pm N/V])^k,\; (\be_2 V \cdot [\pm N/V])^\ell}^{2^{k+\ell}} \geq \exp(-\log(1/\delta)^{C_1}) N^2.$$
Then 
there exists some positive integer power $\tilde{V} = V^{O_{C_1,P,k,\ell}(1)}$ such that
$$\|f_0\|_{(\be_1\tilde{V} \cdot [N/\tilde{V}])^{k - 1},\; (\be_2 \tilde{V} \cdot [\pm N/\tilde{V}])^\ell}^{2^{k+\ell-1}} \ge \exp(-\log(1/\delta)^{O_{C_1,P,k,\ell}(1)})N^2$$
if $k \ge 2$ and
$$\|f_0\|_{(\be_1\tilde{V} \cdot [N/\tilde{V}])^{k},\; (\be_2 \tilde{V} \cdot [\pm N/\tilde{V}])^{\ell - 1}}^{2^{k+\ell-1}} \ge \exp(-\log(1/\delta)^{O_{C_1,P,k,\ell}(1)})N^2$$
if $\ell \ge 2$.
\end{proposition}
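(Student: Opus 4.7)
The plan is to follow the strategy laid out in Section~\ref{S: proof strategy}: use the dual--difference interchange (Lemma~\ref{lem:dual-diff}) to reduce to the base case $(k,\ell)=(2,1)$, handle that case via an inverse theorem for the $U^2\times U^1$-box norm together with the phase-structure result Proposition~\ref{prop:popular-a-value}, and transfer the resulting lower-degree control back to $f_0$. By the permutation invariance of box norms (Lemma~\ref{L: properties of box norms}\eqref{i: permutation invariance}), it suffices to treat the case $k\ge 2$; the $\ell\ge 2$ case follows by swapping the roles of $\be_1$ and $\be_2$ (and of $f_1,f_2$).

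Starting from $|\Lambda^W(f_0,f_1,f_2)|\ge \delta N^2$, I first ``stash'' $f_0$: Cauchy--Schwarz in $(x,y)$ gives $|\Lambda^W(\mathcal{D}_0(f_1,f_2),f_1,f_2)|\ge \delta^2 N^2$, where $\mathcal{D}_0(f_1,f_2)(x,y) := \E_z f_1(x+\widetilde P(z),y)f_2(x,y+\widetilde P(z))$ is the dual function for the first argument of $\Lambda^W$. The hypothesized $(k,\ell)$-implication applied to the triple $(\mathcal{D}_0(f_1,f_2),f_1,f_2)$ then furnishes a large $(k,\ell)$-box norm of $\mathcal{D}_0(f_1,f_2)$. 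Since $\mathcal{D}_0(f_1,f_2)$ is a $z$-average of products of shifts of $f_1,f_2$, I invoke Lemma~\ref{lem:dual-diff} with $r=k+\ell-3$ to absorb all but three of the outer differencings into the $z$-expectation, and pigeonhole in the absorbed shifts. This reduces matters to the base case: for some tuple $\bh$ of shifts, the three-direction box norm bound $\|G_{\bh}\|_{(\be_1 V\cdot[\pm N/V])^2,\;\be_2 V\cdot[\pm N/V]}^{8}\ge \exp(-\log(1/\delta)^{O(1)})N^2$ holds, where $G_{\bh}$ is a $z$-average of products of shifts of $f_1,f_2$.

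Next I apply the $U^2\times U^1$-inverse theorem (Lemma~\ref{L: U^2 x U^1 inverse}) to $G_{\bh}$, obtaining, on an appropriate coset of $(V\Z)^2$, a correlation $G_{\bh}(x,y)\approx g(x)\,e(a(y)x+b(y))$. Unfolding $G_{\bh}$ and shifting $x\mapsto x-\widetilde P(z)$ inside the $z$-expectation recognizes this correlation as the statement that the auxiliary operator $\Lambda'(a,\widetilde f_1,\widetilde f_2):=\sum_{x,y}\E_z e(a(y)x)\widetilde f_1(x+\widetilde P'(z),y)\widetilde f_2(x,y+\widetilde P'(z))$ from the proof sketch is large in absolute value, where $\widetilde f_1,\widetilde f_2$ and $\widetilde P'$ are suitable $V$-tricked versions of the original data. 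Invoking Proposition~\ref{prop:iterate-popular-a-value}, I deduce that $a(y)$ is approximately a single constant $\alpha\in\R/\Z$ for a $\exp(-\log(1/\delta)^{O(1)})$-fraction of $y$. On this popular set I substitute $\alpha$ for $a(y)$ and shift $x\mapsto x-\widetilde P'(z)$, converting the $z$-expectation into the counting operator for the two-point polynomial configuration $(x,y),(x-\widetilde P'(z),y+\widetilde P'(z))$, to which the S\'ark\"ozy-style inverse theorem of Appendix~\ref{A: Sarkozy} applies and yields a degree-$2$ box-norm bound on a suitable derivative of $f_1$ in the directions $(\be_2-\be_1)\widetilde V$ and $\be_2\widetilde V$.

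The final step is to transfer this degree-$2$ control from $f_1$ back to $f_0$. Applying the $U^2\times U^1$-inverse theorem to the derivative of $f_1$ exposes a product-of-one-variable-functions structure $b_1(x+y)b_2(y)$ with which the derivative correlates; substituting this product into the original bound $|\Lambda^W(f_0,f_1,f_2)|\ge \delta N^2$ and executing another round of Cauchy--Schwarz (to eliminate $f_2$ and the $z$-variable) produces the claimed $\|f_0\|_{(\be_1\widetilde V\cdot[N/\widetilde V])^{k-1},\;(\be_2\widetilde V\cdot[\pm N/\widetilde V])^\ell}^{2^{k+\ell-1}}\ge \exp(-\log(1/\delta)^{O(1)})N^2$. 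Throughout the argument I must carefully propagate the growing $W$-trick modulus: every $V$-trick, every inverse-theorem step, and every change of variables multiplies the modulus by a bounded power of $W$, but the cumulative modulus $\widetilde V$ remains of the form $V^{O(1)}=W^{O(1)}$ as required. The decisive step is the structural analysis of the phase $a$ in Proposition~\ref{prop:popular-a-value}, which rests on the nilsequence machinery of Proposition~\ref{prop:Wtricknilsequenceprop} and recent quantitative equidistribution results; the other steps are, in principle, standard stashing, inverse-theorem, and circle-method arguments, but the bookkeeping of nested $W$-tricks adds considerable technical weight.
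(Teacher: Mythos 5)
Your outline tracks the paper's own argument closely up through the auxiliary control on $f_1$: stashing $f_0$ into the dual $\CD_0(f_1,f_2)$, the dual--difference interchange with $r=k+\ell-3$, the $V$-tricked $U^2\times U^1$-inverse theorem producing the phase $e(a(y)x)$, the structural analysis of $a$ via Propositions~\ref{prop:popular-a-value} and~\ref{prop:iterate-popular-a-value}, and a S\'ark\"ozy-type estimate to get a lower-degree bound involving $f_1$. (Two secondary inaccuracies: Proposition~\ref{prop:iterate-popular-a-value} does not yield a single constant $\alpha$ but phases constant on each residue class mod $V'$; and the degree-$2$ control on the differenced $f_1$ comes out in the directions $\be_2-\be_1$ and $\be_1$, not $\be_2-\be_1$ and $\be_2$ --- with your directions the final bookkeeping would land on $\be_1^{k-2},\be_2^{\ell+1}$ rather than the claimed $\be_1^{k-1},\be_2^{\ell}$.)

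The genuine gap is in your last step. You propose to apply an inverse theorem to the differenced $f_1$, obtain a correlating product $b_1(x+y)b_2(y)$, and then ``substitute this product into the original bound $|\Lambda^W(f_0,f_1,f_2)|\ge\delta N^2$.'' Correlation does not license substitution: the fact that (a derivative of) $f_1$ correlates with such a product says nothing about $\Lambda^W$ evaluated with $f_1$ replaced by that product, and a largeness statement about a norm of $f_1$ alone carries no information about $f_0$. What actually transfers the control in the paper is a \emph{second stashing, in the $f_1$-slot}: the auxiliary bound \eqref{E: auxiliary bound} is established for \emph{arbitrary} $1$-bounded $f_1$ supported on $[N]^2$, so by Cauchy--Schwarz it may be applied with the dual function $\CD_1(f_0,f_2)(x,y)=\E_{z}f_0(x-\widetilde P(z),y)f_2(x-\widetilde P(z),y+\widetilde P(z))$ in place of $f_1$. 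One then runs the dual--difference interchange again on this dual (absorbing the $\be_1^{k-1}$ and $\be_2^{\ell-1}$ directions), applies the $U^1$-inverse theorem (Lemma~\ref{L: U^1 inverse}) in the remaining $\be_2-\be_1$ direction to produce the factor $b(x+y)$, undoes the shift, and invokes Corollary~\ref{C: Sarkozy} a second time with $\bv=\be_2$ to obtain the $(k-1,\ell)$ box norm of $f_0$. Without this second stashing there is no expression containing $f_0$ left on which your ``another round of Cauchy--Schwarz'' could act, so the step as you describe it does not close the argument.
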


Before proving Proposition \ref{prop:degreeloweringdual}, we show how it implies Theorem \ref{thm:degree lowering-lambda}.

\begin{proof}[Proof of Theorem~\ref{thm:degree lowering-lambda} assuming Propositions \ref{P: preliminary box control for Lambda^W} and \ref{prop:degreeloweringdual}.]
By Proposition \ref{P: preliminary box control for Lambda^W}, there exists a positive integer $t=O_d(1)$ such that 
$$\|f_0\|_{(\be_1W_d \cdot [\pm N/W_d])^t,\; (\be_2 W_d \cdot [\pm N/W_d])^t}^{2^{2t}} \gg_{d}\delta^{O_d(1)} N^2.$$
Using Lemma \ref{L: properties of box norms}\eqref{i: enlarging}, we can replace $W_d$ by $W^{d-1}$ at the cost of a factor of $O_P(1)$ in the lower bound.
We now apply Proposition~\ref{prop:degreeloweringdual} $2t-2$ times.  Since $N$ is chosen sufficiently large, we end up with a positive integer power $V = W^{O_d(1)}$ such that
$$\|f_0\|_{\be_1V \cdot [\pm N/V],\; \be_2 V \cdot [\pm N/V]}^{4} \geq \exp(-\log(1/\delta)^{O_P(1)}) N^2.$$
This gives the claimed control in terms of $f_0$.

The control in terms of $f_1,f_2$ follows by symmetry. To see this, notice that the affine linear map $(x, y) \mapsto (2N-1-(x + y), y)$ sends $[N]^2$ into $[2N]^2$ and preserves the set of corners with each given side length, with the caveat that the two points on the horizontal side of each corner have been ``exchanged''.  More precisely, for each $i$, define the function $\widetilde f_i: [2N]^2 \to \C$ by $\widetilde f_i(x,y):=f_i(2N-1-(x + y), y)$.  Then the change of variables $x \mapsto 2N-1-x-y-\widetilde P(z)$ gives the identity
$$\Lambda^W(f_0, f_1, f_2) = \Lambda^W(\widetilde{f_1}, \widetilde{f_0}, \widetilde{f_2}).$$
The previous paragraph (with $N,\delta$ replaced by $2N, \delta/4$) tells us that
$$\|\widetilde f_1\|_{\be_1V \cdot [\pm 2N/V],\; \be_2 V \cdot [\pm2 N/V]}^{4} \geq \exp(-\log(1/\delta)^{O_P(1)}) N^2.$$
Using Lemma \ref{L: properties of box norms}\eqref{i: trimming 2} to replace $[\pm2 N/V]$ by $[\pm N/V]$ and unwinding the definition of $\widetilde f_1$, we conclude that $$\|f_1\|_{\be_1V \cdot [N/V],\; (\be_2-\be_1)V \cdot [N/V]}^4 \geq \exp(-\log(1/\delta)^{O_P(1)}) N^2.\\$$
The control for $f_2$ follows from an analogous change of variables.
\end{proof}

\subsection{Proof of Proposition \ref{prop:degreeloweringdual}: preliminary maneuvers}

We now begin the proof of Proposition \ref{prop:degreeloweringdual}. All implicit constants are allowed to depend on $P,k,l$. 
The initial assumption
\begin{align*}
    |\Lambda^W(f_0, f_1, f_2)| \ge \delta N^2
\end{align*}
implies that
\begin{align}\label{E: initial f_0 control}
    \|f_0\|_{(\be_1V \cdot [\pm N/V])^k,\; (\be_2 V \cdot [\pm N/V])^\ell}^{2^{k+\ell}} \geq \exp(-\log(1/\delta)^{O(1)}) N^2
\end{align}
for all $1$-bounded functions $f_0, f_1, f_2:\Z^2\to\C$ supported on $[N]^2$. Our proof strategy is as follows. Applying several standard maneuvers such as stashing, the dual-difference interchange and an inverse theorem for the $U^2\times U^1$ box norm, we reduce the problem to the study of the auxiliary operator
$$\Lambda'(a,\widetilde{f_1},\widetilde{f_2}):=\sum_{x,y} \E_{z \in [(N/W_d)^{1/d}V^{-1}]} e(a(y)x) \widetilde{f_1}(x+\widetilde P_{[r, V]}(z),y) \widetilde{f_2}(x,y+\widetilde P_{[r, V]}(z)).$$
The phase $e(a(y)x)$ is the ``main term'' that arises from the $U^2\times U^1$ inverse theorem.
Our main technical advancement is a structural result on $\Lambda'$: We show that if $\Lambda'(a,\widetilde{f_1},\widetilde{f_2})$ is large, then $a$ is ``approximately constant'' in the sense that there are many arithmetic progressions of difference $V'=V^{O(1)}$ on each of which $a$ is constant a positive proportion of the time. Morally speaking, this means that $\Lambda'(a,\widetilde{f_1},\widetilde{f_2})$ can be large only when $e(a(y)x)$ is actually a $U^1\times U^1$ obstruction. We use this information to obtain the auxiliary norm control
\begin{align*}
    \|f_1\|^4_{(\be_1 VV' \cdot [\pm N/\Tilde{V}])^{k - 1},\; (\be_2 - \be_1)\Tilde{V} \cdot [\pm N/\Tilde{V}],\; (\be_2 \Tilde{V} \cdot [\pm N/\Tilde{V}])^{\ell-1}} \geq \exp(-\log(1/\delta)^{O(1)}) N^2,
\end{align*}
where $\Tilde{V}=VV'$.  We then transfer this norm-control from $f_1$ to $f_0$.

Let us fill in some more details of this strategy. In doing so, we will also highlight several techniques used in various guises throughout the paper.

 We define the \emph{dual function}
\begin{align*}
    \CD_0(f_1,f_2)(x,y) := \E_{z \in [(N/W_d)^{1/d}]} f_1(x+\widetilde{P}(z),y)f_2(x,y+\widetilde{P}(z))
\end{align*}
with respect to the first argument of $\Lambda^W$.  The Cauchy--Schwarz inequality gives 
\begin{align*}
    |\Lambda^W(f_0,f_1,f_2)|^2 &= \abs{\sum_{x,y}f_0(x,y)\CD_0(f_1,f_2)(x,y)}^2\\
    &\leq N^2\cdot\sum_{x,y}|\CD_0(f_1,f_2)(x,y)|^2\\
    &= N^2\cdot \Lambda^W(\CD_0(f_1, f_2),\overline{f_1},\overline{f_2}),
\end{align*}
for all $1$-bounded functions $f_0$ supported on $[N]^2$.  It follows that if $\Lambda^W(f_0,f_1,f_2) \geq \delta N^2$, then we also have
\begin{align*}
    \Lambda^W(\CD_0(f_1, f_2),\overline{f_1},\overline{f_2}) \geq \delta^2 N^2.
\end{align*}
This trick whereby we replace an arbitrary function $f_0$ by the structured function $\CD_0(f_1, f_2)$ is often called \textit{stashing}, and it will be the bread and butter of our arguments.
In particular, once we know that $\Lambda^W$ is controlled by some norm $\norm{\cdot}$ of its first argument, we see that the largeness of $\Lambda^W(f_0,f_1,f_2)$ implies not only the largeness of $\norm{f_0}$ but also the largeness of $\norm{\CD_0(f_1,f_2)}$.
In this particular case, stashing and \eqref{E: initial f_0 control} together give
\begin{align}\label{E: D_0 control}
        \|\CD_0(f_1, f_2)\|_{(\be_1V \cdot [\pm N/V])^k,\; (\be_2 V \cdot [\pm N/V])^\ell}^{2^{k+\ell}} \geq \exp(-\log(1/\delta)^{O(1)}) N^2.
\end{align}

The next step is to apply the dual--difference interchange to \eqref{E: D_0 control} in order to arrive at an average of $U^2\times U^1$-box norms to which we can apply an inverse theorem. To begin, applying Lemma \ref{lem:dual-diff} to \eqref{E: D_0 control} gives
$$\sum_{\substack{\uh_1 \in \mathbb{Z}^{k-2},\\ \uh_2 \in \mathbb{Z}^{\ell-1}}} \mu_{N/V}(\uh_1,\uh_2)\|\mathcal{D}_0^{\uh_1,\uh_2}(f_1,f_2)\|_{(\be_1V\cdot[\pm N/V])^2,\; \be_2V\cdot[\pm N/V]}^8 \geq \exp(-\log(1/\delta)^{O(1)})N^2,$$
where
$$\mathcal{D}_0^{\uh_1,\uh_2}(f_1,f_2)(x,y):=\E_{z \in [(N/W_d)^{1/d}]} \Delta_{\uh_1 V\be_1,\uh_2 V\be_2}(f_1(x+\widetilde P(z),y)f_2(x,y+\widetilde P(z)))$$
for $\uh_1 \in \mathbb{Z}^{k-2}$ and $\uh_2 \in \mathbb{Z}^{\ell-1}$. (We recall here that $\uh_1 V\be_1 = (h_{11}V\be_1, \ldots, h_{1(k-2)}V\be_1)$, and similarly for $\uh_2 V\be_2$.)
Since $\mu_{N/V}(\uh_1,\uh_2) \ll (N/V)^{-k-\ell+3}$ pointwise, the popularity principle provides at least $\exp(-\log(1/\delta)^{O(1)}) (N/V)^{k+\ell-3}$ choices of $(\uh_1,\uh_2)$ such that
\begin{equation} \label{eq:dual-before-change-of-var}
\|\mathcal{D}_0^{\uh_1,\uh_2}(f_1,f_2)\|_{(\be_1V\cdot[\pm N/V])^2,\; \be_2V\cdot[\pm N/V]}^8 \geq \exp(-\log(1/\delta)^{O(1)})N^2.
\end{equation}
For the next several steps, fix such a choice of $\uh_1,\uh_2$.  (The reader can now see why all of the main ideas appear in the case $k=2$, $\ell=1$ described in the proof sketch.)

The box norm in \eqref{eq:dual-before-change-of-var} is not yet ripe for an application of an inverse theorem for the $U^2\times U^1$ box norms because of the multiplicative factor $V$ in the boxes $\be_1V\cdot[\pm N/V]$ and $\be_2V\cdot[\pm N/V]$. To bring the box norm into a form amenable to an application of the inverse theorem, we split the ranges of $x,y$ into arithmetic progressions of common difference $V$ and observe the identity 
\begin{align}\label{E: identity on dilated box norms}
    \norm{g}_{(\be_1V\cdot[\pm N/V])^2,\; \be_2V\cdot[\pm N/V]}^8 = \sum_{j_1, j_2\in[V]} \norm{g_{(j_1, j_2, V)}}_{(\be_1\cdot[\pm N/V])^2,\; \be_2\cdot[\pm N/V]}^8.
\end{align}
for any compactly supported $g:\Z\to\C$; we recall the $V$-tricked function $g_{(j_1, j_2, V)}(x,y)= g(Vx + j_1, Vy + j_2)$ from Definition~\ref{d:Vtrickfunction}. Notice that after this change of variables, the boxes in the box norm have side length $[N/V]$ instead of $V \cdot [N/V]$.

We will of course apply the identity \eqref{E: identity on dilated box norms} to the functions $\mathcal{D}^{\uh_1,\uh_2}_0(f_1,f_2)$.
This maneuver requires us to adapt the range of $z$ accordingly: Recalling the definition of the $V$-trick for polynomials, we replace $\widetilde{P}(z)$ by $\widetilde{P}_{[r, V]}(z) = \frac{\widetilde P(Vz + r)-\widetilde P(r)}{V}$. 
On changing variables 
$z \mapsto Vz+r$, we can express the dual function as 
\begin{align*}
        &\mathcal{D}_0^{\uh_1,\uh_2}(f_1,f_2)(Vx+j_1,Vy+j_2)\\
    &=\E_{z \in [(N/W_d)^{1/d}]} \Delta_{V\uh_1 \be_1,V\uh_2 \be_2}(f_1(Vx+j_1 + \widetilde P(z),Vy+j_2)f_2(Vx+j_1,Vy+j_2+\widetilde P(z)))\\
 &=\E_{r \in [V]} \E_{z \in [(N/W_d)^{1/d}V^{-1}]} \Delta_{V\uh_1 \be_1,V\uh_2 \be_2} (f_1(V(x+\widetilde P_{[r, V]}(z))+j_1+\widetilde P(r),Vy+j_2) \\
  &\qquad\qquad\qquad\qquad\qquad\qquad\qquad\qquad\times f_2(Vx+j_1,V(y+\widetilde P_{[r, V]}(z))+j_2+\widetilde P(r)))\\
  &=\E_{r \in [V]} \E_{z \in [(N/W_d)^{1/d}V^{-1}]} \Delta_{\uh_1 \be_1,\uh_2 \be_2} (f_{1,j_1+\widetilde P(r),j_2}(x+\widetilde P_{[r, V]}(z),y) f_{2,j_1, j_2+\widetilde P(r)}(x,y+\widetilde{P}_{[r, V]}(z))),
\end{align*}
where we also set $$f_{l, j_1, j_2}(x,y) := (f_l)_{(j_1, j_2, V)}(x,y) = f_l(Vx + j_1, Vy + j_2)$$ for brevity.  We observe that these $V$-tricked functions are supported on an interval of length $N/V$. {In the formula above (and in similar expressions in the future), we assume without loss of generality that $(N/W_d)^{1/d}V^{-1}\in\N$ (as we may since $N$ is large compared to $V$), so that splitting the range of $z$ into arithmetic progressions incurs no error.} 

Invoking \eqref{E: identity on dilated box norms} with $g = \mathcal{D}_0^{\uh_1,\uh_2}(f_1,f_2)$ and applying the triangle inequality to \eqref{eq:dual-before-change-of-var} gives
\begin{multline*}
\E_{j_1,j_2,r \in [V]}\left|\!\left|\E_{z \in [(N/W_d)^{1/d}V^{-1}]} \Delta_{\uh_1 \be_1,\uh_2 \be_2} f_{1,j_1+\widetilde P(r),j_2}(\cdot+\widetilde P_{[r, V]}(z),\cdot)\right.\right.\\
\left.\left. \Delta_{\uh_1 \be_1,\uh_2 \be_2} f_{2,j_1, j_2+\widetilde P(r)}(\cdot, \cdot+\widetilde P_{[r, V]}(z))\vphantom{\E_{z \in [(N/W_d)^{1/d}V^{-1}]}}\right|\!\right|_{(\be_1\cdot[\pm N/V])^2,\; \be_2\cdot[\pm N/V]}^8\\
\geq \exp(-\log(1/\delta)^{O(1)}) (N/V)^2.
\end{multline*}
  Since each summand contributes at most $(N/V)^2$, the popularity principle provides at least $\exp(-\log(1/\delta)^{O(1)}) V^3$ choices of $(j_1,j_2,r)\in[V]^3$ such that
  \begin{multline*}
      \left|\!\left|\E_{z \in [(N/W_d)^{1/d}V^{-1}]} \Delta_{\uh_1 \be_1,\uh_2 \be_2} f_{1,j_1+\widetilde P(r),j_2}(\cdot+\widetilde P_{[r, V]}(z),\cdot)\right.\right.\\
      \left.\left. \Delta_{\uh_1 \be_1,\uh_2 \be_2} f_{2,j_1, j_2+\widetilde P(r)}(\cdot, \cdot+\widetilde P_{[r, V]}(z))\vphantom{\E_{z \in [(N/W_d)^{1/d}V^{-1}]}}\right|\!\right|_{(\be_1\cdot[\pm N/V])^2,\; \be_2\cdot[\pm N/V]}^8\\
      \geq \exp(-\log(1/\delta)^{O(1)})(N/V)^2.
  \end{multline*}

For the next several steps, fix such a choice of $j_1,j_2,r$.
For each ``good'' choice of $j_1,j_2,r$ from the previous step, the $U^2 \times U^1$-inverse theorem (Lemma \ref{L: U^2 x U^1 inverse}) tells us that there exist a $1$-bounded function $g: \mathbb{Z} \to \mathbb{C}$ supported on $[N]$ and phase functions $a,b: \Z \to \mathbb{R}/\mathbb{Z}$ (all depending on $\uh_1,\uh_2, j_1,j_2,r$) such that
\begin{multline*}
\sum_{x,y}\E_{z \in [(N/W_d)^{1/d}V^{-1}]} \Delta_{\uh_1 \be_1,\uh_2 \be_2} f_{1,j_1+\widetilde P(r),j_2}(x+\widetilde P_{[r, V]}(z),y)\\
\Delta_{\uh_1 \be_1,\uh_2 \be_2} f_{2,j_1, j_2+\widetilde P(r)}(x,y+\widetilde P_{[r, V]}(z)) 
g(x)e(a(y)x+b(y))\geq \exp(-\log(1/\delta)^{O(1)}) (N/V)^2.
\end{multline*}
Hiding the dependence on $\uh_1, \uh_2, j_1,j_2,r$, we define
\begin{align*}
    &\widetilde f_1(x,y):=e(b(y))\Delta_{\uh_1 \be_1,\uh_2 \be_2} f_{1,j_1+\widetilde P(r),j_2}(x,y)\\
    \quad \text{and} \quad &\widetilde f_2(x,y):=g(x)\Delta_{\uh_1 \be_1,\uh_2 \be_2} f_{2,j_1, j_2+\widetilde P(r)}(x,y);
\end{align*}
then the previous inequality can be rewritten as
\begin{multline*}
    \sum_{x,y}\E_{z \in [(N/W_d)^{1/d}V^{-1}]} e(a(y)x) \widetilde f_1(x+\widetilde P_{[r, V]}(z),y) \widetilde f_2(x,y+\widetilde P_{[r, V]}(z))\\
    \geq \exp(-\log(1/\delta)^{O(1)}) (N/V)^2.
\end{multline*}

\subsection{Examining the structure of $a$}

The main task is showing that the phase function $a$ possesses some extra structure; specifically, we will show that $a(y)$ is approximately constant much of the time.  For future reference, define the counting operator
$$\Lambda'(a,\widetilde{f_1},\widetilde{f_2}):=\sum_{x,y} \E_{z \in [(N/W_d)^{1/d}V^{-1}]} e(a(y)x) \widetilde{f_1}(x+\widetilde P_{[r, V]}(z),y) \widetilde{f_2}(x,y+\widetilde P_{[r, V]}(z)).$$
The key structural result for the function $a$ is the following proposition.

\begin{proposition}\label{prop:popular-a-value}
 For every $C_1>0$ there exists $C_2=C_2(C_1,P)>0$ such that the following holds. Let $\delta\in(0, 1/10)$, let $V\leq W^{C_1}$ be a nonnegative integer power of $W$, and let $r\in[V]$.  Let $\widetilde{f_1},\widetilde{f_2}: \Z^2 \to \mathbb{C}$ be $1$-bounded functions supported on $[N/V]^2$, for some integer $N\geq C_2 V^{C_2}\exp(\log(1/\delta)^{C_2})$ , and let $a: \Z \to \mathbb{R}/\mathbb{Z}$ be a phase function.  If
$$|\Lambda'(a,\widetilde{f_1}, \widetilde{f_2})| \geq \delta (N/V)^{2},$$
then there exist a positive integer power $V' = V^{O_{C_1,P}(1)}$ 
and phases $\alpha_0, \ldots, \alpha_{V'-1}\in\R/\Z$ 
such that for at least $\exp(-\log(1/\delta)^{O_{C_1,P}(1)})V'$ values of $j \in[V']$, the bound
$$\|V'a_{(j, V')}(y) -\alpha_j\|_{\mathbb{R}/\mathbb{Z}} \leq \exp(\log(1/\delta)^{O_{C_1,P}(1)}) VV'/N$$
holds for at least $\exp(-\log(1/\delta)^{O_{C_1,P}(1)}) N/(VV')$  choices of $y \in [N/(VV')]$.
\end{proposition}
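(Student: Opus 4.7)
The plan is to follow the outline of Section~\ref{S: proof strategy}. The first step is to reduce the study of $\Lambda'$ to the case where $\widetilde{f_2}$ is a $U^1\times U^1$-obstruction of the form $b_1(y)b_2(x+y)$ for some $1$-bounded $b_1,b_2:\Z\to\C$. This reduction is achieved via a Cauchy--Schwarz stashing of $\widetilde{f_1}$ out of $\Lambda'$, followed by an inverse theorem for an appropriate $U^1\times U^1$-box norm of $\widetilde{f_2}$, possibly after passing to a further sub-$V$-trick. Substituting this factorization into $\Lambda'$ and shifting $x\mapsto x-\widetilde P_{[r,V]}(z)$ makes both $\widetilde{f_1}(x,y)$ and $b_2(x+y)$ independent of $z$. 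Applying the triangle inequality in $x,y$ and pigeonholing in $x$ then produces
$$\sum_{y}\abs{\E_z e(-a(y)\widetilde P_{[r,V]}(z))\,b_1(y+\widetilde P_{[r,V]}(z))}\;\geq\; \exp(-\log(1/\delta)^{O(1)})\,N/V,$$
which isolates the relevant information about the phase $a$.

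Next, I run a short PET induction in $z$ (a bounded number of Cauchy--Schwarz and van~der~Corput steps, depending only on $d$, in the same spirit as in \cite{KKL24a}) on the inner average. For a positive-density set of $y$'s this yields Gowers-norm control $\|b_1\|_{U^s}$ of some degree $s=O_P(1)$, and the $U^s$-inverse theorem then supplies a bounded-complexity nilsequence $\varphi_y$ of step at most $s-1$ against which the inner average correlates, so that
$$\sum_{y}\abs{\E_z e(-a(y)\widetilde P_{[r,V]}(z))\,\varphi_y(y+\widetilde P_{[r,V]}(z))}$$
is still large. Proposition~\ref{prop:Wtricknilsequenceprop} now applies iteratively: each application replaces $\varphi_y$ by a nilsequence of strictly smaller step, at the cost of further $V^{O(1)}$-tricks and losses of the form $\exp(\log(1/\delta)^{O(1)})$. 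After $O_P(1)$ iterations I am reduced to a step-$1$ nilsequence; Weyl's inequality lets me further assume the surviving phase is linear, $\varphi_y(y)=e(\alpha_1 y + \alpha_0)$ with $\alpha_0,\alpha_1\in\R/\Z$ independent of $y$.

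With a linear phase in hand, the inner average collapses (modulo a $y$-dependent factor absorbable into the outer sum) to $\abs{\E_z e((\alpha_1-a(y))\widetilde P_{[r,V]}(z))}$. A final invocation of Weyl's inequality, applied to the polynomial $\widetilde P_{[r,V]}$ in $z$, shows that for each $y$ making a nontrivial contribution to the outer sum, the leading coefficient of $(\alpha_1-a(y))\widetilde P_{[r,V]}(z)$ in $z$ lies within $\exp(\log(1/\delta)^{O(1)})(V/N)^{d}$ of a rational with denominator at most $\exp(\log(1/\delta)^{O(1)})$. The fraction-comparison argument of Appendix~\ref{A: fraction comparison} then transfers this major-arc information from the leading coefficient back to $\alpha_1-a(y)$ itself. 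Pigeonholing simultaneously on the denominator $q$, the numerator $p$, and the residue class of $y$ modulo $q$ produces a single arithmetic progression on which $a(y)\approx \alpha_1 + p/q$; choosing a positive integer power $V'$ of $W$ with $V'=V^{O_{C_1,P}(1)}$ such that the common difference of this progression divides $V'/V$---available thanks to the $V^{O(1)}$-trick slack and, if necessary, an auxiliary $W$-trick---yields the stated approximate constancy of $V' a(y)\pmod 1$ on a positive-density subset of a single residue class modulo $V'$.

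The principal obstacle is the nilsequence step-reduction, which rests on Proposition~\ref{prop:Wtricknilsequenceprop} and through it on the quantitative equidistribution theory for polynomial orbits on nilmanifolds \cite{Leng23b}; this is the only step that goes beyond standard $L^2$-based arithmetic-combinatorial techniques. A persistent secondary difficulty is tracking the cumulative $V$-tricks throughout the argument: each Cauchy--Schwarz, pigeonhole, and inverse-theorem invocation introduces a further positive integer power of $V$ and modifies the polynomial $\widetilde P_{[r,V]}$, and one must verify that the final $V'$ remains of the promised form $V^{O_{C_1,P}(1)}$ and that the quantitative losses aggregate to at most $\exp(\log(1/\delta)^{O_{C_1,P}(1)})$.
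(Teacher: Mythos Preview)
Your outline follows the paper's strategy and correctly identifies Proposition~\ref{prop:Wtricknilsequenceprop} as the decisive input.  There is, however, one genuine gap in the passage from Gowers-norm control of $b_1$ to a nilsequence correlation of the required shape.

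You write that PET yields $\|b_1\|_{U^s}$ large and that the inverse theorem then supplies a nilsequence $\varphi_y$ ``against which the inner average correlates''.  If $\varphi_y$ really depends on $y$, Proposition~\ref{prop:Wtricknilsequenceprop} does not apply: its hypothesis requires a \emph{single} polynomial sequence $g$ and function $F$, and the factorisation $g=\varepsilon g'\gamma$ underlying the step-reduction is meaningful only for a fixed $g$.  Even if $\varphi_y=\varphi$ is a typo for a $y$-independent object, you have not explained why
\[
\sum_{y}\Bigl|\E_{z} e\bigl(-a(y)\widetilde P_{[r,V]}(z)\bigr)\,\varphi\bigl(y+\widetilde P_{[r,V]}(z)\bigr)\Bigr|
\]
is large.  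The inverse theorem only gives $\langle b_1,\varphi\rangle$ large; it does not let you substitute $\varphi$ for $b_1$ inside an arbitrary averaging operator.

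The paper closes this gap with an extra stashing step (Step~2 of the proof of Proposition~\ref{P: local correlation with nilsequences}): \emph{before} running PET, one replaces $b_1$ by its dual
\[
\widetilde b_1(y)=\E_{z} e\bigl(-a(y-\widetilde P_{[r,V]}(z))\,\widetilde P_{[r,V]}(z)\bigr)\,b_3\bigl(y-\widetilde P_{[r,V]}(z)\bigr)\,1_{[N/V]}(y).
\]
PET then gives $\|\widetilde b_1\|_{U^{k+1}}$ large, the inverse theorem produces a single $\varphi$ with $\sum_y \widetilde b_1(y)\varphi(y)$ large, and expanding the definition of $\widetilde b_1$ and shifting $y\mapsto y+\widetilde P_{[r,V]}(z)$ places $\varphi$ exactly where $b_1$ used to be.  Without this manoeuvre (or an equivalent arithmetic regularity decomposition of $b_1$), the argument does not go through.

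Two minor points.  First, in the initial reduction the paper stashes $\widetilde f_2$ rather than $\widetilde f_1$, after Proposition~\ref{prop:lambda_1} has established box-norm control on $\widetilde f_2$; your description inverts this but arrives at the same reduced expression.  Second, the paper does not force the Weyl-denominator $q$ to divide $V'$; instead it pigeonholes over residues $\kappa\in[q]$ to select a single phase $\alpha_j\in\{(\tilde\alpha_j+\kappa)/q\}$.  Your alternative of absorbing $q$ into a further power of $W$ would also work, but requires the (unstated) assumption that all prime factors of $q$ are below $w$, which need not hold for an arbitrary $q\ll\delta^{-O(1)}$.
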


Unfortunately, this set of $y$'s where the phase can be well-approximated by constants may contribute little to the value of $\Lambda'(a,\widetilde{f_1}, \widetilde{f_2})$. To obtain a consequence better suited to our purposes, we bootstrap Proposition \ref{prop:popular-a-value} and produce some such set of $y$'s that does make a significant contribution to $\Lambda'(a, \widetilde{f_1}, \widetilde{f_2})$. The exact statement is as follows.

\begin{proposition}\label{prop:iterate-popular-a-value}
For every $C_1>0$ there exists $C_2=C_2(C_1,P)>0$ such that the following holds. Let $\delta\in(0, 1/10)$, let $V\leq W^{C_1}$ be a nonnegative integer power of $W$, and let $r\in[V]$. Let $\widetilde{f_1},\widetilde{f_2}: \Z^2 \to \mathbb{C}$ be $1$-bounded functions supported on $[N/V]^2$, for some integer $N\geq C_2 V^{C_2}\exp(\log(1/\delta)^{C_2})$, and let $a: \Z \to \mathbb{R}/\mathbb{Z}$ be any function.  If
$$|\Lambda'(a,\widetilde{f_1}, \widetilde{f_2})| \geq \delta (N/V)^{2},$$
then 
there exist a positive integer power $V' = V^{O_{C_1, P}(1)}$, an integer $$\exp(-\log(1/\delta)^{O_{C_1, P}(1)}))N\leq N'\leq N,$$ and phases $\alpha_{0}, \ldots, \alpha_{V'-1}\in \R/\Z$
such that
\begin{multline*}
    \sum_{j\in[V']}\sum_{i\in [N/N']}\sum_y\left|\sum_{x\in [N'/(VV')]} \E_{z \in [(N/W_d)^{1/d}V^{-1}]} e(\alpha_{y\!\!\!\!\pmod{V'}} x)\right.\\ 
     \widetilde{f_1}(V'x + j + \frac{N'}{V}i+\widetilde P_{[r, V]}(z),y)
     \left. \widetilde{f_2}(V'x + j + \frac{N'}{V}i,y+\widetilde P_{[r, V]}(z))\vphantom{\sum_x} \right|\\
     \geq \exp(-\log(1/\delta)^{O_{C_1, P}(1)})(N/V)^2.
\end{multline*}
\end{proposition}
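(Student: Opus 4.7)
The plan is to iterate Proposition~\ref{prop:popular-a-value} until a set of ``good'' $y$-values collectively contributes a non-negligible portion of the sum, and then run a windowing argument in $x$ (with very short windows) to replace $e(a(y)x)$ by $e(\alpha_{y \bmod V'} x)$. The preliminary step is a sign normalization: setting $\Lambda'_y := \sum_x \E_z e(a(y)x)\widetilde{f_1}(x + \widetilde{P}_{[r,V]}(z), y)\widetilde{f_2}(x, y + \widetilde{P}_{[r,V]}(z))$ and $\sigma(y) := \overline{\Lambda'_y}/|\Lambda'_y|$ (with $0/0 := 0$), I replace $\widetilde{f_1}$ by the $1$-bounded function $\sigma(y)\widetilde{f_1}(x,y)$. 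This turns the hypothesis into the nonnegative lower bound $\sum_y|\Lambda'_y|\geq\delta(N/V)^2$.

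Next is the main iteration. I maintain an ``exhausted'' set $E \subseteq [N/V]$, initially empty, and at step $k$ work with $F_1^{(k)} := \mathbf{1}_{y \notin E}\sigma\widetilde{f_1}$, for which $\Lambda'(a, F_1^{(k)}, \widetilde{f_2}) = \sum_{y \notin E}|\Lambda'_y|$. As long as this is at least $(\delta/2)(N/V)^2$, I apply Proposition~\ref{prop:popular-a-value} to produce $V^{(k)} = V^{O_{C_1,P}(1)}$, phases $\alpha^{(k)}_j$, and a good set $G^{(k)}$ of size at least $\eta(N/V)$, with $\eta := \exp(-\log(1/\delta)^{O_{C_1,P}(1)})$. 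Setting the threshold $\tau := \delta\eta/4$: if $\sum_{y \in G^{(k)}\setminus E}|\Lambda'_y| \geq \tau(N/V)^2$ (\emph{Case A}), I exit the loop; otherwise (\emph{Case B}) I set $E \leftarrow E \cup G^{(k)}$ and continue. An inspection of the proof of Proposition~\ref{prop:popular-a-value} shows that the good set it outputs from $F_1^{(k)}$ lies essentially in the support $E^c$ of $F_1^{(k)}$ (because the nilsequence analysis identifying the structure on $a$ is driven by a dual function that vanishes on $E$), so each Case B step contributes at least $\Omega(\eta(N/V))$ fresh $y$'s to $E$. Hence at most $K = \lceil 1/\eta\rceil$ Case B iterations occur; if all $K$ were Case B, the total removed mass would be less than $K\tau(N/V)^2 \leq \delta(N/V)^2/4$, contradicting that $E$ would by then contain all of $[N/V]$. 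Therefore Case A must trigger at some step $k$; set $V' := V^{(k)}$, $\alpha_j := \alpha^{(k)}_j$, and $G := G^{(k)} \setminus E$, giving $\sum_{y \in G}|\Lambda'_y| \geq \tau(N/V)^2$.

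Now I do the windowing. Restricting to those $y \in G$ with $|\Lambda'_y| \geq \tau(N/V)/4$ preserves $\geq \tau(N/V)^2/2$ of the sum, and a dyadic pigeonhole then extracts a scale $\mu \geq \tau(N/V)/4$ and a sublevel $G_\mu \subseteq G$ with $|\Lambda'_y| \in [\mu, 2\mu)$ for $y \in G_\mu$ and $|G_\mu|\mu \gg \tau(N/V)^2/\log(1/\delta)$. I choose $N' := \exp(-\log(1/\delta)^{C}) N$ for $C = C(C_1, P)$ sufficiently large, and partition $[N/V]$ into the blocks $V'\cdot[N'/(VV')] + j + (N'/V)i$ indexed by $(j,i) \in [V'] \times [N/N']$. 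For $y \in G_\mu$, the bound $\|V'a(y) - \alpha_{y \bmod V'}\|_{\R/\Z} \leq \exp(\log(1/\delta)^{O(1)})VV'/N$ together with $|x| \leq N'/(VV')$ gives $|e(V'a(y)x) - e(\alpha_{y \bmod V'}x)| \leq 2\pi\exp(\log(1/\delta)^{O(1)})N'/N$, which is at most $\mu V/(8\pi^2 N)$ by the choice of $C$. Summing the per-$x$ error over all $V' \cdot (N/N')$ blocks gives a total windowing error of at most $\mu/10$ per $y \in G_\mu$, so two triangle inequalities yield
\[
\sum_{j,i}\abs{\sum_{x \in [N'/(VV')]}\E_z e(\alpha_{y \bmod V'}x)\widetilde{f_1}(V'x + j + (N'/V)i + \widetilde{P}_{[r,V]}(z), y)\widetilde{f_2}(V'x + j + (N'/V)i, y + \widetilde{P}_{[r,V]}(z))} \geq 9\mu/10
\]
for each $y \in G_\mu$. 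Summing over $y \in G_\mu$ then produces the target lower bound $\gg \tau(N/V)^2/\log(1/\delta) = \exp(-\log(1/\delta)^{O(1)})(N/V)^2$.

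The main obstacle I expect is the internal claim invoked in the iteration termination argument: that the good set $G^{(k)}$ of Proposition~\ref{prop:popular-a-value} applied to $F_1^{(k)}$ must lie mostly in the support $E^c$ of $F_1^{(k)}$. This is the essence of the bootstrapping, and if it is not immediate from the statement of Proposition~\ref{prop:popular-a-value} then verifying it will require either a slight strengthening of that proposition or an explicit reading of its proof; without it the iteration could in principle stall by repeatedly producing the same $G$.
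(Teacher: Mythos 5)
Your overall architecture (sign-normalize so the hypothesis becomes $\sum_y|\Lambda'_y|\geq\delta(N/V)^2$, iterate Proposition~\ref{prop:popular-a-value}, exit as soon as the current good set carries mass $\gg\exp(-\log(1/\delta)^{O(1)})(N/V)^2$, then window in $x$ at scale $N'\asymp\exp(-\log(1/\delta)^{C})N$ to trade $e(a(y)x)$ for $e(\alpha_{y\bmod V'}x)$) is essentially the paper's strategy, and your windowing step is carried out correctly. The gap is exactly the one you flag, and it is genuine: your termination argument needs each Case~B iteration to add $\Omega(\eta N/V)$ \emph{fresh} elements to $E$, i.e.\ the good set produced by Proposition~\ref{prop:popular-a-value} applied to $(a,F_1^{(k)},\widetilde{f_2})$ must be essentially contained in $E^c$. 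Nothing in the statement of Proposition~\ref{prop:popular-a-value} gives this: its conclusion is a structural statement about the phase $a$ alone, and $a$ is unchanged on $E$, so the (existential) output could be the same set at every step and the loop can stall. Your proposed justification by reading the proof — that the good set is ``driven by a dual function that vanishes on $E$'' — does not hold up either: the dual $\mathcal{D}'_2(a,\widetilde{f_1})$ has its $y$-argument shifted by $-\widetilde P_{[r,V]}(z)$, and by the time the good $y$'s are extracted (from the nilsequence estimate \eqref{E: lower bound for j in V} via the popularity principle) they are tied to where $a$ is structured, not to the $y$-support of $\widetilde{f_1}$; there is no localization of the good set to $E^c$ available from that argument.

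The paper closes this gap differently, and the missing idea is worth internalizing: since the $y$-argument of the first function in $\Lambda'$ is \emph{unshifted}, $\Lambda'(a'',1_{[N/V]\setminus E}\widetilde{f_1},\widetilde{f_2})=\Lambda'(a,1_{[N/V]\setminus E}\widetilde{f_1},\widetilde{f_2})$ for any $a''$ agreeing with $a$ off $E$ (equation \eqref{E: equivalence of lambdas'}). One is therefore free to \emph{redefine} $a$ on the exhausted set $E$ before reapplying Proposition~\ref{prop:popular-a-value}, and the paper chooses $a''$ to be uniformly distributed on the intersection of $E$ with each residue class mod $V'$. Then the counting bound \eqref{eq:pseudorandom} shows that any set on which $V'a''$ lies in an interval of length $\exp(\log(1/\delta)^{O(1)})\widetilde V/N$ can meet $E$ in at most $O(V'\exp(\log(1/\delta)^{O(1)}))$ points, so the new good set is automatically almost disjoint from $E$ and each iteration makes the required fresh progress. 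With that modification (and keeping your bookkeeping of the discarded mass, which is otherwise fine), your argument goes through; without it, or some equivalent strengthening of Proposition~\ref{prop:popular-a-value}, the proof as written is incomplete.
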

Proposition \ref{prop:iterate-popular-a-value} essentially says that we can replace $a(y)$ by a phase function $y\mapsto \alpha_{y\pmod{V'}}$ that is constant on arithmetic progressions of difference $V'$. The summation over $i\in[N/N']$ is a technical annoyance, necessitated by the proof, that will be easily removed in our later application of Proposition \ref{prop:iterate-popular-a-value}.
\begin{proof}
Fix $C_1>0$. Proposition~\ref{prop:popular-a-value} applied with $\delta/2$ in place of $\delta$ tells us that the inequality 
 \begin{align}\label{E: Lambda' with delta/2}
    |\Lambda'({a},\widetilde{f_1}, \widetilde{f_2})| \geq (\delta/2) (N/V)^{2}    
 \end{align}
(for $a, \widetilde{f_1}, \widetilde{f_2}$ satisfying the conditions of Proposition~\ref{prop:popular-a-value}) implies that there exist $C:=C_2 = C_2(C_1, P)>0$ and $V' = V^{O_P(1)}$ such that, on setting $\tilde{V} = VV'$, we obtain a set $\CV\subset[V']$ of size $|\CV|\geq \exp(-\log(1/\delta)^{C})V'$, and for each $j'\in\CV$ we get a phase $\alpha_{j'} \in \mathbb{R}/\mathbb{Z}$ and a set $S_{j'} \subseteq [N/\tilde{V}]$ of size $|S_{j'}| \geq \exp(-\log(1/\delta)^{C}) N/\tilde{V}$ satisfying
\begin{align}\label{E: approximation of a}
    \|V' a_{({j'}, V')}(y')-\alpha_{j'}\|_{\R/\Z} \leq \exp(\log(1/\delta)^{C}) \tilde{V}/N
\end{align}
for all $y' \in S_{j'}$.  (Note that if this conclusion holds for $V'$, then it also holds with $V'$ replaced by a larger power of $V$, so we may assume that $V'$ is independent of the particular choice of $a, \widetilde f_1,\widetilde f_2$.)  For any constant $C'>C$, we can set $N'\sim (C')^{-1}\exp(-2\log(1/\delta)^{2C'})N$ and split the $x$-variable into progressions as
$$V'x + j + \frac{N'}{V}i\quad \textrm{for}\quad x\in [N'/\tilde{V}],\;\; j\in[V'],\;\; i\in [N/N'].$$ 
Then for each $j'\in\CV$ and $y'\in S_{j'}$, we can approximate
\begin{multline}\label{E: exponential sum approximation}
    \abs{e\brac{a_{({j'}, V')}(y')\brac{V'x + j+\frac{N'}{V}i}}-e\brac{\alpha_{j'} x + a_{({j'}, V')}(y')\brac{j+\frac{N'}{V}i}}}\\
    \leq \exp(\log(1/\delta)^{C})\frac{N'}{N}\leq  \exp(-\log(1/\delta)^{2C'})/2.
\end{multline}
Define the set
\begin{align*}
    \tilde{S} := \{y\in[N/V]:\; y = V'y'+j',\; j'\in\CV,\; y' \in S_{j'}\},
\end{align*}
which has size $|\tilde{S}| \geq \exp(-2\log(1/\delta)^{C}) N/V$.   Now the approximation \eqref{E: exponential sum approximation} gives
\begin{align*}
    &\left|\sum_{x} \E_{z \in [(N/W_d)^{1/d}V^{-1}]} e(a(y)x)\widetilde{f_1}(x+\widetilde P_{[r, V]}(z),y)\widetilde{f_2}(x,y+\widetilde P_{[r, V]}(z))\right.\\
     &\qquad- \sum_{j\in[V']}\sum_{i\in [N/N']}\sum_{x\in [N'/\tilde{V}]} \E_{z \in [(N/W_d)^{1/d}V^{-1}]} e\brac{\alpha_{y\!\!\!\!\pmod{V'}} x + a(y)\brac{j+\frac{N'}{V}i}}\\ 
     &\qquad\qquad\qquad\left. \widetilde{f_1}(V'x + j + \frac{N'}{V}i+\widetilde P_{[r, V]}(z),y)\widetilde{f_2}(V'x + j + \frac{N'}{V}i,y+\widetilde P_{[r, V]}(z))\vphantom{\sum_x} \right|\\
     &\qquad\qquad\qquad\qquad\qquad\leq (\exp(-\log(1/\delta)^{2C'})/2) N/V
\end{align*}
for each $y\in\tilde{S}$, 
and hence
\begin{align*}
    \sum_{y\in \tilde{S}}&\left|\sum_{x} \E_{z \in [(N/W_d)^{1/d}V^{-1}]} e(a(y)x)\widetilde{f_1}(x+\widetilde P_{[r, V]}(z),y)\widetilde{f_2}(x,y+\widetilde P_{[r, V]}(z))\right.\\
     &\qquad- \sum_{j\in[V']}\sum_{i\in [N/N']}\sum_{x\in [N'/\tilde{V}]} \E_{z \in [(N/W_d)^{1/d}V^{-1}]} e\brac{\alpha_{y\!\!\!\!\pmod{V'}} x + a(y)\brac{j+\frac{N'}{V}i}}\\ 
     &\qquad\qquad\qquad\left. \widetilde{f_1}(V'x + j + \frac{N'}{V}i+\widetilde P_{[r, V]}(z),y)\widetilde{f_2}(V'x + j + \frac{N'}{V}i,y+\widetilde P_{[r, V]}(z))\vphantom{\sum_x} \right|\\
     &\qquad\qquad\qquad\qquad\qquad\leq (\exp(-\log(1/\delta)^{2C'})/2)(N/V)^2.
\end{align*}
The upshot is that if $\Lambda'(a, \widetilde f_1, \widetilde f_2)$ is large, then we can use the approximation \eqref{E: approximation of a} provided by Proposition \ref{prop:popular-a-value} to approximate
\begin{align*}
    \Lambda'_{\tilde{S}}(a, \widetilde f_1, \widetilde f_2):=\sum_{y\in \tilde{S}}\abs{\sum_{x} \E_{z \in [(N/W_d)^{1/d}V^{-1}]} e(a(y)x)\widetilde{f_1}(x+\widetilde P_{[r, V]}(z),y)\widetilde{f_2}(x,y+\widetilde P_{[r, V]}(z))}
\end{align*}
by a sum closer to what we are seeking. Indeed, if 
\begin{align}\label{E: large Lambda'_S}
    \Lambda'_{\tilde{S}}(a, \widetilde f_1, \widetilde f_2)\geq \exp(-\log(1/\delta)^{2C'})(N/V)^2
\end{align}
for $C'>C$ as above, then the triangle inequality and the long inequality above imply that
\begin{multline*}
    \sum_{y}\sum_{j\in[V']}\sum_{i\in [N/N']}\left|\sum_{x\in [N'/\tilde{V}]} \E_{z \in [(N/W_d)^{1/d}V^{-1}]} e(\alpha_{y\!\!\!\!\pmod{V'}} x)\widetilde{f_1}(V'x + j + \frac{N'}{V}i+\widetilde P_{[r, V]}(z),y)\right.\\
    \left.\widetilde{f_2}(V'x + j + \frac{N'}{V}i,y+\widetilde P_{[r, V]}(z))\vphantom{\sum_x} \right|\geq (\exp(-\log(1/\delta)^{2C'})/2)(N/V)^2,
\end{multline*}
which establishes the conclusion of the proposition.

The lower bound \eqref{E: large Lambda'_S} does not always hold, however.  We will show that in this case, we can ``throw out'' the $y$-values in $\widetilde S$ and repeat the argument from the previous paragraph to obtain a new set $\widetilde S$ which is mostly disjoint from the set that we threw out; we can then iterate this procedure, which is guaranteed to terminate after a bounded number of steps because of the disjointness of the sets $\widetilde S$.  The details are as follows.

We first note that if $S \subseteq [N/\widetilde V]$ and $a':S \to \mathbb{R}/\mathbb{Z}$ is a function such that $V'a'(y)$ assumes each integer multiple of $1/|S|$ exactly once as $y$ ranges over $S$, then, for any interval $I \subseteq \mathbb{R}/\mathbb{Z}$, we have the bound
\begin{equation}\label{eq:pseudorandom}
|\{y \in S: V'a'(y) \in I\}| \leq 1+|S| \cdot |I| \leq 1+(N/\widetilde V)|I|.
\end{equation}
In this case, we will say that $a'$ is \emph{uniformly distributed} on  $S$.

We also note that if $S \subseteq [N/V]$ and $a'':[N/V] \to \mathbb{R}/\mathbb{Z}$ is any function that agrees with $a$ on $[N/V] \setminus S$, then
\begin{align}\label{E: equivalence of lambdas'}
    \Lambda'(a'', 1_{[N/V]\setminus S} \widetilde f_1, \widetilde f_2)=\Lambda'(a, 1_{[N/V]\setminus S} \widetilde f_1, \widetilde f_2)
\end{align}
(here we write $(1_{[N/V]\setminus S} \widetilde f_1)(x,y)$ for $1_{[N/V]\setminus S}(y) \widetilde f_1(x,y)$).  Indeed, there is no contribution from the values of $y$ where $a(y) \neq a''(y)$ because for such $y$ we have $(1_{[N/V]\setminus S} \widetilde f_1)(x+\widetilde P_{[r,V]}(z),y)=0$.

To begin our iterative procedure, set $\widetilde S_0=\emptyset$ and $a_0=a$.  For $\ell=1,2, \ldots$, we will construct phases $\alpha_{\ell,j'}$ (for $j' \in [V']$), a set $\widetilde S_\ell$, and a phase function $a_\ell$ that agrees with $a$ on the complement of $\widetilde S_{\leq \ell}:=\cup_{\ell'\leq \ell} \widetilde S_{\ell'}$ and is uniformly distributed on the intersection of $\widetilde S_{\leq \ell}$ with each arithmetic progression of common difference $V'$.  The $\ell$-th stage of the iteration begins with the inequality
$$\Lambda'(a, 1_{[N/V]\setminus \tilde{S}_{\leq \ell-1}}\widetilde f_1, \widetilde f_2)\geq (\delta/2)(N/V)^2$$
(which certainly holds for $\ell=1$).  Then, by \eqref{E: equivalence of lambdas'}, we also have
$$\Lambda'(a_{\ell-1}, 1_{[N/V]\setminus \tilde{S}_{\leq \ell-1}}\widetilde f_1, \widetilde f_2)\geq (\delta/2)(N/V)^2.$$
Arguing as in the first paragraph of the proof, with $a_{\ell-1}$ instead of $a$ and $1_{[N/V]\setminus \tilde{S}_{\leq \ell-1}}\widetilde f_1$ instead of $\widetilde f_1$, we obtain phases $\alpha_{\ell,j'}$ (for $j' \in [V']$) and a set $\widetilde S_\ell$.  If \eqref{E: large Lambda'_S} holds, then we obtain the conclusion of the proposition and our iterative procedure terminates.  If not, then, by induction on $\ell$ and the triangle inequality, we have
\begin{equation}\label{eq:iterative-loss-lambda'}
\Lambda'(a, 1_{[N/V]\setminus \widetilde S_{\leq \ell}}\widetilde f_1, \widetilde f_2) \geq (\delta - \ell\exp(-\log(1/\delta)^{C'}))(N/V)^2 > (\delta/2) (N/V)^2
\end{equation}
as long as $\ell<(\delta/2)\exp(\log(1/\delta)^{C'})$.  Now let $a_\ell$ be a function that agrees with $a$ on $[N/V] \setminus \widetilde S_{\leq \ell}$ and that is uniformly distributed on the intersection of $\widetilde S_{\leq \ell}$ with each arithmetic progression of common difference $V'$.  This completes the $\ell$-th step of the iteration and puts us in a position to perform the $(\ell+1)$-th step.

It remains to analyze the sizes of the sets $\widetilde S_{\leq \ell}$ and argue that the procedure terminates after a bounded number of steps.  By \eqref{eq:pseudorandom} applied to each of the sets $S_{\ell,j'}$ (for $j' \in [V']$), we have
$$|\widetilde S_\ell \cap \widetilde S_{\leq \ell-1}| \leq V'(1+2(N/\widetilde V)\exp(\log(1/\delta)^C)\widetilde V/N)<3V'\exp(\log(1/\delta)^C).$$
Hence
\begin{align*}
|\widetilde S_{\leq \ell}| &>|\widetilde S_{\leq \ell-1}|+|\widetilde S_\ell|-3V'\exp(\log(1/\delta)^C)\\
 &\geq |\widetilde S_{\leq \ell-1}|+\exp(-2\log(1/\delta)^C)N/V-3V'\exp(\log(1/\delta)^C)\\
 & \geq |\widetilde S_{\leq \ell-1}|+(1/2)\exp(-2\log(1/\delta)^C)N/V
\end{align*}
(using the assumption that $N$ is large), and by induction we obtain
$$|\widetilde S_{\leq \ell}|>(\ell/2)\exp(-2\log(1/\delta)^C)N/V.$$
Since $\widetilde S_{\leq \ell}$ is a subset of $[N/V]$, we conclude that the iterative procedure must terminate in at most $2\exp(-2\log(1/\delta)^C)$ steps, and, in particular, a suitable choice of $C'$ guarantees that the estimate in \eqref{eq:iterative-loss-lambda'} holds for all $\ell$ in the iterative procedure.
\end{proof}

\subsection{Auxiliary norm control on $f_1$.}\label{S: auiliary}
Recall from before that we have the lower bound
\begin{multline*}
\sum_{x,y}\E_{z \in [(N/W_d)^{1/d}V^{-1}]} \Delta_{\uh_1 \be_1,\uh_2 \be_2} f_{1,j_1+\widetilde P(r),j_2}(x+\widetilde P_{[r, V]}(z),y)\\
\Delta_{\uh_1 \be_1,\uh_2 \be_2} f_{2,j_1, j_2+\widetilde P(r)}(x,y+\widetilde P_{[r, V]}(z)) 
g(x)e(a(y)x+b(y))\geq \exp(-\log(1/\delta)^{O(1)}) (N/V)^2 
\end{multline*}
for at least $$\exp(-\log(1/\delta)^{O(1)})(N/V)^{k+\ell-3}$$ tuples $(\uh_1, \uh_2)\in\Z^{k+\ell-3}$ and at least $\exp(-\log(1/\delta)^{O(1)})V^3$ values $(j_1, j_2, r) \in [V]^3$, where $g,a,b$ depend on $\uh_1, \uh_2, j_1,j_2, r$.
Proposition~\ref{prop:iterate-popular-a-value} gives an integer $$\exp(-\log(1/\delta)^{O(1)})N\leq N'\leq N,$$ a positive integer power $V'=V^{O(1)}$, and $1$-bounded functions $b_{1,j,j',i}, b_{2,j,j',i}:\Z\to\C$ (the latter dependent on $\uh_1, \uh_2, j_1,j_2, r$) such that
\begin{multline*}
    \sum_{i\in [N/N']}\sum_{j,j'\in[V']}\sum_y\sum_{x\in [N'/(VV')]} \E_{z \in [(N/W_d)^{1/d}V^{-1}]} b_{1,j,j',i}(x)b_{2,j,j',i}(y)\\ 
     \Delta_{\uh_1 \be_1,\uh_2 \be_2}f_{1, j_1 + \widetilde P(r), j_2}(V'x + j + \frac{N'}{V}i+\widetilde P_{[r, V]}(z),V'y+j')\\
     \Delta_{\uh_1 \be_1,\uh_2 \be_2}f_{1, j_1, j_2 + \widetilde P(r)}(V'x + j + \frac{N'}{V}i,V'y+j'+\widetilde P_{[r, V]}(z))\\
     \geq \exp(-\log(1/\delta)^{O(1)})(N/V)^2
\end{multline*}
for at least $\exp(-\log(1/\delta)^{O(1)}) V^3$ values $(j_1, j_2, r) \in [V]^3$. By the pigeonhole principle, we can choose a single $i\in[N/N']$ for which the sum over $j,j',y,x,z$ satisfies the lower bound, at the cost of losing an extra factor of $\exp(-\log(1/\delta)^{O(1)})$, modifying $b_{1,j,j',i}$ by a phase if necessary, and dropping the index $i$ from $b_{1,j,j',i}$, $b_{2,j,j',i}$. We then $V'$-trick the polynomial $\widetilde P_{[r,V]}$, simplify the notation by setting
\begin{equation}\label{E: f^l}
\begin{split}
    f^l_{j_1, j_2, j, j'}(x,y) &:= \Delta_{\uh_1 \be_1,\uh_2 \be_2}f_{l, j_1, j_2}(V'x + j + \frac{N'}{V}i,V'y+j')\\
    &=\Delta_{V\uh_1 \be_1,V\uh_2 \be_2}f_l(VV'x + Vj + j_1+ N'i,VV'y+ Vj'+j_2)
\end{split}
\end{equation}
for $l=1,2$, and shift $y\mapsto y - \widetilde P_{[Vr'+r, VV']}(z)$, obtaining
\begin{multline*}
    \sum_{j,j',r'\in[V']}\sum_y\sum_{x\in [N'/(VV')]} \E_{z \in [(N/W_d)^{1/d}(VV')^{-1}]} b_{1,j,j'}(x)b_{2,j,j'}(y- \widetilde P_{[Vr'+r, VV']}(z))\\ 
     f^1_{j_1 + \widetilde P(r), j_2, j + \widetilde P_{[r, V]}(r'), j'}(x+ \widetilde P_{[Vr'+r, VV']}(z),y- \widetilde P_{[Vr'+r, VV']}(z))\\
     f^2_{j_1, j_2 + \widetilde P(r), j, j'+ \widetilde P_{[r, V]}(r')}(x,y)
     \geq \exp(-\log(1/\delta)^{O(1)})(N/V)^2
\end{multline*}
for the aforementioned values $(j_1, j_2, r) \in [V]^3$. Importantly, this new expression is amenable to an application of a S\'ark\"ozy-type estimate. By Corollary~\ref{C: Sarkozy} applied with $$b_{1,j,j'}(x)1_{[N'/(VV')]}(x) f^2_{j_1, j_2 + \widetilde P(r), j, j'+ \widetilde P_{[r, V]}(r')}(x,y),
\;\; b_{2,j,j'}(y) f^1_{j_1 + \widetilde P(r), j_2, j + \widetilde P_{[r, V]}(r'), j'}(x,y)$$ in place of $f_0, f_1$ and with $\bv = \be_1-\be_2$, we obtain the lower bound
\begin{multline*}
    \sum_{j,j',r'\in[V']}\sum_{x,y} \abs{\sum_{z\in[\pm N''/(VV')]} b_{2,j,j'}(y-qz) f^1_{j_1 + \widetilde P(r), j_2, j + \widetilde P_{[r, V]}(r'), j'}(x  + qz, y - qz)}\\ \geq \exp(-\log(1/\delta)^{O(1)}) (N/V)^3
\end{multline*}
for some positive integers $q\leq \exp(\log(1/\delta)^{O(1)})$ and $\exp(-\log(1/\delta)^{O(1)})N\leq N''\leq N$. We apply the Cauchy--Schwarz inequality to remove the absolute value and then change variables $(x,y)\mapsto (x-qz, y + qz)$ and $z\mapsto -z$, so that
\begin{multline*}
    \sum_{j,j',r'\in[V']}\sum_{x, y, z}\mu_{N''/(VV')}(z) \Delta_{qz} b_{2,j,j'}(y) \Delta_{qz(\be_2 - \be_1)} f^1_{j_1 + \widetilde P(r), j_2, j + \widetilde P_{[r, V]}(r'), j'}(x, y)\\ \geq \exp(-\log(1/\delta)^{O(1)})V'(N/V)^2.
\end{multline*}
In order to get rid of $b_{2,j,j'}$, we introduce an additional averaging over $[\pm N/(VV')]$ in the $x$-direction and then apply the Cauchy--Schwarz inequality to double this new variable. Using Lemma \ref{L: properties of box norms}\eqref{i: enlarging} to remove $q$, and summing over the admissible values of $j_1, j_2, r\in[V]$, we thus obtain
\begin{multline*}
    \sum_{\substack{j_1, j_2, r\in [V],\\ j,j',r'\in[V']}}\|f^1_{j_1 + \widetilde P(r), j_2, j + \widetilde P_{[r, V]}(r'), j'}\|_{(\be_2 - \be_1)\cdot[\pm N/(VV')],\; \be_1\cdot[\pm N/(VV')]}^4\\
    \geq \exp(-\log(1/\delta)^{O(1)}) VV' N^2.
\end{multline*}

We now want to remove the shifts by $\widetilde P(r)$ and $\widetilde P_{[r, V]}(r')$ from the indices $j_1, j'$. 
Notice first that the function $f^1_{j_1,j_2, j+s_3V', j'+s_4V'}$ is a translate of the function $f^1_{j_1,j_2, j, j'}$ and hence $\norm{f^1_{j_1,j_2, j,j'}}_{(\be_2 - \be_1)\cdot[\pm N/(VV')],\; \be_1\cdot[\pm N/(VV')]}$ depends only on the residue classes of $j,j'$ modulo $V'$. We can therefore replace the summation over $j,j'\in[V']$ by a summation over $j,j'\in\Z/V'\Z$ and then shift $j \mapsto j - \widetilde P_{[r, V]}(r')$ to obtain
\begin{multline*}
    \sum_{\substack{j_1, j_2, r\in [V]}}\sum_{j,j'\in\Z/V'\Z}\|f^1_{j_1 + \widetilde P(r), j_2, j, j'}\|_{(\be_2 - \be_1)\cdot[\pm N/(VV')],\; \be_1\cdot[\pm N/(VV')]}^4\\
    \geq \exp(-\log(1/\delta)^{O(1)}) V N^2.
\end{multline*}
Similarly, the identity
\begin{align*}
    f^1_{j_1+s_1V,j_2+s_2V, j, j'} = f^1_{j_1,j_2, j+s_1, j'+s_2},
\end{align*}
makes $\sum_{j,j'\in\Z/V'\Z}\|f^1_{j_1 + \widetilde P(r), j_2, j, j'}\|_{(\be_2 - \be_1)\cdot[\pm N/(VV')],\; \be_1\cdot[\pm N/(VV')]}^4$ depend only on the residue class of $j_1, j_2$ modulo $V$, and so we can replace the sum over $j_1, j_2\in[V]$ by the sum over $j_1, j_2\in\Z/V\Z$. Shifting $j_1\mapsto j_1 - \tilde{P}(r)$, we then get
\begin{multline*}
    \sum_{\substack{j_1, j_2\in \Z/V\Z}}\sum_{j,j'\in\Z/V'\Z}\|f^1_{j_1, j_2, j, j'}\|_{(\be_2 - \be_1)\cdot[\pm N/(VV')],\; \be_1\cdot[\pm N/(VV')]}^4\\
    \geq \exp(-\log(1/\delta)^{O(1)}) N^2,
\end{multline*}
and so
$$\|\Delta_{V\uh_1 \be_1,V\uh_2 \be_2}f_1\|_{(\be_2 - \be_1)VV' \cdot [\pm N/(VV')],\; \be_1VV' \cdot [\pm N/(VV')]}^4 \geq \exp(-\log(1/\delta)^{O(1)}) N^2.$$
Since this holds for at least $\exp(-\log(1/\delta)^{O(1)})(N/V)^{k+\ell-3}$ choices of $(\uh_1, \uh_2)\in\Z^{k+\ell-3}$, we get from properties \eqref{i: inductive formula} and \eqref{i: passing to APs} of Lemma \ref{L: properties of box norms} that
\begin{multline}\label{E: auxiliary bound}
    \|f_1\|^{2^{k+\ell-1}}_{(\be_1 VV' \cdot [\pm N/(VV')])^{k - 1},\; (\be_2 - \be_1)VV' \cdot [\pm N/(VV')],\; (\be_2 VV' \cdot [\pm N/(VV')])^{\ell-1}}\\ \geq \exp(-\log(1/\delta)^{O(1)}) N^2.
\end{multline}

\subsection{Completing the proof of Proposition~\ref{prop:degreeloweringdual}}\label{S: completing}
So far, we have proved that if $$|\Lambda^W(f_0, f_1, f_2)| \ge \delta N^2$$
implies that
$$\|f_0\|_{(\be_1V \cdot [\pm N/V])^k,\; (\be_2 V \cdot [\pm N/V])^\ell}^{2^{k+\ell}} \geq \exp(-\log(1/\delta)^{O(1)}) N^2,$$
then the norm-control \eqref{E: auxiliary bound} also follows. This lower bound is not particularly useful in itself, but we will use it as an intermediate step towards the desired norm-control on $f_0$. To this end, we first use stashing to replace $f_1$ in $\Lambda^W$ with the dual function
\begin{align*}
    \CD_1(f_0,f_2)(x,y):=\E_{z \in [(N/W_d)^{1/d}} f_0(x-\widetilde P(z),y)f_2(x-\widetilde P(z),y+\widetilde P(z)),
\end{align*}
so that \eqref{E: auxiliary bound} holds with $\CD_1(f_0,f_2)$ in place of $f_1$. We then apply the dual--difference interchange (Lemma~\ref{lem:dual-diff}) to this expression, so that
\begin{multline*}
    \sum_{\substack{\uh_1 \in \mathbb{Z}^{k-1},\\ \uh_2 \in \mathbb{Z}^{\ell-1}}} \mu_{N/(VV')}(\uh_1,\uh_2)\cdot \|\mathcal{D}_1^{\uh_1,\uh_2}(f_0,f_2)\|_{(\be_2-\be_1)VV' \cdot [\pm N/(VV')]}^2\\
    \geq \exp(-\log(1/\delta)^{O(1)}) N^2,
\end{multline*}
where
$$\mathcal{D}_1^{\uh_1,\uh_2}(f_0,f_2)(x,y):=\E_{z \in [(N/W_d)^{1/d}} \Delta_{VV'\uh_1\be_1,VV'\uh_2 \be_2}(f_0(x-\widetilde P(z),y)f_2(x-\widetilde P(z),y+\widetilde P(z))).$$
Thus for at least $\exp(-\log(1/\delta)^{O(1)})(N/(VV'))^{k+\ell-2}$ values of $(\uh_1, \uh_2)\in\Z^{k+\ell-2}$, we have the lower bound
$$\|\mathcal{D}_1^{\uh_1,\uh_2}(f_0,f_2)\|_{(\be_2-\be_1)VV' \cdot [\pm N/(VV')]}^2 \geq \exp(-\log(1/\delta)^{O(1)}) N^2.$$
We temporarily fix such a choice of $(\uh_1, \uh_2)\in\Z^{k+\ell-2}$ and repeat the above procedure of splitting the domains of $x,y$ into arithmetic progressions of difference $VV'$. This yields
\begin{align*}
    &\mathcal{D}_1^{\uh_1,\uh_2}(f_0,f_2)(VV'x+j_1,VV'y+j_2)\\
    &\qquad\qquad=\E_{z \in [(N/W_d)^{1/d}]} \Delta_{VV'\uh_1\be_1,VV'\uh_2 \be_2}(f_0(VV'x+j_1-\widetilde P(z),VV'y+j_2)\\
    &\qquad\qquad\qquad\qquad\qquad\qquad\qquad\times f_2(VV'x+j_1-\widetilde P(z),VV'y+j_2+\widetilde P(z)))\\
    &\qquad\qquad=\E_{r\in[VV']}\E_{z \in [(N/W_d)^{1/d}(VV')^{-1}]} \Delta_{\uh_1\be_1,\uh_2 \be_2}(f_{0, j_1 - \widetilde P(r), j_2}(x-\widetilde P_{[r,VV']}(z),y)\\
    &\qquad\qquad\qquad\qquad\qquad\qquad\qquad\times f_{2, j_1 - \widetilde P(r), j_2 + \widetilde P(r)}(x-\widetilde P_{[r,VV']}(z),y+\widetilde P_{[r,VV']}(z))),
\end{align*}
where $f_{l, j_1, j_2}(x,y) = (f_l)_{(j_1, j_2, VV')}(x,y) = f_l(VV'x + j_1, VV'y+j_2)$ for $l=0,2$.
Hence
\begin{multline*}
    \E_{j_1, j_2, r\in[VV']} \left|\!\left|\E_{z \in [(N/W_d)^{1/d}(VV')^{-1}]} \Delta_{\uh_1\be_1,\uh_2 \be_2}f_{0, j_1 - \widetilde P(r), j_2}(\cdot-\widetilde P_{[r,VV']}(z),\cdot)\right.\right.\\
    \left.\left.\Delta_{\uh_1\be_1,\uh_2 \be_2} f_{2, j_1 - \widetilde P(r), j_2 + \widetilde P(r)}(\cdot-\widetilde P_{[r,VV']}(z),\cdot+\widetilde P_{[r,VV']}(z)))\vphantom{\E_{z \in [(N/W_d)^{1/d}(VV')^{-1}]}}\right|\!\right|_{(\be_2-\be_1) \cdot [\pm N/(VV')]}^2\\
    \geq \exp(-\log(1/\delta)^{O(1)}) (N/(VV'))^2.
\end{multline*}
Applying the inverse theorem for the $U^1$ norm (Lemma \ref{L: U^1 inverse})\footnote{Technically, to apply this result, the length of the boxes should be larger by a factor $\exp(\log(1/\delta)^{O(1)})$ than the support of the function whose norm we are evaluating. This can be ensured by replacing the box $(\be_2-\be_1)VV' \cdot [\pm N/(VV')]$ by $(\be_2-\be_1)VV' \cdot [\pm N'/(VV')]$ (with $N' \asymp \exp(\log(1/\delta)^{C})N$ for an appropriate $C>0$) before performing the Dual-Difference Interchange using Lemma \ref{L: properties of box norms}\eqref{i: enlarging}.} for the values $(j_1, j_2, r)\in[VV']^3$ where the above norm is at least $\exp(-\log(1/\delta)^{O(1)}) (N/(VV'))^2$, we obtain a $1$-bounded function $b: \mathbb{Z} \to \mathbb{C}$ (depending on $\uh_1, \uh_2, j_1, j_2, r$) such that
\begin{multline*}
    \sum_{x,y}\E_{z \in [(N/W_d)^{1/d}(VV')^{-1}]} b(x+y) \Delta_{\uh_1 \be_1,\uh_2 \be_2}f_{0, j_1 - \widetilde P(r), j_2}(x-\widetilde P_{[r,VV']}(z),y)\\
    \Delta_{\uh_1 \be_1,\uh_2 \be_2}f_{2, j_1 - \widetilde P(r), j_2 + \widetilde P(r)}(x-\widetilde P_{[r,VV']}(z),y+\widetilde P_{[r,VV']}(z))\\
    \geq \exp(-\log(1/\delta)^{O(1)}) (N/(VV'))^2.
\end{multline*}
Changing variables $(x,y) \mapsto (x+\widetilde P_{[r,VV']}(z), y-\widetilde P_{[r,VV']}(z))$ simplifies this expression to
\begin{multline*}
    \sum_{x,y}\E_{z \in [(N/W_d)^{1/d}(VV')^{-1}]} b(x+y) \Delta_{\uh_1 \be_1,\uh_2 \be_2}f_{0, j_1 - \widetilde P(r), j_2}(x,y-\widetilde P_{[r,VV']}(z))\\
    \Delta_{\uh_1 \be_1,\uh_2 \be_2}f_{2, j_1 - \widetilde P(r), j_2 + \widetilde P(r)}(x,y)\\
    \geq \exp(-\log(1/\delta)^{O(1)}) (N/(VV'))^2.
\end{multline*}
After an application of Corollary~\ref{C: Sarkozy}, with
\begin{align*}
 b(x+y)\Delta_{\uh_1 \be_1,\uh_2 \be_2}f_{2, j_1 - \widetilde P(r), j_2 + \widetilde P(r)}(x,y)\quad \textrm{and}\quad \Delta_{\uh_1 \be_1,\uh_2 \be_2}f_{0, j_1 - \widetilde P(r), j_2}(x,y) 
\end{align*}
in place of $f_0(x,y), f_1(x,y)$ and with $\bv = \be_2$), maneuvers similar to those in the proof of Theorem \ref{thm:degree lowering-lambda} give  
    $$\|f_{0}\|_{(\be_1\tilde{V} \cdot [N/\tilde{V}])^{k-1},\; (\be_2\tilde{V} \cdot [N/\tilde{V}])^{\ell}}^{2^{k+\ell-1}} \geq \exp(-\log(1/\delta)^{O(1)}) N^2$$
    with $\tilde{V} = VV'.$
Exchanging the roles of $x$ and $y$ gives the last assertion of the proposition.

\section{Reducing Proposition \ref{prop:popular-a-value} to a nilsequence problem}\label{S: lambda'}
The next two sections are devoted to the proof of Proposition \ref{prop:popular-a-value}; this will complete the degree-lowering argument from the previous section. Recall the counting operator
$$\Lambda'(a,\widetilde{f_1},\widetilde{f_2})=\sum_{x,y} \E_{z \in [(N/W_d)^{1/d}V^{-1}]} e(a(y)x) \widetilde{f_1}(x+\widetilde P_{[r, V]}(z),y) \widetilde{f_2}(x,y+\widetilde P_{[r, V]}(z));$$
our goal is to show that if $\Lambda'(a, \widetilde{f_1}, \widetilde{f_2})$ is large for some $1$-bounded functions $\widetilde{f_1}, \widetilde{f_2}$, then there are many arithmetic progressions of difference $V' = V^{O_P(1)}$ on each of which the phase $a$ is approximately constant. In this section we will use higher-order Fourier analysis manipulations to replace the functions $\widetilde f_1,\widetilde f_2$ with nilsequences; in the next section we will use sophisticated machinery from the theory of nilsequences to complete the proof of Proposition \ref{prop:popular-a-value}. Our starting point is the following norm-control.

\begin{proposition}\label{prop:lambda_1}
For every $C_1>0$ there exists $C_2=C_2(C_1,P)>0$ such that the following holds. Let $\delta\in(0,1/10)$, let $V \leq W^{C_1}$ be a nonnegative integer power of $W$, and let $r\in[V]$. Let $\widetilde{f_1},\widetilde{f_2}: \Z^2 \to \mathbb{C}$ be $1$-bounded functions supported on $[N/V]^2$, for some integer $N\ge C_2V^{C_2}\delta^{-C_2}$, and let $a: \mathbb{Z} \to \mathbb{R}/\mathbb{Z}$ be a phase function.  If
$$|\Lambda'(a,\widetilde{f_1}, \widetilde{f_2})| \geq \delta (N/V)^{2},$$
then we have the norm-control
\begin{align*}
&\|\widetilde{f_2}\|_{(\be_2-\be_1)\cdot  [\pm N/V],\; \be_1\cdot [\pm N/V]}^4 \gg_{C_1, P} \delta^{O_{C_1,P}(1)}(N/V)^2.    
\end{align*}
\end{proposition}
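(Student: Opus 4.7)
The plan is to apply Cauchy--Schwarz twice, with an intervening change of variables, to reduce the hypothesis $\abs{\Lambda'(a,\widetilde{f_1},\widetilde{f_2})}\geq \delta(N/V)^{2}$ to a four-factor expression in $\widetilde{f_2}$ that already carries the correct box directions $\be_1$ and $\be_2-\be_1$. The residual polynomial two-point progression in $w=\widetilde{P}_{[r,V]}(z)-\widetilde{P}_{[r,V]}(z')$ will then be linearised by Corollary~\ref{C: Sarkozy}, and Lemma~\ref{L: properties of box norms} will let us recognise the outcome as the claimed box norm of $\widetilde{f_2}$.

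First I would shift $x\mapsto x-\widetilde{P}_{[r,V]}(z)$, rendering $e(a(y)x)\widetilde{f_1}(x,y)$ independent of $z$ and leaving a residual phase $e(-a(y)\widetilde{P}_{[r,V]}(z))$ grouped with $\widetilde{f_2}$. Cauchy--Schwarz in $(x,y)$ to eliminate the $1$-bounded factor $e(a(y)x)\widetilde{f_1}(x,y)$ on $[N/V]^{2}$ then yields
\[
\delta^{2}(N/V)^{2}\ll \sum_{x,y}\E_{z,z'}e(-a(y)w)\widetilde{f_2}(x-\widetilde{P}_{[r,V]}(z),y+\widetilde{P}_{[r,V]}(z))\overline{\widetilde{f_2}(x-\widetilde{P}_{[r,V]}(z'),y+\widetilde{P}_{[r,V]}(z'))}.
\]
The further change of variables $x\mapsto x+\widetilde{P}_{[r,V]}(z')$ and $y\mapsto y-\widetilde{P}_{[r,V]}(z')$ converts the two $\widetilde{f_2}$ factors into $\widetilde{f_2}((x,y)+w(\be_2-\be_1))$ and $\overline{\widetilde{f_2}(x,y)}$ respectively, updates the phase to $e(-a(y-\widetilde{P}_{[r,V]}(z'))w)$, and exposes the first of the two desired box directions.

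Next, pulling the $(z,z')$-expectation outside by the triangle inequality, I note that for each fixed $(z,z')$ the inner sum splits as $\sum_{y}e(-a(y-\widetilde{P}_{[r,V]}(z'))w)F_w(y)$ where $F_w(y):=\sum_{x}\widetilde{f_2}(x-w,y+w)\overline{\widetilde{f_2}(x,y)}$. Cauchy--Schwarz in $y$ disposes of the unit-modulus phase at a cost of $(N/V)^{1/2}$, and after squaring we obtain
\[
\delta^{4}(N/V)^{3}\ll \E_{z,z'}\sum_{y}\abs{F_w(y)}^{2}.
\]
Expanding $\abs{F_w(y)}^{2}$ by writing the doubled $x$-variable as $x+h$ introduces the second desired box direction, giving
\[
\delta^{4}(N/V)^{3}\ll \E_{z,z'}\sum_{x,y,h}\Delta_{h\be_1,\,w(\be_2-\be_1)}\widetilde{f_2}(x,y).
\]
This right-hand side is almost the target box norm, except that the $w$-variable carries the distribution $\rho(w):=\E_{z,z'}1_{\widetilde{P}_{[r,V]}(z)-\widetilde{P}_{[r,V]}(z')=w}$ rather than the Fej\'er kernel $\mu_{N/V}(w)$.

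In the final step, I would apply Corollary~\ref{C: Sarkozy} with $\bv=\be_2-\be_1$ to convert this polynomial two-point average over $w$ into a linear two-point average with common difference $qz$ for some $q\leq \exp(\log(1/\delta)^{O_{C_1,P}(1)})$, incurring only a polynomial-in-$\delta$ loss. Parts \eqref{i: trimming 2} and \eqref{i: passing to APs} of Lemma~\ref{L: properties of box norms} then let me rescale and trim the resulting shortened progression $q\cdot[\pm N'/V]$ to the canonical box $[\pm N/V]$ while preserving the polynomial-in-$\delta$ gain, producing $\|\widetilde{f_2}\|_{(\be_2-\be_1)\cdot[\pm N/V],\,\be_1\cdot[\pm N/V]}^{4}\gg_{C_1,P}\delta^{O_{C_1,P}(1)}(N/V)^{2}$. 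The main obstacle is setting up the invocation of Corollary~\ref{C: Sarkozy} cleanly, since the polynomial sits inside a four-factor expression rather than a two-function correlation; this will likely require first stashing the $\be_1$-differencing (by fixing $h$ and treating $\Delta_{h\be_1}\widetilde{f_2}$ as a bounded auxiliary function of $(x,y)$) so that the residual $w$-dependence has the two-function shape Corollary~\ref{C: Sarkozy} can handle, and then averaging the resulting gain back over $h$ via Lemma~\ref{L: properties of box norms}.
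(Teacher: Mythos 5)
Your overall strategy is close in spirit to the paper's, but as written there is a gap at the Sárközy step. After you eliminate $e(a(y)x)\widetilde{f_1}(x,y)$ by Cauchy--Schwarz you have \emph{doubled the $z$-variable}, so the shift acting on $\widetilde{f_2}$ is $w=\widetilde{P}_{[r,V]}(z)-\widetilde{P}_{[r,V]}(z')$ and the $w$-average carries the autocorrelation weight $\rho(w)=\E_{z,z'}1_{\widetilde{P}_{[r,V]}(z)-\widetilde{P}_{[r,V]}(z')=w}$. Corollary~\ref{C: Sarkozy} does not apply to this: it linearises a correlation along a \emph{single} polynomial orbit $\bx\mapsto\bx+\bv\widetilde{P}_{[r,V]}(z)$, not along a difference of two polynomial values. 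The obstacle you flag (four factors versus two functions) is indeed handled by freezing $h$ and working with $G_h:=\Delta_{h\be_1}\widetilde{f_2}$, but even after that the correlation is between $G_h(\bx)$ and $\overline{G_h(\bx+w(\be_2-\be_1))}$ with $w$ still a difference. To make your plan work you need a further step you do not mention: pigeonhole in $z'$ (the $h$-summed quantity is real after symmetrising in $z,z'$ and bounded by $O((N/V)^2)$ per term, so this is legitimate) to fix $z'=z_0'$, and then absorb the constant shift $\widetilde{P}_{[r,V]}(z_0')(\be_2-\be_1)$ into the second function by translation; only then does Corollary~\ref{C: Sarkozy} apply, with a final pigeonhole in the resulting $q_h$ (as in the paper's footnote) before you can sum back over $h$. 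Without this, the step ``apply Corollary~\ref{C: Sarkozy} to the average over $w$'' does not go through as stated.

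For comparison, the paper sidesteps the issue by performing the Cauchy--Schwarz in the other order: it first doubles the $x$-variable via an extra restricted averaging (in the style of Lemma~\ref{L: vdC}), which produces $\Delta_{h\be_1}\widetilde{f_1}$, $\Delta_{h\be_1}\widetilde{f_2}$ and the harmless phase $e(a(y)h)$ while keeping a \emph{single} copy of $z$; after shifting $x\mapsto x-\widetilde{P}_{[r,V]}(z)$, Corollary~\ref{C: Sarkozy} then applies directly. This also builds the Fejér kernel $\mu_{N/V}(h)$ into the $\be_1$-direction from the start, which points to a secondary issue in your write-up: your $h$ comes from expanding $\abs{F_w(y)}^2$ and is unnormalised, whereas the target box norm is Fejér-weighted; parts \eqref{i: trimming 2} and \eqref{i: passing to APs} of Lemma~\ref{L: properties of box norms} do not by themselves convert an unweighted $h$-sum into a weighted one, so you would need either the restricted-averaging trick or a comparison in the spirit of the proof of Lemma~\ref{L: 1-dim U^1 inverse} with a slightly longer box, followed by Lemma~\ref{L: properties of box norms}\eqref{i: trimming 2}. (Finally, Corollary~\ref{C: Sarkozy} gives $q\ll\delta^{-O(1)}$, not just $q\le\exp(\log(1/\delta)^{O(1)})$; the stronger bound is what keeps the total loss polynomial in $\delta$, so this misquote is harmless.)
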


Na\"ively, one would expect to have
$\{\be_2-\be_1,\; \be_2\}$-control on $\widetilde{f_2}$.  Our norm-control looks slightly different because, as will become clear in the proof, taking discrete derivatives in the $\be_1$-direction is very convenient for dealing with the $e(a(y)x)$ term in $\Lambda'$. The argument would give the same conclusion with $\|\widetilde{f_1}\|_{(\be_2-\be_1)\cdot [\pm N/V],\; (\be_1\cdot [\pm N/V])^2}$, but we do not need this.

\begin{proof}
     We fix $C_1>0$ and let all the constants depend on $C_1, P$. 
     We introduce an extra averaging over $[N/V]$ in the $x$-direction\footnote{If we did not want the range of the new variable $h$ to be restricted, then we could apply the Cauchy--Schwarz inequality directly.} and apply the Cauchy--Schwarz inequality in order to double the variable $x$; this gives
\begin{multline}\label{E: first CS}
    \sum_{x,y,h}\mu_{N/V}(h)\E_{z \in [(N/W_d)^{1/d}V^{-1}]}
    e(a(y)h)\\ \Delta_{h\be_1}\widetilde{f_1}(x + \widetilde P_{[r, V]}(z), y)\Delta_{h\be_1}\widetilde{f_2}(x, y+ \widetilde P_{[r, V]}(z)) \geq \delta^2 (N/V)^2.
\end{multline}
Shifting $x\mapsto x - \widetilde P_{[r, V]}(z)$ gives
\begin{multline*}
    \sum_{x,y,h}\mu_{N/V}(h)\E_{z \in [(N/W_d)^{1/d}V^{-1}]}
    e(a(y)h)\\ \Delta_{h\be_1}\widetilde{f_1}(x, y)\Delta_{h\be_1}\widetilde{f_2}(x - \widetilde P_{[r, V]}(z), y+ \widetilde P_{[r, V]}(z)) \geq \delta^2 (N/V)^2.
\end{multline*}
The crucial point is that now only $\widetilde f_2$ has $\widetilde P_{[r, V]}(z)$ in its argument and so we can apply the S\'ark\"ozy-type estimate (Corollary \ref{C: Sarkozy}) (after using the popularity principle to find many $h$'s for which the inner expression is large in absolute value). Hence there exist positive integers\footnote{A direct application of Corollary \ref{C: Sarkozy} gives $q_{h}$ that depends on $h$; however, $q$ can be chosen uniformly by the pigeonhole principle since there are only $O(\delta^{-O(1)})$ possibilities for $q_{h}$.} $q\ll \delta^{-O(1)}$ and $\delta^{O(1)} N\ll N'\leq N$ such that
\begin{align*}
    \sum_{x,y,h}\mu_{N/V}(h)\abs{\E_{z\in[N'/V]}\Delta_{h\be_1}\widetilde{f_2}(x - qz, y + qz)} \gg \delta^{O(1)} (N/V)^2.
\end{align*}
Applying the Cauchy--Schwarz inequality and then using Lemma \ref{L: properties of box norms}\eqref{i: enlarging} to replace $N'$ with $N$ and drop $q$, we get the desired bound on $\widetilde f_2$. 
\end{proof}

The next result shows that the largeness of $\Lambda'(a,\widetilde{f_1}, \widetilde{f_2})$ for some functions $\widetilde{f_1}, \widetilde{f_2}$ implies some correlation of the phase $a$ with nilsequences on arithmetic progressions. We refer the reader to Definition \ref{D: nilsequence} for definition of a nilsequence.
\begin{proposition}\label{P: local correlation with nilsequences}
For every $C_1>0$ there exists $C_2=C_2(C_1,P)>0$ such that the following holds. Let $\delta\in(0,1/10)$, let $V \le W^{C_1}$ be a positive integer power of $W$, and let $r\in[V]$.
Let $a:\Z\to\R/\Z$ be a phase function, and let $\widetilde{f_1},\widetilde{f_2}: \Z^2 \to \mathbb{C}$ be $1$-bounded functions supported on $[N/V]^2$, for some integer $N\ge C_2V^{C_2}\exp(\log(1/\delta)^{C_2})$.  If
    $$|\Lambda'(a,\widetilde{f_1}, \widetilde{f_2})| \geq \delta (N/V)^{2},$$
    then there exist a positive integer $k=O_P(1)$,
    a positive integer power $V' = V^{O_{C_1, P}(1)}$, and a function $\varphi:\Z\to\C$ such that
\begin{multline*}
    \E_{j, r'\in[V']}\sum_{y\in[N/(VV')]}\left|\E_{z \in [(N/W_d)^{1/d}(VV')^{-1}]} e(-a_{(j, V')}(y)V'\cdot\widetilde{P}_{[Vr'+r,VV']}(z))\right.\\
    \left.\varphi_{({j} + \widetilde{P}_{[r,V]}(r'), V')}(y + \widetilde{P}_{[Vr'+r,VV']}(z)) \vphantom{\E_{z \in [(N/W_d)^{1/d}(VV')^{-1}]}} \right| \geq \exp(-\log(1/\delta)^{O_{C_1, P}(1)}) N/(VV')
\end{multline*}
and for each $j\in[V']$, the restriction $\varphi_{(j,V')}$ is a $1$-bounded, degree-$k$ nilsequence of complexity $\exp(\log(1/\delta)^{O_{C_1, P}(1)})$ on a $\log(1/\delta)^{O_{C_1, P}(1)}$-dimensional nilmanifold. 

\end{proposition}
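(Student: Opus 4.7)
The strategy follows the ``nilsequence input'' subroutine outlined in the proof sketch of Theorem~\ref{thm:degree lowering-lambda}. Proposition~\ref{prop:lambda_1} tells us that $\widetilde{f_2}$ has large box norm in the directions $\be_2-\be_1$ and $\be_1$; since (by the same stashing/Cauchy--Schwarz argument used in the proof of that proposition) this norm also controls $\Lambda'(a,\widetilde{f_1},\cdot)$ in its third argument, a Hahn--Banach decomposition together with the inverse theorem for $U^1\times U^1$ box norms from Appendix~\ref{A: inverse theorems} lets us reduce to the case where $\widetilde{f_2}(x,y)=b_1(y)b_2(x+y)$ for some $1$-bounded $b_1,b_2:\Z\to\C$. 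Substituting this form into $\Lambda'$, we may assume that
\begin{align*}
\abs{\sum_{x,y}\E_z e(a(y)x)\,\widetilde{f_1}(x+\widetilde{P}_{[r,V]}(z),y)\,b_1(y+\widetilde{P}_{[r,V]}(z))\,b_2(x+y+\widetilde{P}_{[r,V]}(z))}\ge\exp(-\log(1/\delta)^{O(1)})(N/V)^2.
\end{align*}

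Next, the change of variables $x\mapsto x-\widetilde{P}_{[r,V]}(z)$ removes the $z$-dependence from $\widetilde{f_1}$ and from $b_2$, and the resulting expression factors as
\begin{align*}
\sum_{x,y}e(a(y)x)\,\widetilde{f_1}(x,y)\,b_2(x+y)\,\Bigbrac{\E_z e(-a(y)\widetilde{P}_{[r,V]}(z))\,b_1(y+\widetilde{P}_{[r,V]}(z))}.
\end{align*}
Applying the triangle inequality in $(x,y)$ and using $|\widetilde{f_1}|,|b_2|\le 1$ yields
\begin{align*}
\sum_y\abs{\E_z e(-a(y)\widetilde{P}_{[r,V]}(z))\,b_1(y+\widetilde{P}_{[r,V]}(z))}\ge\exp(-\log(1/\delta)^{O(1)})\,N/V.
\end{align*}

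To produce the desired nilsequence, we apply a PET-induction argument in the spirit of the companion paper \cite{KKL24a} to the two-point configuration $(y,y+\widetilde{P}_{[r,V]}(z))$ weighted by the additive character $e(-a(y)\widetilde{P}_{[r,V]}(z))$. Iterated Cauchy--Schwarz in $z$, with successive degree-lowering of the inner polynomial, eventually yields control by a Gowers norm $\|b_1\|_{U^{k+1}(J)}^{2^{k+1}}$ for some $k=O_P(1)$, where $J$ is an arithmetic progression whose common difference is a positive constant multiple of the leading coefficient of $\widetilde{P}_{[r,V]}$, which is itself a constant multiple of $W_d V^{d-1}$. A $V'$-trick with $V'=V^{O_P(1)}$ normalizes this common difference, converting the bound into ordinary Gowers-norm control on $(b_1)_{(j,V')}$ for each $j\in[V']$. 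The quasipolynomial inverse theorem for Gowers norms of \cite{LSS24} (summarized in Appendix~\ref{A: nilsequence theory}) then provides, on each residue class, a degree-$k$ nilsequence of dimension $\log(1/\delta)^{O(1)}$ and complexity $\exp(\log(1/\delta)^{O(1)})$ that correlates with $(b_1)_{(j,V')}$; these pieces assemble into a single function $\varphi:\Z\to\C$ whose $V'$-trick on each residue class is a nilsequence of the claimed quality. Splitting $y$ and $z$ into residue classes modulo $V'$---under which $\widetilde{P}_{[r,V]}$ transforms into $\widetilde{P}_{[Vr'+r,VV']}$ by the composition rule in Definition~\ref{d: Vtrickpolynomial}---and tracking the shift $j\mapsto j+\widetilde{P}_{[r,V]}(r')$ introduced by the earlier shift in $x$, produces the advertised lower bound.

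The main obstacle is the bookkeeping of the nested $V$- and $V'$-tricks through the PET step. The higher-order coefficients of $\widetilde{P}_{[r,V]}$ carry positive powers of $V$ (and of $W$), so the arithmetic progressions output by the PET differencings have common differences that are themselves powers of $V$; converting these into the standard unit-spacing Gowers-norm input required by \cite{LSS24}, and ensuring that the resulting piecewise nilsequences $\varphi_{(j,V')}$ are chosen consistently for all $j$ simultaneously, demands careful tracking of all the arithmetic parameters introduced along the way.
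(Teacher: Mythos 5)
Your first step matches the paper: Proposition~\ref{prop:lambda_1} plus stashing and the $U^1\times U^1$ inverse theorem (Lemma~\ref{L: U1xU1 inverse}) do let you replace $\widetilde{f_2}$ by $b_1(y)b_2(x+y)$, and the shift $x\mapsto x-\widetilde P_{[r,V]}(z)$ followed by the triangle inequality reduces matters to the largeness of $\sum_y\bigl|\E_z e(-a(y)\widetilde P_{[r,V]}(z))\,b_1(y+\widetilde P_{[r,V]}(z))\bigr|$, exactly as in the paper (your mention of a Hahn--Banach decomposition is unnecessary; plain stashing suffices).

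The gap is in the second half: you apply the PET/Cauchy--Schwarz argument and then the inverse theorem of \cite{LSS24} to $b_1$ itself, obtaining nilsequences that correlate with $(b_1)_{(j,V')}$, and then assert that this ``produces the advertised lower bound.'' It does not: the conclusion of the proposition requires $\varphi$ to appear \emph{inside} the oscillatory average, i.e.\ you must replace $b_1$ by $\varphi$ in the operator $\Lambda''(b_1)=\sum_y\bigl|\E_z e(-a(y)\widetilde P_{[r,V]}(z))b_1(y+\widetilde P_{[r,V]}(z))\bigr|$, and mere correlation of $b_1$ with a nilsequence gives no control on the contribution of the remaining (uniform) part of $b_1$ to $\Lambda''$. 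The paper avoids this by one more stashing step before the PET: it passes to the dual function $\widetilde b_1(y)=\E_z e(-a(y-\widetilde P_{[r,V]}(z))\widetilde P_{[r,V]}(z))\,b_3(y-\widetilde P_{[r,V]}(z))1_{[N/V]}(y)$, runs the $d+1$ Cauchy--Schwarz doublings and \cite[Proposition 10.1]{KKL24a} to get $U^{k+1}(V_d\cdot[\pm N/(VV_d)])$-control on $\widetilde b_1$ (not on $b_1$), and applies the quasipolynomial inverse theorem to $(\widetilde b_1)_{(j,V')}$; correlation with this dual then \emph{unfolds literally} (expand $\widetilde b_1$, shift $y\mapsto y+\widetilde P_{[r,V]}(z)$, drop $b_3$ by the triangle inequality, split $y,z$ modulo $V'$) into the advertised inequality, with the index shift of $\varphi$ coming from $\widetilde P_{[r,V]}(V'z'+r')=V'\widetilde P_{[Vr'+r,VV']}(z')+\widetilde P_{[r,V]}(r')$, not from the earlier shift in $x$. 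To repair your argument you must either insert this stashing so that the Gowers-norm control and the inverse theorem are applied to the dual function, or invoke a full structure-plus-uniformity decomposition of $b_1$ relative to the $U^{k+1}$-norm and show the uniform part contributes negligibly to $\Lambda''$ -- a substantially heavier route with worse quantitative losses, and not something you can cite off the shelf at the stated $\exp(-\log(1/\delta)^{O(1)})$ quality.
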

\begin{proof}

The proof consists of massaging the expression $\Lambda'(a,\widetilde{f_1}, \widetilde{f_2})$, with the help of the same techniques as in the proof of Theorem \ref{thm:degree lowering-lambda},
until the structure of $a$ along arithmetic progressions becomes apparent. We fix $C_1>0$ and let all constants depend on $C_1, P$ without notating the dependence explicitly.

\smallskip
\textbf{Step 1: Replacing $\widetilde{f_2}$ by a box norm obstruction.}
\smallskip

Our first objective is to use Proposition \ref{prop:lambda_1} in conjunction with relevant box norm inverse theorems and stashing to replace $\widetilde{f_2}$ by a ``structured'' function.
By Proposition \ref{prop:lambda_1}, we have the lower bound
$$\|\widetilde{f_2}\|_{(\be_2-\be_1)\cdot [\pm N/V],\; \be_1\cdot[\pm N/V]}^4 \gg \delta^{O(1)}(N/V)^2.$$
The stashing argument gives
$$\|\mathcal{D}'_2(a, \widetilde{f_1})\|_{(\be_2-\be_1)\cdot [\pm N/V],\; \be_1\cdot  [\pm N/V]}^4 \gg \delta^{O(1)}(N/V)^2,$$
for the dual function
\begin{multline*}
\mathcal{D}'_{2}(a, \widetilde{f_1})(x,y):=\E_{z \in [(N/W_d)^{1/d}V^{-1}]}e(a(y-\widetilde{P}_{[r, V]}(z))x)\\
\widetilde{f_1}(x+\widetilde{P}_{[r, V]}(z),y-\widetilde{P}_{[r, V]}(z))1_{[N/V]}(y),
\end{multline*}
where we were able to insert the term $1_{[N/V]}$ because $\widetilde{f_2}$ is supported on $[N/V]^2.$
By the inverse theorem for the $U^1\times U^1$-box norm (Lemma \ref{L: U1xU1 inverse}), there are $1$-bounded functions $b_1, b_2:\Z\to\C$ such that 
\begin{multline*}
\sum_{x, y} \E_{z \in [(N/W_d)^{1/d}V^{-1}]} b_1(y) b_2(x+y) e(a(y - \widetilde{P}_{[r,V]}(z))x)\\
\widetilde{f_1}(x + \widetilde{P}_{[r,V]}(z), y - \widetilde{P}_{[r,V]}(z))1_{[N/V]}(y) \gg \delta^{O(1)}(N/V)^2,    
\end{multline*}
and the presence of the $1_{[N/V]}(y)$ term lets us assume that $b_1$ is supported on $[N/V]$.
The change of variables 
$y \mapsto y + \widetilde{P}_{[r,V]}(z)$ gives
\begin{multline*}
\sum_{x, y}\E_{z \in [(N/W_d)^{1/d}V^{-1}]} e(a(y)x)
\widetilde{f_1}(x+ \widetilde{P}_{[r,V]}(z), y)\\
b_1(y+ \widetilde{P}_{[r,V]}(z)) b_2(x + y + \widetilde{P}_{[r,V]}(z)) \gg \delta^{O(1)}(N/V)^2.
\end{multline*}
We observe that $b_2(x + y + \widetilde{P}_{[r,V]}(z))$ is a function of $(x+\widetilde{P}_{[r,V]}(z), y)$. Shifting $x\mapsto x - \widetilde{P}_{[r,V]}(z)$ and setting $\widetilde{f_1}'(x,y):= e(a(y)x)\widetilde{f_1}(x,y)b_2(x + y)$, we get 
\begin{align*}
\sum_{x, y}\E_{z \in [(N/W_d)^{1/d}V^{-1}]} e(-a(y)\widetilde{P}_{[r,V]}(z))
\widetilde{f_1}'(x, y)
b_1(y+ \widetilde{P}_{[r,V]}(z))  \gg \delta^{O(1)}(N/V)^2.
\end{align*}
Recall that $\widetilde{f_1}'$ is 1-bounded and supported on $[N/V]^2$.  Using the triangle inequality in the $x$- and $y$-variables, we can eliminate $\widetilde {f_1}'$ entirely and obtain
\begin{align*}
\sum_{y\in[N/V]}\abs{\E_{z \in [(N/W_d)^{1/d}V^{-1}]} e(-a(y)\widetilde{P}_{[r,V]}(z))
b_1(y+ \widetilde{P}_{[r,V]}(z))}\gg \delta^{O(1)}N/V.
\end{align*}
We have thus reduced matters to analyzing the auxiliary operator
\begin{align}\label{E: Lambda''}
    \Lambda''(b_1):= \sum_{y\in[N/V]}\abs{\E_{z \in [(N/W_d)^{1/d}V^{-1}]} e(-a(y)\widetilde{P}_{[r,V]}(z))
b_1(y+ \widetilde{P}_{[r,V]}(z))}.
\end{align}

\smallskip
\textbf{Step 2: Replacing $b_1$ by a nilsequence.}
\smallskip

Our next goal is to show that $\Lambda''$ can be controlled by a Gowers norm of $b_1$; this will let us replace $b_1$ by a nilsequence (at least on arithmetic progressions of not-too-small difference). To obtain Gowers norm control, we observe that $\Lambda''$ is amenable to a standard PET argument.  Let $b_3:\Z\to\C$ be a $1$-bounded function supported on $[N/V]$ such that 
$$\Lambda''(b_1) = \sum_{y}b_3(y)\E_{z \in [(N/W_d)^{1/d}V^{-1}]} e(-a(y)\widetilde{P}_{[r,V]}(z))
b_1(y+ \widetilde{P}_{[r,V]}(z)) \gg \delta^{O(1)}N/V.$$
Shifting $y \mapsto y-\widetilde P_{[r,V]}(z)$, stashing, and shifting back $y \mapsto y+\widetilde P_{[r,V]}(z)$, we find that
$$\sum_{y}\overline{b_3(y)}\E_{z \in [(N/W_d)^{1/d}V^{-1}]} e(a(y)\widetilde{P}_{[r,V]}(z))
\widetilde b_1(y+ \widetilde{P}_{[r,V]}(z))\gg \delta^{O(1)}N/V,$$
where
$$\widetilde b_1(y):= \E_{z \in [(N/W_d)^{1/d}V^{-1}]} e(-a(y-\widetilde{P}_{[r,V]}(z))\widetilde{P}_{[r,V]}(z))
b_3(y-\widetilde{P}_{[r,V]}(z))1_{[N/V]}(y).$$ Note that the indicator function $1_{[N/V]}(y)$ can be included since $b_1$ is supported on this set. 
Now, using the Cauchy--Schwarz inequality to double the $z$-variable $d+1$ times (in the style of PET), we eliminate $b_3(y)$ and the exponential involving $a(y)$; we obtain
\begin{multline*}
    \sum_{y, h_1, \ldots, h_{d+1}}\mu_{(N/W_d)^{1/d}V^{-1}}(h_1, \ldots, h_{d+1})\\ \E_{z \in [(N/W_d)^{1/d}V^{-1}]} \prod_{\ueps\in\{0,1\}^{d+1}}\CC^{|\ueps|}\widetilde b_1(y + \widetilde{P}_{[r,V]}(z+\ueps\cdot\uh)) \gg \delta^{O(1)}N/V.
\end{multline*}
By \cite[Proposition 10.1]{KKL24a} and the formula $V_d = \beta_d \beta_1^{d-2}V^{d-1}W^{d-1}$, we have
\begin{align*}
    \norm{\widetilde b_1}_{U^{k+1}(V_d\cdot[\pm N/(VV_d)])}^{2^{k+1}}\gg \delta^{O(1)} N/V
\end{align*}
for some integer $k=O(1)$. Take $V'=V^{O(1)}$ to be the smallest positive integer power of $V$ such that $V_d|\beta_d \beta_1 V'$.
Since $\beta_1,\; \beta_d=O(1)$, we can apply parts \eqref{i: enlarging} and \eqref{i: passing to APs} of Lemma \ref{L: properties of box norms} to replace the boxes $V_d\cdot[\pm N/(VV_d)]$ by $V'\cdot[\pm N/(VV')]$, obtaining
\begin{align*}
    \norm{\widetilde b_1}_{U^{k+1}(V'\cdot[\pm N/(VV')])}^{2^{k+1}}\gg \delta^{O(1)} N/V.
\end{align*}

Splitting the domain of $\tilde{b}_1$ into arithmetic progressions of difference $V'$, we find using the popularity principle that
\begin{align*}
    \norm{(\tilde{b}_1)_{(j, V')}}_{U^{k+1}([\pm N/(VV')])}^{2^{k+1}}\gg \delta^{O(1)} N/(VV')
\end{align*}
for an $\Omega(\delta^{O(1)})$-proportion of $j\in[V']$.  The $U^{k+1}$-inverse theorem \cite[Theorem 1.2]{LSS24} tells us that for each such $j\in[V']$, there is a $1$-bounded\footnote{$1$-boundedness is not mentioned in the statement of \cite[Theorem 1.2]{LSS24}; however, the control on the Lipschitz norms of $\varphi_j$ ensures that the nilsequences coming from \cite[Theorem 1.2]{LSS24} can be rescaled to be $1$-bounded at an acceptable loss of the factor $\exp(-\log(1/\delta)^{O(1)})$ in the lower bound.} degree-$k$ nilsequence $\varphi_j$ of complexity $\exp(\log(1/\delta)^{O(1)})$ on an $O(\log(1/\delta)^{O(1)})$-dimensional nilmanifold (see Definition \ref{D: nilsequence} for the definition of a nilsequence and its complexity)
such that
\begin{align*}
    \sum_y (\tilde{b}_1)_{(j,V')}(y)\varphi_j(y) \geq \exp(-\log(1/\delta)^{O(1)})N/(VV').
\end{align*}
For each remaining value of $j\in[V']$, set $\varphi_j:= 0$.
Thus, $\varphi_j$ is a $1$-bounded degree-$k$ nilsequence of complexity $\exp(\log(1/\delta)^{O(1)})$ on an $O(\log(1/\delta)^{O(1)})$-dimensional nilmanifold for \textit{each} $j\in [V']$.

Define $\varphi:\Z\to\C$ by setting $ \varphi_{(j,V')}(y)=\varphi(V'y + j)$ to equal $\varphi_j(y)$ for each $j\in[V']$ and $y\in\Z$, so that
\begin{align*}
    \sum_y \tilde{b}_1(y)\varphi(y) \geq \exp(-\log(1/\delta)^{O(1)})N/V.
\end{align*}
Expanding the definition of $\widetilde b_1$, shifting $y \mapsto y+\widetilde P_{[r,V]}(z)$, and using the triangle inequality to eliminate $b_3$, we obtain
\begin{align*}
    \sum_{y\in[N/V]}\left|\E_{z \in [(N/W_d)^{1/d}V^{-1}]} e(-a(y)\widetilde{P}_{[r,V]}(z)) \varphi(y + \widetilde{P}_{[r,V]}(z))\right| \geq \exp(-\log(1/\delta)^{O(1)}) N/V.
\end{align*}
Since $\varphi$ is a nilsequence only when restricted to arithmetic progressions of common difference $V'$, we also split the ranges of $y$ and $z$ into arithmetic progressions of common difference $V'$.  This gives
\begin{multline*}
    \E_{j, r'\in[V']}\sum_{y\in[N/(VV')]}\left|\E_{z \in [(N/W_d)^{1/d}(VV')^{-1}]} e(-a_{(j, V')}(y)V'\cdot\widetilde{P}_{[Vr'+r,VV']}(z))\right.\\
    \left.\varphi_{({j} + \widetilde{P}_{[r,V]}(r'), V')}(y + \widetilde{P}_{[Vr'+r,VV']}(z)) \vphantom{\E_{z \in [(N/W_d)^{1/d}(VV')^{-1}]}} \right| \geq \exp(-\log(1/\delta)^{O(1)}) N/(VV'),
\end{multline*}
as desired.
\end{proof}
Notice that the last step of splitting into progressions introduced a factor of $V'$ in the first term; this arises essentially because we $V'$-tricked the exponential function.

\section{Nilsequence application}\label{S: nilsequence}
In the previous section, we arrived at expressions of the form
\begin{align}\label{E: section 9 expression}
    \sum_{y\in [N/\tilde{V}]}\left|\E_{z \in [(N/W_d)^{1/d}\tilde{V}^{-1}]} e(\tilde{a}(y)\cdot\widetilde{P}_{[\tilde{r},\tilde{V}]}(z))\tilde{\varphi}(y + \widetilde{P}_{[\tilde{r},\tilde{V}]}(z)) \vphantom{\E_{z \in [(N/W_d)^{1/d}\tilde{V}^{-1}]}} \right|
\end{align}
with
\begin{align*}
    \Tilde{V} = VV',\quad \tilde{r} =  Vr'+r,\quad \tilde{\varphi} = \varphi_{({j} + \widetilde{P}_{[r,V]}(r'), V')},\quad \textrm{and}\quad\tilde{a} = -a_{(j, V')}V',
\end{align*}
where $\Tilde{\varphi}$ is a nilsequence. This section is devoted to extracting information about the phase $a$ using the largeness of \eqref{E: section 9 expression}.  The main underlying machinery comes from the equidistribution results of \cite{Leng23b}.  We refer the reader to Appendix \ref{A: nilsequence theory} for definitions and notation related to nilsequences.

Our main technical input is the following result, which is the special case $\ell = 1$ of \cite[Theorem 3]{Leng23b}.

\begin{theorem}\label{multiparameter}
For every $k>0$ there is a constant $C = C(k)>0$ such that the following holds. Let $G/\Gamma$ be a nilmanifold of degree $k$, step $s$, complexity at most $M$, and dimension $m$, with associated filtration $G_\bullet$. Let $\delta\in (0,1/10)$, and let $N > (M/\delta)^{Cm^C}$ be an integer. Let $F:G/\Gamma \to \mathbb{C}$ be a $1$-bounded $M$-Lipschitz function with a vertical character $\xi$ satisfying $0<|\xi| \leq M/\delta$.

Suppose $g \in \mathrm{poly}(\mathbb{Z}, G)$ with
\[\abs{\E_{n \in [N]} F(g(n)\Gamma)} \ge \delta.\]
Then there exist $0< r\le \mathrm{dim}(G/[G,G])$ and horizontal characters $\eta_1, \dots, \eta_r$ of size at most $(M/\delta)^{O_k(m)^{O_k(1)}}$ satisfying
    \[\|\eta_i \circ g\|_{C^\infty[{N}]} \le (M/\delta)^{O_k(m)^{O_k(1)}},\]
    and $\xi$ annihilates all $(s-1)$-fold commutators of elements in the subgroup $G' := \bigcap_{i=1}^r\mathrm{ker}(\eta_i)$.
\end{theorem}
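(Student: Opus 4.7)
My plan is to prove this by induction on the step $s$ of the filtration $G_\bullet$, following the strategy of Green--Tao's quantitative Leibman theorem and exploiting the vertical character hypothesis to force commutator-annihilation structure. Since $F$ carries the vertical character $\xi$, the expression $F(g(n)\Gamma)$ transforms by $e(\xi(z))$ under the action of the central torus $G_s/(G_s\cap\Gamma)$; this will be the hook that converts non-equidistribution into arithmetic information about $\xi$.

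For the base case $s=1$, the nilmanifold is an abelian torus and the conclusion reduces to a quantitative form of Weyl's inequality: the hypothesis $|\E_{n\in[N]}F(g(n)\Gamma)|\ge\delta$ together with the Fourier expansion of $F$ on $G/\Gamma$ forces the nontrivial frequencies appearing in $g$ to be approximately rational in the sense measured by $\|\eta\circ g\|_{C^\infty[N]}$, and the annihilation condition is vacuous. For the inductive step, apply the van der Corput inequality (Lemma~\ref{L: vdC}) to pass from control of $\E_{n}F(g(n)\Gamma)$ to control of an average of the form
\[\E_{h}\E_n F(g(n+h)\Gamma)\overline{F(g(n)\Gamma)}\gg \delta^2.\]
For each fixed $h$, the inner average can be recognized as a correlation of a sequence valued in the nilmanifold $(G\times G)/(\Gamma\times\Gamma)$ against a Lipschitz function, but after restriction to the ``diagonal derivative'' $g(n+h)g(n)^{-1}$, which lies in a filtration whose step is at most $s-1$ on $G_2$ (by the definition of a polynomial sequence), one obtains a correlation on a lower-step quotient. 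The key quantitative point is that the derived vertical character on the lower-step nilmanifold picks up the commutator pairing of $\xi$ with the shift direction.

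By the inductive hypothesis applied for $\gg\delta^{O_k(1)}$ many $h$, we obtain horizontal characters of the derived nilmanifold of controlled size and smoothness. The next step is the ``lifting'' procedure: one uses a bilinear Bogolyubov-type argument on the parameter $h$ (essentially running a pigeonhole on popular frequencies across many shifts) to combine these $h$-dependent characters into horizontal characters $\eta_1,\dots,\eta_r$ of $G$ itself, with the required bound $\|\eta_i\circ g\|_{C^\infty[N]}\le(M/\delta)^{O_k(m)^{O_k(1)}}$. The commutator annihilation property for $\xi$ on $(s-1)$-fold commutators from $G'=\bigcap\ker(\eta_i)$ then follows from a contradiction argument: if $\xi$ did not annihilate such an $(s-1)$-fold commutator, then iterating the van der Corput reduction one more time inside $G'$ would produce an additional independent horizontal character, violating the bound $r\le\dim(G/[G,G])$.

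The main obstacle, and the reason this is the content of \cite{Leng23b} rather than earlier work of Green--Tao, is quantitative bookkeeping: the na\"ive iteration of van der Corput and the pigeonhole extraction of characters gives bounds of tower-type in the dimension $m$, whereas the statement demands polynomial-in-$m$ exponents. Achieving this requires (i) processing all $s$ inductive steps simultaneously so that the parameter losses compound additively rather than multiplicatively in the tower sense, (ii) a careful version of the factorization theorem that outputs a single polynomial-in-$m$ loss per step, and (iii) exploiting the vertical character throughout to avoid having to track higher-order Fourier modes separately. I would expect most of the technical effort to concentrate on step (ii), where one must combine the $h$-parametric family of lower-step characters into a single set of top-level characters while preserving both the smoothness norm bound and the annihilation constraint on $\xi$.
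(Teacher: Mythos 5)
There is a genuine gap, and it is worth noting first that the paper itself does not prove this statement at all: Theorem~\ref{multiparameter} is imported verbatim as the special case $\ell=1$ of Theorem~3 of \cite{Leng23b}, so the ``proof'' in the paper is a citation to an external result whose entire content is the quantitative strength of the bounds. Your sketch reproduces the classical Green--Tao scheme for the quantitative Leibman theorem (induction on step, van der Corput to pass to $g(n+h)g(n)^{-1}$ on a lower-step quotient, pigeonhole/lifting of the $h$-dependent characters), and that scheme does yield \emph{some} version of the conclusion --- but with losses that are far worse than $(M/\delta)^{O_k(m)^{O_k(1)}}$ in the dimension $m$, because each pass through the factorization and character-extraction steps compounds the exponent. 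The single-and-polynomial-in-$m$ exponent is precisely what distinguishes the cited theorem from the earlier Green--Tao and Tao--Ter\"av\"ainen bounds, and your proposal explicitly defers this (``the main obstacle \dots is quantitative bookkeeping,'' items (i)--(iii)) without supplying any mechanism for it. Asserting that the losses can be made to ``compound additively'' is a restatement of the goal, not an argument; so the proposal proves, at best, a qualitatively similar statement with unacceptable bounds, and the theorem as stated is not established.

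A secondary but concrete flaw: your derivation of the commutator-annihilation property is not valid as described. You argue that if $\xi$ failed to annihilate some $(s-1)$-fold commutator of elements of $G'=\bigcap_i\ker(\eta_i)$, one could run van der Corput once more ``inside $G'$'' to produce an additional independent horizontal character, ``violating the bound $r\le\dim(G/[G,G])$.'' But $r\le\dim(G/[G,G])$ is part of the conclusion one is free to arrange (one may always discard dependent characters), not an a priori constraint whose violation yields a contradiction; producing one more character simply enlarges the list and shrinks $G'$. In the actual arguments (Green--Tao \cite{GT12} and its refinement in \cite{Leng23b}), the annihilation statement is extracted structurally: after van der Corput, the vertical frequency of the derived function on the lower-step nilmanifold is the pairing of $\xi$ with commutators against the shift direction, and the induction forces this pairing to vanish on the kernel subgroup cut out by the extracted horizontal characters. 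Your sketch gestures at this (``the derived vertical character \dots picks up the commutator pairing'') but then abandons it in favor of the counting argument, which does not work.
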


The use of this theorem is that it allows one to iteratively ``reduce'' the step of $g$.

In the following proposition, we let $\Tilde{V}_d$ denote the leading coefficient of $\widetilde{P}_{[\tilde{r},\tilde{V}]}$.

\begin{proposition}\label{prop:Wtricknilsequenceprop}
Let $s,k\in\N$ with $s\leq k$. For every $C_1>0$ there exist constants $C_2=C_2(C_1, P, k),\; C_3=C_3(C_1, P, k)>1$ such that the following holds. Let $G/\Gamma$ be a nilmanifold of degree $k$, step $s$, complexity at most $M \geq 2$, and dimension $m\in\N$, with associated filtration $G_\bullet$, and let $g\in\poly(\Z, G)$.  Let $\delta\in(0, 1/10)$, and let $F:G/\Gamma \to \mathbb{C}$ be a $1$-bounded $M$-Lipschitz function with a vertical character $\xi$ satisfying $0<|\xi| \leq M/\delta$.  
Let $\tilde{V} \leq  W^{C_1}$ be a nonnegative integer power of $W$, let $\tilde{r}\in[\tilde{V}]$, and let 
$N\geq \tilde{V}_d \tilde{V}^{C_2} (M/\delta)^{2C_2 dm^{C_2}}$ be an integer.  Lastly, let $A \subseteq [N/\tilde{V}]$ and $B \subseteq \Z$ be nonempty arithmetic progressions of common difference at most $M/\delta$ satisfying 
\begin{align}\label{E: bound on B} 
   |B| \geq C_2\cdot(N/(\tilde{V}\tilde{V}_d))^{1/d}(M/\delta)^{-C_2 m^{C_2}}.
\end{align}
 
If
\begin{equation}\label{E:Wtricknilsequencehypothesis}
    \sum_{y\in A}\left|\E_{z \in B} e(\tilde{a}(y)\cdot\widetilde{P}_{[\tilde{r},\tilde{V}]}(z))F(g(y + \widetilde{P}_{[\tilde{r},\tilde{V}]}(z))\Gamma) \vphantom{\E_{z \in [(N/W_d)^{1/d}\tilde{V}^{-1}]}} \right| \geq  \delta |A|,
\end{equation}
then 
the following holds with $T:=|B|^d \tilde{V}_d$.
\begin{itemize}
    \item
 When $s>1$: There exists a factorization $g(n) = \veps(n)g'(n)\gamma(n)$, where $\veps$ is $((M/\delta)^{C_3m^{C_3}}, T)$-smooth, $g'$ takes values in an $(M/\delta)^{C_3m^{C_3}}$-rational normal subgroup $G'$ of $G$ such that $G'/\mathrm{ker}(\xi)$ has step at most $s - 1$, and $\gamma$ is $(M/\delta)^{C_3m^{C_3}}$-rational.
    \item When $s=1$ and $k>1$: There is a polynomial $p(n)=\sum_{j=0}^k \alpha_j n^j$ such that $F(g(n)\Gamma)=e(p(n))$, and there is a positive integer $q\leq C_3\delta^{-C_3}$ such that $\norm{q\alpha_j}_{\mathbb{R}/\mathbb{Z}} \leq C_3\delta^{-C_3}/T^j$ for all $j \geq 2$.
    \item When $s=1$ and $k=1$: There are a phase $\alpha \in \mathbb{R}/\mathbb{Z}$ and a positive integer $q \leq C_3\delta^{-C_3}$ such that $\norm{q\tilde{a}(y)-\alpha}_{\mathbb{R}/\mathbb{Z}} \leq C_3\delta^{-C_3}/T$ for at least {$C_3\delta^{C_3}|A|$} values of $y \in A$.
\end{itemize}
\end{proposition}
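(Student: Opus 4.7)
The plan is: for each $y\in A$ where the inner average in \eqref{E:Wtricknilsequencehypothesis} is at least $\delta/2$ (at least a $\delta/2$-fraction of $y$'s, by popularity), view the combined expression as a single nilsequence on the product nilmanifold $(G\times\R)/(\Gamma\times\Z)$ and invoke Theorem~\ref{multiparameter}. Concretely, with $F^\ast(u,t):=F(u)e(t)$, a $1$-bounded $O(M)$-Lipschitz function with vertical character $(\xi,1)$, and the polynomial sequence $g^\ast_y(z):=\bigl(g(y+\widetilde P_{[\tilde r,\tilde V]}(z)),\,\tilde a(y)\widetilde P_{[\tilde r,\tilde V]}(z)\bigr)$, the hypothesis becomes $\bigl|\E_{z\in B} F^\ast(g^\ast_y(z)(\Gamma\times\Z))\bigr|\ge\delta/2$. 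After extending the filtration on $G$ to accommodate composition with the degree-$d$ polynomial $\widetilde P_{[\tilde r,\tilde V]}$, the sequence $g^\ast_y$ has degree $kd$ and step $s$ on a nilmanifold of dimension $m+1$ and complexity $O(M)$. The hypotheses on $|B|$ and $N$ are tailored precisely to allow Theorem~\ref{multiparameter} to produce horizontal characters $\eta^\ast_{y,1},\dots,\eta^\ast_{y,r}$ of size $\le(M/\delta)^{O(m^{O(1)})}$ with $\|\eta^\ast_{y,i}\cdot g^\ast_y\|_{C^\infty[|B|]}\le(M/\delta)^{O(m^{O(1)})}$, such that $(\xi,1)$ annihilates all $(s-1)$-fold commutators in the common kernel $H_y\le G\times\R$.

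For the case $s>1$, I will pigeonhole to make the horizontal-character data independent of $y$. Since there are at most $(M/\delta)^{O(m^{O(1)})}$ choices for the tuple $(\eta^\ast_{y,i})_i$, a $\delta^{O(1)}$-fraction of the good $y$'s share a common subgroup $H\le G\times\R$ and common characters $\eta^\ast_i=(\eta_i,n_i)$. Projecting $H$ to $G$ gives a normal subgroup $G'\le G$ on which $\xi$ annihilates $(s-1)$-fold commutators, so $G'/\mathrm{ker}(\xi)$ has step at most $s-1$. The factorization theorem for polynomial sequences on nilmanifolds (Appendix~\ref{A: nilsequence theory}), applied with a filtration adapted to $G'$ and using the smoothness bounds $\|\eta_i\cdot g\|_{C^\infty}$ that descend from $\|\eta^\ast_i\cdot g^\ast_y\|_{C^\infty}$ by the definition of $g^\ast_y$, then produces the required decomposition $g(n)=\veps(n)g'(n)\gamma(n)$ with the claimed complexity bounds.

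For the case $s=1$, the group $G$ is abelian and the vertical-character condition forces $F$ to be a Fourier mode on $G/\Gamma$, hence $F(g(n)\Gamma)=e(p(n))$ with $p(n)=\sum_{j=0}^k\alpha_jn^j$. When $k>1$, I expand $\tilde a(y)\widetilde P_{[\tilde r,\tilde V]}(z)+p(y+\widetilde P_{[\tilde r,\tilde V]}(z))$ in powers of $z$ and note that the coefficient of $z^{jd}$ for $j\ge 2$ depends on $\alpha_j,\dots,\alpha_k$ but not on $\tilde a(y)$ (which only appears in the $z^d$ coefficient). Iterating Weyl's inequality from the top coefficient $\alpha_k\tilde V_d^k$ downward and absorbing the rational residues produced at each step into corrections to lower-order $\alpha_j$'s (after replacing $q$ by a suitable multiple bounded by $\delta^{-O(1)}$), I extract a single $q\le\delta^{-O(1)}$ with $\|q\alpha_j\|_{\R/\Z}\le\delta^{-O(1)}/T^j$ for all $j\ge 2$.

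The hardest sub-case is $s=1$, $k=1$. Here the combined phase factors as $e(\alpha_0+\alpha_1 y)\,e((\tilde a(y)+\alpha_1)\widetilde P_{[\tilde r,\tilde V]}(z))$, and Weyl's inequality applied to each good $y$ yields $q(y)\le\delta^{-O(1)}$ and an integer $m(y)$ with $\bigl|q(y)(\tilde a(y)+\alpha_1)\tilde V_d-m(y)\bigr|\le\delta^{-O(1)}/|B|^d$, so $\|q(y)\tilde a(y)-(m(y)/\tilde V_d-q(y)\alpha_1)\|_{\R/\Z}\le\delta^{-O(1)}/T$. The obstacle is that the residue $m(y)/\tilde V_d\bmod 1$ takes $\tilde V_d$ possible values, and a naive pigeonhole over these would cost a prohibitive factor of $\tilde V_d$. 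I circumvent this by first pigeonholing $q(y)=q$ (only $\delta^{-O(1)}$ classes) and then pigeonholing the discretized value of $q\tilde a(y)\bmod 1$ within windows of spacing $\delta^{-O(1)}/T$; there are at most $T\delta^{-O(1)}$ such windows, and the hypothesis $N\ge\tilde V_d\tilde V^{C_2}(M/\delta)^{2C_2 dm^{C_2}}$ together with $T\asymp|B|^d\tilde V_d\ll N/\tilde V=|A|$ ensures $|A|\gg T\delta^{-O(1)}$, so the pigeonhole produces a common pair $(q,\alpha)$ valid on $\delta^{O(1)}|A|$ of the good $y$'s.
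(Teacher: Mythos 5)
There is a genuine gap, and it sits exactly where the proposition earns its keep: converting the Weyl/Theorem~\ref{multiparameter} output, which only controls quantities like $\|q(\tilde a(y)+\alpha_1)\tilde V_d\|_{\R/\Z}\ll\delta^{-O(1)}/|B|^d$ (and, for $k>1$, $\|q\alpha_k\tilde V_d^{k}\|_{\R/\Z}\ll\delta^{-O(1)}/|B|^{kd}$), into control of the frequency itself at scale $1/T$ with a modulus $q\le\delta^{-O(1)}$. The denominators here involve powers of $\tilde V_d$, which is a power of $W$ and is \emph{not} bounded by any $\delta^{-O(1)}$, so your phrase ``after replacing $q$ by a suitable multiple bounded by $\delta^{-O(1)}$'' in the $s=1,k>1$ case has no justification: clearing those denominators costs a factor $\tilde V_d^{O(1)}$. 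The paper resolves this with the fraction-comparison Lemma~\ref{lem:composing-polynomials-2} (together with Lemma~\ref{translationextrapolation}), which uses the specific arithmetic of the $W$-tricked polynomial --- all of its coefficients, and the coprimality $\gcd(V_1,V_d)=1$ coming from $V_1\equiv 1$ modulo every prime dividing $W$ --- to rule out the spurious multiples of $1/\tilde V_d^{j}$. Your sketch never invokes this structure, and without it the claimed conclusions in the second and third bullets simply do not follow.

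The failure is most visible in your $s=k=1$ argument. After fixing $q$, you propose to pigeonhole the values $q\tilde a(y)\bmod 1$ into windows of width $\delta^{-O(1)}/T$; there are $\asymp T\delta^{O(1)}$ such windows, so starting from $\delta^{O(1)}|A|$ good $y$'s the most popular window only captures $\gg\delta^{O(1)}|A|/T$ of them --- which is precisely the trivial bound that the remark after the proposition says the statement is designed to beat by a factor $\approx\delta^{O(1)}T$, and $T$ is huge (comparable to $N/\tilde V$ up to quasipolynomial-in-$(M/\delta)$ losses). Your claim that $|A|\gg T\delta^{-O(1)}$ is also not implied by the hypotheses (with $|B|$ near its lower bound, $T=|B|^d\tilde V_d$ is already of order $N/\tilde V\ge|A|$), and even if it held, it would not convert ``$|A|/T$ values per window'' into ``a $\delta^{O(1)}$-proportion of $|A|$''. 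The correct route, as in the paper, is that Lemma~\ref{lem:composing-polynomials-2} shows $\|q_y(\tilde a(y)+\alpha_1)\|_{\R/\Z}\ll\delta^{-O(1)}/T$ directly, so after pigeonholing only in $q_y$ (boundedly many values) one may take the single phase $\alpha=-q\alpha_1$, with no pigeonholing over residues $m(y)/\tilde V_d$ at all. (Your $s>1$ argument via the product $(G\times\R)/(\Gamma\times\Z)$ could be made to work but is more complicated than needed --- the paper absorbs the phase by choosing $g_s\in G_{(s)}$ with $\xi(g_s)=\tilde a(y_0)$ and working with $\tilde g(n)=g_s^ng(y_0+n)$ for a single pigeonholed $y_0$, which avoids both the $y$-dependence of the character data and the issue of descending from kernels of characters of $G\times\R$ to a rational normal subgroup of $G$ --- but in any case it too ultimately needs Lemma~\ref{lem:composing-polynomials-2} to pass from the scale $|B|$ in the $z$-variable to the scale $T$.)
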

  
We remark that in the third case, the pigeonhole principle trivially provides some $\alpha$ such that $\norm{\widetilde a(y)-\alpha}_{\mathbb{R}/\mathbb{Z}} \leq C_3\delta^{-C_3}/T$ for at least $2C_3\delta^{-C_3}|A|/T$ values of $y$; the proposition improves the quantity $2C_3\delta^{-C_3}|A|/T$ by a factor of around $\delta^{O(1)}T$, which is quite substantial since $|B|$ is large in applications.
\begin{proof}
We fix $C_1>0$ and let all constants depend on $C_1, P, k$ (and hence also on $s$ since $s\leq k$). We address the three bullet-pointed cases one-by-one.

\smallskip
{\textbf{Case 1}: $s > 1$.}
\smallskip

Suppose first that $s>1$. By the pigeonhole principle, there is some $y_0 \in A$ such that
    \begin{align}\label{E: pigeonholed nilsequence average}
        \abs{\E_{z \in B} e(\tilde{a}(y_0)\widetilde{P}_{[\tilde{r}, \tilde{V}]}(z))F(g(y_0 + \widetilde{P}_{[\tilde{r}, \tilde{V}]}(z))\Gamma)} \ge \delta.
    \end{align}
Since $\xi$ is nonzero, there is some $g_s \in G_{(s)}$ such that $\xi(g_s)=a(y_0)$.  Now define the polynomial sequence $\tilde g: \mathbb{Z} \to G$ (with respect to the same filtration $G_\bullet$) via
$$\tilde g(n):=g_s^n g(y_0+n).$$
Since $\mathrm{poly}(\mathbb{Z}, G)$ is closed under pointwise multiplication, we have $\tilde{g} \in \mathrm{poly}(\mathbb{Z}, G)$. Notice that
$$\pi_{\mathrm{horiz}}(\tilde g(n))=\pi_{\mathrm{horiz}}(g(y_0+n))$$
since $g_s^n \in G_{(s)}$ has image zero in the horizontal torus. The identity $$F(\tilde g(n)\Gamma)=e(\tilde{a}(y_0)n)F(g(y_0+n)\Gamma)$$ then lets us rewrite \eqref{E: pigeonholed nilsequence average} as
$$\abs{\E_{z \in B} F(\tilde g(\widetilde P_{[r,V]}(z))\Gamma)} \geq \delta.$$
Now, consider the filtration $(\mathbb{Z}_i)_{i=0}^\infty$ on $\mathbb{Z}$ given by
\[\mathbb{Z}_i = \begin{cases}
\mathbb{Z} & \text{if $i \le d$}, \\
0 & \text{if $i > d$};
\end{cases}\]
with respect to this filtration, we have $\widetilde{P}_{[\tilde{r}, \tilde{V}]} \in \poly(\mathbb{Z},\mathbb{Z})$.  By \cite[Theorem 1.6.9]{T12}, the composition $\tilde{g} \circ \widetilde{P}_{[\tilde{r}, \tilde{V}]}(z)$ is a polynomial sequence on $G$ with a slightly modified filtration (now of degree $dk$).

Since
\[
|B| \geq C_2\cdot(N/(\tilde{V}\tilde{V}_d))^{1/d}(M/\delta)^{-C_2m^{C_2}} \geq C(M/\delta)^{C_2 m^{C_2}}
\]
and $C_2>0$ can be chosen to be sufficiently large, Theorem~\ref{multiparameter} gives us horizontal characters $\eta_1, \ldots, \eta_r$ of size $O((M/\delta)^{O(m)^{O(1)}})$ (for some $0<r\leq\mathrm{dim}(G/[G,G])$) such that each $\eta_i$ satisfies
\begin{equation}\label{eq:horizontalcharacterapplication}
    \|\eta_i \circ \tilde{g}\circ \widetilde{P}_{[\tilde{r}, \tilde{V}]}\|_{C^\infty[|B|]} \ll (M/\delta)^{O(m)^{O(1)}},    
\end{equation}
and such that $\xi$ annihilates all $(s-1)$-fold commutators of elements of $G':= \bigcap_{i=1}^r\ker(\eta_i)$.
Since
\begin{align}\label{E: B and V}
    |B| \geq C_2\cdot(N/(\tilde{V}\tilde{V}_d))^{1/d}(M/\delta)^{-C_2 m^{C_2}} \geq C_2\tilde{V}^{(C_2-1)/d}    
\end{align}
and $C_2>0$ is sufficiently large, Lemma~\ref{lem:composing-polynomials-2} tells us that
$$\|\eta_i \circ \tilde{g}\|_{C^\infty[T]} \ll (M/\delta)^{O(m)^{O(1)}}$$
for each $\eta_i$ (recall that $T=|B|^d\tilde{V}_d$).  Since
\begin{align}\label{E: y_0}
    |y_0|\leq N/\tilde{V}\ll (M/\delta)^{O(m)^{O(1)}} T   
\end{align}
by
\eqref{E: bound on B},  Lemma~\ref{translationextrapolation} further implies that each
$$\|\eta_i \circ g\|_{C^\infty[T]} \ll (M/\delta)^{O(m)^{O(1)}}$$
(where the $O(1)$ in this inequality may differ from the $O(1)$ in the preceding inequality).  The conclusion of the proposition now follows from \cite[Lemma A.1]{Leng23b}.

\smallskip
{\textbf{Case 2: $s=1, k>1$.}}
\smallskip

Now suppose that $s=1$.  Since $s=1$, there is a polynomial $p(n)=\sum_{j=0}^k \alpha_j n^j$ such that $F(g(n)\Gamma)=e(p(n))$.  Substituting this into \eqref{E:Wtricknilsequencehypothesis} and pigeonholing in $y$ gives that
    $$\abs{\E_{z \in [B]} e\left(\tilde{a}(y_0)\widetilde{P}_{[\tilde{r}, \tilde{V}]}(z) + \sum_{j = 0}^k \alpha_j (y_0 + \widetilde{P}_{[\tilde{r}, \tilde{V}]}(z))^{j}\right)} \ge \delta$$
for some $y_0 \in A$.  Weyl's inequality (see, e.g., \cite[Lemma 4.4]{GT12}) tells us that there is a positive integer $q\ll \delta^{-O(1)}$
such that
\begin{align*}
    \norm{q\cdot R(y_0 + \widetilde{P}_{[\tilde{r}, \tilde{V}]}(z))}_{C^\infty[|B|]}\ll \delta^{-O(1)}.
\end{align*}
for $R(n):= p(n) + \tilde{a}(y_0)(n-y_0)$.
Applying Lemma~\ref{lem:composing-polynomials-2} with \eqref{E: y_0} and then Lemma ~\ref{translationextrapolation} with \eqref{E: B and V}, we find that 
$\norm{q\alpha_j}_{\mathbb{R}/\mathbb{Z}} \ll \delta^{-O(1)}/T^j$ for all $j \geq 2$.
(We do not make use of the conclusion that $\norm{q(\tilde{a}(y_0)+\alpha_1)}_{\mathbb{R}/\mathbb{Z}} \ll \delta^{-O(1)}/T$.)  This concludes the proof of the proposition in the second case.

\smallskip
{\textbf{Case 3: $s=k=1$.}}
\smallskip

Finally, suppose that $s=1$ and moreover $k=1$.  Then there are $\alpha_0,\alpha_1 \in \mathbb{R}/\mathbb{Z}$ such that $F(g(n)\Gamma)=e(\alpha_1 n+\alpha_0)$.  Now \eqref{E:Wtricknilsequencehypothesis} reads
$$\sum_{y \in A} \abs{\E_{z \in [B]} e(\tilde{a}(y)\widetilde{P}_{[\tilde{r}, \tilde{V}]}(z) + \alpha_1 (y + \widetilde{P}_{[\tilde{r}, \tilde{V}]}(z)))} \ge \delta |A|.$$
The popularity principle provides at least $\delta|A|/2$ values of $y\in |A|$ such that
\begin{align*}
    \abs{\E_{z \in [B]} e(R_y\circ\widetilde{P}_{[\tilde{r}, \tilde{V}]}(z))}\geq \delta/2,
\end{align*}
where $R_y(n) := (\tilde{a}(y)+\alpha_1)n$.  Hence, by Weyl's inequality, we have $$\norm{q_y R_y\circ \widetilde{P}_{[\tilde{r}, \tilde{V}]}}_{C^\infty(|B|)}\ll \delta^{-O(1)}$$ for some positive integer $q_y\ll \delta^{-O(1)}$. Lemma \ref{lem:composing-polynomials-2} then gives $\norm{q_y R_y}_{C^\infty(T)}\ll \delta^{-O(1)}$, which is tantamount to
$$\norm{q_y(\tilde{a}(y)+\alpha_1)}_{\mathbb{R}/\mathbb{Z}} \ll \delta^{-O(1)}/T.$$
Pigeonholing in $y$, we can find $q$ such that this bound holds for $\gg \delta^{O(1)}|A|$ values of $y\in A$. 
\end{proof}

\subsection{Completing the proof of Proposition \ref{prop:popular-a-value}}
We are now in a position to complete the proof of Proposition~\ref{prop:popular-a-value}. We let all constants depend on $P$.
By Proposition \ref{P: local correlation with nilsequences}, there exist a positive integer $k=O(1)$, 
    a positive integer power $V' = V^{O(1)}$, and a function $\varphi:\Z\to\C$ such that for each $j\in[V']$, the restriction $\varphi_{(j,V')}$ is a $1$-bounded, degree-$k$ nilsequence of complexity $\exp(\log(1/\delta)^{O(1)})$ on a $\log(1/\delta)^{O(1)}$-dimensional nilmanifold. Moreover, we have 
\begin{multline*}
    \E_{j, r'\in[V']}\sum_{y\in[N/(VV')]}\left|\E_{z \in [(N/W_d)^{1/d}(VV')^{-1}]} e(-a_{(j, V')}(y)V'\cdot\widetilde{P}_{[Vr'+r,VV']}(z))\right.\\
    \left.\varphi_{({j} + \widetilde{P}_{[r,V]}(r'), V')}(y + \widetilde{P}_{[Vr'+r,VV']}(z)) \vphantom{\E_{z \in [(N/W_d)^{1/d}(VV')^{-1}]}} \right| \geq \exp(-\log(1/\delta)^{O(1)}) N/(VV').
\end{multline*}
By the popularity principle, there exists $r'\in[V']$ and a set $\mathcal{V}\subseteq [V']$ of size $|\mathcal{V}|\geq \exp(-\log(1/\delta)^{O(1)})V'$ such that for each $j\in\mathcal{V}$, we have the bound
\begin{align}\label{E: lower bound for j in V}
    \sum_{y\in [N/\tilde{V}]}\left|\E_{z \in [(N/W_d)^{1/d}\tilde{V}^{-1}]} e(\tilde{a}_{(j,V')}(y)\cdot\widetilde{P}_{[\tilde{r},\tilde{V}]}(z))\tilde{\varphi}_{j}(y + \widetilde{P}_{[\tilde{r},\tilde{V}]}(z)) \vphantom{\E_{z \in [(N/W_d)^{1/d}\tilde{V}^{-1}]}} \right| \geq \eta N/\tilde{V},
\end{align}
 where $\eta=\exp(-\log(1/\delta)^{O(1)})$ and
\begin{align*}
    \tilde{V}=VV',\quad \tilde{r} = Vr'+r,\quad \tilde{a}(y) = -a(y)V',\quad\textrm{and}\quad   \Tilde{\varphi}_j(y)= \varphi_{({j} + \widetilde{P}_{[r,V]}(r'), V')}(y).
\end{align*}

Fix $j\in\mathcal{V}$. We will iteratively apply Proposition  \ref{prop:Wtricknilsequenceprop}.  This will let us first reduce to the case where \eqref{E: lower bound for j in V} holds with $\tilde{\varphi}_{j}$ replaced by a degree-$1$ nilsequence on a $1$-dimensional nilmanifold, and then deduce that the phase function $\tilde{a}_{(j, V')}$ is nearly constant for a positive proportion of $y$'s. Our proof proceeds by iteratively reducing the step of the nilmanifold underlying $\tilde{\varphi}_{j}$. At each stage, we will have parameters $\eta, A, B, g, G/\Gamma$, and $F$. Since the step is initially at most $\log (1/\delta)^{O(1)}$ and we decrease the step at each stage, there are at most $\log (1/\delta)^{O(1)}$ stages.  Keeping track of how our bounds degrade at each stage, we can guarantee that throughout the iterative process we always have the bounds
\begin{gather*}
    \eta \geq \exp(-\log(1/\delta)^{O(1)}),\quad N \gg \Tilde{V}_d \tilde{V}^{\Omega(1)}\eta^{-O(m)^{O(1)}}, \quad |A| \gg \eta^{O(m)^{O(1)}}\frac{N}{\tilde{V}},\\ |B| \gg \eta^{O(m)^{O(1)}}(N/(\tilde{V}\tilde{V}_d))^{1/d},\quad T =|B|^d \Tilde{V}_d\gg \eta^{O(m)^{O(1)}} \frac{N}{\tilde{V}}, \\ M\ll \eta^{-O(m)^{O(1)}},\quad \text{ and }\quad
    m \ll \log(1/\delta)^{O(1)}.
\end{gather*}
We remind the reader that $m,M$ stand for dimension and complexity of the underlying nilmanifold, and we observe that the frequently appearing quantity $\eta^{O(m)^{O(1)}}$ is of the shape $\exp(-\log(1/\delta)^{O(1)})$.
Our lower bound $N\gg V^{\Omega(1)}\exp(\log(1/\delta)^{\Omega(1)})$ guarantees that the iteration will not halt before the claimed result is proven.

We begin with $A = [N/\tilde{V}]$ and $B = [(N/(\tilde{V}\tilde{V}_d))^{1/d}]=[(N/W_d)^{1/d}\tilde{V}^{-1}]$. We also start with $\eta \asymp \exp(-\log(1/\delta)^{O(1)})$, for which the lower bound in \eqref{E: lower bound for j in V} holds. 

\textbf{Step 1: Fourier-expanding $F$.}
\smallskip

In order to apply Proposition~\ref{prop:Wtricknilsequenceprop}, we will first Fourier-expand the function $F$. Suppose that $F:G/\Gamma \to \mathbb{C}$ is an $M$-Lipschitz function on a nilmanifold $G/\Gamma$ of complexity $M$, and that
\begin{equation}\label{e: firsthypothesis}
\sum_{y \in A} \abs{\E_{z \in B}e(\tilde{a}_{(j, \tilde{V})}(y)\widetilde{P}_{[\tilde{r}, \tilde{V}]}(z))F(g(y + \widetilde{P}_{[\tilde{r}, \tilde{V}]}(z))\Gamma)} \ge \eta |A|.   
\end{equation}
By \cite[Lemma A.6]{Leng23b}, we can Fourier-expand $F$ along the $G'_{(s)}$-torus as
$$\tilde{F} = \sum_{|\xi| \le (M/\eta)^{O(m)^{O(1)}}} F_\xi + O((\eta/M)^{O(m)^{O(1)}}),$$
where each $F_\xi$ is an $(M/\eta)^{O(m)^{O(1)}}$-Lipschitz vertical character of frequency $\xi$. By replacing $\eta$ with $(\eta/M)^{O(m)^{O(1)}}$, we may assume that $F = F_\xi$ and that $F$ is $1$-Lipschitz.
Thus, we have
\begin{equation}
\sum_{y \in A} \abs{\E_{z \in B}e(\tilde{a}_{(j, \tilde{V})}(y)\widetilde{P}_{[\tilde{r}, \tilde{V}]}(z))F(g(y + \widetilde{P}_{[\tilde{r}, \tilde{V}]}(z))\Gamma)}\gg (\eta/M)^{O(m)^{O(1)}} |A|
\end{equation}
(The purpose of these manipulations is to replace the function $F$ in \eqref{e: firsthypothesis} by a vertical character.) Finally, we relabel $(\eta/M)^{O(m)^{O(1)}}$ as simply $\eta$ so our hypothesis becomes
\begin{equation}\label{e: secondhypothesis}
\sum_{y \in A} \abs{\E_{z \in B}e(\tilde{a}_{(j, \tilde{V})}(y)\widetilde{P}_{[\tilde{r}, \tilde{V}]}(z))F(g(y + \widetilde{P}_{[\tilde{r}, \tilde{V}]}(z))\Gamma)}\gg \eta |A|.
\end{equation}

\smallskip
\textbf{Step 2: Applying Proposition~\ref{prop:Wtricknilsequenceprop}.}
\smallskip

Proposition~\ref{prop:Wtricknilsequenceprop} applied to \eqref{e: secondhypothesis} supplies a factorization $g = \varepsilon g' \gamma$, where $\gamma$ is an $(M/\eta)^{O(m)^{O(1)}}$-rational sequence, $\veps$ is an $((M/\eta)^{O(m)^{O(1)}}, T)$-smooth sequence, and $g'$ is a sequence lying in an $(M/\eta)^{O(m)^{O(1)}}$-rational subnilmanifold $G'/\Gamma'$ of step $\le s - 1$ for some normal subgroup $G'\subseteq G$ with lattice $\Gamma' := G'\cap \Gamma$. Since $\varepsilon$ is $((M/\eta)^{O(m)^{O(1)}}, T)$-smooth, it is also $(\sigma^{-1}, |A|)$-smooth for some $\sigma = (\eta/M)^{O(m)^{O(1)}}$.

\smallskip
\textbf{Step 3: Pigeonholing in a subprogression.}
\smallskip

Let $R \ll (M/\eta)^{O(m)^{O(1)}}$ be the period\footnote{We use here \cite[Lemma B.14]{Leng23b}, which says that if $\gamma$ is $Q$-rational, then it is $O(Q^{O(1)})$-periodic.} of $\gamma$. We now pigeonhole in subprogressions $A'$ of $A$ and $B'$ of $B$, of common difference $R$ and lengths
$$(\eta/M)^{O(m)^{O(1)}}|A| \ll |A'| \leq \sigma |A|\eta^{2} \quad \text{and} \quad (\eta/M)^{O(m)^{O(1)}}|B| \ll |B'| \leq \sigma |B|\eta^{2},$$
to find some such $A',B'$ such that
$$\sum_{y \in A'} \abs{\E_{z \in B'}e(\tilde{a}_{(j, \tilde{V})}(y)\widetilde{P}_{[\tilde{r}, \tilde{V}]}(z))F(g(y + \widetilde{P}_{[\tilde{r}, \tilde{V}]}(z))\Gamma)} \geq (\eta/2) |A'|.$$
Recall that we have the factorization 
$$g(y + \widetilde{P}_{[\tilde{r}, \tilde{V}]}(z)) = \varepsilon(y + \widetilde{P}_{[\tilde{r}, \tilde{V}]}(z))g'(y + \widetilde{P}_{[\tilde{r}, \tilde{V}]}(z))\gamma(y + \widetilde{P}_{[\tilde{r}, \tilde{V}]}(z)).$$
The term  $\gamma(y + \widetilde{P}_{[\tilde{r}, \tilde{V}]}(z))\Gamma$ is constant for $y \in A'$ and $z \in B'$ because $A',B'$ have common difference $R$.  Thus, there is some $\gamma_0 \in G$ such that for such $y,z$, our factorization can be written as
$$\varepsilon(y + \widetilde{P}_{[\tilde{r}, \tilde{V}]}(z))g'(y + \widetilde{P}_{[\tilde{r} \tilde{V}]}(z))\gamma_0,$$
up to right-multiplication by $\Gamma$. By the smoothness properties of $\varepsilon$, we have
\begin{gather*}
d_{G/\Gamma}(\varepsilon(y + \widetilde{P}_{[\tilde{r}, \tilde{V}]}(z)), \varepsilon_0) \leq \left(\sigma^{-1}/|A|\right) \cdot |A'| \leq \eta^2 \leq \eta/4,\\
\textrm{and}\quad d_{G/\Gamma}(\varepsilon_0, \mathrm{id}_G) \leq \sigma^{-1}\ll (M/\eta)^{O(m)^{O(1)}}
\end{gather*}
for some $\varepsilon_0 \in G$. Then, by the triangle inequality (recall that $F$ has Lipschitz constant at most $1$), we have
$$\sum_{y \in A'} \abs{\E_{z \in B'}e(\tilde{a}_{(j, \tilde{V})}(y)\widetilde{P}_{[\tilde{r}, \tilde{V}]}(z))F(\varepsilon_0 g'(y + \widetilde{P}_{[\tilde{r}, \tilde{V}]}(z))\gamma_0\Gamma)} \geq (\eta/4) |A'|.$$
By \cite[Lemma B.6]{Leng23b}, we can write $\gamma_0 = \{\gamma_0\}[\gamma_0]$ where $[\gamma_0] \in \Gamma$ and $|\psi(\{\gamma_0\})| \le \frac{1}{2}$. Expanding
$$\varepsilon_0 g'(y + \widetilde{P}_{[\tilde{r}, \tilde{V}]}(z))\gamma_0 = \varepsilon_0 \{\gamma_0\} (\{\gamma_0\}^{-1}g'(y + \widetilde{P}_{[\tilde{r}, \tilde{V}]}(z))\{\gamma_0\})[\gamma_0]$$ and setting
$\tilde{F} := F|_{G'}(\varepsilon_0 \{\gamma_0\} \cdot)$ (this function is $O((M/\eta)^{O(m)^{O(1)}})$-Lipschitz by \cite[Lemma B.4]{Leng23b}), we deduce from the normality of $G'$ in $G$ that $$\{\gamma_0\}^{-1}g'(y + \widetilde{P}_{[\tilde{r}, \tilde{V}]}(z))\{\gamma_0\} \in \mathrm{poly}(\mathbb{Z}, G')$$ and that
$$\sum_{y \in A'} \abs{\E_{z \in B'}e(\tilde{a}_{(j, \tilde{V})}(y)\widetilde{P}_{[\tilde{r}, \tilde{V}]}(z))\tilde{F}(\{\gamma_0\}^{-1}g'(y + \widetilde{P}_{[\tilde{r}, \tilde{V}]}(z))\{\gamma_0\}\Gamma')}\gg (\eta/M)^{O(m)^{O(1)}} |A'|.$$

\smallskip
\textbf{Step 4: Completing the iteration.}
\smallskip

We continue the iteration by redefining 
\begin{gather*}
A \leadsto A',\quad B \leadsto B',\quad g(n) \leadsto \{\gamma_0\}^{-1}g'(n)\{\gamma_0\},\\ G/\Gamma \leadsto G'/\Gamma',\quad \eta \leadsto (\eta/M)^{O(m)^{O(1)}},\quad \text{ and }\quad F \leadsto \tilde{F}.    
\end{gather*}

\smallskip
\textbf{Step 5: The special case $s = 1$.}
\smallskip

When $s = 1$ and $k \ge 2$, we have $F(g(n)\Gamma) = e(p(n))$ for some $p(n) = \sum_{j = 0}^k \alpha_j n^j \in \mathbb{R}[n]$. Applying Proposition~\ref{prop:Wtricknilsequenceprop}, we see that there exists some $q \ll \eta^{-O(m)^{O(1)}}$ such that
$$\|q\alpha_j\|_{\mathbb{R}/\mathbb{Z}} \ll \frac{(M/\eta)^{O(m)^{O(1)}}}{T^j}\quad \textrm{for all}\quad 2\leq j \leq k.$$
Notice that when $n$ is restricted to an arithmetic progression of difference $q$, the function $\sum_{j=2}^k \alpha_j n^j$ is slowly-varying and $p(n)$ is very close to a constant plus $\alpha_1 n$ .  Combining this with the pigeonholing argument from Step 2, which lets us replace $A,B$ by arithmetic progressions of difference $q$, we reduce to the case where $k=1$.

Finally, when $s = 1$ and $k = 1$, the last case of Proposition \ref{prop:Wtricknilsequenceprop} gives for each $j\in\CV$ a positive integer $q_j\ll \eta^{-O(m)^{O(1)}}\leq \exp(\log(1/\delta)^{O(1)})$ and a phase $\tilde{\alpha}_j\in\R/\Z$ such that
\begin{align*}
    \|q_j(\tilde{a}_{(j, \tilde{V})}(y) + \tilde{\alpha}_j)\|_{\mathbb{R}/\mathbb{Z}} &= \|q_j({a}_{(j, \tilde{V})}(y)V' - \tilde{\alpha}_j)\|_{\mathbb{R}/\mathbb{Z}}\ll \frac{(M/\eta)^{O(m)^{O(1)}}}{T}\\
    &\ll \frac{(M/(\delta\eta))^{O(m)^{O(1)}}\tilde{V}}{N}\leq \exp(\log(1/\delta)^{O(1)})\frac{\Tilde{V}}{N} 
\end{align*}
for at least $\exp(-(\log(1/\delta))^{O(1)})N/\tilde{V}$ values of $y\in [N/\Tilde{V}]$. Pigeonholing in the residue classes modulo $q_j$, we can find some $\alpha_j \in \{\frac{\tilde{\alpha_j}+\kappa_j}{q_j}: \kappa_j \in [q_j]\}$ such that
\begin{align*}
    \|{a}_{(j, \tilde{V})}(y)V' - {\alpha}_j\|_{\mathbb{R}/\mathbb{Z}}\leq \exp(\log(1/\delta)^{O(1)})\frac{\Tilde{V}}{N} 
\end{align*}
holds for at least $\exp(-(\log(1/\delta))^{O(1)})N/\tilde{V}$ values of $y\in [N/\Tilde{V}]$.
For $j\notin\CV$, we define $\alpha_j$ arbitrarily. This completes the proof of Proposition \ref{prop:popular-a-value}.

\section{Finishing the argument}\label{S: finishing}

Recall that
$$\Lambda^{\mathrm{Model}}(f_0, f_1, f_2) = \sum_{x, y} \E_{z \in [N]} f_0(x, y)f_1(x + z, y) f_2(x, y + z)\nu(z)$$
where $\nu(z) = d^{-1} 1_{[N]}(z) (N/z)^{(d-1)/d}$.  Our goal in this section is to compare $\Lambda^W$ with $\Lambda^{\model}$. We already know from Theorem \ref{thm:degree lowering-lambda} that $\Lambda^W$ is controlled by suitable degree-$2$ box norms of $f_0, f_1, f_2$, and we now show that the same holds for $\Lambda^\model.$ 
\begin{proposition}\label{P: box norm control of model}
    Let $\delta\in(0, 1/10)$, let $N\in\N$, and let $f_0, f_1, f_2:\Z^2\to\C$ be $1$-bounded functions supported on $[N]^2$. If 
    \begin{align}\label{E: lower bound for model}
        \Lambda^{\model}(f_0, f_1, f_2)\geq \delta N^3,
    \end{align}
    then $\norm{f_0}^4_{\be_1\cdot[\pm N],\; \be_2\cdot[\pm N]}, \norm{f_1}^4_{\be_1\cdot[\pm N],\; (\be_2-\be_1)\cdot[\pm N]}, \norm{f_2}^4_{\be_2\cdot[\pm N],\; (\be_2-\be_1)\cdot[\pm N]}\gg_d \delta^{O_d(1)} N^2$.
\end{proposition}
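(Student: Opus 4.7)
The proof follows the well-known two-step Cauchy–Schwarz argument for controlling ordinary corner counts by box norms (in the style of Shkredov \cite{Sh06b}), combined with a dyadic decomposition of the $z$-variable to handle the non-uniform weight $\nu$. By symmetry (via affine changes of variables in $(x,y)$, analogous to those appearing in the proof of Theorem \ref{thm:degree lowering-lambda}), it suffices to establish the box-norm control on $f_0$; the bounds for $f_1$ and $f_2$ follow by identical reasoning after permuting the roles of the functions.

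Since $\nu(z)=d^{-1}(N/z)^{(d-1)/d}$ varies by only a constant factor on each dyadic interval $I_j:=[2^{j-1},2^j)\cap[N]$, I would split $\Lambda^{\model}=\sum_j\Lambda_j^{\model}$ according to these intervals and pigeonhole over the $O_d(\log N)$ levels to find a scale $Z=2^j$ at which the unweighted localized corner operator
$$\Lambda^{\corners}_{[Z/2,Z]}(f_0,f_1,f_2):=\sum_{x,y}\E_{z\in[Z/2,Z]}f_0(x,y)f_1(x+z,y)f_2(x,y+z)$$
satisfies $\Lambda^{\corners}_{[Z/2,Z]}\gg_d\delta(N/Z)^{1/d}N^2/\log N$. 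As this quantity is trivially bounded above by $N^2$, the relevant dyadic level must satisfy $Z\gg_d N(\delta/\log N)^d$, i.e.\ $Z$ is comparable to $N$ up to polynomial-in-$\delta/\log N$ factors.

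I would then run the classical two-Cauchy–Schwarz argument for ordinary corners on $\Lambda^{\corners}_{[Z/2,Z]}$. Substituting $u=x+z$ and applying a first Cauchy–Schwarz in $(u,y)$ to eliminate $f_1$, followed by the changes of variables $h=z-z'$ and $v=y+z'$, the inner sum factorises, up to cross-constraints $v-y,v-y+h\in[Z/2,Z]$, as $\sum_{u,h}A(u,h)\,B(u,h)$ with
$$A(u,h)=\sum_y f_0(u-h,y)\overline{f_0(u,y)}\qquad\text{and}\qquad B(u,h)=\sum_v f_2(u-h,v+h)\overline{f_2(u,v)}.$$
Dropping the cross-constraints at a polynomial-in-$Z$ cost and applying a second Cauchy–Schwarz in $(u,h)$ separates the $f_0$ and $f_2$ contributions; the routine Parseval-style identifications $\sum_{u,h}|A(u,h)|^2\asymp Z^2\|f_0\|_{\be_1[\pm Z],\be_2[\pm Z]}^4$ (and the analogue for $B$ and $f_2$) then convert the inequality into box-norm language. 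Combining with the trivial bound $\|f_2\|_{\be_2[\pm Z],(\be_2-\be_1)[\pm Z]}^4\ll N^2$ and rearranging yields a polynomial-in-$\delta/\log N$ lower bound on $\|f_0\|_{\be_1[\pm Z],\be_2[\pm Z]}^4$.

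The final step is to enlarge the box norm from scale $Z$ to scale $N$ via Lemma \ref{L: properties of box norms}\eqref{i: enlarging}, which incurs a loss of $(N/Z)^{O_d(1)}=(\delta/\log N)^{-O_d(1)}$. Absorbing the accumulated polylogarithmic factors into the $\delta^{O_d(1)}$ exponent then yields the claimed bound $\|f_0\|_{\be_1[\pm N],\be_2[\pm N]}^4\gg_d\delta^{O_d(1)}N^2$. The main obstacle is the careful bookkeeping of the $\delta$, $N$, $Z$, and $\log N$ dependencies through the dyadic pigeonhole, the two Cauchy–Schwarzes, the Parseval identifications, and the enlarging step, together with the handling of the cross-constraints from the localization of the $z$-variable without breaking the factorization into $A\cdot B$.
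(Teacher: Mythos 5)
Your overall strategy (reduce to an unweighted, localized corner count and run the standard two Cauchy--Schwarz argument, then enlarge the boxes) is reasonable, and it is genuinely different from the paper's route, which never dyadically decomposes: the paper smooths $\nu$, approximates the rescaled weight by a trigonometric polynomial with $O(\delta^{-10d})$ frequencies, distributes the phases $e(mz/N)$ into $f_1,f_2$, pigeonholes in the frequency $m$, and then runs the usual Cauchy--Schwarz maneuvers at the full scale $N$, so that every loss is a power of $\delta$ alone. The problem with your version is the final step: pigeonholing over the $O_d(\log N)$ dyadic levels costs a factor $1/\log N$, your lower bound on the selected scale is only $Z\gg_d N(\delta/\log N)^d$, and the enlargement via Lemma \ref{L: properties of box norms}\eqref{i: enlarging} then costs $(N/Z)^{O(1)}=(\log N/\delta)^{O_d(1)}$. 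These are losses in $\log N$, not in $\log(1/\delta)$, and they cannot be ``absorbed into the $\delta^{O_d(1)}$ exponent'': the proposition imposes no relation between $N$ and $\delta$ (take $\delta$ fixed and $N\to\infty$), so the bound you actually obtain, $(\delta/\log N)^{O_d(1)}N^2$, is strictly weaker than the stated $\delta^{O_d(1)}N^2$. This weakening is not cosmetic: in the application (Proposition \ref{P: Lambda*}) the box-norm lower bound must contradict the Fourier-uniformity estimate $\ll N/w^{c}$ with $w=\exp((\log\log\log\log N)^{O(1)})$, and a loss of any power of $\log N$ swamps $w^{c}$ and destroys the contradiction. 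The fix within your framework is easy but necessary: first discard the range $z\le (\delta/2)^d N$, whose contribution to $\Lambda^{\model}$ is at most $(\delta/2)N^2$ since $\E_{z\in[N]}1_{z\le \eta N}\nu(z)\ll \eta^{1/d}$; on the remaining range there are only $O_d(\log(1/\delta))$ dyadic blocks and $Z\gg_d\delta^dN$, so all losses become powers of $\delta$.

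A secondary issue is the middle step: after the first Cauchy--Schwarz you cannot ``drop the cross-constraints at a polynomial-in-$Z$ cost,'' since the summands are not nonnegative and removing an indicator from a signed sum has no controlled multiplicative cost; as written the factorization into $\sum_{u,h}A(u,h)B(u,h)$ does not follow. The standard remedies are either to reorder the eliminations (first Cauchy--Schwarz in the variables $(x,\,y+z)$ to remove $f_2$, then a second one to remove $f_1$, which produces the $\be_1,\be_2$-box norm of $f_0$ with differencing parameters $z-z'$ and no coupling constraint ever appearing), or to Fourier-expand the interval cutoff --- which is essentially what the paper does once and for all with the weight $\nu$ itself. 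With these two repairs your argument goes through and gives the stated $\delta^{O_d(1)}$ bound, but as written the proposal does not.
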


\begin{proof}
We will replace $\nu$ by a smoother weight function that is easier to work with.  Let $\veps>0$ be a small parameter to be chosen later, and let $\varphi: \Z \to [0,1]$ be a $4\veps^{-1}$-Lipschitz function that equals $1$ on $[2\veps N, (1-\veps)N-1]$ and vanishes outside of $[\veps N, N-1]$.  Define the cut-off weight function
$$\nu_{\veps}(z):= d^{-1} (N/(z+1))^{(d-1)/d}\varphi(z).$$
Then $\nu_{\eps}$ equals $\nu$ away from the boundary of $[N]$, and we have the $\ell^1$-bound
$$\sum_{z \in \Z} \abs{\nu_{\veps}(z)-\nu(z)} \ll \veps^{1/d}N.$$
Our construction of $\nu_{\veps}$ ensures that it is $O\brac{\veps^{-3}N^{-1}}$-Lipschitz.  Define the function $g: \R/\Z \to [0,1]$ by setting $g(z):=\veps^{1-1/d}\nu_{\eps}(zN)$ for $0 \leq z<1$; it follows that $g$ is $O\brac{\veps^{-3}}$-Lipschitz. By \cite[Lemma 4.1]{OP23}, there is a trigonometric polynomial $h: \R/\Z \to [0,1]$ of the form
$$h(x)=\sum_{m \ll \veps^{-10}} a(m) e(mx),$$
with each $|a_m| \leq 1$, such that
$$\norm{g-h}_\infty \leq \veps.$$
Unwinding the various definitions, we see that
$$\abs{\nu_{\veps}(z)-\veps^{1/d-1}\sum_{m \ll \veps^{-10}} a(m) e(mz/N)} \leq \veps^{1/d}$$
for all $z \in [N]$.  This, combined with our earlier $\ell^1$-bound, implies that $\Lambda^{\model}(f_0, f_1, f_2)$ differs by at most $O\brac{\veps^{1/d}N^3}$ from
\begin{align*}
\sum_{x,y}\sum_{z \in [N]} f_0(x,y)f_1(x+z,y)f_2(x,y+z) \cdot \veps^{1/d-1}\sum_{m \ll \veps^{-10}} a(m) e(mz/N).
\end{align*}
In particular, if we set $\veps$ to be a sufficiently small constant times $\delta^d$, then the latter quantity has absolute value at least $\delta N^3/2$.  Applying the triangle inequality and distributing the phases $e(mz/N)$, we obtain
$$\sum_{m \ll \delta^{-10d}} \abs{\sum_{x,y}\sum_{z \in [N]} f_0(x,y)f_1^{(m)}(x+z,y)f_2^{(m)}(x,y+z)} \gg \delta^{2d} N^3,$$
where
$$f_1^{(m)}(x,y):=f_1(x,y)e(mx/N) \quad \text{and} \quad f_2^{(m)}(x,y):=f_2(x,y)e(-mx/N)$$
are still $1$-bounded functions supported on $[N]$.  Pigeonholing, we find some $m$ such that
$$\abs{\sum_{x,y}\sum_{z \in [N]} f_0(x,y)f_1^{(m)}(x+z,y)f_2^{(m)}(x,y+z)} \gg \delta^{12d} N^3,$$
and then the desired norm-control on $f_0$ follows from the usual Cauchy--Schwarz maneuvers.  The norm-control on $f_1,f_2$ follows analogously.
\end{proof}

We are finally ready to show that $\Lambda^W$ and $\Lambda^\model$ are always close together.

\begin{proposition}\label{P: Lambda*}
    There exists $C = C(P)>0$ such that the following holds. Let $f_0, f_1, f_2:\Z^2\to\C$ be $1$-bounded functions supported on $[N]^2$. If $N, W$ satisfy $N\geq C W^{C}$ and $W\geq \exp\exp(\log(1/\delta)^{C})$, then
    \begin{align*}
        |\Lambda^W(f_0,f_1,f_2)-\Lambda^{\mathrm{Model}}(f_0,f_1,f_2)| <\delta N^2.
    \end{align*}
\end{proposition}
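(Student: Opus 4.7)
The plan is to exploit the fact that both $\Lambda^W$ and $\Lambda^{\mathrm{Model}}$ are controlled by the same triple of $U^1\times U^1$-box norms, as established in Theorem~\ref{thm:degree lowering-lambda} and Proposition~\ref{P: box norm control of model}, and then to compare the two on product (rank-one) test functions using Fourier-analytic properties of $\widetilde P$.

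I will argue by contradiction: assume $|\Lambda^W(f_0,f_1,f_2) - \Lambda^{\mathrm{Model}}(f_0,f_1,f_2)|\geq \delta N^2$. Whichever of the two operators contributes more (at least $\delta N^2/2$ in absolute value) lets me apply the relevant inverse theorem, so in either case I obtain lower bounds of the form $\norm{f_i}_{\mathrm{box}_i}^4\geq \exp(-\log(1/\delta)^{O(1)})N^2$, with distinct directions for $i=0,1,2$. Since the same box-norm control applies to both operators simultaneously, it applies to their difference as well. I would then carry out a Koopman--von Neumann (regularity) decomposition of each $f_i$ in turn, writing $f_i=f_i^{\mathrm{str}}+f_i^{\mathrm{unif}}$, where $f_i^{\mathrm{str}}$ is an explicit combination of doubly-exponentially many rank-one box-norm obstructions (of the form $a_1(x)a_2(y)$ for $f_0$, $b_1(y)b_2(x+y)$ for $f_1$, and $c_1(x)c_2(x+y)$ for $f_2$) and $f_i^{\mathrm{unif}}$ has box norm so small that its contribution is absorbed into the error by the box-norm control on the difference. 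Substituting one coordinate at a time in this way reduces matters, by multilinearity, to bounding $\Lambda^W-\Lambda^{\mathrm{Model}}$ on individual pure product triples.

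On such a triple, the change of variable $u=x+y$ collapses the comparison to an expression of the form
\[
\sum_u (A\ast B)(u)\sum_z C(u+z)\bigl(\mu^W(z)-\nu(z)/N\bigr),
\]
where $A,B,C$ are $1$-bounded functions built from the factors of the rank-one obstructions and $\mu^W$ denotes the pushforward of the uniform measure on $[(N/W_d)^{1/d}]$ under $\widetilde P$. Fourier expansion in the $z$-variable together with Plancherel and Cauchy--Schwarz bound this quantity by roughly $\norm{\widehat{\mu^W - \nu/N}}_\infty \cdot N^2$, and the Fourier-uniformity estimate of Appendix~\ref{A: Fourier uniformity} gives $\norm{\widehat{\mu^W-\nu/N}}_\infty=O_P(W^{-c})$ for some $c=c_P>0$. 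This saving must beat the doubly-exponential loss $\exp\exp(\log(1/\delta)^{O(1)})$ coming from the Koopman--von Neumann bookkeeping, which is precisely why the hypothesis $W\geq \exp\exp(\log(1/\delta)^C)$ is what it is; the supersaturation version of Shkredov's corners theorem from Appendix~\ref{A: supersaturation} enters to bound certain auxiliary linear-corner counts that appear when estimating the $\nu$-weighted model operator on the structured pieces. The main obstacle I expect is bookkeeping: each inverse-theorem application costs an $\exp(\log(1/\delta)^{O(1)})$ factor, the Koopman--von Neumann step doubly-exponentiates this, and matching these accumulated losses against the Fourier-uniformity saving is precisely what forces the rapid growth rate imposed on $W$.
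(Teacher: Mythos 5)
Your overall frame (contradiction, use the box-norm control of $\Lambda^W$ and $\Lambda^{\model}$, and beat the loss with Fourier uniformity of the weight difference) is the right one, but two quantitative points break the argument as proposed. First, the Fourier-uniformity input is weaker than you assume: Proposition~\ref{prop:nu-uniformity} gives a saving of $w^{-c}$ with $\log W \asymp w$, i.e.\ only $(\log W)^{-c}$, not $W^{-c}$; indeed the bound $O_P(W^{-c})$ is false, since the $W$-trick only removes bias at primes below $w$, and the major arc at the first prime exceeding $w$ already contributes about $w^{-1/d}$. Second, your Koopman--von Neumann decomposition of all three functions into doubly-exponentially many rank-one pieces produces a loss of size $\exp\exp(\log(1/\delta)^{O(1)})$, and a saving of $w^{-c}\le \exp(-c\log(1/\delta)^{C})$ (which is all the hypothesis $W\ge\exp\exp(\log(1/\delta)^{C})$ buys you) cannot absorb it; to close your argument you would need $W$ triple-exponential in $\log(1/\delta)$, which is strictly weaker than the stated proposition and would cost an extra logarithm in Theorem~\ref{maintheorem}. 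A further, smaller point: the supersaturation result of Appendix~\ref{A: supersaturation} plays no role here --- it enters only in the deduction of Theorem~\ref{maintheorem} after the comparison theorem --- so it should not be invoked to control auxiliary corner counts inside this proof.

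The paper avoids any decomposition by stashing. One sets $\Lambda^*:=\Lambda^W-\Lambda^{\model}$ and, assuming $|\Lambda^*(f_0,f_1,f_2)|\ge\delta N^2$, applies Cauchy--Schwarz to replace $f_0$ by the dual function $\CD_0^*(f_1,f_2)$ of $\Lambda^*$; the triangle inequality then makes either $\Lambda^W$ or $\Lambda^{\model}$ large with this dual function in the first slot, so Theorem~\ref{thm:degree lowering-lambda} or Proposition~\ref{P: box norm control of model} gives largeness of a $U^1\times U^1$ box norm of $\CD_0^*(f_1,f_2)$ itself. A single application of the $U^1\times U^1$ inverse theorem (after a $V$-trick) produces one rank-one correlation, in which the weight $\nu^*=\tilde\nu-\nu$ appears explicitly; two Cauchy--Schwarz/van der Corput steps then remove the bounded functions and show $\norm{\nu^*}_{U^2(V\cdot[\pm N/V])}^4\ge\exp(-\log(1/\delta)^{O(1)})N$, which contradicts Proposition~\ref{prop:nu-uniformity} combined with Lemma~\ref{L: 1-dim U^2 inverse} precisely when $w\ge\exp(\log(1/\delta)^{O(1)})$, i.e.\ $W\ge\exp\exp(\log(1/\delta)^{O(1)})$. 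Because only singly ``quasi-exponential'' losses $\exp(\log(1/\delta)^{O(1)})$ are incurred, the logarithmic-in-$W$ saving suffices, which is exactly what the stated hypothesis encodes. If you want to salvage your route, you must replace the global regularity decomposition by this dual-function argument (or otherwise arrange that only $\exp(\log(1/\delta)^{O(1)})$-type losses occur before the Fourier-uniformity step).
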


\begin{proof}
We let all constants depend on $P$.
Define the ``difference'' counting operator
\begin{align*}
    \Lambda^*(f_0, f_1, f_2) := \Lambda^W(f_0,f_1,f_2)-\Lambda^{\mathrm{Model}}(f_0,f_1,f_2)
\end{align*}
and the dual function
\begin{align*}
    \CD_0^*(f_1, f_2)(x,y) &:= \E_{z\in[(N/W_d)^{1/d}]}f_1(x + \widetilde P(z), y) f_2(x, y + \widetilde P(z))\\
    &\qquad\qquad- \E_{z\in[N]}f_1(x + z, y) f_2(x, y + z)\nu(z).
\end{align*}
Assume for the sake of contradiction that
\begin{align*}
    |\Lambda^*(f_0, f_1, f_2)|\geq \delta N^2;
\end{align*}
then stashing gives
\begin{align*}
    |\Lambda^*(\CD_0^*(f_1,f_2), f_1, f_2)|\geq \delta^2 N^2,
\end{align*}
and we conclude by the triangle inequality that either
\begin{align*}
    |\Lambda^W(\CD_0^*(f_1,f_2), f_1, f_2)|\geq \delta^2 N^2/2 \quad\textrm{or}\quad |\Lambda^\model(\CD_0^*(f_1,f_2), f_1, f_2)|\geq \delta^2 N^2/2.
\end{align*}
According to which of these inequalities holds, we apply either
Theorem \ref{thm:degree lowering-lambda} or Proposition \ref{P: box norm control of model} and Lemma \ref{L: properties of box norms}\eqref{i: passing to APs} to find a positive integer power $V = W^{O(1)}$ for which 
\begin{align*}
    \norm{\CD_{0}^*(f_1,f_2)}_{\be_1 V \cdot[\pm N/V],\; \be_2 V \cdot[\pm N/V]}^4 \geq \exp(-\log(1/\delta)^{O(1)})N^2.
\end{align*}
As usual, we must pass to arithmetic progressions of difference $V$ before using the $U^1 \times U^2$-inverse theorem. Write $g_{0, j_1, j_2}:= (\CD_0^*(f_1, f_2))_{(j_1, j_2, V)}$, so that
\begin{align*} 
    \E_{j_1, j_2\in[V]}\norm{g_{0, j_1, j_2}}_{\be_1 \cdot[\pm N/V],\; \be_2 \cdot[\pm N/V]}^4 \geq \exp(-\log(1/\delta)^{O(1)})(N/V)^2.
\end{align*}

The inverse theorem for the $U^1 \times U^1$-box norm (Lemma \ref{L: U1xU1 inverse}) gives $1$-bounded functions $b_{1, j_1, j_2},b_{2, j_1, j_2}: \mathbb{Z} \to \mathbb{C}$, for each $j_1, j_2\in [V]$, such that
\begin{align*}
    \E_{j_1, j_2\in[V]}\sum_{x,y}g_{0, j_1, j_2}(x,y)b_{1, j_1, j_2}(x)b_{2, j_1, j_2}(y)\geq \exp(-\log(1/\delta)^{O(1)})(N/V)^2.
\end{align*}

Define the weights
\begin{align*}
    \tilde{\nu}(z):= \frac{N}{(N/W_d)^{1/d}}1_{\widetilde P([(N/W_d)^{1/d}])}(z)\quad\textrm{and}\quad \nu^*: = \Tilde{\nu}-\nu,
\end{align*}
so that
\begin{align*}
    \E_{z\in[(N/W_d)^{1/d}]}f_1(x + \widetilde P(z), y)f_2(x, y + \widetilde P(z)) = \E_{z\in[N]} f_1(x+z, y) f_2(x, y+z)\Tilde{\nu}(z)
\end{align*}
and
\begin{align*}
    \CD_0^*(f_1, f_2)(x,y) = \E_{z\in[N]} f_1(x+z, y) f_2(x, y+z)\nu^*(z).
\end{align*}
Then
\begin{multline*}
    \E_{j_1,j_2,r\in[V]}\sum_{x,y}\E_{z\in[N/V]}b_{1,j_1,j_2}(x)b_{2,j_1,j_2}(y)\\
    f_{1, j_1 + r, j_2}(x+z,y)f_{2,j_1,j_2+r}(x,y+z)\nu^*_r(z)\geq \exp(-\log(1/\delta)^{O(1)})(N/V)^2, 
\end{multline*}
where $f_{l,j_1,j_2} :=(f_l)_{(j_1,j_2, V)}$ and $\nu^*_r := \nu^*_{(r,V)}$.

We will now deduce the largeness of the norm $\norm{\nu^*}_{U^2(V\cdot[\pm N/V])}$, in contradiction with the Fourier-uniformity estimate from Proposition \ref{prop:nu-uniformity}.  After shifting $x\mapsto x-z$, we use the Cauchy--Schwarz inequality and Lemma~\ref{L: vdC} in order to double $z$, obtaining
\begin{multline*}
    \E_{j_1,j_2,r\in[V]}\sum_{x,y,h_1}\E_{z\in[N/V]} \mu_{N'/V}(h_1)\Delta_{-h_1}b_{1,j_1,j_2}(x-z)\\
    \Delta_{h_1(\be_2-\be_1)}f_{2,j_1+r,j_2}(x-z,y+z)\Delta_{h_1} \nu^*_r(z)\geq \exp(-\log(1/\delta)^{O(1)})(N/V)^2
\end{multline*}
for any positive integer $ N'\leq \exp(-\log(1/\delta)^{O(1)}) N$. Shifting $(x,y)\mapsto (x+z, y-z)$ and repeating this maneuver gives
\begin{align*}
    \E_{r\in[V]}\norm{\nu^*_r}_{U^2([\pm N'/V])}^4 = \E_{r\in[V]}\sum_{z,h_1,h_2}\mu_{N'/V}(h_1,h_2)\Delta_{h_1,h_2}\nu^*_r(z)\geq \exp(-\log(1/\delta)^{O(1)})N/V.
\end{align*}
Choosing $N'\asymp \exp(-\log(1/\delta)^{C}) N$ for a sufficiently large constant $C>0$, we can 
use Lemma \ref{L: properties of box norms}\eqref{i: enlarging} to replace $N'$ with $N$ and obtain the norm-control
\begin{align*}
    \norm{\nu^*}_{U^2(V\cdot[\pm N/V])}^4 = \sum_{r\in[V]}\norm{\nu^*_r}_{U^2([\pm N/V])}^4 \geq \exp(-\log(1/\delta)^{O(1)}) N.
\end{align*}
But Proposition~\ref{prop:nu-uniformity} and Lemma \ref{L: 1-dim U^2 inverse} tell us that
\begin{align*}
    \norm{\nu^*}_{U^2(V\cdot[\pm N/V])}^4 \ll N/w^{c}
\end{align*}
for some constant $c>0$ depending only on $P$. Hence $w \leq \exp(\log(1/\delta)^{O(1)})$ and so $W\leq \exp\exp(\log(1/\delta)^{O(1)})$,
which contradicts the assumption on $W$ as long as the $O(1)$ term is sufficiently large.
\end{proof}
We are finally ready to conclude the proof of Theorem \ref{maintheorem}.
\begin{proof}[Proof of Theorem~\ref{maintheorem}]
It suffices to prove the result with $N$ replaced by $WN$.
Let $A \subseteq [WN]^2$ be a set with density $\gamma$. By the pigeonhole principle, there exist $j_1, j_2 \in [\beta_1^2W]$ such that the set $$B := \{(x,y)\in[N]^2: \beta_1^2W(x,y) + (j_1, j_2) \in A\}$$ has size at least $\gamma N^2$. Since $ \nu(z) \geq d^{-1}$ pointwise for $z \in [N]$, the supersaturation result (Proposition~\ref{prop:supersat}) tells us that 
$$\Lambda^{\mathrm{Model}}(1_B,1_B,1_B) \geq d^{-1}\exp(- \exp (\gamma^{-c'}))N^2$$
for some absolute $c'>0$. Setting $\delta:=\exp(- \exp (\gamma^{-c'}))/(2d)$,
we obtain from Proposition \ref{P: Lambda*} that
$$\Lambda^W(1_B,1_B,1_B) \gg_P \exp(- \exp (\gamma^{-c'})) N^2$$
for any $W\geq \exp\exp(\log(1/\delta)^{\Omega_P(1)})$ satisfying
\begin{align*}
    N\gg_P W^{\Omega_P(1)}\geq \exp\exp(\log(1/\delta)^{\Omega_P(1)})\geq \exp\exp\exp(\Omega_P(\gamma^{-c'})).
\end{align*} 
In particular, the set $B$ contains nontrivial configurations $(x,y),\; (x+\widetilde P(z),y),\; (x,y+\widetilde P(z))$, and hence the set $A$ contains nontrivial configurations of $(x, y),\; (x + P(z), y),\; (x, y + P(z))$.  This completes the proof.
\end{proof}
We will now deduce Corollary~\ref{cor: PSS improvement}.
\begin{proof}[Proof of Corollary~\ref{cor: PSS improvement}]
It suffices to prove the result with $N$ replaced by $WN$.
Let $A \subseteq [WN]$ be a set of density $\gamma$. By the pigeonhole principle, there exists $j \in [\beta_1^2W]$ such that the set $$B := \{x\in[N]: \beta_1^2Wx + j \in A\}$$ has size at least $\gamma N$. Since $\nu(z) \geq d^{-1}$ pointwise for $z \in [N]$, a standard Varnavides-style supersaturation version of the result of Kelley and Meka \cite{KM23} tells us that 
$$\sum_{x} \mathbb{E}_{z \in [N]}1_B(x)1_B(x + z)1_B(x + 2z)  \geq d^{-1}\exp(-\log(1/\gamma)^{C'})N$$
for some absolute constant $C'>0$.

Consider the set
$$B':=\{(x,y) \in [2N]^2: y-x \in B\},$$
and note that
$$N\sum_{x} 1_B(x)1_B(x + z)1_B(x + 2z) \asymp \sum_{x,y} 1_{B'}(x,y)1_{B'}(x + z,y)1_{B'}(x,y+z)$$
for each $z \in [N]$.  Using this correspondence and applying Proposition \ref{P: Lambda*} with $\delta:=\exp(- \log(1/\gamma)^{C'})/(10d)$, we conclude that
$$\sum_{x} \mathbb{E}_{z \in [(N/W_d)^{1/d}]} 1_B(x)1_B(x + \widetilde P(z))1_B(x + 2\widetilde P(z)) \gg_P \exp(-\log(1/\gamma)^{O_P(1)}) N$$
as long as $W\geq \exp\exp(\log(1/\delta)^{\Omega_P(1)})$ satisfies
\begin{align*}
    N\gg_P W^{\Omega_P(1)}\geq \exp\exp(\log(1/\delta)^{\Omega_P(1)})\geq \exp\exp(\log(1/\gamma)^{\Omega_P(1)}).
\end{align*}
In particular, the set $B$ contains nontrivial configurations $x,\; x+\widetilde P(z),\; x+2\widetilde P(z)$, and hence the set $A$ contains nontrivial configurations $x,\; x+ P(z),\; x+2 P(z)$.  This completes the proof.
\end{proof}

\appendix

\section{Inverse theorems for box norms}\label{A: inverse theorems}
We require inverse theorems for several box norms and directional Gowers norms that arise in our arguments.  Although these results are more or less standard, we include statements and proofs here for ease of reference.

\subsection{$U^1$-inverse theorems}
Heuristically, a function has large degree-$1$ Gowers norm in the direction $\bv$ if it correlates with a function that is periodic in the $\bv$-direction. Since we are working with norms in which the differencing parameters $h$ are constrained to lie in intervals, making this heuristic precise is somewhat delicate. In particular, we need the ``length'' of the differencing parameter to be sufficiently large relative to the size of the support of the function in question, where ``sufficiently large'' depends on the quality of the lower bound on the Gowers norm.
This issue does not appear for higher-degree box norms since Lemma \ref{L: properties of box norms}\eqref{i: enlarging} allows us to manipulate the lengths of boxes at will. 

We start with the $U^1$-inverse theorem for functions on $\Z$.
\begin{lemma}[$1$-dimensional $U^1$-inverse theorem]\label{L: 1-dim U^1 inverse}
    Let $\delta\in(0, 1]$ and $N, N'\in\N$ with $N'\geq \delta^{-1/2} N\geq \delta^{-1}$.  If $f: \Z \to \mathbb{C}$ is a $1$-bounded function supported on $[N]$, then
    \begin{gather*}
        \norm{f}_{[\pm N']}^2\geq \delta N \quad \Longrightarrow\quad \abs{\sum_{x}f(x)}\gg\delta^{1/4} N.
    \end{gather*}
\end{lemma}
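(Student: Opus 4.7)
The plan is to unpack the degree-$1$ box norm as the $L^2$-norm of a running average of $f$, then exploit the mismatch between the averaging window (of length $\sim N'$) and the support of $f$ (of length $N$) to isolate the contribution of the total sum $S := \sum_x f(x)$.

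First I would rewrite
\begin{align*}
\|f\|_{[\pm N']}^2 = \E_{h,h' \in [\pm N']} \sum_x f(x+h)\overline{f(x+h')} = \sum_x |F(x)|^2, \quad F(x):=\E_{h\in[\pm N']}f(x+h).
\end{align*}
Since $f$ is supported on $[N]$ and the hypothesis gives $N' \geq N$, the support of $F$ (an interval of length $N + 2N' - 2$) partitions naturally into a long \emph{middle region} of length $2N' - N$, on which the averaging window $[x-N'+1, x+N'-1]$ contains all of $[N]$ so that $F(x) = S/(2N'-1)$, together with two \emph{boundary strips} of combined length $2(N-1)$, on which $F(x)$ equals $1/(2N'-1)$ times a truncated partial sum $T(k):=\sum_{y=0}^{k-1}f(y)$ or its tail $S - T(k)$.

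Computing the two contributions separately would give an identity of the form
\begin{align*}
(2N'-1)^2 \|F\|_2^2 = (2N'-N)\,|S|^2 + \sum_{k=1}^{N-1}\bigl(|T(k)|^2 + |S-T(k)|^2\bigr).
\end{align*}
The trivial bounds $|T(k)| \leq k$ and $|S-T(k)| \leq N - k$ (from $1$-boundedness of $f$) bound the boundary sum by $\tfrac{2N^3}{3}$, and dividing by $(2N'-1)^2 \geq (N')^2 \geq \delta^{-1} N^2$ (using $N' \geq \delta^{-1/2} N$) bounds the boundary contribution to $\|F\|_2^2$ by $\tfrac{2\delta N}{3}$. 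Combining with the hypothesis $\|F\|_2^2 \geq \delta N$ forces the middle contribution to be at least $\tfrac{\delta N}{3}$; rearranging and using $(2N'-1)^2/(2N'-N) \gg N'$ together with $N' \geq \delta^{-1/2} N$ then yields $|S|^2 \gg \delta^{1/2} N^2$, hence $|S| \gg \delta^{1/4} N$.

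The only subtle point in this plan is recognizing the precise role of the hypothesis $N' \geq \delta^{-1/2} N$: it is exactly what is needed to guarantee that the boundary contribution is strictly smaller than the hypothesized lower bound $\delta N$. Without a large enough gap between $N'$ and $N$, an adversarial function such as $1_{[0, N/2)}-1_{[N/2,N)}$ could place essentially all of the $\|F\|_2^2$-mass in the boundary strips via large partial sums $T(k)$ despite having $S = 0$; the gap rules this out and pins $\|F\|_2^2$ down to the middle-region contribution, which is directly proportional to $|S|^2$.
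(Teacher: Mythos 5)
Your proof is correct, and it takes a slightly different (and in one respect cleaner) route than the paper's. The paper works directly with the Fej\'er-kernel form $\norm{f}_{[\pm N']}^2=\sum_{x,x'\in[N]}\mu_{N'}(x'-x)f(x)\overline{f(x')}$ and simply approximates $\mu_{N'}(x'-x)$ by the constant $\tfrac{1}{2N'-1}$ on the support, bounding the total error by $\tfrac{1}{2N'-1}\sum_{x,x'\in[N]}|x-x'|\leq \delta^{1/2}N^2/2$ and then comparing $(2N'-1)\delta N$ with $|S|^2$; at the last step it also invokes $N\geq\delta^{-1/2}$ to absorb a lower-order term. You instead pass to the running average $F(x)=\E_{h\in[\pm N']}f(x+h)$, use the identity $\norm{f}_{[\pm N']}^2=\sum_x|F(x)|^2$, and split the support of $F$ into the middle region (where $F=S/(2N'-1)$ exactly) and two boundary strips controlled by partial sums; this gives an exact identity rather than a kernel approximation, and the hypothesis $N'\geq\delta^{-1/2}N$ enters only to make the boundary contribution $\leq \tfrac{2}{3}\delta N$, strictly below the assumed lower bound $\delta N$. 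Both arguments exploit the same mechanism (the kernel is essentially constant at scale $N$ because $N'\gg N$), but your bookkeeping is exact where the paper's is an $\ell^1$ error estimate, and as a minor bonus your version never needs the secondary hypothesis $\delta^{-1/2}N\geq\delta^{-1}$. All the computational claims in your outline check out: the middle region has $2N'-N$ points, the boundary sum is at most $\tfrac{2}{3}N^3$ by $|T(k)|\leq k$, $|S-T(k)|\leq N-k$, and $(2N'-1)^2/(2N'-N)\geq 2N'-1\geq \delta^{-1/2}N$ gives $|S|^2\gg\delta^{1/2}N^2$ as required.
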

\begin{proof}
By definition, we have
\begin{align*}
    \norm{f}_{[\pm N']}^2 = \sum_{x,x'\in[N]}\mu_{N'}(x'-x)f(x)\overline{f(x')}.
\end{align*}
The idea is that since $N'$ is sufficiently large relative to $N$, we have $\mu_{N'}(x-x') \approx 1/(2N'-1)$ for all $x,x'\in[N]$.  More precisely, since $f$ is $1$-bounded and $N' \geq \delta^{-1/2}N \geq N$, we can make the crude comparison
\begin{align*}
\abs{(2N'-1)\norm{f}_{[N']}^2 -\sum_{x,x'}f(x)\overline{f(x')}} &\leq \sum_{x,x'\in[N]}\abs{(2N'-1)\mu_{N'}(x'-x)-1}\\
 &=\frac{1}{2N'-1} \sum_{x,x' \in [N]} |x-x'|\\
 & \leq \frac{N^2(N-1)}{2N'-1} \leq \delta^{1/2} N^2/2.
\end{align*}
Hence
\begin{align*}
\abs{\sum_x f(x)}^2=\sum_{x,x'} f(x) \overline{f(x')} &\geq (2N'-1) \delta N-\delta^{1/2} N^2/2\\
&\geq(3/2)\delta^{1/2}N - \delta N\gg \delta^{1/2}N^2
\end{align*}
since $N\geq\delta^{-1/2}$.
\end{proof}

\begin{lemma}[$2$-dimensional $U^1$-inverse theorem]\label{L: U^1 inverse}
    Let $\delta\in(0, 1]$ and $N, N'\in\N$ with $N'\geq (\delta/2)^{-1/2} N\geq 2\delta^{-1}$, and let $f: \Z^2 \to \mathbb{C}$ be a $1$-bounded function supported on $[N]^2$.  Then
    $$\norm{f}^2_{\be_1\cdot[\pm N']}\geq \delta N^2 \; \Longrightarrow \; \text{$\sum_{x,y}f(x,y)b(y)\gg \delta^{5/4} N^2$ for some $1$-bounded $b:\Z\to\C$};$$
     $$\norm{f}_{\be_2\cdot[\pm N']}^2\geq \delta N^2 \; \Longrightarrow \; \text{$\sum_{x,y}f(x,y)b(x)\gg \delta^{5/4} N^2$ for some $1$-bounded $b:\Z\to\C$};$$
and
 $$\norm{f}^2_{(\be_2-\be_1)\cdot[\pm N']}\geq \delta N^2 \; \Longrightarrow \; \text{$\sum_{x,y}f(x,y)b(x+y)\gg \delta^{5/4} N^2$ for some $1$-bounded $b:\Z\to\C$}.$$
\end{lemma}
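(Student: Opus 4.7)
The plan for all three cases is to follow a common template: decompose the two-dimensional box norm as a sum of one-dimensional box norms taken along lines parallel to the differencing direction, apply the popularity principle to find many slices with large one-dimensional box norm, and then invoke Lemma \ref{L: 1-dim U^1 inverse} on each good slice to extract an averaged correlation, which I assemble into the desired $1$-bounded function $b$ of one variable.

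For the $\be_1$ case, I would first expand directly from \eqref{E: box norms with Fejer kernels} to get
$$\norm{f}^2_{\be_1 \cdot [\pm N']} = \sum_y \norm{f(\cdot, y)}^2_{[\pm N']};$$
since each $f(\cdot, y)$ is $1$-bounded and supported on $[N]$, every term is bounded above by $N$, and only $y \in [N]$ contribute. The popularity principle at threshold $(\delta/2) N$ then produces at least $(\delta/2) N$ values of $y$ with $\norm{f(\cdot, y)}^2_{[\pm N']} \geq (\delta/2) N$. The hypotheses $N' \geq (\delta/2)^{-1/2} N$ and $N \geq 2\delta^{-1}$ exactly match what Lemma \ref{L: 1-dim U^1 inverse} requires with $\delta/2$ in place of $\delta$, so I would apply it to each good slice to obtain $|\sum_x f(x,y)| \gg \delta^{1/4} N$. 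Defining $b(y)$ to be the unit phase making $b(y) \sum_x f(x,y) = |\sum_x f(x,y)|$ on the good $y$'s (and zero otherwise) then yields $\sum_{x,y} f(x,y) b(y) \gg \delta^{5/4} N^2$. The $\be_2$ case is identical after exchanging the two coordinates.

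For the $\be_2 - \be_1$ case, I would parameterize $\Z^2$ by $(u,v) := (x+y, y)$; since the differencing direction $\be_2 - \be_1 = (-1,1)$ preserves $u = x+y$, setting $g(u,v) := f(u-v, v)$ yields the analogous slice decomposition
$$\norm{f}^2_{(\be_2-\be_1) \cdot [\pm N']} = \sum_u \norm{g(u, \cdot)}^2_{[\pm N']}.$$
The same popularity-plus-Lemma-\ref{L: 1-dim U^1 inverse} argument, applied to each slice $g(u, \cdot)$ after a harmless translation fitting its support into an interval of the form $[N_u]$ with $N_u := |\supp(g(u, \cdot))| \leq N$, produces a $1$-bounded phase $b: \Z \to \C$ with $\sum_{u,v} g(u,v) b(u) \gg \delta^{5/4} N^2$; undoing the change of variables gives the desired $\sum_{x,y} f(x,y) b(x+y)$. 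I do not anticipate any genuine obstacle here; the only bookkeeping point is controlling the contribution from boundary slices (where $N_u$ is too small to apply Lemma \ref{L: 1-dim U^1 inverse} directly), which is a lower-order effect absorbed by the stated floor $N \geq 2\delta^{-1}$.
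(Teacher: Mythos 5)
Your proposal is correct and follows essentially the same route as the paper: decompose the two-dimensional norm into one-dimensional slice norms along the differencing direction, apply the popularity principle, invoke Lemma \ref{L: 1-dim U^1 inverse} on the good slices, and assemble the unimodular weights into $b$; for the third statement the paper likewise foliates $[N]^2$ by progressions in the direction $\be_2-\be_1$. The boundary-slice bookkeeping you mention is the same lower-order issue the paper silently absorbs into the implicit constant (each slice can simply be viewed, after translation, as supported in an interval of length at most $N$), so no further changes are needed.
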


\begin{proof}
We begin with the first statement; the second follows by symmetry.  For each $y \in \Z$, define the function $f_y:\Z \to \Z$ via $f_y(x):=f(x,y)$.  Then we have the identity
$$\norm{f}_{\be_1\cdot[\pm N']}^2=\sum_{y \in [N]} \norm{f_y}_{[\pm N']}^2.$$
The popularity principle supplies at least $\delta N/2$ values of $y \in [N]$ for which $\norm{f_y}_{[\pm N']}^2 \geq \delta N/2$.  Applying Lemma \ref{L: 1-dim U^1 inverse} for each such $y$ and using nonnegativity for the remaining $y$'s, we find that
$$\sum_{y \in [N]} \abs{\sum_{x \in \Z} f(x,y)} \gg \delta N \cdot \delta^{1/4} N.$$
Adding $1$-bounded fudge factors $b(y)$ to remove the absolute values, we obtain
$$\sum_{x,y} f(x,y)b(y) \gg \delta^{5/4}N^2,$$
as desired.

The proof of the third statement goes similarly.  This time, we foliate $[N]^2$ using $1$-dimensional arithmetic progressions with common difference $\be_2-\be_1$; each such arithmetic progression intersects $[N]^2$ in at most $N$ elements.  Applying the popularity principle and Lemma \ref{L: 1-dim U^1 inverse}, and then removing the absolute values as in the first statement, we arrive at the desired conclusion.
\end{proof}

\subsection{Unnormalized box norms}
To prove inverse theorems for higher-degree box norms, it is often easier to start with unnormalized box norms. Given $\bv_1, \ldots, \bv_s\in\Z^D$, we define the \emph{unnormalized box norm} of a compactly supported function $f:\Z^D\to\C$ along $\bv_1, \ldots, \bv_s$ to be
    \begin{align*}
                \norm{f}_{\bv_1,\ldots, \bv_s}&:= \brac{\sum_{\bx, h_1, \ldots, h_s}\Delta_{\bv_1 h_1, \ldots, \bv_s h_s} f(\bx)}^{1/2^s}.
    \end{align*}
    If $D=1$ and $\bv_1 = \cdots = \bv_s=1$, we also set $\norm{f}_{U^s} := \norm{f}_{\bv_1, \ldots, \bv_s}$.
    The following lemma relates unnormalized and normalized box norms. 
    \begin{lemma}[Relating normalized and unnormalized box norms]\label{L: normalized vs. unnormalized}
        Let $s\geq 2$, let $D, H_1, \ldots, H_s\in\N$, and let $\bv_1, \ldots, \bv_s\in\Z^D$.  For any compactly supported function $f:\Z^D\to\C$, we have
            \begin{align*}
                \norm{f}_{\bv_1\cdot[\pm H_1],\ldots, \bv_s\cdot[\pm H_s]}^{2^s}\leq \prod_{i=1}^s \frac{1}{2H_i-1} \cdot\norm{f}_{\bv_1, \ldots, \bv_s}^{2^{s}}.
            \end{align*}
    \end{lemma}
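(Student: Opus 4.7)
The plan is to induct on $s\geq 2$, peeling off one direction at a time. Isolating the innermost sum over $k_s$ in the Fej\'er-kernel representation \eqref{E: box norms with Fejer kernels} and using the identity $\Delta_{k_1\bv_1,\ldots,k_s\bv_s}f=\Delta_{k_1\bv_1,\ldots,k_{s-1}\bv_{s-1}}[\Delta_{k_s\bv_s}f]$ yields
\begin{equation*}
\norm{f}^{2^s}_{\bv_1\cdot[\pm H_1],\ldots,\bv_s\cdot[\pm H_s]}=\sum_{k_s\in\Z}\mu_{H_s}(k_s)\,\norm{\Delta_{k_s\bv_s}f}^{2^{s-1}}_{\bv_1\cdot[\pm H_1],\ldots,\bv_{s-1}\cdot[\pm H_{s-1}]}.
\end{equation*}

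For the inductive step $s\geq 3$, I would invoke the inductive hypothesis applied to $\Delta_{k_s\bv_s}f$ (a comparison in $s-1\geq 2$ directions), bounding the inner normalized norm by $\frac{1}{\prod_{i<s}(2H_i-1)}\norm{\Delta_{k_s\bv_s}f}^{2^{s-1}}_{\bv_1,\ldots,\bv_{s-1}}$. This unnormalized norm is nonnegative because a box norm of order at least $1$ is manifestly a sum of squares. Combining with the pointwise bound $\mu_{H_s}(k_s)\leq\frac{1}{2H_s-1}$ and summing over $k_s$ then produces $\frac{1}{\prod_i(2H_i-1)}\norm{f}^{2^s}_{\bv_1,\ldots,\bv_s}$ on the right-hand side, as desired.

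The base case $s=2$ requires a separate argument, since the analogous $s=1$ inequality $\norm{g}^2_{\bv\cdot[\pm H]}\leq\frac{1}{2H-1}\norm{g}^2_{\bv}$ fails in general (the RHS can vanish when the LHS does not). Here I would instead use the Fej\'er-positivity identity $\sum_{\bx,k}\mu_H(k)\Delta_{k\bv}g(\bx)=\sum_\bx\bigabs{\E_{h\in[\pm H]}g(\bx+h\bv)}^2\geq 0$ to rewrite $\norm{\Delta_{k_2\bv_2}f}^2_{\bv_1\cdot[\pm H_1]}$ as a nonnegative sum of squares, then apply $\mu_{H_2}(k_2)\leq\frac{1}{2H_2-1}$ and expand the square as $\E_{h_1,h_1'\in[\pm H_1]}$. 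Swapping this expectation with the sum over $k_2$ and translating $\bx$ by $-h_1'\bv_1$ would rewrite the resulting expression as $\frac{1}{2H_2-1}\sum_{k_1}\mu_{H_1}(k_1)\norm{\Delta_{k_1\bv_1}f}^2_{\bv_2}$ (with $k_1=h_1-h_1'$), using that the inner $k_2$-sum reduces to $\overline{\sum_{\bx,k_2}\Delta_{k_1\bv_1,k_2\bv_2}f(\bx)}=\norm{\Delta_{k_1\bv_1}f}^2_{\bv_2}$. Since $\norm{\Delta_{k_1\bv_1}f}^2_{\bv_2}=\sum_{\bc\in\Z^D/\bv_2\Z}\bigabs{\sum_{\bx\in\bc+\bv_2\Z}\Delta_{k_1\bv_1}f(\bx)}^2$ is manifestly nonnegative, a final application of $\mu_{H_1}(k_1)\leq\frac{1}{2H_1-1}$ produces the prefactor $\frac{1}{(2H_1-1)(2H_2-1)}$ and closes the base case.

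The main obstacle is precisely this base case: the pointwise Fej\'er bound lets one replace normalized averages by unnormalized sums whenever the integrand is nonnegative after integrating out the remaining variables, and the nontrivial content is verifying this positivity for the final $k_1$-integrand, which requires the conjugation-and-translation manipulation above rather than a direct recursion.
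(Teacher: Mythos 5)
Your proof is correct and is essentially the paper's argument in different packaging: the paper runs your base-case computation in a single pass for general $s$ (bounding the Fej\'er kernels in directions $2,\ldots,s$ at once using nonnegativity of the degree-$1$ normalized norm, then transferring the one remaining Fej\'er weight onto direction $1$, where the unnormalized norm $\norm{\Delta_{\bv_1 h_1}f}_{\bv_2,\ldots,\bv_s}^{2^{s-1}}$ is nonnegative), whereas you peel off one direction at a time by induction. The two key points — the swap of the Fej\'er weight onto a direction whose unnormalized norm is visibly a sum of squares, and the fact that this is exactly where the hypothesis $s\geq 2$ is used — coincide with the paper's proof.
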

\begin{proof}
Using the inductive formula for box norms, the pointwise bound $\mu_{H}(h) \leq 1/(2H-1)$, and nonnegativity, we have
\begin{align*}
\norm{f}_{\bv_1\cdot[\pm H_1],\ldots, \bv_s\cdot[\pm H_s]}^{2^s} &= \sum_{h_2, \ldots, h_s}\mu_{H_2}(h_2) \cdots \mu_{H_s}(h_s)\norm{\Delta_{\bv_2 h_2, \ldots, \bv_s h_s} f}_{\bv_1\cdot[\pm H_1]}^2\\
&\leq \prod_{i=2}^s \frac{1}{2H_i-1} \cdot\sum_{h_2, \ldots, h_s}\norm{\Delta_{\bv_2 h_2, \ldots, \bv_s h_s} f}_{\bv_1\cdot[\pm H_1]}^{2}\\
 &=\prod_{i=2}^s \frac{1}{2H_i-1} \cdot \sum_{h_1} \mu_{H_1}(h_1)\norm{\Delta_{\bv_1 h_1}f}_{\bv_2, \ldots, \bv_s}^{2^{s-1}}\\
 & \leq \prod_{i=1}^s \frac{1}{2H_i-1} \cdot \norm{f}_{\bv_1, \ldots, \bv_s}^{2^{s}};
\end{align*}
note that the applications of nonnegativity crucially used the assumption $s \geq 2$.
\end{proof}

\subsection{$U^2$-inverse theorems}
The starting point for $U^2$-inverse theorems is the following well-known $1$-dimensional result (see, e.g., \cite[Lemma 2.4]{Pel20}).
\begin{lemma}[$1$-dimensional $U^2$-inverse theorem]\label{L: 1-dim U^2 inverse}
    Let $\delta\in(0, 1]$ and $H, N\in\N$ with $\delta N \leq H\leq N$.  If $f: \Z \to \mathbb{C}$ is a $1$-bounded function supported on $[N]$, then
    \begin{gather*}
        \norm{f}_{[\pm H], [\pm H]}^4\geq \delta N \quad \Longrightarrow\quad \text{$\sum_{x}f(x)e(a x + b)\gg\delta^{3/2} N$ for some $a,\; b \in \R/\Z$}.
    \end{gather*}
\end{lemma}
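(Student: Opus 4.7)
The plan is a standard Fourier-analytic argument: expand the box norm in Fourier, bound it by $\|\hat f\|_\infty^2$ times a suitable normalization, and extract a large Fourier coefficient, which immediately yields the desired linear phase.

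First, I would use Fourier inversion $f(x) = \int \hat f(\xi)\, e(-\xi x)\, d\xi$ to expand each of the four factors of $f$ in $\Delta_{h_1, h_2} f(x)$. Summing over $x$ forces the fourth frequency to equal $\xi_2 + \xi_3 - \xi_1$, and summing over $h_1, h_2$ against the weights $\mu_H$ produces factors $\widehat{\mu_H}(\xi_1 - \xi_2)$ and $\widehat{\mu_H}(\xi_1 - \xi_3)$. After changing variables to $\alpha := \xi_1 - \xi_2$ and $\beta := \xi_1 - \xi_3$, I expect the identity
\[
\norm{f}_{[\pm H], [\pm H]}^4 = \int \hat f(\xi)\, \overline{\hat f(\xi - \alpha)\, \hat f(\xi - \beta)}\, \hat f(\xi - \alpha - \beta)\, \widehat{\mu_H}(\alpha)\, \widehat{\mu_H}(\beta)\, d\xi\, d\alpha\, d\beta.
\]
Note that $\widehat{\mu_H}$ is nonnegative, since $\mu_H$ is the autocorrelation of $1_{[\pm H]}/(2H - 1)$, and $\int_{\R/\Z} \widehat{\mu_H}(\xi)\, d\xi = \mu_H(0) = 1/(2H - 1)$.

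Next, I would bound the inner $\xi$-integral by regrouping the four Fourier factors as $u(\xi)\, v(\xi)$ with $u(\xi) := \hat f(\xi)\, \overline{\hat f(\xi - \beta)}$ and $v(\xi) := \overline{\hat f(\xi - \alpha)}\, \hat f(\xi - \alpha - \beta)$. Cauchy--Schwarz together with the translation invariance $\|v\|_2 = \|u\|_2$ yields
\[
\left|\int u(\xi)\, v(\xi)\, d\xi\right| \leq \|u\|_2^2 = \int |\hat f(\xi)|^2 |\hat f(\xi - \beta)|^2\, d\xi \leq \|\hat f\|_\infty^2\, \|\hat f\|_2^2 = \|\hat f\|_\infty^2\, \|f\|_2^2
\]
by Parseval. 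Using nonnegativity of $\widehat{\mu_H}$ and Fubini then gives
\[
\norm{f}_{[\pm H], [\pm H]}^4 \leq \frac{\|\hat f\|_\infty^2\, \|f\|_2^2}{(2H - 1)^2}.
\]

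Combining this estimate with $\|f\|_2^2 \leq N$ (since $f$ is $1$-bounded and supported on $[N]$), the hypothesis $\norm{f}_{[\pm H], [\pm H]}^4 \geq \delta N$, and the lower bound $2H - 1 \geq H \geq \delta N$ yields $\|\hat f\|_\infty^2 \gg \delta^3 N^2$. Hence some $a \in \R/\Z$ satisfies $|\hat f(a)| \gg \delta^{3/2} N$, and choosing $b \in \R/\Z$ such that $e(b)\, \hat f(a) = |\hat f(a)|$ produces $\sum_x f(x)\, e(ax + b) = |\hat f(a)| \gg \delta^{3/2} N$, as desired. The only step requiring real care is the Fourier-identity derivation in the first paragraph; after that, the proof is essentially a bookkeeping exercise with Cauchy--Schwarz.
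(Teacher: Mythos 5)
Your proof is correct and follows essentially the same route as the paper: extract a large Fourier coefficient from the bound $\norm{f}_{[\pm H],[\pm H]}^4\leq \norm{\hat f}_\infty^2\norm{f}_2^2/(2H-1)^2$, then use $\norm{f}_2^2\leq N$ and $2H-1\geq \delta N$ to get $\norm{\hat f}_\infty\gg\delta^{3/2}N$. The only cosmetic difference is how the Fej\'er weights are removed: the paper first passes to the unnormalized $U^2$-norm via Lemma \ref{L: normalized vs. unnormalized} (the pointwise bound $\mu_H\leq 1/(2H-1)$ plus positivity) and then uses $\norm{f}_{U^2}^4=\norm{\hat f}_4^4\leq\norm{\hat f}_2^2\norm{\hat f}_\infty^2$, whereas you keep the weights and exploit nonnegativity of $\widehat{\mu_H}$ together with $\int_{\R/\Z}\widehat{\mu_H}=1/(2H-1)$ on the Fourier side, arriving at the identical inequality.
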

\begin{proof}
    By Lemma \ref{L: normalized vs. unnormalized} and the assumption $H\geq \delta N$, we have $\norm{f}_{U^2}^4\gg \delta^3 N^3$.  Fourier inversion and Plancherel's Identity give $$\norm{f}_{U^2}^4=\norm{\hat f}_4^4 \leq \norm{ \hat f}_2^2 \cdot\norm{\hat{f}}_\infty^2=\norm{f}_2^2 \cdot\norm{\hat{f}}_\infty^2\leq N \cdot\norm{\hat{f}}_\infty^2.$$ Hence $\norm{\hat{f}}_\infty\gg \delta^{3/2} N$, which is equivalent to the conclusion of the lemma. 
\end{proof}

\begin{lemma}[$2$-dimensional $U^2$-inverse theorem]\label{L: U^2 inverse}
    Let $\delta\in(0, 1]$ and $H, N\in\N$ with $\delta^{-O(1)}\ll\delta N\leq H\leq N$, and let $f: \Z^2 \to \mathbb{C}$ be a $1$-bounded function supported on $[N]^2$.  Then
\begin{align*}
\norm{f}_{(\be_1\cdot[\pm H])^2}^4\geq \delta N^2 \quad \Longrightarrow\quad &\sum_{x,y}f(x,y)e(a(y) x + b(y))\gg \delta^{3/2} N^2 \\ 
 &\text{for some $1$-bounded $a,\; b: \Z \to \R/\Z$};
\end{align*}
\begin{align*}
\norm{f}^4_{(\be_2\cdot[\pm H])^2}\geq \delta N^2 \quad \Longrightarrow\quad &\sum_{x,y}f(x,y)e(a(x) y + b(x))\gg \delta^{3/2} N^2 \\ 
 &\text{for some $1$-bounded $a,\; b: \Z \to \R/\Z$}.
\end{align*}
\end{lemma}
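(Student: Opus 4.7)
The plan is to slice the function along the direction perpendicular to the differencing and apply the $1$-dimensional $U^2$-inverse theorem (Lemma~\ref{L: 1-dim U^2 inverse}) to each slice. For the first statement, since $\be_1 = (1,0)$, the discrete derivatives $\Delta_{\be_1 h}$ only modify the $x$-coordinate. Setting $f_y(x) := f(x, y)$ for each $y \in \Z$, one has the slicing identity
\[
\norm{f}_{(\be_1\cdot[\pm H])^2}^4 = \sum_{y} \norm{f_y}_{[\pm H], [\pm H]}^4,
\]
and since $f$ is supported on $[N]^2$ only $y \in [N]$ contribute, so the hypothesis becomes $\sum_{y \in [N]} \norm{f_y}_{[\pm H], [\pm H]}^4 \geq \delta N^2$.

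For each such $y$, I would apply Lemma~\ref{L: 1-dim U^2 inverse} with $\delta_y := \min(\norm{f_y}_{[\pm H], [\pm H]}^4/N,\, H/N)$, so that the lemma's condition $\delta_y N \leq H$ is satisfied. This provides phases $a(y), b(y) \in \R/\Z$ with
\[
\Bigabs{\sum_x f_y(x)\, e(a(y) x + b(y))} \gg \delta_y^{3/2} N.
\]
In the typical regime $\norm{f_y}^4 \leq H$, the right-hand side equals $\norm{f_y}_{[\pm H], [\pm H]}^6/N^{1/2}$. Applying H\"older's inequality to $\sum_{y \in [N]} \norm{f_y}^4 = \sum_y (\norm{f_y}^6)^{2/3} \cdot 1 \leq (\sum_y \norm{f_y}^6)^{2/3} N^{1/3}$ yields $\sum_y \norm{f_y}^6 \geq \delta^{3/2} N^{5/2}$, so that summing the per-slice bounds gives $\sum_y |\sum_x f_y(x)\, e(a(y) x + b(y))| \gg \delta^{3/2} N^2$. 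Choosing $b(y)$ to absorb the phase of the inner sum on each slice (making each contribution a nonnegative real) then removes the absolute values and produces the desired estimate $\sum_{x,y} f(x,y)\, e(a(y) x + b(y)) \gg \delta^{3/2} N^2$. The second statement follows by exchanging the roles of $x$ and $y$ and repeating the same argument.

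The main obstacle is handling the ``heavy'' slices with $\norm{f_y}^4 > H$, where the choice $\delta_y = H/N$ yields only $H^{3/2}/N^{1/2}$ per slice instead of $\norm{f_y}^6/N^{1/2}$. This is resolved by a case analysis: either the light slices (with $\norm{f_y}^4 \leq H$) carry a constant proportion of the $L^4$-mass $\sum_y \norm{f_y}^4 \geq \delta N^2$, in which case the H\"older argument above goes through directly, or the heavy slices are sufficiently numerous (necessarily $|C_{\mathrm{heavy}}| \gg \delta N$, since $\norm{f_y}^4 \leq N$ uniformly) that their aggregate contribution $|C_{\mathrm{heavy}}| \cdot H^{3/2}/N^{1/2}$ combined with the hypothesis $H \geq \delta N$ yields a bound polynomial in $\delta$. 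In either regime one absorbs phases via the function $b(y)$ to deliver the claimed non-absolute-value inequality.
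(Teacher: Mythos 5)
Your argument is essentially the paper's: the paper proves this lemma by the same slicing as in the $U^1$ case (foliate into rows $f_y$, note $\norm{f}_{(\be_1\cdot[\pm H])^2}^4=\sum_y\norm{f_y}_{[\pm H],[\pm H]}^4$, apply the one-dimensional $U^2$-inverse theorem slice by slice, and absorb phases into $b(y)$), and your proposal is that argument with a H\"older refinement and a light/heavy case split in place of the simple popularity principle. The only caveat is the exponent: in your heavy-slice case the aggregate is only $\gg \delta^{5/2}N^2$, so what you actually prove is the conclusion with $\delta^{5/2}$ rather than the stated $\delta^{3/2}$ --- but the paper's own popularity-principle proof yields the same $\delta^{5/2}$ (mirroring how $\delta^{1/4}$ becomes $\delta^{5/4}$ in the $U^1$ case), and the lemma is only ever used with $\delta^{O(1)}$ losses, so this discrepancy lies in the stated exponent rather than in your argument.
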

This lemma follows from Lemma \ref{L: 1-dim U^2 inverse} just as Lemma \ref{L: U^1 inverse} follows from Lemma \ref{L: 1-dim U^1 inverse}.

\subsection{$U^1 \times U^1$-inverse theorems}
Let $\bv_1, \bv_2\in\Z^2$ be vectors such that $\langle \bv_1, \bv_2\rangle_{\Z} = \Z^2$. Then the {unnormalized box norm}  along $\bv_1, \bv_2$ can be written as
    \begin{align*}
                \norm{f}_{\bv_1, \bv_2}
                &=\brac{\sum_{m,m',n,n'}f(\bv_1 m + \bv_2 n)\overline{f(\bv_1 m' + \bv_2 n)f(\bv_1 m + \bv_2 n')}f(\bv_1 m' + \bv_2 n')}^{1/4}
    \end{align*}
    and satisfies the following inverse theorem.
\begin{lemma}[Inverse theorem for the unnormalized $U^1\times U^1$-norm]\label{L: unnormalized U1xU1 inverse}
    Let $\bv_1,\bv_2 \in \Z^2$ be vectors of length $O(1)$ satisfying $\langle \bv_1,\bv_2 \rangle_{\Z}=\Z^2$, and let $N \in \N$.  If $f: \Z^2 \to \C$ is a $1$-bounded function supported on $[N]^2$, then there are $1$-bounded functions $b_1, b_2:\Z\to\C$ such that
    \begin{align*}
        \norm{f}_{\bv_1, \bv_2}^4 \ll N^2\cdot \sum_{m,n}f(\bv_1 m + \bv_2 n)b_1(m)b_2(n).
    \end{align*}
\end{lemma}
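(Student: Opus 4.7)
The proof hinges on the fact that the hypothesis $\langle \bv_1, \bv_2\rangle_\Z = \Z^2$ makes $(m,n) \mapsto \bv_1 m + \bv_2 n$ a $\Z$-linear bijection of $\Z^2$ of Jacobian $\pm 1$. Setting $g(m,n) := f(\bv_1 m + \bv_2 n)$, this gives a $1$-bounded function supported on a set of size $O(N^2)$ contained in $[-CN,CN]^2$ for some $C = O(1)$ depending on $\bv_1, \bv_2$. With this notation, the unnormalized box norm rewrites as
\[
\norm{f}_{\bv_1,\bv_2}^4 = \sum_{m,m'} |h(m,m')|^2 = \sum_{m,m',n} g(m,n)\overline{g(m',n)}\,\overline{h(m,m')},
\]
where $h(m,m'):=\sum_n g(m,n)\overline{g(m',n)}$.

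The plan is to decouple the $m$- and $n$-variables by pigeonholing in $n$. Since the total sum above is a nonnegative real number and the $n$-support has size $O(N)$, some $n_0$ contributes an $(m,m')$-sum of absolute value $\gg \norm{f}_{\bv_1,\bv_2}^4/N$. Setting the $1$-bounded function $c(m):=g(m,n_0)$ and expanding $\overline{h(m,m')}=\sum_{n'}\overline{g(m,n')}g(m',n')$, the resulting expression rearranges into a sum of squares, namely $\sum_{n'}|A(n')|^2$ where $A(n'):=\sum_m c(m)\overline{g(m,n')}$. Hence
\[
\sum_{n'}|A(n')|^2 \gg \norm{f}_{\bv_1,\bv_2}^4/N.
\]

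To finish, I would use the crude bound $\norm{A}_\infty \ll N$ (from the $1$-boundedness of $c$ and $g$ together with the $O(N)$-sized support of $c$) and the pointwise inequality $|A|^2 \le \norm{A}_\infty |A|$ to obtain $\sum_{n'}|A(n')| \gg \norm{f}_{\bv_1,\bv_2}^4/N^2$. Writing this nonnegative sum as $\sum_{n'}\overline{\phi(n')}A(n')$ for an appropriately chosen $1$-bounded phase $\phi$, substituting the definition of $A$, and taking complex conjugates produces the desired inequality with $b_1(m):=\overline{c(m)}$ and $b_2(n):=\overline{\phi(n)}$; a unit scalar adjustment of $b_1$ absorbs the residual absolute value. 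The main subtlety is that the direct pointwise bound on the dual function $\sum_{m',n'}\overline{g(m',n)g(m,n')}g(m',n')$ only displays $\norm{f}_{\bv_1,\bv_2}^4$ as a correlation of $g$ with a generic two-variable $1$-bounded function, so the pigeonhole in $n$ followed by the ``drop-the-square'' maneuver is exactly what produces the required rank-$1$ tensor product $b_1(m)b_2(n)$.
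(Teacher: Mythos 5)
Your proof is correct and follows essentially the same route as the paper: expand the fourth power, pigeonhole the frozen variable $n_0$ (the paper's $n'$) at a cost of one factor of $N$, take $b_1$ to be the conjugated slice $m\mapsto\overline{f(\bv_1 m+\bv_2 n_0)}$, and lose a second factor of $N$ in normalizing the remaining length-$O(N)$ sum to produce a $1$-bounded $b_2$. The only cosmetic difference is that you extract $b_2$ as the phase of $A(n')$ after a sum-of-squares and $\ell^2\to\ell^1$ step, whereas the paper defines $b_2(n)$ directly as the normalized average $\E_{m'\in S}\overline{f(\bv_1 m'+\bv_2 n)}f(\bv_1 m'+\bv_2 n_0)$ times a unimodular scalar (also, your final choice should be $b_2:=\phi$ rather than $\overline{\phi}$, an immaterial slip).
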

\begin{proof}
    The pigeonhole principle gives some $n' \in \Z$, necessarily of size $O(N)$, such that
    \begin{align*}
        \norm{f}_{\bv_1, \bv_2}^4 \ll N\abs{\sum_{m,m',n}f(\bv_1 m + \bv_2 n)\overline{f(\bv_1 m' + \bv_2 n)f(\bv_1 m + \bv_2 n')}f(\bv_1 m' + \bv_2 n')}.
    \end{align*}
    Let $S := \{m'\in \Z:\; \bv_1 m' + \bv_2 n'\in[N]^2\}$.
    Then $|S|\ll N$, and the result follows upon setting
    $b_1(m): = \overline{f(\bv_1 m + \bv_2 n')}$ and letting $b_2(n)$ be equal to $\E_{m'\in S} \overline{f(\bv_1 m' + \bv_2 n)}f(\bv_1 m' + \bv_2 n')$ times a suitable complex number of norm $1$ that makes the expression in the absolute value positive.
\end{proof}

Combining this lemma with Lemma \ref{L: normalized vs. unnormalized} immediately gives the following result for unnormalized box norms.
\begin{lemma}[Inverse theorem for the $U^1\times U^1$-norm]\label{L: U1xU1 inverse}
Let $\delta\in(0, 1]$ and $H, N\in\N$ with $\delta N\leq H\leq N$.  Let $f: \Z^2 \to \mathbb{C}$ be a $1$-bounded function supported on $[N]^2$.  Then
\begin{align*}
\norm{f}_{\be_1\cdot[\pm H],\; \be_2\cdot[\pm H]}^4\geq \delta N^{2} \quad \Longrightarrow\quad &\sum_{x,y}f(x,y)b_1(x)b_2(y)\gg \delta^3 N^2\\
 &\text{for some $1$-bounded $b_1,b_2: \Z \to \C$};
\end{align*}
\begin{align*}
\norm{f}_{\be_1\cdot[\pm H],\; (\be_2-\be_1)\cdot[\pm H]}^4\geq \delta N^{2} \quad \Longrightarrow\quad &\sum_{x,y}f(x,y)b_1(x+y)b_2(y)\gg \delta^3 N^2\\
 &\text{for some $1$-bounded $b_1,b_2: \Z \to \C$};
\end{align*}
\begin{align*}
\norm{f}_{\be_2\cdot[\pm H],\; (\be_2-\be_1)\cdot[\pm H]}^4\geq \delta N^{2} \quad \Longrightarrow\quad &\sum_{x,y}f(x,y)b_1(x)b_2(x+y)\gg \delta^3 N^2\\
 &\text{for some $1$-bounded $b_1,b_2: \Z \to \C$}.
\end{align*}
\end{lemma}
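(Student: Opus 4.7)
The plan is to execute the two-step reduction suggested by the author in the sentence preceding the statement: first use Lemma~\ref{L: normalized vs. unnormalized} to pass from the normalized box norm (with differencing restricted to boxes of side $[\pm H]$) to the unnormalized box norm along suitable directions, and then apply Lemma~\ref{L: unnormalized U1xU1 inverse} together with an elementary change of variables. There is no serious obstacle; the whole proof is a routine combination of the two preceding lemmas.

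I will handle the three cases in parallel by setting $(\bv_1,\bv_2)$ equal to $(\be_1,\be_2)$, $(\be_1,\be_2-\be_1)$, and $(\be_2-\be_1,\be_2)$ respectively. In each case the matrix with columns $\bv_1,\bv_2$ has determinant $\pm 1$, so $\langle\bv_1,\bv_2\rangle_\Z=\Z^2$, as required to invoke Lemma~\ref{L: unnormalized U1xU1 inverse}. Assuming $\|f\|_{\bv_1\cdot[\pm H],\,\bv_2\cdot[\pm H]}^4\geq\delta N^2$, Lemma~\ref{L: normalized vs. unnormalized} gives
\[
\|f\|_{\bv_1,\bv_2}^4 \geq (2H-1)^2\,\delta N^2 \gg \delta^3 N^4,
\]
where we used $H\geq \delta N$ (so $2H-1\gg \delta N$). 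Then Lemma~\ref{L: unnormalized U1xU1 inverse} produces $1$-bounded $b_1,b_2:\Z\to\C$ with
\[
\sum_{m,n}f(\bv_1 m+\bv_2 n)\,b_1(m)\,b_2(n) \gg \frac{\|f\|_{\bv_1,\bv_2}^4}{N^2} \gg \delta^3 N^2.
\]

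The final step is to read off each of the three claimed conclusions via a linear change of variables $(m,n)\mapsto(x,y)$, where $(x,y):=\bv_1 m+\bv_2 n$. For $(\bv_1,\bv_2)=(\be_1,\be_2)$ we have $(m,n)=(x,y)$, yielding $\sum_{x,y}f(x,y)b_1(x)b_2(y)\gg\delta^3 N^2$. For $(\bv_1,\bv_2)=(\be_1,\be_2-\be_1)$ we have $(x,y)=(m-n,n)$, so $m=x+y$ and $n=y$, giving $\sum_{x,y}f(x,y)b_1(x+y)b_2(y)\gg\delta^3 N^2$. For $(\bv_1,\bv_2)=(\be_2-\be_1,\be_2)$ we have $(x,y)=(-m,m+n)$, so $m=-x$ and $n=x+y$; renaming $\widetilde b_1(z):=b_1(-z)$ converts the resulting sum into $\sum_{x,y}f(x,y)\widetilde b_1(x)b_2(x+y)\gg\delta^3 N^2$, matching the third claim. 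The only minor technical point to verify during the write-up is that the constants absorbed into the $\ll$-notation in Lemma~\ref{L: unnormalized U1xU1 inverse} (which depend on the sizes of $\bv_1,\bv_2$) are bounded uniformly for these three explicit choices, which is clear since all components have absolute value at most $1$.
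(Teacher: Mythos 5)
Your proposal is correct and is essentially the paper's own argument: the paper proves this lemma by exactly the combination you describe (Lemma~\ref{L: normalized vs. unnormalized} to pass to the unnormalized box norm, using $2H-1\geq\delta N$, then Lemma~\ref{L: unnormalized U1xU1 inverse} with the unimodular changes of variables). Your case-by-case substitutions, including the sign flip $\widetilde b_1(z)=b_1(-z)$ in the third case, are all accurate.
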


\subsection{$U^2 \times U^1$-inverse theorem}
Recall that our degree-lowering result Theorem \ref{thm:degree lowering-lambda} is based on replacing ``$U^2 \times U^1$-obstructions'' with ``$U^1 \times U^1$-obstructions''.  We are finally ready to give a precise characterization of the former obstructions.

\begin{lemma}\label{L: U^2 x U^1 inverse}
Let $\delta\in(0, 1]$ and $H, N\in\N$ with $\delta^{O(1)}\ll\delta N\leq H\leq N$, and let $f: \Z^2 \to \mathbb{C}$ be a $1$-bounded function supported on $[N]^2$.  Then
\begin{align*}
\|f\|_{\be_1\cdot[\pm H],\; \be_1\cdot[\pm H],\; \be_2\cdot[\pm H]}^8 \geq \delta N^2 \Longrightarrow \sum_{x,y}f(x,y)g(x)e(a(y)x+b(y)) \gg\delta^{O(1)} N^2\\
 \text{for some $1$-bounded $g: \Z \to \C$ and phase functions $a,b: \Z \to \R/\Z$};
\end{align*}
\begin{align*}
\|f\|_{\be_1\cdot[\pm H],\; \be_2\cdot[\pm H],\; \be_2\cdot[\pm H]}^8 \geq \delta N^2 \Longrightarrow \sum_{x,y}f(x,y)g(y)e(a(x)y+b(x)) \gg\delta^{O(1)} N^2\\
\text{for some $1$-bounded $g: \Z \to \C$ and phase functions $a,b: \Z \to \R/\Z$}.
\end{align*}
\end{lemma}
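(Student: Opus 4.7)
The plan is to combine the inductive formula for box norms (Lemma~\ref{L: properties of box norms}\eqref{i: inductive formula}), the two-dimensional $U^2$-inverse theorem (Lemma~\ref{L: U^2 inverse}), and a dual-function/Cauchy--Schwarz argument to extract the desired $g(x)\,e(a(y)x+b(y))$ structure. I treat only the first statement; the second follows by applying the first to $\tilde{f}(x,y):=f(y,x)$, which interchanges the roles of $\be_1$ and $\be_2$.

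First I would apply the inductive formula to write
\begin{align*}
\|f\|^8_{(\be_1 \cdot [\pm H])^2,\, \be_2 \cdot [\pm H]}
= \E_{h_3, h_3' \in [\pm H]} \|\Delta'_{(h_3 \be_2,\, h_3' \be_2)} f\|^4_{(\be_1 \cdot [\pm H])^2}
\geq \delta N^2.
\end{align*}
Since each summand is at most $N^2$, the popularity principle yields a set $S \subseteq [\pm H]^2$ of density $\gg \delta$ on which $\|\Delta'_{(h_3 \be_2,\, h_3' \be_2)} f\|^4_{(\be_1 \cdot [\pm H])^2} \gg \delta N^2$. For each $(h_3, h_3') \in S$, Lemma~\ref{L: U^2 inverse} supplies phase functions $a_{h_3, h_3'}, b_{h_3, h_3'}\colon \Z \to \R/\Z$ such that
\begin{align*}
\Bigl|\sum_{x, y} f(x, y+h_3)\, \overline{f(x, y+h_3')}\, e\bigl(a_{h_3, h_3'}(y)\, x + b_{h_3, h_3'}(y)\bigr)\Bigr|
\gg \delta^{3/2} N^2.
\end{align*}

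Next, after shifting $y \to y - h_3$ and aggregating these correlations over $S$ with $1$-bounded weights $c(h_3, h_3')$ that unify the absolute values, one obtains $\sum_{x,y} f(x,y)\,\overline{\Phi(x,y)} \gg \delta^{O(1)} N^2$ for the $1$-bounded dual function
\begin{align*}
\Phi(x,y) := \E_{h_3, h_3' \in [\pm H]} \mathbf{1}_S(h_3, h_3')\, c(h_3, h_3')\, f(x,\, y + h_3' - h_3)\, e\bigl(\tilde{a}_{h_3, h_3'}(y)\, x + \tilde{b}_{h_3, h_3'}(y)\bigr),
\end{align*}
where $\tilde{a}, \tilde{b}$ denote appropriately shifted versions of $a, b$. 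The Cauchy--Schwarz inequality then yields $\|\Phi\|_2^2 \gg \delta^{O(1)} N^2$. To finish, I would expand $|\Phi|^2$, pigeonhole in the resulting four parameters $(h_3, h_3', k_3, k_3')$, and apply the $1$-dimensional $U^2$-inverse theorem (Lemma~\ref{L: 1-dim U^2 inverse}) slicewise in $x$ to consolidate the phases into a single function $a(y)$ and extract a $y$-independent residue $g(x)$.

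The main obstacle is precisely this final consolidation step. Heuristically, one expects $a_{h_3, h_3'}(y) \approx a(y + h_3) - a(y + h_3')$ for some ``global'' $a$, and establishing this cocycle-like relation (up to acceptable loss) requires delicate Cauchy--Schwarz and pigeonhole manipulations in the spirit of the standard proof of the $1$D $U^3$-inverse theorem. Once the cocycle structure is secured, the $g(x)$ factor emerges as the $y$-independent residue of the averaged dual function and the $e(a(y)x + b(y))$ phase arises from the consolidated global phase, completing the argument.
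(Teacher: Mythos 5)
Your argument begins the same way as the paper's (inductive formula in the $\be_2$-direction, popularity principle, then the $2$-dimensional $U^2$-inverse theorem applied to $\Delta'_{(h_3\be_2,h_3'\be_2)}f$ for each popular differencing parameter), but the crucial final step is missing. You state yourself that "the main obstacle is precisely this final consolidation step," namely establishing the cocycle-type relation $a_{h_3,h_3'}(y)\approx a(y+h_3)-a(y+h_3')$ in the spirit of the $U^3$-inverse theorem, and you do not carry it out; the dual-function/Cauchy--Schwarz/pigeonhole sketch that follows is not a proof of it. As written, the proposal is therefore incomplete: everything hinges on a step you acknowledge you have not done, and that step (a genuine phase-consolidation à la $U^3$) is substantially harder than the lemma being proved.

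The point you are missing is that no consolidation is needed at all, because the conclusion of the lemma is much weaker than a $U^3$-type statement: $g$ is allowed to be an \emph{arbitrary} $1$-bounded function of $x$, and $a,b$ arbitrary phase functions of $y$, with no additive structure required of $a$. The paper exploits this as follows. After applying Lemma~\ref{L: U^2 inverse} to $\Delta_{h\be_2}f$ for each popular $h$, one may rotate the phases $\beta$ so that the inner sum over $x$ is real and nonnegative for every fixed $(h,y)$; then the Fej\'er weight $\mu_H(h)\leq 1/(2H-1)$ can be dropped using $H\geq\delta N$, giving
\begin{align*}
\sum_{x,y,h}f(x,y)\overline{f(x,y+h)}\,e\bigl(\alpha(y,y+h)x+\beta(y,y+h)\bigr)\gg\delta^{O(1)}N^3 .
\end{align*}
Substituting $z=y+h$ and pigeonholing in $z$ to fix a single value $z_0$ with $|z_0|\ll N$, the factor $\overline{f(x,z_0)}$ becomes the sought $1$-bounded function $g(x)$, and $a(y):=\alpha(y,z_0)$, $b(y):=\beta(y,z_0)$ are single functions of $y$ with no cocycle analysis whatsoever. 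If you replace your dual-function and consolidation steps with this change of variables and pigeonholing, your argument closes; as it stands, it has a genuine gap.
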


\begin{proof}
We only prove the first statement as the other can be proved by swapping the roles of $x$ and $y$.  The inductive formula for box norms gives
$$\delta^{8} N^2 \leq \|f\|_{\be_1\cdot[\pm H],\; \be_1\cdot[\pm H],\; \be_2\cdot[\pm H]}^8=\sum_h \mu_H(h) \|\Delta_{h\be_2}f\|_{\be_1\cdot[\pm H],\; \be_1\cdot[\pm H]}^4.$$
By Lemma \ref{L: U^2 inverse}, there are phases $\alpha(y,y+h),\; \beta(y,y+h) \in \mathbb{R}/\mathbb{Z}$ such that
$$\sum_{x,y,h} \mu_H(h)\Delta_{h\be_2} f(x,y)e(\alpha(y,y+h)x+\beta(y,y+h)) \gg \delta^{O(1)}N^2$$
and the summand is real and nonnegative for each fixed $h,y$.  In particular, we have
$$\sum_{x,y, h}\Delta_{h\be_2} f(x,y) e(\alpha(y,y+h)x+\beta(y,y+h)) \gg \delta^{O(1)}N^3.$$
Replacing the variable $h$ with the variable $z:=y+h$, we get
$$\sum_{x,y,z}f(x,y)\overline{f(x,z)} e(\alpha(y,z)x+\beta(y,z)) \gg \delta^{O(1)}N^3.$$
The pigeonhole principle gives an integer $|z_0| \ll N$ such that
$$\sum_{x,y}f(x,y)\overline{f(x,z_0)} e(\alpha(y,z_0)x+\beta(y,z_0)) \gg \delta^{O(1)}N^2.$$
The lemma now follows upon setting 
$$g(x):=\overline{f(x,z_0)}, \quad a(y):=\alpha(y,z_0), \quad \text{and} \quad b(y):=\beta(y,z_0).$$
\end{proof}

\section{Nilsequence definitions}\label{A: nilsequence theory}

This section contains (mostly standard) definitions pertaining to nilsequences.

\begin{definition}[$(t - 1)$-fold commutator]
    For elements $g_1, \dots, g_t$ of a group $G$, we define
    $$[g_1, g_2, \dots, g_t] := [[[g_1, g_2], g_3], \dots, g_t].$$
    We will call any such commutator a \emph{$(t - 1)$-fold commutator} of the elements $g_1, \dots, g_t$.
\end{definition}

We now introduce the lower central series.
\begin{definition}\label{d:lowercentraseries}
The \emph{lower central series} of a nilpotent Lie group $G$ is the sequence of nested subgroups $G=G_{(0)}=G_{(1)} \supseteq G_{(2)} \supseteq G_{(3)} \cdots $ where $G_{(i)} := [G, G_{(i - 1)}]$ for $i \geq 1$.  The \textit{step} of $G$ is the smallest integer $s$ such that $G_{(s + 1)} = \mathrm{Id}_G$.
\end{definition}

Recall that the \emph{height} of the rational number $a/b$ (for $a,b$ coprime integers) is $\max(|a|,|b|)$.
We will now define a nilmanifold.

\begin{definition}[Nilmanifold]
A \emph{nilmanifold} of degree $k$, step $s$, complexity at most $M$, and dimension $m$ consists of the following data:
\begin{enumerate}
    \item an $s$-step connected and simply connected nilpotent real Lie group $G$ and a discrete cocompact subgroup $\Gamma$ of $G$;
    \item a \emph{filtration} $G_\bullet = (G_i)_{i = 0}^\infty$ of nested subgroups $G = G_0=G_1 \supseteq G_2 \supseteq \cdots$ such that $G_{i}=\{\mathrm{Id}_G\}$ for all $i>k$ and $[G_i,G_j] \subseteq G_{i+j}$ for all $i,j$; and
    \item a \emph{Mal'cev basis}, i.e., a basis $\{X_1, \dots, X_m\}$ of the Lie algebra $\mathfrak{g} := \log(G)$.
\end{enumerate}
We furthermore require the data to satisfy the following conditions:
\begin{enumerate}
    \item For $1 \le i, j \le m$, we have
    $$[X_i, X_j] = \sum_{l > \max(i, j)} a_{ijl} X_l$$
    for some rational numbers $a_{ijl}$ of height at most $M$.
    \item For $m_i := \text{dim}(G_i)$, we have $\log(G_i) = \text{span}\{X_j: m - m_i < j \le m\}$.
    \item Each element $g \in G$ can be expressed uniquely as $\exp(t_1X_1)\exp(t_2X_2) \cdots \exp(t_mX_m)$ with $t_1, \dots, t_m \in \mathbb{R}$; we let $\psi: g \mapsto (t_1, \ldots, t_m)$ denote the associated \emph{coordinate map} from $G$ to $\mathbb{R}^m$.
    \item $\Gamma$ consists of precisely the elements $g$ for which $\psi(g) \in \mathbb{Z}^m$.
\end{enumerate}
A Mal'cev basis that satisfies the last three of these conditions is said to be \textit{adapted} to the filtration $G_\bullet$.
\end{definition}

Note that condition (ii) implies that each $G_i$ is closed, connected, and simply connected.

The lower central series (as defined above) gives the so-called \textit{standard filtration} on a nilpotent Lie group $G$, and we emphasize that there are filtrations other than the standard filtration.  The standard filtration on $G$ is ``minimal'' in the sense that any other filtration $(G_i)$ satisfies $G_i \supseteq G_{(i)}$ for each $i\in\N$ \cite[Exercise 1.6.2]{T12}.

A nilmanifold can be endowed with a metric as follows.
\begin{definition}[Metric on a nilmanifold]\label{d:metricnilmanifold}
Given a nilmanifold $G/\Gamma$ with coordinate map $\psi$, we define a metric $d_G$ on $G$ to be the right-invariant metric
$$d_{G}(x, y) := \inf\left\{\sum_{i = 1}^n \min\{|\psi(x_ix_{i - 1}^{-1})|, |\psi(x_{i - 1}x_i^{-1})|\}: x_0 = x, x_n = y\right\}.$$
Setting $d(x\Gamma, y\Gamma) := \inf\{d(x', y'): x'\Gamma = x\Gamma, y'\Gamma = y\Gamma\}$, we obtain a metric on $G/\Gamma$.
\end{definition}
This metric makes $G/\Gamma$ into a compact metric space and hence can be used to define the Lipschitz parameter of a function on $G/\Gamma$.
\begin{definition}[Lipschitz norm]
    Let $(X, d_X)$ be a metric space.  The \emph{Lipschitz constant} of a function $F\colon X \to \mathbb{C}$ is the quantity $$\sup_{x \neq y \in X} \frac{|F(x) - F(y)|}{d_X(x, y)},$$ and the \emph{Lipschitz norm} of $F$ is
$$\|F\|_{\mathrm{Lip}(X)} := \|F\|_{L^\infty(X)} + \sup_{x \neq y \in X} \frac{|F(x) - F(y)|}{d_X(x, y)}.$$
If $\|F\|_{\mathrm{Lip}(X)} \le M$, then we say that $F$ is an \emph{$M$-Lipschitz function}.
\end{definition}
An important property of Lipschitz functions on a nilmanifold is that they can be approximated by finite linear combinations of vertical characters (as defined below). 

We now turn to polynomial sequences.
\begin{definition}[Polynomial sequences]
Let $G$ be an $s$-step connected and simply connected nilpotent real Lie group with a degree-$k$ filtration $(G_i)_{i = 0}^\infty$. 
The set $\mathrm{poly}(\mathbb{Z}, G)$ of \emph{polynomial sequences} to $G$ consists of the maps $g\colon \mathbb{Z} \to G$ of the form
$$g({n}) = \prod_{{i}=0}^k g_{{i}}^{\binom{{n}}{{i}}}$$
for some $g_i \in G_i$. 
It is known (see, e.g., \cite[Chapter 14]{HK18}) that $\mathrm{poly}(\mathbb{Z}, G)$ forms a group under pointwise multiplication.
\end{definition}

Our next definition is a smoothness norm on real polynomials that captures the extent to which their coefficients are close to integers. 
\begin{definition}[Smoothness norm]\label{def:smoothness}
Let $p(x)=\sum_{j=0}^D \alpha_j x^j \in \mathbb{R}[x]$ be a polynomial of degree $D$.  Its \emph{smoothness norm} at scale $T>0$ is defined to be
$$\norm{p}_{C^\infty[T]}:=\max_{1 \leq j \leq D}T^{j} \norm{\alpha_j}_{\mathbb{R}/\mathbb{Z}}.$$
\end{definition}
Note that this definition of smoothness is written using the monomial basis, which differs slightly from \cite[Definition 2.2]{Leng23b}. By \cite[Lemma A.10]{Leng23b}, these notions are equivalent up to a multiplicative factor of $O_D(1)$.

Two important special classes of polynomial sequences on $G$ are those whose coefficients are ``rational'' and those which grow slowly and remain ``close'' to the identity.

\begin{definition}[Rational and smooth sequences] Given $Q\in\N$, we say an element $g$ of $G$ is \emph{rational with height $Q$} (or \emph{$Q$-rational}) if $g^Q \in \Gamma$. We say that a polynomial sequence $\gamma$ on $G$ is \emph{$Q$-rational} if $\gamma({n})$ is $Q$-rational for each ${n} \in\mathbb{Z}$.

We say that a polynomial sequence $\eps$ on $G$ is \emph{$(K, {T})$-smooth} if for each $n\in\Z$, we have
$$d_{G}(\eps(n +1), \eps(n)) \le K/T \quad \text{ and }\quad d_{G}(\eps(0), \mathrm{id}_G) \le K.$$
\end{definition}

We are finally ready to define a nilsequence.
\begin{definition}[Nilsequence]\label{D: nilsequence}
A \emph{nilsequence} of \textit{complexity} $M$ is a sequence $\varphi: \Z\to\C$ of the form $\varphi(n) = F(g(n)\Gamma)$, where $g$ is a polynomial sequence on $G$ and $F\colon G/\Gamma \to \mathbb{C}$ is an $M$-Lipschitz function on a nilmanifold $G/\Gamma$ of complexity $M$.
\end{definition}

We now shift gears and discuss horizontal and vertical characters on $G/\Gamma$.

\begin{definition}[Horizontal and vertical components]
The \emph{horizontal component} of $G$ is $G/G_{(2)} = G/[G, G]$, and the \emph{vertical component} of $G$ is $G_{(s)}$.\footnote{In \cite{GT12}, the vertical component is defined to be $G_\ell$ where $G_\ell$ is the smallest nontrivial element of the filtration of $G$.} We let $\pi_{\mathrm{horiz}}\colon G \to G/G_{(2)}$ denote the quotient map. We use  $d_{\mathrm{horiz}}$ and $d_{(s)}$ to denote the dimensions of $G/G_{(2)}$ and $G_{(s)}$, respectively.
\end{definition}

We remark that $G/[G, G]\Gamma$, often called the \emph{horizontal torus}, is isomorphic to the torus $(\R/\Z)^{d_{\mathrm{horiz}}}$; the Mal'cev basis adapted to $G/\Gamma$ provides an isomorphism.  Horizontal characters come from characters of the horizontal torus, as follows.

\begin{definition}[Horizontal character]
A \emph{horizontal character} on $G/\Gamma$ is a continuous homomorphism $\eta\colon G \to \mathbb{R}$ such that $\eta(\Gamma) \subseteq \mathbb{Z}$.
\end{definition}

Note that since $\mathbb{R}$ is abelian, every horizontal character $\eta$ annihilates $[G, G]$ and hence descends to a character on the horizontal torus.  Since the characters of $(\R/\Z)^{d_{\mathrm{horiz}}}$ correspond to elements of $\Z^{d_{\mathrm{horiz}}}$, our choice of Mal'cev basis provides a correspondence between horizontal characters of $G/\Gamma$ and elements of $\Z^{d_{\mathrm{horiz}}}$.  More explicitly, each $\eta$ is of the form $\eta(g) = k \cdot \psi(g)$ for some $k \in \Z^{d_{\mathrm{horiz}}}$.

We will also require the notion of a vertical character.
\begin{definition}[Vertical character]
Let $H$ be a connected, simply connected subgroup of the center $Z(G)$ which is \emph{rational} in the sense that $H \cap \Gamma$ is cocompact in $H$, and let $\eta: H \to \R$ be a continuous homomorphism such that $\eta(H \cap \Gamma) \subseteq \Z$.  We say that a function $F: G/\Gamma \to \C$ is an \emph{$H$-vertical character} with \emph{frequency} $\eta$ if
$$F(gx)=e(\eta(g))F(x)$$
for all $g \in H$ and $x \in G/\Gamma$. We refer to a $G_{(s)}$-vertical character as simply a \emph{vertical character}.
\end{definition}

Horizontal and vertical characters come with natural notions of size.
\begin{definition}[Size of a character]
If $\eta$ is a horizontal character on $G/\Gamma$ associated with the element $k \in (\R/\Z)^{d_{\mathrm{horiz}}}$, then the \emph{size} (or \emph{modulus}) of $\eta$, denoted $|k|$ or $\|k\|_\infty$, is the $L^\infty$ norm of the vector $k$. 

Let $H$ be a rational subgroup of the center $Z(G)$, and let $\xi\colon H/(H\cap\Gamma) \to \mathbb{R}/\mathbb{Z}$ be a homomorphism.  The \emph{size} (or \emph{modulus}) of $\xi$, denoted $|\xi|$, is the Lipschitz constant of the function $x \mapsto e(\xi(x))$ on $H/(H\cap\Gamma)$. Equivalently,
$$|\xi| := \sup_{x \neq y \in H/(H\cap\Gamma)} \frac{|e(\xi(x)) - e(\xi(y))|}{d_{G/\Gamma}(x, y)}.$$ 
\end{definition}

Finally, we will need quantitative measures of rationality for subspaces and subgroups.
\begin{definition}[Rational subspace]
Let $V$ be a vector space with a basis $\mathcal{B}$, and let $W$ be a subspace of $V$. We say that $W$ is \emph{at most $Q$-rational} if $W$ has a basis $\mathcal{B}'$ such that each $w_j\in \mathcal{B}'$ can be written as
\[w_j = \sum_{v_i\in \mathcal{B}} a_{ji} v_i\]
for some rationals $a_{ji}$ of height at most $Q$.
\end{definition}
\begin{definition}[Rational subgroup and subnilmanifold]
Let $G/\Gamma$ be a nilmanifold with Mal'cev basis $\mathcal{X}$.
A connected and simply connected subgroup $H\subseteq G$ is \textit{$Q$-rational} if $\mathfrak{h} := \log(H)$ is $Q$-rational with respect to $\mathfrak{g}$ with the basis $\mathcal{X}$.  Define $\Gamma':=H\cap\Gamma$.  We then say that the nilmanifold $H/\Gamma'$, equipped with the filtration $H_\bullet = (H_i)_{i=0}^\infty$ given by $H_i := G_i\cap H$, is a \textit{$Q$-rational subnilmanifold} of $G/\Gamma$.
\end{definition}
\begin{remark}
If $H$ is a $Q$-rational subgroup of $G$, then by \cite[Lemma B.12]{Leng23a} there exists a $Q^{O_k(m^{O(1)})}$-rational Mal'cev basis for $H/(\Gamma \cap H)$.
\end{remark}

\section{Fraction-comparison estimates}\label{A: fraction comparison}
In this section, we prove elementary ``fraction-comparison estimates'' in the style of \cite{PSS23} (see, e.g., the proof of Proposition 5.6 there).  These estimates are useful in the nilsequences part of our argument.  While the first two lemmas are straightforward, the third makes use of some important properties of $V$-tricked polynomials.  Recall the smoothness norms from Definition \ref{def:smoothness}.

\begin{lemma}\label{extrapolation}
    Let $D, Q, T\in\N$, let $R\in\R[z]$ be a polynomial of degree $D$, and let $a, b \in \Z$ with $|a| \le QT$ and $0<|b| \leq Q$ nonzero. Then the polynomial $\tilde{R}(z):= R(a + bz)$ satisfies 
    $$\|b^D R\|_{C^\infty[T]} \ll_D Q^{D-1}\|\tilde{R}\|_{C^\infty[T]}.$$
\end{lemma}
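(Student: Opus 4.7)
The plan is to expand $\tilde{R}(z) = R(a+bz)$ in the monomial basis and invert the resulting triangular system to express each coefficient of $b^D R$ as an integer combination of the coefficients of $\tilde R$. Writing $R(z) = \sum_{j=0}^D \alpha_j z^j$ and $\tilde R(z) = \sum_{k=0}^D \tilde\alpha_k z^k$, the binomial theorem gives
\begin{equation*}
\tilde\alpha_k = \sum_{j=k}^{D} \binom{j}{k} a^{j-k} b^k \alpha_j,
\end{equation*}
which is an upper-triangular linear system in the $\alpha_j$'s with diagonal entries $b^k$.

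First I would invert this system. A short computation with Vandermonde-type binomial identities shows that
\begin{equation*}
\alpha_j = \sum_{k=j}^{D} \binom{k}{j} (-a)^{k-j} b^{-k} \tilde\alpha_k,
\end{equation*}
which one can verify by multiplying the two triangular matrices and collapsing the inner sum via $(1-1)^{l-k} = [l = k]$. Multiplying through by $b^D$ clears the denominators and yields an integer-coefficient identity
\begin{equation*}
b^D \alpha_j = \sum_{k=j}^{D} \binom{k}{j} (-a)^{k-j} b^{D-k} \, \tilde\alpha_k.
\end{equation*}

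Next I would pass to $\|\cdot\|_{\mathbb{R}/\mathbb{Z}}$, using the basic fact that $\|n x\|_{\mathbb{R}/\mathbb{Z}} \le |n|\,\|x\|_{\mathbb{R}/\mathbb{Z}}$ for integer $n$, and the hypotheses $|a| \le QT$ and $|b| \le Q$. Multiplying by $T^j$ and collecting factors, the contribution from the index $k$ is at most $\binom{k}{j} (QT)^{k-j} Q^{D-k} T^j \|\tilde\alpha_k\|_{\mathbb{R}/\mathbb{Z}} = \binom{k}{j} Q^{D-j} T^k \|\tilde\alpha_k\|_{\mathbb{R}/\mathbb{Z}}$. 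Since $j \ge 1$ forces $k \ge 1$ as well, each $T^k \|\tilde\alpha_k\|_{\mathbb{R}/\mathbb{Z}}$ is bounded by $\|\tilde R\|_{C^\infty[T]}$, and $Q^{D-j} \le Q^{D-1}$. Taking the maximum over $j \in \{1, \dots, D\}$ gives $\|b^D R\|_{C^\infty[T]} \ll_D Q^{D-1} \|\tilde R\|_{C^\infty[T]}$.

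No step here looks like a real obstacle; the main thing to be careful about is the bookkeeping in the inversion and the check that the exponents of $Q$ and $T$ combine correctly so that no coefficient with index $j \ge 1$ picks up a factor worse than $Q^{D-1}$. The argument is purely algebraic and uses no special structure of $R$ beyond its degree.
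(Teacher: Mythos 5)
Your proof is correct and rests on the same mechanism as the paper's: expand in the monomial basis, exploit the triangular structure of the substitution $z\mapsto a+bz$, clear denominators with $b^D$ so that the resulting identity has integer coefficients and $\|n x\|_{\R/\Z}\le |n|\,\|x\|_{\R/\Z}$ applies, and track the exponents via $(QT)^{k-j}Q^{D-k}T^j=Q^{D-j}T^k\le Q^{D-1}T^k$ for $j\ge 1$. The only difference is stylistic: you invert the triangular system in closed form (equivalently, use $R(z)=\tilde R((z-a)/b)$) and conclude with a single triangle inequality, whereas the paper recovers $b^D\alpha_j$ by a downward induction on $j$, applying the triangle inequality at each step and accumulating the same $Q^{D-j}$ factors.
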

\begin{proof}
Assume that $\|\tilde{R}\|_{C^\infty[T]} \le M$; we will show that $\|b^D R\|_{C^\infty[T]} \ll_D Q^{D-1} M$.  Write $R(z) = \sum_{i = 0}^D \alpha_i z^i$. Then
    \begin{align*}
        \tilde R(z)=R(a + bz) = \sum_{i = 0}^D \alpha_i (a + bz)^i = \sum_{i = 0}^D \alpha_i \sum_{j = 0}^i {i \choose j} b^jz^{j} a^{i - j} = \sum_{j =0}^D b^jz^{j} \sum_{i=j}^D \alpha_i {i \choose j} a^{i - j}.    
    \end{align*}
     The assumption $\|\tilde{R}\|_{C^\infty[T]} \le M$ means that
     \begin{align}\label{E: R/Z equations}
         \norm{b^j\cdot \sum_{i=j}^D  \alpha_i {i \choose j} a^{i - j}}_{\mathbb{R}/\mathbb{Z}} \le M/T^{j}
     \end{align}
    for each $1 \leq j \leq D$.  In particular the $j=D$ instance of \eqref{E: R/Z equations} gives $$\norm{b^D \alpha_D}_{\R/\Z} \leq M/T^D.$$
    Next, the $j=D-1$ instance of \eqref{E: R/Z equations} gives $$\norm{b^{D-1}(\alpha_{D-1}+Da\alpha_D)}_{\R//Z} \leq M/T^{D-1};$$
    multiplying the expression in $\norm{\cdot}_{\R/\Z}$ by $b$ and using the triangle inequality (and the assumption on the sizes of $a,b$), we find that
    $$\norm{b^D \alpha_{D-1}}_{\R/\Z} \leq QM/T^{D-1}+D(QT)(M/T^D) \ll_D QM/T^{D-1}.$$
    Continuing inductively in this manner, we find that
    $$\norm{b^D \alpha_{j}}_{\R/\Z} \ll_D Q^{D-j}M/T^j \leq Q^{D-1}M/T^j$$
    for each $1 \leq j \leq D$, as desired.
\end{proof}

For ease of reference we record the following special case of this lemma.

\begin{lemma}\label{translationextrapolation}
Let $D, T\in\N$, and let $R\in\R[z]$ be a polynomial of degree $D$. If $|y_0| \le QT$ and
$$\|R(y_0 + \cdot)\|_{C^\infty[T]} \le Q,$$
then
$$\|R\|_{C^\infty[T]} \ll_D Q^{D}.$$
\end{lemma}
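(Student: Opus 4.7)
The plan is to derive this as the immediate specialization $b=1$, $a=y_0$ of Lemma \ref{extrapolation}. First I would set $\tilde{R}(z) := R(y_0 + z)$, which is exactly the polynomial $R(a+bz)$ with $a = y_0$ and $b = 1$. The hypotheses of Lemma \ref{extrapolation} then read $|a| = |y_0| \le QT$ (given) and $0 < |b| = 1 \le Q$ (holding trivially once $Q \ge 1$, which we may assume since otherwise the conclusion is vacuous by enlarging the implied constant). The hypothesis $\|\tilde{R}\|_{C^\infty[T]} \le Q$ is precisely the given bound $\|R(y_0+\cdot)\|_{C^\infty[T]} \le Q$.

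Applying Lemma \ref{extrapolation} with these parameters yields
\[
\|R\|_{C^\infty[T]} = \|b^D R\|_{C^\infty[T]} \ll_D Q^{D-1}\|\tilde{R}\|_{C^\infty[T]} \le Q^{D-1}\cdot Q = Q^D,
\]
which is the desired conclusion. There is no serious obstacle here; the lemma is recorded separately only for convenience of citation in the nilsequence sections, and its proof is a one-line invocation of the preceding result.
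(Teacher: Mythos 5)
Your proof is correct and matches the paper exactly: the paper records this lemma precisely as the special case $a=y_0$, $b=1$ of Lemma \ref{extrapolation}, with the factor $b^D=1$ giving $\|R\|_{C^\infty[T]} \ll_D Q^{D-1}\cdot Q = Q^D$. (The only cosmetic quibble is your remark that $Q<1$ makes the conclusion "vacuous" — it would actually make it stronger — but this never arises since $Q$ is a positive integer as in Lemma \ref{extrapolation}.)
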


Recall that we are interested in the $V$-tricked polynomial
$$\widetilde P_{[r, V]}(z):=\frac{\widetilde P(Vz+r)-\widetilde P(r)}{V},$$
where $V=W^{O_P(1)}$ is a nonnegative integer power of $W$ and $r\in[V]$; as mentioned previously, $\widetilde P_{[r, V]}$ has integer coefficients, zero constant term and degree $d\geq 2$.
Let us denote the coefficient of $z^i$ in $\widetilde P_{[r, V]}(z)$ by $V_i \in \mathbb{Z}[W,V,r,\; \beta_1, \ldots,\; \beta_d]$.  We record that
$$V_d=\beta_d \beta_1^{d-2} W^{d-1}V^{d-1}$$
and that $V_1$ is equal to $1$ plus $W$ times some integer-coefficient polynomial in $W,V,r,\; \beta_1, \ldots,\; \beta_d$.  Thus, if every prime factor of $\beta_1\beta_d$ also divides $W$ (which is the case for $w$ larger than an absolute constant depending only on $P$), then $V_1$ is equivalent to $1$ modulo every prime dividing $V_d$ and we have $$\gcd(V_d,V_1)=1.$$  The following lemma tells us that precomposing a polynomial with $\widetilde P_{[r, V]}$ cannot significantly decrease its (properly normalized) smoothness norm.

\begin{lemma}\label{lem:composing-polynomials-2}
Let $D\in\N$. For every $C_1>0$ there exists a constant $C_2 = C_2(C_1, D,P)>0$ such that the following holds.  Let $V \leq W^{C_1}$ be a {nonnegative} integer power of $W$, let $r \in [V]$, and let $T, Q \in\N$. Let $R \in \mathbb{R}[z]$ be a polynomial of degree $D$. Assume that every prime factor of $\beta_1\beta_d$ also divides $W$. If
$$\|R \circ \widetilde{P}_{[r, V]}\|_{C^\infty[(T/V_d)^{1/d}]} \leq Q$$
and $T>C_{2} W^{C_{2}}$,
then
$$\|R\|_{C^\infty[T]} \leq C_{2}Q.$$
\end{lemma}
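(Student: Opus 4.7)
The proof will proceed by induction on the degree $D$ of $R$. Writing $R(y) = \sum_{i=1}^D \alpha_i y^i$ (constants are irrelevant for the smoothness norm) and $\widetilde P_{[r,V]}(z) = \sum_{k=1}^d V_k z^k$, we expand
\[\tilde R(z) := R(\widetilde P_{[r,V]}(z)) = \sum_{j=1}^{dD} \gamma_j z^j, \qquad \gamma_j = \sum_i \alpha_i c_{j,i},\]
where $c_{j,i} := [z^j](\widetilde P_{[r,V]})^i$ is an integer, and the hypothesis translates to $\|\gamma_j\|_{\mathbb{R}/\mathbb{Z}} \le Q(V_d/T)^{j/d}$ for $1 \le j \le dD$. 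We may assume $Q \le 1$, since otherwise the conclusion is vacuous.

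The heart of the argument is extracting the top coefficient $\alpha_D$. The coefficients $\gamma_j$ for $j > d(D-1)$ depend only on $\alpha_D$, since $(\widetilde P_{[r,V]})^i$ has degree $di \le d(D-1) < j$ for $i < D$. From $\gamma_{dD} = \alpha_D V_d^D$ we obtain $\alpha_D = n/V_d^D + O(Q/T^D)$ for some integer $n$. The constraint at $j = d(D-1)+1$ features the coefficient $c_{d(D-1)+1,D} = D V_1 V_d^{D-1} + E$, where $E$ is a sum of terms each containing at least one factor $V_k$ with $k \in [2, d-1]$ (hence divisible by $W$). Substituting the expression for $\alpha_D$ into this constraint, the main term forces an approximate congruence on $n \cdot D V_1 \pmod{V_d}$, while $E$ contributes only negligible corrections thanks to its extra $W$-divisibility. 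By the hypothesis that every prime factor of $\beta_1\beta_d$ divides $W$ together with $V_1 \equiv 1 \pmod W$ and $V_d = \beta_d\beta_1^{d-2}(VW)^{d-1}$, we have $\gcd(V_1, V_d) = 1$; combined with the lower bound $T > C_2 W^{C_2}$ (with $C_2$ large compared to $C_1$, $D$, and $P$), which makes every accumulated error strictly smaller than the integer spacing $1/V_d^D$, this Bezout-style argument upgrades the approximate congruences produced by the top $d$ constraints into exact divisibility $V_d^D \mid n$, yielding $\|\alpha_D\|_{\mathbb{R}/\mathbb{Z}} \le O(Q/T^D)$.

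For the inductive step, let $m_D \in \mathbb{Z}$ satisfy $|\alpha_D - m_D| \le O(Q/T^D)$, and set $R_1(y) := R(y) - m_D y^D$. Because $m_D \in \mathbb{Z}$ and $\widetilde P_{[r,V]}(z)^D$ has integer coefficients, $R_1 \circ \widetilde P_{[r,V]}$ differs from $\tilde R$ by an integer polynomial, hence $\|R_1 \circ \widetilde P_{[r,V]}\|_{C^\infty[S]} \le Q$ where $S = (T/V_d)^{1/d}$. Splitting $R_1 = R' + (\alpha_D - m_D) y^D$ with $\deg R' \le D-1$, the contribution of $(\alpha_D - m_D)\widetilde P_{[r,V]}(z)^D$ to the smoothness norm is at most $O(|\alpha_D - m_D| \cdot V_d^D \cdot S^{dD}) = O(Q)$, so $\|R' \circ \widetilde P_{[r,V]}\|_{C^\infty[S]} \le O(Q)$. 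The inductive hypothesis applied to $R'$ gives the required bounds on $\alpha_1, \ldots, \alpha_{D-1}$, and combining with the estimate on $\alpha_D$ completes the argument.

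The main obstacle is the number-theoretic extraction: upgrading the approximate congruences $\|n V_1/V_d\|_{\mathbb{R}/\mathbb{Z}} < 1/V_d$ into the exact divisibility $V_d \mid n$ (and iteratively $V_d^D \mid n$) requires careful bookkeeping of error sizes across the full chain of estimates, each of which must remain below the critical integer-spacing threshold. This is precisely where both the coprimality $\gcd(V_1, V_d) = 1$ (arising from the structural hypothesis on $\beta_1\beta_d$) and the large-$T$ hypothesis $T > C_2 W^{C_2}$ (with $C_2 = C_2(C_1, D, P)$ explicitly chosen to absorb powers of $V_d \le W^{O_P(C_1)}$) enter in an essential way.
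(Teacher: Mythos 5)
Your overall skeleton (induction on $D$, extracting $\alpha_D$ first, then subtracting its near-integer part and invoking the inductive hypothesis, with coprimality of $V_1$ and $V_d$ as the arithmetic engine) matches the paper, and your inductive step is fine. The genuine gap is in the heart of the argument: you try to pin down $\alpha_D$ using only the constraints at $j>d(D-1)$ (the coefficients of $R\circ\widetilde P_{[r,V]}$ that depend on $\alpha_D$ alone), namely $j=dD$ and $j=d(D-1)+1$, and this cannot work. Concretely, take $d=2$ (or any $D\geq d$) and $\alpha_D=1/V_d$: then $\gamma_{dD}=\alpha_D V_d^D\in\Z$ and $\gamma_{d(D-1)+1}=\alpha_D\bigl(DV_1V_d^{D-1}+E\bigr)\in\Z$ as well (for $D\geq d$ every monomial in $c_{j,D}$ with $j>d(D-1)$ contains a factor $V_d$), so all of your ``top $d$ constraints'' are satisfied with zero error, yet $\|\alpha_D\|_{\R/\Z}=1/V_d$ is far larger than $Q/T^D$ once $T>C_2W^{C_2}$. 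Two specific steps break down: (i) the term $nE/V_d^D$ is not ``negligible'' --- $W\mid E$ gives no smallness modulo $1$, since $V_d$ is itself a power of $W$ times $O_P(1)$ and $n$ is an unknown integer of size up to $V_d^D$; and (ii) even granting the congruence from $j=d(D-1)+1$, it only yields (up to the factor $D$) $V_d\mid n$, not $V_d^D\mid n$, and there is no ``iterative'' mechanism to upgrade it, as the counterexample $n=V_d^{D-1}$ (i.e.\ $\alpha_D=1/V_d$) shows.

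What is missing is precisely the paper's use of the \emph{low}-order coefficients: writing $E_k$ for the coefficient of $z^k$ in $R\circ\widetilde P_{[r,V]}$, one has $E_1=\alpha_1V_1$, $E_2=\alpha_2V_1^2+\alpha_1V_2$, etc., and an elimination of $\alpha_1,\dots,\alpha_{D-1}$ produces integer-coefficient combinations with $s_1E_1+\cdots+s_DE_D=V_1^{\gamma}\alpha_D$. Since $\|E_k\|_{\R/\Z}\leq Q(V_d/T)^{k/d}$ and the $s_k$ have size $W^{O(1)}$, this gives a \emph{second} rational approximation $\alpha_D\approx n'/V_1^{\gamma}$ with error $O(QW^{O(1)}/T^{1/d})$, whose denominator is coprime to $V_d^D$. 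Comparing the two approximations and using $T>C_2W^{C_2}$ (so the error is below the spacing $1/(V_d^DV_1^{\gamma})$) forces $n/V_d^D=n'/V_1^{\gamma}$, hence $V_d^D\mid n$ and $\|\alpha_D\|_{\R/\Z}\leq Q/T^D$. Your proposal never brings the constraints with $j\leq d(D-1)$ into the extraction of $\alpha_D$, so the key divisibility is not established. (A minor additional quibble: the reduction ``we may assume $Q\leq 1$'' is not justified, since the conclusion $\|R\|_{C^\infty[T]}\leq C_2Q$ is not vacuous for $1<Q<T^D$; but this is inessential compared with the main gap.)
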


The scales of these smoothness norms come from the estimate $\widetilde P_{[r,V]}((T/V_d)^{1/d}) \asymp T$. 

\begin{proof}
We fix $C_1>0$ and let all the constants depend on $P$ and $D$. The proof proceeds by induction on the degree $D$, where the base case $D=0$ holds vacuously. 
For the induction step, write $R(z)=\sum_{i=0}^D \alpha_i z^i$.  Recall that $\widetilde P_{[r, V]}(z)=V_d z^d+\cdots+V_1 z$.  We compute
\begin{align*}
R(\widetilde P_{[r, V]}(z)) &=\sum_{i = 0}^D \alpha_i (V_d z^d+\cdots +V_1 z)^i  \\
 &=\sum_{i = 0}^D \alpha_i \sum_{j_1+\cdots+j_d=i}\binom{i}{j_1, \ldots, j_d}(V_1z)^{j_1}\cdots (V_d z^d)^{j_d}\\
 &=\sum_{k=0}^{dD} z^k \sum_{i = 0}^D \alpha_i  \sum_{{\underline{j}} \in \mathcal{P}(i,k)}\binom{i}{j_1, \ldots, j_d}V_1^{j_1}\cdots V_d^{j_d},
\end{align*}
where $\mathcal{P}(i,k)$ denotes the set of tuples $\underline{j} = (j_1, \ldots, j_d)$ of nonnegative integers satisfying $j_1+\cdots+j_d=i$ and $j_1+2j_2+
\cdots+dj_d=k$.  The assumption $\|R \circ \widetilde P_{[r, V]}\|_{C^\infty [(T/V_d)^{1/d}]} \le Q$
tells us that
\begin{equation}\label{eq:expand-compose-poly}
\norm{\sum_{i = 0}^D \alpha_i  \sum_{{\underline{j}} \in \mathcal{P}(i,k)}\binom{i}{j_1, \ldots, j_d}V_1^{j_1}\cdots V_d^{j_d}}_{\mathbb{R}/\mathbb{Z}} \leq Q (V_d/T)^{k/d}
\end{equation}
for each $1 \leq k \leq dD$.  Let $E_k$ denote the expression inside $\| \cdot \|_{\mathbb{R}/\mathbb{Z}}$ in \eqref{eq:expand-compose-poly}.

We will show that $\|\alpha_D\|_{\mathbb{R}/\mathbb{Z}} \leq Q/T^D$.  
Since $E_{dD}=\alpha_D V_d^D$, Equation \eqref{eq:expand-compose-poly} for $k=dD$ tells us that
$$\norm{\alpha_D V_d^D}_{\mathbb{R}/\mathbb{Z}} \leq Q(V_d/T)^D,$$
and hence there is some integer $n$ such that
\begin{equation}\label{eq:fraction-comparison-first-estimate}
|\alpha_D-n/V_d^D| \leq Q/T^D.
\end{equation}
We will aim to show that $n$ is divisible by $V_d^D$.  To accomplish this, we will find a second rational approximation for $\alpha_D$ using a denominator that is coprime to $V_d^D$.  To begin, we have $E_1=\alpha_1V_1$.  Next, $E_2=\alpha_2 V_1^2+\alpha_1 V_2$, and so
$$V_1E_2-V_2E_1=V_1^3 \alpha_2.$$
We continue with $E_3=\alpha_3 V_1^3+\alpha_2(2V_1V_2)+\alpha_1 V_3$ and
$$V_1^2 E_3-2V_2(V_1E_2-V_2E_1)-V_1 V_3 E_1=V_1^5 \alpha_3.$$
Continuing inductively in this manner, we obtain a positive integer $\gamma$ and polynomials $s_1, \ldots, s_d \in \mathbb{Z}[V_1, \ldots, V_d]$ such that
$$s_1E_1+\cdots+s_dE_d=V_1^{\gamma}\alpha_D$$
holds (as an identity in the formal variables $V_1, \ldots, V_d, \alpha_1, \ldots, \alpha_D$); note that the $s_i$'s depend only on the parameters $d,D$.  The crucial point for the induction is that $E_k$ (for $1 \leq k \leq D$) equals $\alpha_k V_1^k$ plus a linear combination of $\alpha_{k-1}, \ldots, \alpha_1$ with coefficients in $\mathbb{Z}[V_1, \ldots, V_d]$.

Recalling that $V_i=O(W^{d-1}V^{d-1})=O(W^{O(1)})$ 
and applying \eqref{eq:expand-compose-poly} for $1 \leq k \leq D$, we find that
$$\norm{V_1^{\gamma}\alpha_D}_{\mathbb{R}/\mathbb{Z}}=O(QW^{O(1)}/T^{1/d}).$$ 
Thus there is an integer $n'$ such that
$$\abs{\alpha_D-n'/V_1^\gamma}=O(QW^{O(1)}/T^{1/d}).$$
Comparing this with \eqref{eq:fraction-comparison-first-estimate} gives
$$\abs{n/V_d^D-n'/V_1^\gamma}=O(QW^{O(1)}/T^{1/d}).$$
Now we must have
$$n/V_d^D=n'/V_1^\gamma$$
since otherwise we would have $\abs{n/V_d^D-n'/V_1^\gamma} \geq 1/(V_d^D V_1^\gamma)$, which (due to the constraint on $T$) is not of size $O(QW^{O(1)}/T^{1/d})$.  Since $V_1,V_d$ are coprime, we conclude that $n$ is divisible by $V_d^D$, and \eqref{eq:fraction-comparison-first-estimate} tells us that
$$\norm{\alpha_D}_{\mathbb{R}/\mathbb{Z}} \leq Q/T^D,$$
as desired.

We claim that
$$\norm{\alpha_D (\widetilde P_{[r, V]})^D}_{C^\infty [(T/V_d)^{1/d}]}=O(Q).$$
Indeed, since each $V_i \ll (WV)^{d-1} \ll V_d$, we see that the $z^i$-coefficient of $(\widetilde P_{[r, V]})^D$ has size $\ll V_d^{D}$.  Then for each $0 \leq i \leq dD$,
the distance from the $z^i$-coefficient of $\alpha_D (\widetilde P_{[r, V]})^D$ to the nearest integer is at most
$$\ll (Q/T^D)\cdot V_d^{D} \leq (V_d/T)^{i/d}Q;$$
this establishes the claim.   Hence
$$\norm{(R\circ \widetilde P_{[r, V]})-\alpha_D (\widetilde P_{[r, V]})^D}_{C^\infty [(T/V_d)^{1/d}]}=O(Q),$$
and the lemma follows from the induction hypothesis.
\end{proof}

\section{S\'ark\"ozy-type results}\label{A: Sarkozy}
The goal of this section is to make precise the heuristic that
\begin{align}\label{E: Sarkozy heuristic}
    \sum_\bx \E_{z\in[(N/V_d)^{1/d}]}f_0(\bx) f_1(\bx + \bv \widetilde{P}_{[r, V]}(z)) \approx \sum_{\bx}\E_{z\in[N]}f_0(\bx)f_1(\bx + \bv z),
\end{align}
where $V=W^{O_P(1)}$ is a nonnegative integer power of $W$, $r\in[V]$ (in particular, if $V = 1$, then $r=0$ and hence $\widetilde{P}_{[r, V]} = \widetilde{P}$), and $V_i$'s are the coefficients of $\widetilde{P}_{[r, V]}$.
We use circle-method arguments to make this heuristic precise.  These arguments originated in work of S\'ark\"ozy \cite{Sa78a, Sa78b}, and we will more closely follow a method due to Green \cite{Gr02}.

 Throughout this section, define $K: = (N/V_d)^{1/d}$, so that $|\widetilde{P}_{[r, V]}(z)|\ll_P N$ {for $z \in [K]$}.
For $\xi\in\R/\Z$, we also let
\begin{align*}
    S(\xi) := \sum_{z\in[K]}e(\xi \widetilde{P}_{[r, V]}(z))
\end{align*}
denote the Fourier transform of the (multi)set $\widetilde{P}_{[r, V]}([K])$.

\subsection{Circle method preliminaries}

The first step towards \eqref{E: Sarkozy heuristic} is showing that the counting operator for a $2$-point polynomial progression is always controlled by the major arcs of its arguments.  Let us make a few more definitions.  Let $\veps>0$ be a constant to be determined later. For coprime $0\leq a < q \leq K^\veps$, we define the \emph{major arc}
\begin{align*}
    \mathfrak{M}_{a,q} := \left\{\xi\in\R/\Z:\; \abs{\xi - \frac{a}{q}}\leq K^{-(d-\veps)}\right\}.
\end{align*}
As long as $\veps<1/3$, the intervals $\mathfrak{M}_{a,q}$ are disjoint.
So we partition $\R/\Z$ into the \emph{major arcs} $\mathfrak M$ and the \emph{minor arcs} $\mathfrak m$, where
\begin{align*}
    \mathfrak{M} := \bigcup_{\substack{1\leq a\leq q\leq K^\veps,\\ \gcd(a,q) = 1}}\mathfrak{M}_{a,q}
    \quad \textrm{and}\quad \mathfrak{m} := (\R/\Z)\setminus\mathfrak{M}.
\end{align*}
A key input for controlling the minor arcs is the following classical lemma that upper-bounds the even moments of $F$ (see, e.g., \cite[Corollary 14.2]{Wo19}). 
\begin{lemma}[Crude bound on even moments of $F$]\label{L: 4-th moment}
    There exists some positive integer $s_0(d) > d$ such that for all $s\geq s_0(d)$ and $\eta >0$, we have
    \begin{align*} 
        \int_{\R/\Z}\abs{S(\xi)}^{2s}\; d\xi  \ll_\eta K^{2s - d +\eta}.
    \end{align*}
\end{lemma}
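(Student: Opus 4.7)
The plan is to reduce this moment bound to the classical Hua inequality, which is the fundamental $L^{2^d}$-moment estimate in the circle method for polynomial exponential sums. Indeed, Hua's inequality asserts that for any polynomial $Q \in \Z[z]$ of degree $d$ with nonzero leading coefficient, and for any $\eta > 0$, one has
$$\int_{\R/\Z}\Bigabs{\sum_{z \in [K]} e(\xi Q(z))}^{2^d} d\xi \ll_{d,\eta} K^{2^d - d + \eta},$$
with the implied constant depending only on $d$ and $\eta$ (and, crucially, \emph{not} on $Q$). This is proved by iterated Weyl differencing: the identity $|\sum_z e(\xi Q(z))|^2 = \sum_h \sum_z e(\xi(Q(z+h) - Q(z)))$ reexpresses the squared modulus as an exponential sum attached to a polynomial of degree $d-1$ in $z$ (with leading coefficient a nonzero multiple of $h$). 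Iterating $d$ times brings one to linear exponential sums, which are handled by standard geometric-series arguments.

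To deduce the claim, I would apply Hua's inequality with $Q=\widetilde P_{[r,V]}$, which has degree $d$ and nonzero leading coefficient $V_d = \beta_d \beta_1^{d-2}W^{d-1}V^{d-1}$ (so the differencings indeed produce polynomials of the expected degrees with nonvanishing leading coefficients). This gives
$$\int_{\R/\Z}|S(\xi)|^{2^d}\, d\xi \ll_{d,\eta} K^{2^d - d + \eta}.$$
For any $s \geq s_0(d):=\max(2^{d-1},d+1)$, the trivial pointwise bound $|S(\xi)|\leq K$ then yields
$$\int_{\R/\Z}|S(\xi)|^{2s}\, d\xi \leq K^{2s - 2^d}\int_{\R/\Z}|S(\xi)|^{2^d}\, d\xi \ll_{d,\eta} K^{2s - d + \eta},$$
which is the desired estimate. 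The choice $s_0(d)>d$ is then automatic.

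The only potential obstacle is ensuring that the implied constant in Hua's inequality is uniform in the parameters $V$ and $r$ defining $\widetilde P_{[r,V]}$. Since Hua's proof depends only on the degree $d$ (the Weyl differencing argument needs only that the leading coefficient at each stage is nonzero), this uniformity is automatic, and no additional work is needed. In practice, I would simply cite \cite[Corollary 14.2]{Wo19} (or equivalently the version of Hua's inequality in Vaughan's text on the Hardy--Littlewood method) rather than re-derive the moment bound here.
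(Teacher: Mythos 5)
Your overall route (Hua's inequality at the $2^d$-th moment, then padding with the trivial bound $|S(\xi)|\le K$) is a reasonable classical approach, and note that the paper itself does not prove this lemma at all but simply cites \cite[Corollary 14.2]{Wo19}. However, there is a genuine gap in your key uniformity claim. Hua's inequality for a general integer polynomial $Q$ of degree $d$ is \emph{not} proved with an implied constant depending only on $d$ and $\eta$: in the Weyl-differencing proof, after $j$ differencings one must count solutions of $\Delta_{h_1,\dots,h_j}Q(x)=m$, and since $h_1\cdots h_j$ divides $m$ one invokes the divisor bound for integers $m$ of size up to (coefficient height)$\cdot K^d$. This produces a factor of order $(HK)^{\eta}$ where $H$ is the height of the coefficients of $Q$, so the constant does depend on $Q$. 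In the present setting this matters: the coefficients of $\widetilde{P}_{[r,V]}$, e.g.\ $V_d=\beta_d\beta_1^{d-2}(WV)^{d-1}$, are powers of $W=W(N)$ and hence grow with $N$, while the lemma is meant to be uniform in $W,V,r$ (only the dependence on $\eta$ and $P$ is allowed). Your remark that "no additional work is needed" because the differencing only requires nonvanishing leading coefficients is therefore not a correct justification.

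The gap is fixable in two ways. Either add (or import from the context where the lemma is used) the hypothesis that the coefficients are at most a fixed power of $K$ --- this is exactly what is assumed downstream, e.g.\ Lemma \ref{L: minor arcs} requires coefficients at most $K^{\veps/4}$ and Proposition \ref{P: even moment bound} requires $N\ge V_d^{C}$ --- so that the extra factor $H^{\eta}$ is absorbed into $K^{O(\eta)}$; or, more in the spirit of the stated lemma (which permits any fixed $s_0(d)>d$), take $s_0(d)\ge d(d+1)/2+1$ and deduce the bound from Vinogradov's mean value theorem: writing the moment as a count of solutions of $\sum_j c_j\bigl(\sum_{i\le s}z_i^j-\sum_{i>s}z_i^j\bigr)=0$, slice over the vector of power sums (at most $K^{d(d-1)/2}$ choices lie on the hyperplane determined by $c_d\neq 0$, since the top coordinate is then determined) and bound each fiber by $J_{s,d}(K)\ll_\eta K^{2s-d(d+1)/2+\eta}$. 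This yields $K^{2s-d+\eta}$ uniformly in the coefficients, using only that the leading coefficient is a nonzero integer, and is essentially what the cited \cite[Corollary 14.2]{Wo19} supplies.
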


The following lemma is a generalization of \cite[Lemma B.4]{PSS23}, and it follows from a nearly identical proof, which we omit.
\begin{lemma}[Minor arc control]\label{L: minor arcs}
    Let $s_0(d)$ be as in Lemma \ref{L: 4-th moment}, and let $s> s_0(d)$ be a positive integer. There exists an absolute $C=C(d)\geq 1$ such that for all $\veps\in(0,1)$
    and positive integers $K\gg_{d,\veps} 1$, if the coefficients of $\widetilde{P}_{[r, V]}$ have size at most $K^{\veps/4}$, then 
    we have the pointwise bound
    \begin{align*}
        \sup_{\xi\in\mathfrak{m}}|S(\xi)|
        \leq K^{1-\veps/C}
    \end{align*}
    as well as the $L^{2s}$-bound
    \begin{align*}
        \int_\mathfrak{m}\abs{S(\xi)}^{2s}
        \ll_{s, \veps}K^{2s-d-\veps/C}.
    \end{align*}

\end{lemma}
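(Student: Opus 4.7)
The plan is to follow the strategy of \cite[Lemma B.4]{PSS23}, which itself goes back to S\'ark\"ozy \cite{Sa78a,Sa78b}: obtain the pointwise bound via Weyl's inequality, and then deduce the $L^{2s}$ bound by interpolating the pointwise bound with the even-moment estimate of Lemma \ref{L: 4-th moment}.

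For the pointwise bound, I would apply Dirichlet's approximation theorem to the real number $\xi V_d$: for any $\xi \in \R/\Z$, there exist coprime integers $a,q$ with $1 \leq q \leq K^{d-\veps/2}$ and
\[
\abs{\xi V_d - \tfrac{a}{q}} \leq \tfrac{1}{qK^{d-\veps/2}}.
\]
Writing $\tfrac{a}{qV_d} = \tfrac{a'}{q'}$ in lowest terms, one has $q' \leq qV_d$ (since $q' = qV_d/\gcd(a,V_d)$), together with the bound $\abs{\xi - \tfrac{a'}{q'}} \leq \tfrac{1}{qV_d K^{d-\veps/2}} \leq K^{-(d-\veps)}$. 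If we had $q' \leq K^\veps$, then $\xi$ would lie in the major arc $\mathfrak{M}_{a',q'}$, contradicting $\xi \in \mathfrak{m}$; hence $q' > K^\veps$, and combining with the coefficient bound $V_d \leq K^{\veps/4}$ forces $q \geq q'/V_d > K^{3\veps/4}$. The standard form of Weyl's inequality, applied to the polynomial $\xi \widetilde{P}_{[r,V]}(z)$ (whose leading coefficient is $\xi V_d$), then yields
\[
\abs{S(\xi)} \ll_\eta K^{1+\eta}\bigbrac{q^{-1} + K^{-1} + q/K^d}^{1/2^{d-1}} \ll_\eta K^{1 + \eta - \veps/2^d},
\]
using $q^{-1} \leq K^{-3\veps/4}$ and $q/K^d \leq K^{-\veps/2}$. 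Choosing $\eta$ sufficiently small (say $\eta = \veps/2^{d+1}$) and taking $K$ large enough to absorb the implicit constant produces the pointwise bound with an absolute $C = C(d)$.

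For the $L^{2s}$ bound, I would factor out two copies of $S(\xi)$:
\[
\int_{\mathfrak{m}} \abs{S(\xi)}^{2s}\, d\xi \leq \Bigbrac{\sup_{\xi\in\mathfrak{m}}\abs{S(\xi)}}^{2} \cdot \int_{\R/\Z}\abs{S(\xi)}^{2(s-1)}\, d\xi.
\]
The pointwise bound controls the first factor by $K^{2(1-\veps/C)}$. Since $s > s_0(d)$ and $s$ is a positive integer, we have $s-1 \geq s_0(d)$, so Lemma \ref{L: 4-th moment} (applied with $s-1$ in place of $s$) bounds the second factor by $\ll_\eta K^{2(s-1) - d + \eta}$. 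Multiplying the two estimates and choosing $\eta = \veps/C$ gives the claimed bound $\ll_{s,\veps} K^{2s - d - \veps/C}$, after possibly enlarging $C$.

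The main obstacle is the bookkeeping around the leading coefficient $V_d$ of $\widetilde{P}_{[r,V]}$: classical Weyl-type estimates are cleanest for polynomials with ``unit-sized'' coefficients, and here one must carefully pass between rational approximations of $\xi V_d$ and of $\xi$. The hypothesis that all coefficients of $\widetilde{P}_{[r,V]}$ (and in particular $V_d$) have size at most $K^{\veps/4}$ is exactly calibrated to make this translation lossless up to a small power of $K^\veps$; this is the only genuinely new ingredient beyond the classical argument recorded in \cite{PSS23}.
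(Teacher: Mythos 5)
Your proof is correct and matches the intended argument: the paper omits the proof, noting it is nearly identical to \cite[Lemma B.4]{PSS23}, which is exactly your route — Dirichlet approximation of the leading coefficient $\xi V_d$ plus Weyl's inequality (using $V_d \leq K^{\veps/4}$ to pass between denominators for $\xi V_d$ and $\xi$) for the pointwise minor-arc bound, then factoring out $|S(\xi)|^2$ and invoking Lemma \ref{L: 4-th moment} with $s-1 \geq s_0(d)$ for the $L^{2s}$ estimate. The bookkeeping (in particular $q > K^{3\veps/4}$ on the minor arcs and the choice $\eta \asymp \veps$) checks out.
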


In order to handle the major arcs, we start with the following classical decomposition result for $F$  (see e.g. \cite[Corollary 5.2]{Wo23}).
\begin{lemma}\label{L: major arc expression}
    Let $0<\veps<1/3$ and $0\leq a < q \leq K^\veps$ with $a,q$ coprime.
    If $\xi\in\mathfrak{M}_{a,q}$, then 
    \begin{align*}
        S(\xi)
        = \E_{u\in[q]}e(a \widetilde{P}_{[r, V]}(u)/q)\int_0^K e\brac{\brac{\xi-\frac{a}{q}}\widetilde{P}_{[r, V]}(z)}dz + O(K^{2\veps}).
    \end{align*}
\end{lemma}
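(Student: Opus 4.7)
The plan is to follow the classical circle-method decomposition of a complete Weyl sum on a major arc, adapted to the (integer-coefficient) polynomial $\widetilde P_{[r,V]}$. First I would write $\xi = a/q + \beta$, where $|\beta| \leq K^{-(d-\veps)}$ by the definition of $\mathfrak{M}_{a,q}$. Then I would split the summation variable $z \in [K]$ into residue classes modulo $q$, writing $z = qm+u$ with $u \in [q]$ and $m$ running over those integers for which $qm+u \in [K]$ (the number of such $m$ is $K/q + O(1)$).

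The central arithmetic observation is that $\widetilde P_{[r,V]} \in \mathbb{Z}[z]$, so $\widetilde P_{[r,V]}(qm+u) \equiv \widetilde P_{[r,V]}(u) \pmod{q}$ and therefore $e((a/q)\widetilde P_{[r,V]}(qm+u)) = e((a/q)\widetilde P_{[r,V]}(u))$. This lets me factor
\[
    S(\xi) = \sum_{u \in [q]} e\!\left(\tfrac{a}{q}\widetilde P_{[r,V]}(u)\right) \sum_{m:\; qm+u \in [K]} e\bigl(\beta\, \widetilde P_{[r,V]}(qm+u)\bigr).
\]
The second factor now carries only the slowly varying part of the phase.

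Next I would replace the inner sum over $m$ by a Riemann integral. On a major arc, the derivative
\[
    \frac{d}{dz}\bigl(\beta\, \widetilde P_{[r,V]}(z)\bigr) = \beta\, \widetilde P'_{[r,V]}(z)
\]
is bounded in magnitude by $|\beta| \cdot O_P(V_d K^{d-1}) = O_P(K^{\veps-1})$ uniformly for $z \in [0,K]$ (using $V_d K^d = N$). Hence $z \mapsto e(\beta\, \widetilde P_{[r,V]}(z))$ has derivative of size $O_P(K^{\veps-1})$, so on each block of length $q$ the function varies by at most $O_P(qK^{\veps-1})$. Comparing the Riemann sum (with step $q$) to the integral thus costs $O_P(q^2 K^{\veps-1})$ per block, and summing over the $O(K/q)$ blocks (plus an $O(1)$ endpoint error) gives
\[
    \sum_{m:\; qm+u \in [K]} e\bigl(\beta\, \widetilde P_{[r,V]}(qm+u)\bigr) = \frac{1}{q}\int_0^K e\bigl(\beta\, \widetilde P_{[r,V]}(z)\bigr)\, dz + O_P(qK^{\veps}).
\]

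Substituting this back, the main term becomes $\E_{u\in[q]} e(a\widetilde P_{[r,V]}(u)/q) \cdot \int_0^K e(\beta\, \widetilde P_{[r,V]}(z))\, dz$ after absorbing the $1/q$ into the average, which is precisely the claimed leading term with $\beta = \xi - a/q$. The total error is bounded by $q \cdot O_P(qK^{\veps}) \leq O_P(K^{3\veps})$, which, after shrinking $\veps$ slightly (permissible since $\veps$ is only required to be positive and less than $1/3$), yields the desired $O(K^{2\veps})$ bound. The main technical nuisance — though not a deep obstacle — is bookkeeping the dependence of the implicit constants on the (potentially large) coefficients of $\widetilde P_{[r,V]}$; this is handled by absorbing $V_d$ and other $V,W$-dependent quantities into the $O_P$ notation, which is consistent with how such terms are treated throughout Appendix \ref{A: Sarkozy}.
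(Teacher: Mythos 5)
Your skeleton—splitting $z\in[K]$ into residue classes modulo $q$, using the integrality of $\widetilde P_{[r,V]}$ to pull out $e((a/q)\widetilde P_{[r,V]}(u))$, and comparing the remaining sum to an integral—is exactly the classical argument the paper invokes (it gives no proof, citing \cite{Wo23}), so the route is the right one. But your error analysis has a genuine gap. With the paper's definition of the major arcs, $|\xi-a/q|\le K^{-(d-\veps)}$, and since the coefficients of $\widetilde P_{[r,V]}$ have size up to $\asymp_P V_d$ (an unbounded power of $W$), the correct derivative bound is $|\xi-a/q|\,\bigl|\widetilde P_{[r,V]}'(z)\bigr|\ll_P V_d K^{\veps-1}$, not $O_P(K^{\veps-1})$: the identity $V_dK^d=N$ gives no cancellation here (it would if the arc radius were $K^{\veps}/N$, but it is $K^{\veps}/K^{d}$, which is $V_d$ times larger). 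Consequently your sum-versus-integral error is $O_P(V_dK^{3\veps})$ (sharper bookkeeping gives $O_P(V_dK^{2\veps})$), and the factor $V_d$ cannot be ``absorbed into $O_P$'': throughout Appendix~\ref{A: Sarkozy} the implied constants are independent of $W,V$—Lemma~\ref{L: minor arcs} imposes the explicit hypothesis that the coefficients be at most $K^{\veps/4}$ rather than absorbing them, and the entire point of Lemma~\ref{L: major arcs} is a bound $\ll K^{2s-d}/V_d$, which would be vacuous if $V_d$-dependent constants were permitted. Your closing ``shrink $\veps$ slightly'' move is also not available: the same $\veps$ defines the arcs $\mathfrak{M}_{a,q}$, the range $q\le K^{\veps}$, and the error exponent, so the bound must be proved for the given $\veps$.

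Two standard repairs. (i) Bound the per-residue-class error by $O(1)+2\pi|\xi-a/q|\int_0^{K}\bigl|\widetilde P_{[r,V]}'(t)\bigr|\,dt\ll_P 1+V_dK^{\veps}$, giving a total error $O_P(qV_dK^{\veps})$, and then use a largeness hypothesis of $K$ relative to $V_d$ of the kind the neighbouring lemmas already carry ($K\ge V_d^{2}$ in Lemma~\ref{L: major arcs}, coefficients $\le K^{\veps/4}$ in Lemma~\ref{L: minor arcs}). (ii) Better, use the Kuzmin--Landau/van der Corput comparison: for fixed $u$ the phase $m\mapsto(\xi-a/q)\widetilde P_{[r,V]}(qm+u)$ has derivative $\ll_P qV_dK^{\veps-1}\le V_dK^{2\veps-1}$, which is at most $1/2$ once $K$ exceeds a fixed power of $V_d$, and it is piecewise monotone with $O_d(1)$ pieces; then each inner sum equals the corresponding integral up to $O_d(1)$, so the total error is $O_d(q)\le O_d(K^{\veps})$, comfortably within the stated $O(K^{2\veps})$. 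Either way the proof needs (and the applications supply) a quantitative assumption relating $K$ and $V_d$, which your write-up never uses.
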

We recall that the assumption $\veps<1/3$ ensures the disjointness of the major arcs.

In what follows, let $$S(a,q):= \E_{u\in[q]}e(a \widetilde{P}_{[r, V]}(u)/q)=\E_{u\in\Z/q\Z}e(a \widetilde{P}_{[r, V]}(u)/q)$$ be the exponential sum appearing in Lemma \ref{L: major arc expression}. The following lemma gives bounds on $S(a,q)$ for various values of $a,q$.  It is inspired by \cite[Lemma 5.3]{CC23}; the proof is almost identical, so we omit it.
\begin{lemma}\label{L: S(a,q)}
    Let $1\leq a < q$ be coprime positive integers.  
    Then 
    \begin{align*}
        |S(a,q)|\ll_{\eta, P} q^{\eta - 1/d}\quad \textrm{for\; every}\quad \eta>0.
    \end{align*}
    Moreover, $S(a,q) = 0$ whenever $\gcd(q, W)>1$. 
\end{lemma}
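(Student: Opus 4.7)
The plan is to treat the two claims separately. For the bound $|S(a,q)|\ll_{\eta,P} q^{\eta-1/d}$, I would rely on a classical complete exponential sum estimate. Writing $T(a,q):=qS(a,q)=\sum_{u\in\Z/q\Z}e(a\widetilde{P}_{[r,V]}(u)/q)$, the Chinese remainder theorem gives a multiplicative decomposition $T(a,q)=\prod_{p^m\|q}T(a_{p^m},p^m)$, reducing matters to bounding $T$ at prime-power moduli. Since the second half of the lemma will imply $S(a,q)=0$ whenever $\gcd(q,W)>1$, we may restrict to $\gcd(q,W)=1$; combined with the hypothesis that every prime factor of $\beta_1\beta_d$ also divides $W$ (valid once $w$ is large depending only on $P$), this forces the leading coefficient $V_d=\beta_d\beta_1^{d-2}W^{d-1}V^{d-1}$ of $\widetilde{P}_{[r,V]}$ to be coprime to $q$. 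Hua's inequality for polynomials with leading coefficient coprime to the modulus then yields $|T(a,p^m)|\ll_d p^{m(1-1/d)}$, and multiplying over prime-power divisors (with the usual divisor-bound absorption into $q^\eta$) gives $|T(a,q)|\ll_{d,\eta}q^{1-1/d+\eta}$, as required.

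The second half, that $S(a,q)=0$ when $\gcd(q,W)>1$, is the heart of the lemma and exploits the arithmetic structure of the $W$-tricked polynomial. The strategy is to exhibit a symmetry: fix a prime $p$ dividing both $q$ and $W$, and substitute $u\mapsto u+q/p$, which merely permutes $\Z/q\Z$. I would first record two divisibility facts for the coefficients $V_i$ of $\widetilde{P}_{[r,V]}$, both immediate from binomially expanding $\widetilde{P}(Vz+r)-\widetilde{P}(r)$: namely $V_1\equiv 1\pmod{W}$ (since the linear coefficient of $\widetilde{P}$ is $1$ while all other coefficients lie in $W\Z$), and, more refined, $W^{i-1}\mid V_i$ for each $i\ge 2$ (since the coefficient of $z^k$ in $\widetilde{P}$ is divisible by $W^{k-1}$ for $k\ge 2$). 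The goal is then to verify the congruence
\[
\widetilde{P}_{[r,V]}(u+q/p)\equiv \widetilde{P}_{[r,V]}(u)+\tfrac{q}{p}\pmod{q}\qquad\text{for all } u\in\Z,
\]
from which $e(a\widetilde{P}_{[r,V]}(u+q/p)/q)=e(a/p)\,e(a\widetilde{P}_{[r,V]}(u)/q)$, so averaging over $u$ yields $S(a,q)=e(a/p)S(a,q)$; since $\gcd(a,p)=1$, the factor $e(a/p)\ne 1$ forces $S(a,q)=0$.

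To prove the congruence, I would Taylor-expand $\widetilde{P}_{[r,V]}(u+q/p)-\widetilde{P}_{[r,V]}(u)$ in powers of $q/p$. The linear-in-$(q/p)$ term is $\widetilde{P}_{[r,V]}'(u)\cdot(q/p)$; since $\widetilde{P}_{[r,V]}'(u)\equiv V_1\equiv 1\pmod{W}$ (using $V_i\equiv 0\pmod{W}$ for $i\ge 2$) and $p\mid W$, this contributes $q/p$ modulo $q$. For each higher-order term $V_i\binom{i}{j}u^{i-j}(q/p)^j$ with $i\ge j\ge 2$, I would show it vanishes modulo $q$ by a direct $p$-adic valuation count: the divisibility $v_p(V_i)\ge i-1$ cancels the denominator $p^j$, and for primes $\ell\ne p$ dividing $q$ the factor $(q/p)^j$ trivially supplies at least $v_\ell(q)$ factors of $\ell$. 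The main obstacle is handling the regime where $p$ exactly divides $q$ (so $q/p$ carries no factors of $p$ at all), for here the refined bound $W^{i-1}\mid V_i$, rather than the weaker $W\mid V_i$, is essential to close the valuation count; this is precisely the point at which the multiplicity-one hypothesis on the root of $P$ (which ensures $\widetilde{P}$ has linear term exactly $z$, not something divisible by $W$) enters.
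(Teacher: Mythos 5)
Your proposal is correct, and since the paper omits its own proof of this lemma (deferring to the nearly identical \cite[Lemma 5.3]{CC23}), there is no internal argument for it to diverge from: your route — reduce the size bound to complete sums at prime-power moduli via CRT and Hua's inequality (legitimate here because, under the standing hypothesis that every prime factor of $\beta_1\beta_d$ divides $W$, the leading coefficient $V_d$ is coprime to any $q$ with $\gcd(q,W)=1$), and prove the vanishing by the shift $u\mapsto u+q/p$ using $V_1\equiv 1\pmod W$ and $W\mid V_i$ for $i\ge 2$ — is exactly the standard argument the paper has in mind. Your congruence $\widetilde P_{[r,V]}(u+q/p)\equiv \widetilde P_{[r,V]}(u)+q/p\pmod q$ does check out: the $j=1$ term contributes $q/p$ because $p\mid V_1-1$ and $p\mid V_i$ for $i\ge 2$, and each $j\ge 2$ term $ (q/p)^jV_i\binom{i}{j}u^{i-j}$ is divisible by $q$.

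One remark in your last paragraph is off, though it does not damage the proof. The refined bound $W^{i-1}\mid V_i$ is not actually needed, even when $p\Vert q$: writing $m=v_p(q)$, a $j\ge 2$ term has $p$-valuation at least $j(m-1)+v_p(V_i)\ge j(m-1)+1\ge m$, since $(j-1)(m-1)\ge 0$ — so the weaker $W\mid V_i$ closes the count in every regime. The place where the multiplicity-one hypothesis genuinely enters is the $j=1$ term: it guarantees the linear coefficient of $\widetilde P$ is $1$, hence $V_1\equiv 1\pmod W$, so the shift produces exactly the nontrivial character $e(a/p)$ rather than $e(ac/p)$ for some $c$ possibly divisible by $p$ (in which case the symmetry argument would yield nothing). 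So your valuation bookkeeping is fine, but the attribution of where the hypothesis bites should be moved from the higher-order terms to the linear one.
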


Next, we upper-bound the integral appearing in Lemma \ref{L: major arc expression}. It essentially follows from the van der Corput Lemma (e.g., \cite[pp. 332]{Ste93}) and we omit the proof.
\begin{lemma}\label{L: integral bound}
    We have
    \begin{align*}
        \abs{\int_0^K e(\xi \widetilde{P}_{[r, V]}(z))\; dz}\ll_P 1 + |\xi|^{-1/d}V_d^{-1/d}.
    \end{align*}
\end{lemma}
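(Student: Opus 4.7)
The plan is to recognize this as a direct application of the classical van der Corput lemma for oscillatory integrals with polynomial phase. Setting $\phi(z) := \xi \widetilde{P}_{[r, V]}(z)$, the key observation is that since $\widetilde{P}_{[r, V]}$ is a polynomial of degree $d$ with leading coefficient $V_d$, its $d$-th derivative is the constant $d! \, V_d$. Consequently $|\phi^{(d)}(z)| = 2\pi d! \, V_d |\xi|$ uniformly on $[0, K]$, so the phase has a $d$-th derivative that is bounded below by $\lambda := 2\pi d! \, V_d |\xi|$ (for $\xi \neq 0$).

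The van der Corput lemma in the form stated in Stein, \emph{Harmonic Analysis}, p.~332 then asserts that
\[
\Bigl|\int_0^K e(\xi \widetilde{P}_{[r, V]}(z))\, dz\Bigr| \leq c_d \lambda^{-1/d} \ll_d V_d^{-1/d} |\xi|^{-1/d}
\]
whenever $\xi \neq 0$. (When $d \geq 2$ the lemma applies directly without further hypothesis; the edge case $d = 1$ is handled by noting that a linear phase has a monotonic derivative, so the $k=1$ form of the van der Corput lemma still applies.) Since $d = \deg P$ is determined by $P$, the constant $c_d$ is $O_P(1)$.

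To obtain the bound as stated, we simply add a $+1$, which accommodates the trivial case $\xi = 0$ (where $|\xi|^{-1/d}$ is infinite and the bound is vacuous) and also absorbs the additive constant coming from the van der Corput inequality. Combining these two regimes gives
\[
\Bigl|\int_0^K e(\xi \widetilde{P}_{[r, V]}(z))\, dz\Bigr| \ll_P 1 + |\xi|^{-1/d} V_d^{-1/d},
\]
as required. No nontrivial obstacle is expected here, as this is a routine invocation of a standard oscillatory integral estimate; the lemma is morally just recording the fact that a polynomial of degree $d$ with a large leading coefficient oscillates rapidly, and consequently its Fourier transform at scale $|\xi|$ decays like $(V_d |\xi|)^{-1/d}$.
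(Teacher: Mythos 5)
Your proof is correct and takes essentially the same route the paper intends: the paper omits the argument and cites exactly the van der Corput oscillatory-integral lemma from Stein (p.~332), applied to the phase $2\pi\xi\widetilde{P}_{[r,V]}(z)$ whose $d$-th derivative has constant absolute value $2\pi d!\,V_d|\xi|$, yielding the $(V_d|\xi|)^{-1/d}$ decay, with the $+1$ covering the degenerate case $\xi=0$. Nothing further is needed.
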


Combining the previous three lemmas, we can finally upper-bound the contribution of the major arcs. 
\begin{lemma}[Major arc control]\label{L: major arcs}
    For $s>d$ and $\veps\in (0, 1/10]$, we have
    \begin{align*}
        \int_\mathfrak{M}\abs{S(\xi)}^{2s}\; d\xi \ll_{P, s} \frac{K^{2s-d}}{V_d}
    \end{align*}
    as long as $K\geq V_d^2$.
\end{lemma}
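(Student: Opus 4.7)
\medskip

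\noindent\textbf{Proof plan for Lemma \ref{L: major arcs}.}

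The natural strategy is to combine Lemma \ref{L: major arc expression} with the pointwise estimate of Lemma \ref{L: integral bound} and the Weyl sum bound of Lemma \ref{L: S(a,q)}. The plan is to fix $\veps\in(0,1/10]$, fix a major arc $\mathfrak{M}_{a,q}$, and change variables $\beta=\xi-a/q$ so that the approximation
\[
S(\xi)=S(a,q)\int_0^K e(\beta\widetilde P_{[r,V]}(z))\,dz+O(K^{2\veps})
\]
from Lemma \ref{L: major arc expression} can be raised to the $2s$-th power. After using $|A+B|^{2s}\ll_s|A|^{2s}+|B|^{2s}$ and recalling that $\operatorname{Leb}(\mathfrak M_{a,q})=2K^{-(d-\veps)}$, the contribution to $\int_{\mathfrak M_{a,q}}|S(\xi)|^{2s}\,d\xi$ splits into a ``main'' piece
\[
|S(a,q)|^{2s}\int_{|\beta|\leq K^{-(d-\veps)}}\Bigabs{\int_0^K e(\beta\widetilde P_{[r,V]}(z))\,dz}^{2s}d\beta
\]
and a negligible error of size $O(K^{4s\veps-d+\veps})$. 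Summing the error over the $O(K^{2\veps})$ major arcs and using $V_d\leq K^{1/2}$ (which follows from the hypothesis $K\geq V_d^2$), this error is dominated by $K^{2s-d}/V_d$ provided that $4s\veps+3\veps<2s-1/2$, which holds for $\veps$ sufficiently small depending on $s$.

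The main piece is the crux of the calculation. I would estimate the inner integral by the two bounds available: the trivial bound $K$ and the van der Corput bound from Lemma \ref{L: integral bound}, namely $\ll 1+|\beta|^{-1/d}V_d^{-1/d}$. These match at the threshold $|\beta|\asymp K^{-d}V_d^{-1}$, so I would split the $\beta$-integral at this point. On $|\beta|\leq K^{-d}V_d^{-1}$ the trivial bound gives a contribution $K^{2s}\cdot K^{-d}V_d^{-1}=K^{2s-d}/V_d$. On $K^{-d}V_d^{-1}\leq|\beta|\leq K^{-(d-\veps)}$, I would use $\int\beta^{-2s/d}\,d\beta$; since $s>d$ implies $2s/d>2>1$, the antiderivative $\beta^{1-2s/d}/(1-2s/d)$ is dominated by its value at the lower endpoint, yielding $V_d^{2s/d-1}K^{2s-d}$, which when multiplied by $V_d^{-2s/d}$ again gives $K^{2s-d}/V_d$. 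Thus the main piece is $\ll_s |S(a,q)|^{2s}\cdot K^{2s-d}/V_d$.

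It remains to sum $|S(a,q)|^{2s}$ over coprime pairs $(a,q)$ with $1\leq q\leq K^\veps$. For $q=1$ we have $S(0,1)=1$ and the bound is trivial. For $q\geq 2$, Lemma \ref{L: S(a,q)} gives $|S(a,q)|\ll_{\eta,P}q^{\eta-1/d}$, so
\[
\sum_{\substack{1\leq a\leq q\\ \gcd(a,q)=1}}|S(a,q)|^{2s}\leq q\cdot q^{2s(\eta-1/d)}=q^{1+2s\eta-2s/d}.
\]
Because $s>d$, we have $1-2s/d<-1$, and choosing $\eta=\eta(s,d)>0$ small enough makes the exponent strictly less than $-1$, so $\sum_{q\geq 1}q^{1+2s\eta-2s/d}\ll_s 1$. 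Combining everything yields the claimed bound $\int_{\mathfrak M}|S(\xi)|^{2s}\,d\xi\ll_{P,s}K^{2s-d}/V_d$.

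The main obstacle I anticipate is the bookkeeping at the crossover $|\beta|\asymp K^{-d}V_d^{-1}$: one must verify that the trivial $K$-bound and the van der Corput bound meet there so that both regions contribute the same final size $K^{2s-d}/V_d$, and that the constraint $K\geq V_d^2$ is strong enough to absorb both the error $O(K^{2\veps})$ arising from Lemma \ref{L: major arc expression} and the sum over major arcs into the main term.
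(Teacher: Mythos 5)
Your proposal is correct and follows essentially the same route as the paper: apply Lemma \ref{L: major arc expression} on each major arc, bound $\sum_{a,q}|S(a,q)|^{2s}$ by $O_{P,s}(1)$ via Lemma \ref{L: S(a,q)} using $s>d$, split the oscillatory integral at $|\beta|\asymp K^{-d}V_d^{-1}$ with the trivial bound inside and Lemma \ref{L: integral bound} outside (both giving $K^{2s-d}/V_d$), and absorb the error from the approximation using $K\geq V_d^2$. One small remark: your condition $4s\veps+3\veps<2s-1/2$ in fact holds for every $\veps\in(0,1/10]$ once $s>d\geq 1$, so the hedge ``for $\veps$ sufficiently small'' is unnecessary and the argument covers the full range required by the statement.
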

We recall that the parameter $\veps$ figures implicitly in the definition of the major arcs $\mathfrak{M}$; since the left-hand side of the inequality in the lemma is non-decreasing as $\veps$ increases, the important point is that we can take $\veps$ bounded away from $0$.

\begin{proof}
We let all constants depend on $P, s$.
Applying Lemma \ref{L: major arc expression}, integrating over all of the major arcs, and changing variables $\xi\mapsto \xi + a/q$ on each $\mathfrak{M}_{a,q}$, we have  
\begin{multline*}
    \int_\mathfrak{M}\abs{S(\xi)}^{2s}\; d\xi = \Bigbrac{1_{(a,q)=(0,1)} + \sum_{\substack{1\leq a < q\leq K^\veps,\\ \gcd(a,q) = 1}}
    |S(a,q)|^{2s}}\cdot\int_{-K^{-d+\veps}}^{K^{-d+\veps}} \abs{\int_0^K e\brac{{\xi}\widetilde{P}_{[r, V]}(z)} \,dz}^{2s}\, d\xi\\
    +O\Bigbrac{1_{(a,q)=(0,1)} + \sum_{\substack{1\leq a < q\leq K^\veps,\\ \gcd(a,q) = 1}}1}(K^{2s-1-d+3\veps}).
\end{multline*}

We bound the terms in the expression above one by one, starting with  the contribution of the sums $S(a,q)$. For every $\eta>0$, we can bound
\begin{align*}
    1_{(a,q)=(0,1)} + \sum_{\substack{1\leq a < q\leq K^\veps,\\ \gcd(a,q) = 1}}|S(a,q)|^{2s}\ll_{\eta} 1 +  \sum_{\substack{1\leq a < q\leq K^\veps,\\ \gcd(a,q) = 1}}q^{\eta -2s/d}\ll_\eta 1 + K^{\veps(\eta + 2-2s/d)},
\end{align*}
where the first inequality follows from an application of Lemma \ref{L: S(a,q)} with $2\eta s$ in place of $\eta$, and the second bound follows from trivially estimating the number of $a$'s in $1\leq a < q$ by $q$ and then trivially bounding $q\leq K^\veps$. 
Importantly, this expression is $O(1)$ as long as $s>d$ and $\eta<2/d$. Since there are no further restrictions on the choice of $\eta$, we can take any $\eta$ in the range above. 

To bound the contribution of the integral, we split it over $\xi$ at $K^{-d}/V_d$, obtaining
\begin{multline*}
    \int_{-K^{-d+\veps}}^{K^{-d+\veps}} \abs{\int_0^K e\brac{\xi \widetilde{P}_{[r, V]}(z)}dz}^{2s} d\xi\\ \leq \int_{-K^{-d}/V_d}^{K^{-d}/V_d} K^{2s}\, d\xi + 2 \int_{K^{-d}/V_d}^1 \abs{\int_0^K e\brac{\xi \widetilde{P}_{[r, V]}(z)}dz}^{2s}\, d\xi.
\end{multline*}
The first term can be bounded trivially by $K^{2s-d}/V_d$. To bound the second term, we apply Lemma \ref{L: integral bound}, which gives
\begin{align*}
    \int_{K^{-d}/V_d}^1 \abs{\int_0^K e\brac{\xi \widetilde{P}_{[r, V]}(z)}dz}^{2s}\, d\xi
    &\ll \int_{K^{-d}/V_d}^1 |\xi|^{-2s/d}V_d^{-2 s/d} d\xi\ll  \frac{K^{2s-d}}{V_d},
\end{align*}
where we use Lemma \ref{L: integral bound} to bound the integrand and the assumption $s>d$ to ensure that the exponent is of the right order. 

Lastly, the error term is at most
\begin{align*}
    \ll\Bigbrac{1_{(a,q)=(0,1)} + \sum_{\substack{1\leq a < q\leq K^\veps,\\ \gcd(a,q) = 1}}1}K^{2s-1-d+3\veps}\ll K^{2s-1-d+5\veps},
\end{align*}
which can be bounded by $\ll K^{2s-d}/V_d$ since $K\geq V_d^2$ and $\veps\leq 1/10$. 
\end{proof}
We summarize the computations performed in this section in the result below.
\begin{proposition}[Even moment bound]\label{P: even moment bound}
    There exists $s_1(d)\in\N$ and $C=C(d)>0$ such that if $s\geq s_1(d)$ and $N\geq V_d^{C}$, then
        \begin{align*}
        \int_{\R/\Z}|S(\xi)|^{2s}\; d\xi \ll_{P, s} \frac{K^{2s}}{N}.
    \end{align*}
\end{proposition}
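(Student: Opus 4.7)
The plan is to split the integral over $\R/\Z$ into the major arcs $\mathfrak{M}$ and the minor arcs $\mathfrak{m}$ (defined with some fixed small $\veps \in (0, 1/10]$, say $\veps = 1/10$) and apply Lemma \ref{L: major arcs} and Lemma \ref{L: minor arcs} to the two pieces. The target quantity $K^{2s}/N$ equals $K^{2s-d}/V_d$ via the defining identity $K^d V_d = N$, which is precisely the bound produced by Lemma \ref{L: major arcs} on $\mathfrak{M}$ (for $s > d$ and $K \geq V_d^2$). So the real content is to verify that the minor arc contribution is also dominated by $K^{2s-d}/V_d$, which is where the hypothesis $N \geq V_d^C$ enters.

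For the minor arcs, I would apply Lemma \ref{L: minor arcs} with the fixed $\veps$ above. The hypothesis there requires that the coefficients of $\widetilde{P}_{[r,V]}$ have size at most $K^{\veps/4}$. Since each coefficient is bounded up to a $P$-dependent constant by $V_d = \beta_d \beta_1^{d-2} W^{d-1} V^{d-1}$, this condition holds as soon as $V_d \leq K^{\veps/8}$ (absorbing the $P$-dependent constant), which by the identity $K^d = N/V_d$ translates to a lower bound of the form $N \geq V_d^{1+8d/\veps}$. Under this assumption, Lemma \ref{L: minor arcs} provides a constant $C' = C'(d)$ and the bound
\[
\int_\mathfrak{m} |S(\xi)|^{2s}\, d\xi \ll_{s,\veps} K^{2s-d-\veps/C'}.
\]
To absorb the factor $1/V_d$ we need $V_d \leq K^{\veps/C'}$, i.e.\ $N \geq V_d^{1 + dC'/\veps}$; taking $C(d)$ larger than both exponents above secures all required inequalities simultaneously.

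Setting $s_1(d) := \max(s_0(d), d+1)$ ensures that both Lemma \ref{L: minor arcs} (which needs $s > s_0(d)$) and Lemma \ref{L: major arcs} (which needs $s > d$) apply. Summing the major arc contribution $\ll_{P,s} K^{2s-d}/V_d$ and the minor arc contribution $\ll_{s} K^{2s-d}/V_d$ and rewriting via $K^{2s-d}/V_d = K^{2s}/N$ yields the claimed bound. Since every step is an invocation of a lemma already proved in the appendix, there is no substantive obstacle; the only bookkeeping point is to choose $C$ large enough that the single lower bound $N \geq V_d^C$ simultaneously implies $K \geq V_d^2$, the coefficient bound $V_d \leq K^{\veps/8}$, and the minor-arc-savings condition $V_d \leq K^{\veps/C'}$.
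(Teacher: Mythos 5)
Your proposal is correct and follows essentially the same route as the paper's proof: split at $\veps=1/10$ into major and minor arcs, apply Lemma \ref{L: major arcs} and Lemma \ref{L: minor arcs} respectively with $s_1(d)=\max(s_0(d),d+1)$, and use $N\geq V_d^C$ to verify the coefficient hypothesis $V_d\ll K^{\veps/4}$ and to absorb the minor-arc saving $K^{-\veps/C'}$ into the factor $1/V_d$. The only difference is that you spell out the bookkeeping (e.g.\ $K\geq V_d^2$ and the explicit exponents) slightly more explicitly than the paper does.
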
 
\begin{proof}
    Let $s_1(d) = \max(s_0(d), d+1)$, where $s_0$ comes from Lemma \ref{L: minor arcs} and $d+1$ is the smallest possible $s$ allowed by Lemma \ref{L: major arcs}. We fix $s>s_1(d)$ and let all the implicit constants depend on $P, s$.
    We split $\R/\Z$ into major and minor arcs with  $\veps = 1/10$.
    Then we can use Lemma \ref{L: minor arcs} to bound the minor arc contribution as 
    \begin{align*}
        \int_{\mathfrak{m}}|S(\xi)|^{2s}\; d\xi \ll K^{2s-d-\veps/C_1} 
    \end{align*}
    where $C_1>0$ is the constant from Lemma \ref{L: minor arcs}. Since $N = K^d V_d$, this can be bounded by $\ll \frac{K^{2s}}{N}$ as long as $V_d\leq K^{\veps/C_1}$, which we can assume by taking the implicit constant in $N\geq V_d^{\Omega(1)}$ to be sufficiently large. We also implicitly use the assumption $N\geq V_d^{\Omega(1)}$ to ensure that $K^{\veps/4}\geq V_d$ so that the assumption on the coefficients of $\widetilde{P}_{[r, V]}$ in Lemma \ref{L: minor arcs} is satisfied. To bound the major arc contribution by the same final expression, we simply apply Lemma \ref{L: major arcs}.

\end{proof}

\subsection{Applications to S\'ark\"ozy-type estimates}

We are now ready to show that counting operators for S\'ark\"ozy-type configurations are controlled by the major arcs of their arguments. 

\begin{proposition}\label{P: major arc control}
    For every $C_1>0$ there exists $C_2 = C_2(C_1, P)>0$ such that the following holds. Let $\delta\in(0, 1/10)$, let $V\leq W^{C_1}$ be a nonnegative integer power of $W$, let $r\in[V]$, and let $N\geq C_2 V_d^{C_2} \delta^{-C_2}$ be an integer. Let $f_0,f_1:\Z\to\C$ be $1$-bounded functions supported on $[N]$. If
    \begin{align}\label{E: Sarkozy input}
        \abs{\sum_x \E_{z\in [K]}f_0(x)f_1(x+\widetilde{P}_{[r, V]}(z))}\geq \delta N,
    \end{align}
    then for every $i=0,1$ there exist phases $\xi_i\in\R/\Z$ and positive integers $q_i\ll_{C_1, P} \delta^{-O_{C_1, P}(1)}$ such that $\norm{q_i\xi_i}_{\R/\Z}\ll_{C_1, P}\delta^{-O_{C_1, P}(1)}/N$ and $|\widehat{f_i}(\xi_i)|\gg_{C_1, P} \delta^{O_{C_1, P}(1)}N$.
\end{proposition}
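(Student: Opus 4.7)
The plan is to use Parseval's identity, a symmetrizing Cauchy--Schwarz, and a careful dissection of $\R/\Z$ into arcs of varying quality. By Parseval, the counting operator equals
\[
\int_{\R/\Z} \widehat{f_0}(-\xi)\, \widehat{f_1}(\xi)\, \overline{S(\xi)}/K\, d\xi,
\]
so after taking absolute values I would apply Cauchy--Schwarz with weight $|S/K|$ (using the symmetry $|S(-\xi)| = |S(\xi)|$ to match the factor involving $f_0$) to obtain
\[
\delta N \le \Big(\int |\widehat{f_0}|^2 |S/K|\,d\xi\Big)^{1/2}\Big(\int |\widehat{f_1}|^2 |S/K|\,d\xi\Big)^{1/2}.
\]
Since $|S/K| \le 1$ pointwise, each factor is bounded by $\int|\widehat{f_i}|^2 \le N$ via Parseval, and this forces \emph{both} integrals to satisfy $\int|\widehat{f_i}|^2|S/K| \ge \delta^2 N$. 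This symmetrization is the key step enabling individual extraction of Fourier information for both $f_0$ and $f_1$.

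Next I would dissect $\R/\Z$ into four pieces relative to the standard major/minor arc decomposition at some small parameter $\veps$: the minor arcs $\mathfrak m$; the broad major arcs $\bigcup_{q > Q_0} \mathfrak M_{a,q}$ with $Q_0 := C \delta^{-d/(1-d\eta)}$; the far regions $\mathfrak M_{\le Q_0}\setminus\mathfrak M^*$; and the narrow arcs $\mathfrak M^* := \bigcup_{q \le Q_0,\,(a,q)=1}\{\xi : |\xi - a/q| \le \delta^{-A}/N\}$ with $A = O_d(1)$ to be chosen. On $\mathfrak m$, Lemma~\ref{L: minor arcs} gives $|S/K| \le K^{-\veps/C}$ pointwise, contributing at most $K^{-\veps/C}N$; the broad major arcs are handled by combining Lemma~\ref{L: major arc expression} with Lemma~\ref{L: S(a,q)} to obtain the pointwise bound $|S/K| \lesssim q^{-1/d+\eta}$, integrating to at most $Q_0^{-1/d+\eta}N$; and the far regions are controlled by combining the major-arc expansion with the van der Corput--type decay $|I(\eta)|/K \lesssim (|\eta|N)^{-1/d}$ from Lemma~\ref{L: integral bound}, yielding $|S/K| \lesssim \delta^{A/d}$ and a contribution of at most $\delta^{A/d}N$. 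Choosing $Q_0, A$ as suitable polynomial functions of $1/\delta$ and using the size hypothesis $N \ge C_2 V_d^{C_2}\delta^{-C_2}$ to absorb the additive $O(K^{2\veps})$ error from Lemma~\ref{L: major arc expression}, each of these three contributions can be made at most $\delta^2 N/6$.

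It follows that $\int_{\mathfrak M^*}|\widehat{f_i}|^2|S/K|\,d\xi \ge \delta^2 N/2$, and then $|S/K| \le 1$ pointwise gives $\int_{\mathfrak M^*}|\widehat{f_i}|^2\,d\xi \ge \delta^2 N/2$. The Lebesgue measure of $\mathfrak M^*$ is at most $Q_0^2 \cdot 2\delta^{-A}/N = \delta^{-O(1)}/N$, so the pigeonhole principle delivers $\xi_i \in \mathfrak M^*_{a_i,q_i}$ for some coprime pair $(a_i,q_i)$ with $q_i \le Q_0$ satisfying $|\widehat{f_i}(\xi_i)|^2 \ge (\delta^2 N/2)/(\delta^{-O(1)}/N) = \delta^{O(1)}N^2$, i.e., $|\widehat{f_i}(\xi_i)| \gg \delta^{O(1)} N$. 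The membership $\xi_i \in \mathfrak M^*_{a_i,q_i}$ automatically yields $q_i \le Q_0 = \delta^{-O(1)}$ and $\|q_i\xi_i\|_{\R/\Z} \le q_i \cdot \delta^{-A}/N \ll \delta^{-O(1)}/N$, completing the claim.

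The main technical hurdle will be verifying the pointwise decay $|S(\xi)/K| \lesssim \delta^{A/d}$ on the far regions of the low-denominator major arcs, where the main term $|S(a,q)|\,|I(\xi-a/q)|/K$ from Lemma~\ref{L: major arc expression} must be balanced against its $O(K^{2\veps})$ additive error; keeping this error negligible is precisely what necessitates the size hypothesis $N \ge C_2 V_d^{C_2}\delta^{-C_2}$, with $C_2$ chosen as a suitable function of $C_1$ and $P$ so as to accommodate $V_d = W^{O_{C_1,P}(1)}$.
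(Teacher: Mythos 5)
Your argument is correct in substance but follows a genuinely different route from the paper. The paper first extracts a single large Fourier coefficient of $f_0$ by H\"older with exponents $(\tfrac{2s}{s-1},2,2s)$ against the even-moment bound $\int_{\R/\Z}|S|^{2s}\ll K^{2s}/N$ (Proposition \ref{P: even moment bound}), and only afterwards shows that this frequency is major-arc: it stashes to replace $f_0$ by a dual function, splits off the Weyl sum $\E_{z\in[K]}e(-\xi\widetilde P_{[r,V]}(z))$, applies Weyl's inequality, and then uses the fraction-comparison Lemma \ref{lem:composing-polynomials-2} (where the coprimality of $V_1$ and $V_d$ enters) to convert $\|q\xi\widetilde P_{[r,V]}\|_{C^\infty[K]}\ll\delta^{-O(1)}$ into $\|q\xi\|_{\R/\Z}\ll\delta^{-O(1)}/N$, with the case $i=1$ obtained by a shift. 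You instead run a weighted Cauchy--Schwarz after Parseval, which yields $\int|\widehat{f_i}|^2\,|S|/K\,d\xi\gg\delta^2N$ for both $i$ simultaneously, and then localize the weight pointwise: Lemma \ref{L: minor arcs} on $\mathfrak m$, Lemma \ref{L: major arc expression} combined with Lemma \ref{L: S(a,q)} on major arcs with denominator above $Q_0$, and Lemma \ref{L: major arc expression} combined with Lemma \ref{L: integral bound} on the parts of the low-denominator arcs away from the rationals; pigeonholing on the remaining narrow arcs then delivers the large coefficient and its major-arc location in a single stroke. This buys a proof that dispenses with the even-moment estimate, Weyl's inequality, and Lemma \ref{lem:composing-polynomials-2} (the arithmetic structure of $\widetilde P_{[r,V]}$ is used only through Lemma \ref{L: S(a,q)}), at the price of requiring pointwise control of $S$ over all of $\R/\Z$, which the appendix supplies anyway. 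Two small repairs are needed in the write-up: your choice $Q_0=C\delta^{-d/(1-d\eta)}$ makes the broad-major-arc contribution only $O(\delta N)$, not $\le\delta^2N/6$, so you should take, e.g., $Q_0\asymp\delta^{-3d}$ (harmless, since the conclusion tolerates any $\delta^{-O(1)}$); and you should record that $\mathfrak M^*\subseteq\mathfrak M$ (i.e.\ $\delta^{-A}/N\le K^{-(d-\veps)}$) and that the coefficient hypothesis of Lemma \ref{L: minor arcs} is met, both of which follow from $N\ge C_2V_d^{C_2}\delta^{-C_2}$ with $C_2$ sufficiently large in terms of $C_1$ and $P$.
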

\begin{proof}
    We fix $C_1>0$ and let all constants depend on $P$.
    We prove the bound only for $i=0$, since one can then deduce the result for $i=1$ by shifting $x \mapsto x-\widetilde{P}_{[r, V]}(z)$ and exchanging the roles of $f_0,f_1$. Our first goal is to show that the largeness of \eqref{E: Sarkozy input} implies the largeness of some Fourier coefficient of $f_0$, and then we use the fraction comparison argument to deduce that this Fourier coefficient must be a major arc.
    
    Define the mutliset $E:= \{\widetilde{P}_{[r, V]}(z):\; z\in[K]\}$, so that our assumption can be expressed as
    \begin{align*}
        \abs{\sum_{x,z}f_0(x) f_1(x+z)1_E(z)}\geq \delta NK.
    \end{align*}
  Note that $F = \widehat{1_E}$.  Now, Fourier-expanding, extracting the largest Fourier coefficient, and applying the triple H\"older inequality (with exponents $\frac{2s}{s-1}, 2, 2s$) followed by Parseval's Identity,
    we get
    \begin{align*}
        \delta N K &\leq \norm{\widehat{f_0}}_\infty^{1/s}\cdot \int_{\R/\Z}|\widehat{f_0}(\xi)|^{1-1/s}\cdot|\widehat{f_1}(\xi)|\cdot|F(\xi)|\; d\xi\\
        &\leq \norm{\widehat{f_0}}_\infty^{1/s}\cdot \norm{f_0}^{1-1/s}_2 \cdot \norm{f_1}_2 \cdot \brac{\int_{\R/\Z}|F(\xi)|^{2s}\; d\xi}^{1/(2s)}
    \end{align*}
    for any $s\geq 2$.
    We can trivially bound $\norm{f_0}^{1-1/s}_2 \cdot \norm{f_1}_2\leq N^{(2s-1)/(2s)}$, and Proposition \ref{P: even moment bound} gives
    \begin{align*}
        \int_{\R/\Z}|S(\xi)|^{2s}\; d\xi \ll \frac{K^{2s}}{N}
    \end{align*}
    for any $s$ sufficiently large in terms of $d$.
    Combining all these bounds together, we get the bound $\norm{\widehat{f_i}}_\infty\gg \delta^{O(1)} N$, i.e. we have shown that the S\'ark\"ozy configuration is controlled by Fourier analysis. 

    Now, let $\Tilde{f}_0 = \E_{z\in{K}}\overline{f_1}(x+\widetilde{P}_{[r, V]}(z))$. Stashing, we can replace $f_0$ by $\tilde{f}_0$ in \eqref{E: Sarkozy input} at the cost of squaring $\delta$ in the lower bound. Together with the just proved Fourier control, this gives us 
    \begin{align*}
        \abs{\sum_{x}\E_{z\in[K]}\overline{f_1}(x+\widetilde{P}_{[r, V]}(z)) e(-\xi x)}\gg \delta^{O(1)}N
    \end{align*}
    for some $\xi\in\R/\Z$. Shifting $x\mapsto x-\widetilde{P}_{[r, V]}(z)$, splitting the sums over $x$ and $z$, and using $\norm{f_1}_1\leq N$, we get that
    \begin{align*}
        \abs{\sum_x f_1(x) e(\xi x)}\gg \delta^{O(1)}N\quad \textrm{and}\quad \abs{\E_{z\in[K]}e(-\xi \widetilde{P}_{[r, V]}(z))}\gg \delta^{O(1)}.
    \end{align*}
    It remains to show that $\xi$ is a major arc. By Weyl's inequality (e.g. \cite[Lemma 4.4]{GT12}), the second inequality can only hold if there exists a positive integer $q\ll \delta^{-O(1)}$ for which $\norm{q\xi \widetilde{P}_{[r, V]}}_{C^\infty[K]}\ll \delta^{-O(1)}$. The result then follows from Lemma \ref{lem:composing-polynomials-2}.
\end{proof}

The following two corollaries make precise the heuristic \eqref{E: Sarkozy heuristic}.
\begin{corollary}\label{C: 1-dim Sarkozy}
        For every $C_1>0$ there exists $C_2 = C_2(C_1, P)>0$ such that the following holds. Let $\delta\in(0, 1/10)$, let $V\leq W^{C_1}$ be a nonnegative integer power of $W$, let $r\in[V]$, and let $N\geq C_2 V_d^{C_2}\delta^{-C_2}$ be an integer. If $f_0, f_1:\Z\to\C$ are $1$-bounded functions supported on $[N]$ satisfying
    \begin{align*}
        \abs{\sum_x \E_{z\in[K]}f_0(x) f_1(x+\widetilde{P}_{[r, V]}(z))}\geq \delta N,
    \end{align*}
    then for each $i\in\{0,1\}$ there exists a positive integer  $q_i\ll_{C_1, P} \delta^{-O_{C_1, P}(1)}$ such that 
    \begin{align*}
        \E_{u\in [\pm N]}\abs{\E_{z\in[N']} f_i(u + q_i z)}\gg_{C_1, P} \delta^{O_{C_1, P}(1)} \quad\textrm{for\; all}\quad 1\leq N'\ll_{C_1, P} \delta^{O_{C_1, P}(1)} N.
    \end{align*}
\end{corollary}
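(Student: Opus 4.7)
The plan is to deduce the corollary directly from Proposition~\ref{P: major arc control} by a short Fourier calculation. Applying that proposition produces, for each $i \in \{0,1\}$, a positive integer $q_i \ll_{C_1,P} \delta^{-O_{C_1,P}(1)}$ and a phase $\xi_i \in \R/\Z$ with $\norm{q_i \xi_i}_{\R/\Z} \ll_{C_1,P} \delta^{-O_{C_1,P}(1)}/N$ and $|\widehat{f_i}(\xi_i)| \gg_{C_1,P} \delta^{O_{C_1,P}(1)} N$. I would then write $q_i \xi_i = a_i + \eta_i$ with $a_i \in \Z$ and $|\eta_i| \ll \delta^{-O(1)}/N$.

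Next, I would define $g_i(u) := \E_{z \in [N']} f_i(u + q_i z)$ and compute its Fourier transform at $\xi_i$ by interchanging the sum and expectation:
$$\widehat{g_i}(\xi_i) = \E_{z \in [N']} e(-\xi_i q_i z) \cdot \widehat{f_i}(\xi_i) = \E_{z \in [N']} e(-\eta_i z) \cdot \widehat{f_i}(\xi_i),$$
where the second equality uses that $a_i \in \Z$. Choosing the implicit exponent in the hypothesis $N' \ll \delta^{O(1)} N$ to exceed the one in the bound on $|\eta_i|$, one ensures that $|\eta_i z| \leq 1/10$ for every $z \in [N']$, so that $\mathrm{Re}\,e(-\eta_i z) \ge \cos(\pi/5)$ and hence $|\E_{z \in [N']} e(-\eta_i z)| \gg 1$. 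It follows that $|\widehat{g_i}(\xi_i)| \gg \delta^{O(1)} N$.

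Finally, since $f_i$ is supported on $[N]$, the function $g_i$ is supported on the interval $[-q_i(N'-1),\, N-1]$, which has length $N + q_i N' \ll N$ (because $q_i N' \ll \delta^{-O(1)} \cdot \delta^{O(1)} N \ll N$ once the exponents are chosen compatibly) and so is contained in $[\pm N]$. The triangle inequality gives
$$\sum_u |g_i(u)| \ \geq\ |\widehat{g_i}(\xi_i)| \ \gg\ \delta^{O(1)} N,$$
and dividing by $|[\pm N]| = 2N-1$ yields the desired lower bound $\E_{u \in [\pm N]} |g_i(u)| \gg \delta^{O(1)}$.

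There is no substantive obstacle to this argument; the analytic work has already been absorbed into Proposition~\ref{P: major arc control}, and what remains is the standard observation that a major-arc Fourier concentration of $f_i$ automatically implies slow variation along arithmetic progressions of common difference $q_i$. The only bookkeeping item is keeping the $O(1)$ exponents in the conclusion strictly larger than those in the major-arc bound so that the Féjer-type average $\E_{z \in [N']} e(-\eta_i z)$ stays bounded away from $0$, which imposes no real constraint.
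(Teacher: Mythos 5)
Your proposal is correct and follows essentially the same route as the paper: both deduce the statement from Proposition~\ref{P: major arc control} and then exploit $\norm{q_i\xi_i}_{\R/\Z}\ll\delta^{-O(1)}/N$ to show the average over $z\in[N']$ along progressions of difference $q_i$ retains the Fourier mass. The only cosmetic difference is that you factor $\widehat{g_i}(\xi_i)$ as a Fej\'er-type factor times $\widehat{f_i}(\xi_i)$, whereas the paper inserts the averaging directly and approximates $e(\xi q z)\approx 1$ before applying the triangle inequality; the bookkeeping on the exponents and on the support of $g_i$ is handled the same way in both.
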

\begin{proof}
    We fix $C_1>0$ and let all constants depend on $P$. We just proof the claim for $i=1$, as the other case follows by symmetry. By Proposition \ref{P: major arc control}, there exist a phase $\xi\in\R/\Z$ and a positive integer $q\ll \delta^{-O(1)}$ such that $\norm{q\xi}_{\R/\Z}\ll\delta^{-O(1)}/N$ 
    and 
    \begin{align*}
        \abs{\sum_x f_1(x)e(\xi x)}\gg \delta^{O(1)}N.
    \end{align*}
    Set $N'_0 \asymp \delta^C N$ for some $C>1$ to be chosen later, and let $N'\in[N'_0]$ be arbitrary. Introducing an extra averaging over arithmetic progressions of length $N'$ and common difference $q$ gives
    \begin{align*}
        \abs{\sum_{u}\E_{z\in[N']} f_1(u+qz)e(\xi (u+qz))}\gg \delta^{O(1)}N.
    \end{align*} 
    The triangle inequality gives
    $$\sum_u \abs{\E_{z\in[N']} f_1(u+qz)e(\xi qz)}\gg_d \delta^{O_d(1)}N.$$
    We have
    $$|e(\xi qz)-1| \leq |z| \cdot \norm{\xi q}_{\R/\Z}\ll \delta^{-O(1)}N'/N$$
    for all $z \in [N']$, so, with $C$ sufficiently large, we obtain
    $$\sum_u \abs{\E_{z\in[N']} f_1(u+qz)}\gg \delta^{O(1)}N.$$
    Finally, since $f_1$ is supported on $[N]$, there is no contribution from $u$'s outside of $[\pm N]$, and we get

    \begin{align*}
        \E_{u\in [\pm N]} \abs{\E_{z\in[N']} f_1(u+qz)e(\xi u)}\gg \delta^{O(1)},
    \end{align*}      
    as desired.
\end{proof}

We now deduce a multidimensional version of this corollary.

\begin{corollary}\label{C: Sarkozy}
        For every $C_1>0$ there exists $C_2 = C_2(C_1, P)>0$ such that the following holds. Let $\delta\in(0, 1/10)$, let $V\leq W^{C_1}$ be a nonnegative integer power of $W$, let $r\in[V]$, and let $N\geq C_2 V_d^{C_2}\delta^{-C_2}$ be an integer. Let $D\in\N$, and let $\bv\in\Z^D$ have coordinates of size $O(1)$. If $f_0, f_1:\Z^D\to\C$ are $1$-bounded functions supported on $[N]^D$ satisfying 
    \begin{align*}
        \abs{\sum_\bx \E_{z\in[K]}f_0(\bx) f_1(\bx+\bv \widetilde{P}_{[r, V]}(z))}\geq \delta N^D,
    \end{align*}
    then for each $i\in\{0,1\}$ there exists a positive integer
    $q_i\ll_{C_1, D,P} \delta^{-O_{C_1, D,P}(1)}$ such that 
    \begin{align*}
        \sum_\bx\abs{\E_{z\in[N']} f_i(\bx + q_i\bv z)}\gg_{C_1, D, P} \delta^{O_{C_1, D, P}(1)}N^{D} \quad\textrm{for\; all}\quad 1\leq N'\leq \delta^{O_{C_1, D,P}(1)} N.
    \end{align*}
\end{corollary}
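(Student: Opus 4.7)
The plan is to reduce to the one-dimensional Corollary \ref{C: 1-dim Sarkozy} by foliating $\Z^D$ into lines parallel to $\bv$. Assume $\bv \neq \bzero$ (otherwise the conclusion is trivial). Fix a set $R \subseteq \Z^D$ of representatives for the cosets of $\bv\Z$ in $\Z^D$, so that every $\bx \in \Z^D$ writes uniquely as $\bx = \bw + t\bv$ with $\bw \in R$ and $t \in \Z$. For $i \in \{0,1\}$ and $\bw \in R$, define the $1$-bounded restriction $g_{i,\bw}(t) := f_i(\bw + t\bv)$. Since $f_i$ is supported on $[N]^D$ and $\bv$ has entries of size $O(1)$, each $g_{i,\bw}$ is supported in an interval of length $M_\bw \leq C_D N$, only $O_D(N^{D-1})$ cosets meet $[N]^D$, and partitioning $[N]^D$ line-by-line gives $\sum_\bw M_\bw = N^D$. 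In these coordinates, the hypothesis reads
\[
\Bigabs{\sum_\bw \sum_t \E_{z \in [K]} g_{0,\bw}(t)\, g_{1,\bw}(t + \widetilde{P}_{[r,V]}(z))} \;\geq\; \delta N^D.
\]

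First I would truncate to the ``thick'' cosets with $M_\bw \geq c_D \delta N$ for a small constant $c_D > 0$. Because each inner sum is trivially bounded in modulus by $M_\bw$, the contribution from thin cosets is at most $c_D \delta N \cdot O_D(N^{D-1}) \leq \delta N^D / 2$ for $c_D$ small enough, and hence can be absorbed. Inserting unit-modulus phases $c(\bw) \in \C$ to remove the outer absolute value and applying the popularity principle, I then obtain $\gg_D \delta^{O_D(1)}\, N^{D - 1}$ good cosets $\bw$ on which
\[
\Bigabs{\sum_t \E_{z \in [K]} g_{0,\bw}(t)\, g_{1,\bw}(t + \widetilde{P}_{[r,V]}(z))} \;\gg_D\; \delta\, M_\bw.
\]

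For each good coset, after translating the supports of $g_{0,\bw}, g_{1,\bw}$ into $[M_\bw]$ (which preserves the S\'ark\"ozy sum), I invoke Corollary \ref{C: 1-dim Sarkozy}; the required hypothesis $M_\bw \gtrsim \delta N \geq C_2 V_d^{C_2} \delta^{-C_2}$ is satisfied provided the $C_2$ in the present corollary is chosen sufficiently larger than the $C_2$ in the $1$-dimensional one. This produces a positive integer $q_{1,\bw} \ll_{C_1,P} \delta^{-O_{C_1,P}(1)}$ with
\[
\sum_u \Bigabs{\E_{z \in [N'']} g_{1,\bw}(u + q_{1,\bw} z)} \;\gg_{C_1,P}\; \delta^{O_{C_1,P}(1)} M_\bw
\]
for any $1 \leq N'' \leq \delta^{O_{C_1,P}(1)} M_\bw$. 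Since only $\ll_{C_1,P} \delta^{-O_{C_1,P}(1)}$ values of $q_{1,\bw}$ are possible, pigeonholing fixes a single $q_1$ which works for $\gg_{C_1,D,P} \delta^{O_{C_1,D,P}(1)} N^{D-1}$ good cosets.

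Finally, I reverse the foliation: because $(\bw, u) \mapsto \bw + u\bv$ is a bijection $R \times \Z \to \Z^D$ and the integrand is nonnegative, on good cosets we have $M_\bw \gtrsim \delta N$ which means the allowed range of $N''$ covers all $N' \leq \delta^{O_{C_1,D,P}(1)} N$, and
\[
\sum_\bx \Bigabs{\E_{z \in [N']} f_1(\bx + q_1 \bv z)} \;\geq\; \sum_{\bw \text{ good}} \sum_u \Bigabs{\E_{z \in [N']} g_{1,\bw}(u + q_1 z)} \;\gg_{C_1,D,P}\; \delta^{O_{C_1,D,P}(1)} N^D.
\]
This gives the conclusion for $i = 1$; the case $i = 0$ follows by the change of variables $\bx \mapsto \bx - \bv \widetilde{P}_{[r,V]}(z)$ in the hypothesis, which interchanges $f_0$ and $f_1$. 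The only non-trivial work is the bookkeeping around the foliation --- verifying that thin cosets can be safely discarded and that the constants $O(1)$ coming from $|\bv|_\infty$ and from iterated applications of the $1$-dimensional Corollary align correctly --- all of which are routine.
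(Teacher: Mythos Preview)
Your proof is correct and follows essentially the same approach as the paper: reduce to the one-dimensional Corollary~\ref{C: 1-dim Sarkozy} along lines parallel to $\bv$, then pigeonhole on the resulting $q$. The paper implements this slightly differently---rather than explicitly foliating into cosets of $\bv\Z$ and truncating away thin cosets, it introduces an extra averaging over $y\in[N]$ in the $\bv$-direction (so the inner one-dimensional sum always has length exactly $N$) and pigeonholes in the basepoint $\bx$---but this is a cosmetic variation of the same idea.
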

\begin{proof} We fix $C_1>0$ and let all constants depend on $D$ and $P$. We just proof the claim for $i=1$, as the other case follows by symmetry.
    Introducing an extra averaging over $\bv\cdot[N]$ gives
    \begin{align*}
        \sum_\bx\abs{\sum_{y\in[N]} \E_{z\in[K]}f_0(\bx+\bv y) f_1(\bx+\bv(y+ \widetilde{P}_{[r, V]}(z)))}\geq \delta N^{D+1}.
    \end{align*}    
    Since the coordinates of $\bv$ have size $O(1)$, the functions $\bx\mapsto f_0(\bx + \bv y)$ vanish whenever $|\bx|\gg N$ as $y$ ranges over $[N]$. Pigeonholing in $\bx$, we can therefore find $\Omega(\delta N^D)$ values $\bx\in\Z^D$ for which
    \begin{align}\label{E: pigeonholing in bx}
        \abs{\sum_{y\in[N]} \E_{z\in[K]}f_0(\bx+\bv y) f_1(\bx+\bv(y+ \widetilde{P}_{[r, V]}(z)))}\gg \delta N,
    \end{align}
    and so the previous corollary gives us positive integers $q_\bx\ll\delta^{-O(1)}$ and $\delta^{O(1)}N\ll N'_{0,\bx}\leq N$
    such that 
    \begin{align*}
        \E_{u\in [\pm N]}\abs{\E_{z\in[N'_\bx]} f_1(\bx + \bv(u+ q_\bx z))}\gg \delta^{O(1)}
    \end{align*}
    for all $N'_\bx\in[N'_{0,\bx}]$. We set $N'_0 := \min_\bx N'_{0,\bx}$ to eliminate the dependence on $\bx$, where the minimum is taken over the  $\Omega(\delta N^D)$ good values $\bx\in\Z^D$. Since the lower bounds on $N'_{0,\bx}$ are independent on $\bx$, we deduce that $N'_0\gg \delta^{O(1)}N$.
    Similarly, since there are only $O(\delta^{-O(1)})$ choices of $q_\bx$, the pigeonhole principle produces some $q\ll \delta^{-O(1)}$ for which
    \begin{align*}
        \sum_\bx\E_{u\in [\pm N]}\abs{\E_{z\in[N']} f_1(\bx + \bv(u+ q z))}\gg \delta^{O(1)}N^{D}
    \end{align*}
    for all $N' \leq N'_0$.  The result follows upon shifting $\bx \mapsto \bx - \bv u$.
\end{proof}

\section{Fourier uniformity of difference of weights}\label{A: Fourier uniformity}
The purpose of this section is to show that the weight $\nu^*$ from the proof of Proposition \ref{P: Lambda*} is highly Fourier-uniform on arithmetic progressions of difference $V$.  Our argument is a slight modification of the proof of \cite[Lemma B.8]{PSS23}.  Recall that the leading term of $\widetilde P(z)$ is $W_d z^d$, where $W_d=\beta_d \beta_1^{d-2} W^{d-1}$, and our weights are
\begin{gather*}
    \tilde{\nu}(z)= \frac{N}{(N/W_d)^{1/d}}1_{\widetilde P([(N/W_d)^{1/d}])}(z) = N^{(d-1)/d}W_d^{1/d} \sum_{z' \in [(N/W_d)^{1/d}]}1_{\widetilde P(z')}(z),\\
 \nu(z)=d^{-1} 1_{[N]}(z) (N/(z+1))^{(d-1)/d}, \quad \text{and} \quad  \nu^*(z) = \tilde{\nu}(z) - \nu(z).
\end{gather*}

The following proposition captures the main estimate.

\begin{proposition}\label{prop:nu-uniformity}
For every $C_1>0$ there exist $C_2 = C_2(C_1, P),\; c=c(C_1, P)>0$ such that the following holds. If $V\leq W^{C_1}$ is a nonnegative integer power of $W$ and $N\geq W^{C_2}$, then 
$$\max_{r\in[V]}\max_{\theta \in \mathbb{R}/\mathbb{Z}}\abs{\sum_{z \in \mathbb{Z}}e(\theta z)\nu^*(Vz+r)} \ll_{C_1,P} \frac{N}{V w^c}.$$

\end{proposition}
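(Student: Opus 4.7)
The plan is to reduce to bounding the whole-line Fourier transform $\widehat{\nu^*}(\xi) = \widehat{\tilde\nu}(\xi) - \hat\nu(\xi)$ at a grid of test points via finite Poisson summation, and then exploit a cancellation between the two summands on the principal major arc via the circle method. Concretely, the identity
\[
\sum_{z} e(\theta z)\nu^*(Vz+r) = \frac{1}{V}\sum_{j \in [V]} e\!\left(-\frac{(\theta+j)r}{V}\right)\widehat{\nu^*}\!\left(\frac{\theta+j}{V}\right)
\]
reduces the task to estimating $|\widehat{\nu^*}(\xi)|$ at the $V$ equally spaced points $\xi_j = (\theta+j)/V \in \R/\Z$. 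Setting $K := (N/W_d)^{1/d}$, recall $\widehat{\tilde\nu}(\xi) = N^{(d-1)/d}W_d^{1/d}S(\xi)$ with $S(\xi)=\sum_{z'\in[K]}e(\xi \widetilde P(z'))$, while Euler--Maclaurin with the change of variable $t = u^d$ writes $\hat\nu(\xi)$ as the oscillatory integral $I(\xi) := N^{(d-1)/d}\int_0^{N^{1/d}}e(\xi u^d)\,du$ plus a controlled error.

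Fix a small constant $\veps \in (0, 1/10)$, and apply Dirichlet's theorem: each $\xi \in \R/\Z$ admits coprime $a,q$ with $q \le K^{d-\veps}$ and $|\xi-a/q| \le 1/(qK^{d-\veps})$; declare $\xi$ a \emph{minor arc} point if $q > K^\veps$ and a \emph{major arc} point otherwise. For minor arc points, Lemma~\ref{L: minor arcs} gives $|S(\xi)| \ll K^{1-\veps/C}$, hence $|\widehat{\tilde\nu}(\xi)| \ll NK^{-\veps/C}$; separately, Lemma~\ref{L: integral bound} (applied with $\widetilde P(u) = u^d$) together with the Euler--Maclaurin error gives $|\hat\nu(\xi)| \ll NW_d^{-1/d}K^{-\veps/d}$, using that minor arc points satisfy $|\xi| \ge K^{-d+\veps}$. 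Both contributions are of the form $N \cdot W^{-\alpha}$ with $\alpha$ as large as we wish once $C_2$ is taken sufficiently large, comfortably beating $N/(V w^c)$.

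For major arc points $\xi = a/q+\beta$ with $q \leq K^\veps$, Lemma~\ref{L: major arc expression} gives
\[
\widehat{\tilde\nu}(\xi) = S(a,q)\cdot N^{(d-1)/d}W_d^{1/d}\!\int_0^K e(\beta\widetilde P(t))\,dt + O\!\bigl(N^{(d-1)/d}W_d^{1/d}K^{2\veps}\bigr),
\]
and the substitution $u = W_d^{1/d}t$ identifies the main integral with $I(\beta)$ modulo a phase correction coming from the subleading terms of $\widetilde P$. On the principal major arc $(a,q)=(0,1)$ we have $S(0,1)=1$, so both $\widehat{\tilde\nu}(\xi)$ and $\hat\nu(\xi) \approx I(\xi)$ reduce to $I(\beta)$ and their difference is only the error. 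On other major arcs ($q \ge 2$), we control $|\hat\nu(\xi)| \ll N^{(d-1)/d}q^{1/d}$ via Lemma~\ref{L: integral bound} and $|\xi| \ge 1/(2q)$, and control $|\widehat{\tilde\nu}(\xi)|$ via Lemma~\ref{L: S(a,q)}: when $\gcd(q,W)>1$ the Gauss sum $S(a,q)$ vanishes outright, while when $\gcd(q,W)=1$ all prime factors of $q$ exceed $w$, forcing $q \ge w$ and hence $|S(a,q)| \ll q^{-1/d+\eta} \le w^{-1/d+\eta}$.

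The main obstacle is the careful error bookkeeping in the major arc analysis, particularly for the non-principal major arcs. One must (i) verify that the phase corrections introduced by replacing $\widetilde P(W_d^{-1/d}u)$ with $u^d$ are polynomially small in $N$, using that the coefficients $W_i W_d^{-i/d}$ reduce to negative powers of $W$ of explicit shape from \eqref{E: tilde P}; (ii) show that the points $\xi_j$ landing on non-principal major arcs are sparse enough (at most $O(K^{2\veps})$, since each such major arc has length $\ll K^{-d+\veps}$ and the $\xi_j$ are $1/V$-spaced) that their combined contribution, after division by $V$, stays within $N/(Vw^c)$; and (iii) choose $\veps$ and $C_2$ in such a way that every error term is comfortably absorbed into the target bound.
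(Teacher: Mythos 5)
Your route is genuinely different from the paper's (which uses Hensel's lemma to rewrite the progression-restricted $\tilde\nu$-sum as a single exponential sum in the $W$-tricked polynomial, then Weyl plus the fraction-comparison Lemma \ref{lem:composing-polynomials-2}, then an interval-by-interval matching on the principal arc), and the skeleton — finite Fourier/Poisson reduction to the $V$ sample points $\xi_j=(\theta+j)/V$, cancellation of the two main terms against a common oscillatory integral on the principal arc, and vanishing of $S(a,q)$ when $\gcd(q,W)>1$ — is sound in outline. But item (ii) of your error bookkeeping has a genuine gap, and it is the crux of the whole argument. After the Poisson step you must show $\sum_{j\in[V]}|\widehat{\nu^*}(\xi_j)|\ll N/w^c$. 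For a point $\xi_j$ lying in a major arc around $a/q$ with $2\le q\le K^\veps$ and $\gcd(q,W)=1$, the only saving available is $|S(a,q)|\ll q^{-1/d+\eta}\le w^{-1/d+\eta}$ from Lemma \ref{L: S(a,q)}, so such a point can genuinely contribute $\asymp Nw^{-1/d}$; the budget therefore tolerates only $O(w^{1/d-c})$ such points. Your sparsity count "at most $O(K^{2\veps})$ points on non-principal major arcs" is a power of $N$, hence astronomically larger than any power of $w$, and with the per-point bound $Nw^{-1/d}$ it yields a total that can exceed $N$ rather than $N/w^c$. What is needed, and absent from your plan, is an arithmetic fact about the grid: since the $\xi_j$ are $1/V$-spaced with $V$ a power of $W$, if two of them were within $K^{-(d-\veps)}$ of reduced fractions $a_1/q_1$, $a_2/q_2$ with $q_i\le K^\veps$, then for $N\ge W^{C_2}$ the two rationals must satisfy $(j_1-j_2)/V=a_1/q_1-a_2/q_2$ exactly, which forces the prime-to-$W$ parts of $q_1,q_2$ to coincide; and if that common part $v$ exceeds $1$, then $\gcd(v,V)=1$ forces $V\mid j_1-j_2$, i.e. $j_1=j_2$. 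Hence at most one $\xi_j$ can sit on a major arc whose Gauss sum is not killed outright by Lemma \ref{L: S(a,q)}. Some such observation (it plays the role that Hensel's lemma and Lemma \ref{lem:composing-polynomials-2} play in the paper) is indispensable; without it your item (ii) does not close the argument.

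Two secondary points. First, the minor/major dichotomy should be defined by membership in the union $\mathfrak{M}$ of major arcs, not by which Dirichlet approximation one happens to choose: under your classification a point with $\|\xi_j\|_{\R/\Z}$ tiny could be labelled minor, and there the claimed bound on $\hat\nu(\xi_j)$ is false. Second, the Euler--Maclaurin comparison of $\hat\nu(\xi)$ with the integral $I(\xi)$ carries an error $O(1+\|\xi\|_{\R/\Z}N)$, which is useless off the principal arc; off that arc you should instead bound the sum defining $\hat\nu$ directly by summation by parts (as the paper does), which gives $\ll N^{(d-1)/d}\|\xi\|_{\R/\Z}^{-1/d}+\min(N,\|\xi\|_{\R/\Z}^{-1})$ and suffices. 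Both of these are repairable; the counting of major-arc points with denominator coprime to $W$ is the substantive missing ingredient.
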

Notice that the trivial bound (from the triangle inequality) is $\ll N/V$; our gain is in the (relatively modest) $w^{-c}$ term.

\begin{proof}
We fix $C_1>0$ and let all the constants depend on $C_1$ and $P$.
Hensel's Lemma and the identity $\widetilde P(z) \equiv z \!\!\pmod{W}$ ensure that for each $r\in[V]$ there is a unique $r'\in[V]$ such that $\widetilde P(r') \equiv r \!\!\pmod{V}$. It follows that $\widetilde P(z') \equiv r \!\!\pmod{V}$ if and only if $z'\equiv r' \!\!\pmod{V}$, so
\begin{align*}
    \sum_{z}e(\theta z) \Tilde{\nu}(Vz+r) &=  N^{(d-1)/d}W_d^{1/d} \sum_z e(\theta z)\sum_{z' \in [(N/W_d)^{1/d}]}1_{\widetilde P(z')}(Vz+r)\\
    &= N^{(d-1)/d}W_d^{1/d} \sum_{z' \in [(N/W_d)^{1/d}V^{-1}]}e\brac{\theta \frac{\widetilde P(Vz'+r')-r}{V}}.
\end{align*}
Substituting $z$ for $z'$ and $V\theta$ for $\theta$, and then multiplying both $\Tilde{\nu}$ and $\nu$ by $e(\theta r)$, we find that it suffices to prove the inequality
\begin{multline*}
 \left|N^{(d-1)/d}W_d^{1/d} \sum_{z \in [(N/W_d)^{1/d}V^{-1}]}e(\theta \widetilde P(Vz+r'))\right.\\
 \left.-d^{-1}\sum_{z \in [N/V]} (N/(Vz+r))^{(d-1)/d}e(\theta (V z+r+1))\right|\ll \frac{N}{V w^c}   
\end{multline*}
for all $r \in [V]$ and $\theta\in\R/\Z$.

We will require different arguments when $V\theta$ is in a minor arc, when it is in the major arc around $0$, and when it is in a major arc not around $0$.\footnote{These major and minor arcs are different from the major and minor arcs in Appendix \ref{A: Sarkozy}.}

We first analyze when the contribution of $\nu$ can be large. By summation by parts, we have that
$$d^{-1}\abs{\sum_{z \in [N/V]} (N/(Vz+r+1))^{(d-1)/d}e(\theta(Vz+r))}$$
equals
\begin{align*}
 &d^{-1}\abs{\sum_{z \in [N/V]} (N/(Vz+r+1))^{(d-1)/d}e(\theta Vz)}\\
 &\qquad\qquad\ll \sum_{t \in [N/V-1]} \left(\left(\frac{N}{Vt+r+1}\right)^{(d-1)/d}-\left(\frac{N}{V(t+1)+r+1}\right)^{(d-1)/d}\right) \abs{\sum_{z \in [t]}e(V\theta z)}\\
 &\qquad\qquad\qquad\qquad\qquad\qquad\qquad\qquad\qquad\qquad\qquad\qquad\qquad\qquad\qquad+\abs{\sum_{z \in [N/V]}e(V\theta z)}\\
 &\qquad\qquad\ll \sum_{t \in [N/V-1]} (N/V)^{(d-1)/d} t^{(1-2d)/d} \min \left\{ t, \frac{1}{\norm{V\theta}_{\mathbb{R}/\mathbb{Z}}} \right\}+\min \left\{N/V,\frac{1}{\norm{V\theta}_{\mathbb{R}/\mathbb{Z}}}\right\}\\
 &\qquad\qquad\ll \frac{(N/V)^{(d-1)/d}}{\norm{V\theta}_{\mathbb{R}/\mathbb{Z}}^{1/d}} + \min \left\{N/V,\frac{1}{\norm{V\theta}_{\mathbb{R}/\mathbb{Z}}}\right\},
\end{align*}
where in the second-to-last line we split the sum at $t = 1/\|V\theta\|_{\mathbb{R}/\mathbb{Z}}$.  This quantity is $\ll N/(V w^c)$ unless $\norm{V\theta}_{\mathbb{R}/\mathbb{Z}}<V w^{cd}/N$.

We now analyze when the contribution of $\tilde{\nu}$ can be large.  By \cite[Proposition 4.3]{GT12} and \cite[Lemma A.10]{Leng23b}, we have
$$\abs{N^{(d-1)/d}W_d^{1/d} \sum_{z \in [(N/W_d)^{1/d}V^{-1}]}e(\theta \widetilde P(Vz+r'))} \ll \frac{N}{Vw^c}$$
unless there is some positive integer $q \ll w^{O(c)}$ such that
\begin{align}\label{E: Fourier uniformity estimate}
    \norm{q\theta\widetilde P(V \cdot +r')}_{C^\infty [(N/W_d)^{1/d}V^{-1}]} \ll w^{O(c)}.
\end{align}
Let $R(z):= q\theta(Vz + \widetilde P(r'))$, so that $R\circ \widetilde P_{[r',V]}(z) = q\theta\widetilde P(Vz +r')$. Setting $T=N/V$ and recalling that $V_d = W_d V^{d-1}$, we can write the estimate \eqref{E: Fourier uniformity estimate} as
\begin{align*}
    \norm{R\circ \widetilde P_{[r',V]}}_{C^\infty [(T/V_d)^{1/d}]} \ll w^{O(c)}.
\end{align*}
Lemma \ref{lem:composing-polynomials-2} then gives
\begin{align*}
    \norm{qV\theta}_{\R/\Z}\ll \frac{V w^{O(c)}}{N}.
\end{align*}
Hence the $\tilde{\nu}$-contribution is $\ll N/(V w^c)$ unless there is some positive integer $q \ll w^{O(c)}$ such that $\norm{qV\theta}_{\mathbb{R}/\mathbb{Z}} \ll Vw^{O(c)}/N$.

Comparing the outputs of the previous two paragraphs, we see that the conclusion of the proposition holds in the ``minor arc'' case where there is no $q \ll w^{O(c)}$ such that $\norm{qV\theta}_{\mathbb{R}/\mathbb{Z}} \ll Vw^{O(c)}/N$.  Henceforth, we assume that we are in the ``major arc'' case where there is some $q \ll w^{O(c)}$ such that $\norm{qV\theta}_{\mathbb{R}/\mathbb{Z}} \leq Vw^{O(c)}/N$.  In this case, we write
$$V\theta=a/q+\theta^*,$$
where $a,q$ are coprime integers with $1\leq q \ll w^{O(c)}$, and 
$$|\theta^*| \ll Vw^{O(c)}/N\leq (W_d/N)^{1-1/(4d)}V^{d-1/4}$$ (say) since $N$ bounded from below be a sufficiently large power of $W$ that is allowed to depend on $d$.
Since the implicit constant in the term $w^{O(c)}$ depends only on $d$, choosing $c$ sufficiently small (depending on $d$) guarantees that $q<w$.

First, assume that $q>1$.  The $\nu$-contribution is $\ll N/(Vw^c)$.   To handle the $\tilde{\nu}$-contribution, we use Lemma \ref{L: major arc expression} with $Q(z) =\widetilde P_{[r',V]}(z)$, $K = (N/W_d)^{1/d}V^{-1}$, and $\veps = 1/4$ to express
\begin{multline*}
\sum_{z \in [(N/W_d)^{1/d}V^{-1}]} e(\theta\widetilde P(Vz+r'))\\
=e(\theta \widetilde P(r'))\E_{u\in [q]} e((a/q)\widetilde P_{[r',V]}(u)) \int_0^{(N/W_d)^{1/d}V^{-1}} e(\theta^* \widetilde P_{[r',V]}(x)) \,dx\\
+O((N/W_d)^{1/(2d)}V^{-1/2}).
\end{multline*}

Since $q$ is smaller than $w$, it shares a prime factor with $W$, and so Lemma \ref{L: S(a,q)} tells us that the average $\E_{u\in [q]} e((a/q)\widetilde P_{[r',V]}(u))$ vanishes.
Hence
$$N^{(d-1)/d}W_d^{1/d} \sum_{z \in [(N/W_d)^{1/d}V^{-1}]}e(\theta \widetilde P(Vz+r'))\ll N/(Vw^c)$$
(with room to spare) by the assumption that $N$ is larger than a suitable power of $V$.

Finally, assume that $q=1$, in which case $a=0$ and $\norm{V\theta}_{\mathbb{R}/\mathbb{Z}} \ll Vw^{O(c)}/N$.  We will break the sum in the $ \nu$-contribution into intervals and show that each one approximately cancels out with a single summand from the $\tilde\nu$-contribution.  Since
$$\widetilde P_{[r',V]}(z)-W_d V^{d-1} z^d \ll V^{O(1)} z^{d-1}$$
and
\begin{align*}
    \widetilde P(r') - r = Vu\quad \textrm{for some integer  } |u|\ll W_d V^{d-1},
\end{align*}
we have
$$\abs{\sum_{z \in [(N/W_d)^{1/d}V^{-1}]} (e(\theta\widetilde P(Vz+r'))-e(\theta (W_d V^{d} z^d + r)))} \ll V^{O(1)}.$$
This lets us simplify the $\tilde{\nu}$-contribution by removing lower-order terms, and it suffices to show that
\begin{multline*}
\left|N^{(d-1)/d}W_d^{1/d} e(\theta r)\sum_{z \in [(N/W_d)^{1/d}V^{-1}]} e(\theta W_d V^{d} z^d)\right.\\
\left.-d^{-1}\sum_{z \in [N/V]} (N/(Vz+r+1))^{(d-1)/d}e(\theta(Vz+r))\right|\ll \frac{N}{V w^c}.  
\end{multline*}
We break the second sum into intervals of the form $$I_z = [A_z, B_z]:=[W_d V^{d-1} z^d,W_d V^{d-1} (z+1)^d-1]$$
with length $|I_z|=W_d V^{d-1}dz^{d-1}(1+O(z^{-1}))$ for $z\in[(N/W_d)^{1/d}V^{-1}]$. For $t\in I_z$, we bound
\begin{align*}
    \abs{\brac{\frac{N}{Vt+r+1}}^{(d-1)/d} - \brac{\frac{N}{V A_z}}^{(d-1)/d}}&\ll \abs{\frac{N}{Vt+r+1} - \frac{N}{V A_z}} \cdot\brac{\frac{N}{V A_z}}^{-1/d}\\
    &\leq \abs{\frac{V A_z - Vt -r-1}{Vt+r+1}} \cdot\brac{\frac{N}{V A_z}}^{(d-1)/d}\\
    &\ll \frac{|I_z|}{A_z}\cdot\brac{\frac{N}{V A_z}}^{(d-1)/d} \ll z^{-1}\brac{\frac{N}{V A_z}}^{(d-1)/d}
\end{align*}
and
\begin{align}\label{E: comparison of sums}
    \abs{e(\theta \cdot Vt) - e(\theta\cdot VA_z)}\leq\norm{V\theta}_{\R/\Z} \cdot |I_z|\ll \frac{V^{O(1)}}{N^{1/d}}\ll N^{-1/(2d)}
\end{align}
as long as $N\gg V^{\Omega(1)}$.  Summing over $t \in I_z$, we obtain the key estimate
\begin{align*}
& d^{-1}\sum_{t \in I_z} (N/(Vt+r+1))^{(d-1)/d}e(\theta(Vt+r))\\
&\qquad \qquad =e(\theta r) d^{-1}\sum_{t \in I_z} (N/(VA_z))^{(d-1)/d}e(\theta V A_z )(1+O(N^{-1/2d}+z^{-1}))\\
&\qquad \qquad =e(\theta r) W_d V^{d-1} z^{d-1} (N/(W_d V^{d} z^d))^{(d-1)/d}e(\theta W_d V^{d} z^d))(1+O(N^{-1/2d}+z^{-1}))\\
&\qquad \qquad =e(\theta r) N^{(d-1)/d} W_d^{1/d}e(\theta W_d V^{d} z^d) (1+O(N^{-1/2d}+z^{-1})).
\end{align*}

The lemma follows by summing over $z \in [(N/W_d)^{1/d}V^{-1}]$ and noting that
$$\abs{\sum_{z \in [(N/W_d)^{1/d}V^{-1}]}N^{(d-1)/d} W_d^{1/d}(N^{-1/2d}+z^{-1})} \ll \frac{N}{V w^c},$$
again with room to spare since $W$ is sufficiently small compared to $N$.
\end{proof}

\section{Supersaturation}\label{A: supersaturation}
The purpose of this short section is to prove a supersaturation result for corners.  We apply Varnavides' now-standard trick from \cite{Var59} to Shkredov's result on corners \cite{Sh06b}.

\begin{theorem}[\cite{Sh06b}]\label{thm:shkredov}
There exists a constant $c>0$ such that the following holds: If $A \subseteq [N]^2$ has size $\delta N^2$ for some $\delta\in(0, 1/10)$ and $N \geq \exp \exp (\delta^{-c})$, then $A$ contains a nontrivial corner.
\end{theorem}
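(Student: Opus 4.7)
The plan is to prove Shkredov's corner-free theorem via a density-increment argument of the type originally introduced by Shkredov, and which is by now standard in the subject. The overarching strategy is: show that a set avoiding corners cannot be sufficiently ``pseudorandom,'' extract from this non-uniformity a density increment on a structured subset, and iterate until the density exceeds $1$.

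The first step is a \emph{counting lemma}: if $A \subseteq [N]^2$ has density $\delta$ and the balanced function $f:= 1_A - \delta \cdot 1_{[N]^2}$ has small box-norm $\norm{f}_{\be_1 \cdot [\pm N],\, \be_2 \cdot [\pm N]}$ (and small norms in the other two symmetric configurations), then the corner-counting operator $\Lambda^{\corners}(1_A,1_A,1_A)$ is at least $\tfrac{1}{2} \delta^3 N^2$. This follows by expanding $1_A = \delta + f$, using the symmetric box-norm control for $\Lambda^{\corners}$ (analogous to Proposition \ref{P: box norm control of model} here, but for the unweighted counting operator), and bounding the resulting error terms via Cauchy--Schwarz. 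If in addition $N$ is larger than a suitable power of $1/\delta$, the trivial corners $z = 0$ contribute only $O(\delta N^2)$, which is negligible; so a corner-free set forces one of the box-norms of $f$ to be substantial.

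The second step is an \emph{inverse/density-increment step}: largeness of, say, $\norm{f}_{\be_1 \cdot [\pm N],\, \be_2 \cdot [\pm N]}$ means (by an inverse theorem akin to Lemma \ref{L: U1xU1 inverse}) that $A$ correlates on a rectangle-like set --- more precisely, on a product of Bohr-style pseudorandom sets in each coordinate --- with density increment at least $\Omega(\delta^c)$. Here the key subtlety, specific to the corners problem, is that we cannot hope to find increment on an honest grid of arithmetic progressions; instead one works with a carefully chosen class of \emph{corner-pseudorandom} sets (essentially, product sets $B_1 \times B_2$ where each $B_i$ is a Bohr set of small rank, closed under taking differences in the relevant way), for which the counting lemma from the first step can be re-run at the next stage. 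This is the content of Shkredov's original argument.

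The third step is the \emph{iteration and bookkeeping}. Each iteration produces a density increment of size $\Omega(\delta^c)$, so after $O(\delta^{-c'})$ iterations the density would exceed $1$, which is impossible. The catch is that each iteration shrinks the ambient scale by a factor roughly exponential in the pseudorandomness/dimension parameters, and these parameters themselves grow through the iterations. The main obstacle --- and the step that ultimately controls the bound --- is carefully tracking these losses so that the pseudorandomness survives each restriction step and the ambient scale remains admissible until the iteration terminates. Shkredov's bookkeeping shows that the overall scale loss is at worst doubly exponential in $\delta^{-c}$, which is the source of the $N \geq \exp\exp(\delta^{-c})$ hypothesis in the statement of Theorem~\ref{thm:shkredov}. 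Finally, since in this paper the theorem appears as a cited black box from \cite{Sh06b}, in practice we simply invoke it, and the supersaturation version needed in Section~\ref{S: finishing} is then obtained via a standard Varnavides-style averaging: pigeonhole over translates of a random sub-grid of side length $\asymp \exp\exp(\delta^{-c})$ to convert ``one corner in every sufficiently large dense set'' into ``at least $\exp(-\exp(\delta^{-c'})) N^2$ corners in every dense set in $[N]^2$''.
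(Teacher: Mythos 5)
The first thing to note is that the paper does not prove Theorem \ref{thm:shkredov} at all: it is imported verbatim from Shkredov \cite{Sh06b} and used as a black box, and the only argument the paper actually supplies in this vicinity is the Varnavides-style supersaturation step (Proposition \ref{prop:supersat}). So to the extent that your proposal ends with ``we simply invoke it,'' you are doing exactly what the paper does, and that part matches.

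Viewed as a proof of the theorem itself, however, your outline has a genuine gap: everything that makes Shkredov's argument hard is asserted rather than carried out. The inverse step you appeal to (in the spirit of Lemma \ref{L: U1xU1 inverse}) only yields correlation of the balanced function with a product $b_1(x)b_2(y)$ of $1$-bounded functions; upgrading this to a density increment on a structured product $B_1\times B_2$ \emph{on which the counting lemma can be re-run at the next scale} is the heart of the matter and is not routine. For corners one cannot simply pass to a sub-grid, and Shkredov's structured sets are products of dense Fourier-uniform subsets (of progressions) rather than Bohr sets; one then needs a counting lemma relative to such products, a density-increment step compatible with that relative setting, and a quantitative analysis of how uniformity degrades under restriction. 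These steps, together with the bookkeeping that produces the doubly exponential dependence $N \geq \exp\exp(\delta^{-c})$, constitute essentially all of \cite{Sh06b} and are only named, not executed, in your sketch. One small slip in your closing remark on supersaturation: the correct conclusion is $\gg \exp(-\exp(\delta^{-c'}))N^3$ nontrivial corners (a corner has three degrees of freedom), as in Proposition \ref{prop:supersat}, not $N^2$.
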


\begin{proposition}\label{prop:supersat}
There exists a constant $c'>0$ such that the following holds: If $A \subseteq [N]^2$ has size $\delta N^2$ for some $\delta\in(0, 1/10)$ and $N \geq \exp \exp (\delta^{-c'})$, then $A$ contains at least $(\exp(- \exp (\delta^{-c'})))N^3$ nontrivial corners.
\end{proposition}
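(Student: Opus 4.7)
I plan to apply the Varnavides-style averaging trick to Shkredov's theorem (Theorem~\ref{thm:shkredov}), but carried out in the ambient cyclic group $\mathbb{Z}_{N'}^2$ for a prime $N'$ slightly larger than $N$. The cyclic embedding is crucial here: a direct Varnavides argument in $[N]^2$ would parameterize $M \times M$ subgrids by a step $s \in [N]$ and incur a $\tau(d)$-type overcount (with $d$ the side of each corner), which grows like $\exp(\Theta(\log N/\log\log N))$ and would swamp the target $\exp(-\exp(\delta^{-c'})) N^3$. Working in $\mathbb{Z}_{N'}^2$ with $N'$ prime makes multiplication by $s$ invertible and eliminates this divisor-function loss.

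In detail, I first invoke Bertrand's postulate to pick a prime $N' \in [5N, 10N]$. Viewing $A \subseteq [N]^2 \subseteq \mathbb{Z}_{N'}^2$, the density $\delta' := |A|/(N')^2$ satisfies $\delta' \ge \delta/100$, and since $N' > 2N$ there are no wrap-around collisions: ``cyclic corners'' of $A$ with step $z \in \mathbb{Z}_{N'}^*$ correspond bijectively with honest integer-step corners of $A \subseteq [N]^2$ having $z \in [-N+1, N-1] \setminus \{0\}$. Next, I set $M := \lceil \exp\exp((400/\delta)^{c})\rceil$, where $c$ is the constant from Theorem~\ref{thm:shkredov}, so that every subset of $[M]^2$ of density at least $\delta'/2 \ge \delta/200$ contains a nontrivial corner.

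For each $(x_0, y_0, s) \in \mathbb{Z}_{N'}^2 \times \mathbb{Z}_{N'}^*$, the affine injection $\phi(a,b) := (x_0 + as, y_0 + bs) \pmod{N'}$ embeds $[M]^2$ into $\mathbb{Z}_{N'}^2$, and a direct double count (using $\mathbb{Z}_{N'}^2$-translation symmetry) yields $\mathbb{E}_\phi[|\phi^{-1}(A)|/M^2] = \delta'$. Markov's inequality then produces at least $(\delta'/2)(N')^2(N'-1)$ ``good'' subgrids with $|\phi^{-1}(A)| \ge (\delta'/2)M^2$, each of which contains a nontrivial corner of $A$ in $\mathbb{Z}_{N'}^2$ by Theorem~\ref{thm:shkredov}.

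Finally, I double-count in the other direction: each cyclic corner $(x, y, z)$ of $A$ with $z \in \mathbb{Z}_{N'}^*$ lies in at most $\sum_{k=1}^{M-1}(M-k)^2 \le M^3/3$ subgrids, since for each $k \in [1, M-1]$ the step $s := z k^{-1} \in \mathbb{Z}_{N'}^*$ is uniquely determined (primality of $N'$ and $k < M < N'$ are used here), after which $(x_0, y_0) = (x - as, y - bs)$ ranges over the $(M-k)^2$ valid pairs $(a,b) \in [0, M-1-k]^2$. Dividing, the number of distinct cyclic corners of $A$ is at least $\delta N^3 / M^3$, and since $M^3 \le \exp(3\exp((400/\delta)^{c})) \le \exp(\exp(\delta^{-c'}))$ for $c'$ sufficiently large in terms of $c$, this yields the desired $\exp(-\exp(\delta^{-c'})) N^3$ honest integer-step corners in $A$. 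The only substantive ingredient is the cyclic embedding that kills the $\tau(d)$ factor; everything else is standard averaging and double-counting against Theorem~\ref{thm:shkredov}.
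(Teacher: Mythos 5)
Your proof is correct (up to immaterial constants) and reaches the stated bound, but it takes a genuinely different route from the paper, which runs Varnavides' averaging directly in $[N]^2$: there one considers the translated and dilated grids $\bu + d\cdot[M]^2$ with $1 \le d \le N/(3M)$, applies Theorem~\ref{thm:shkredov} to the grids on which $A$ has density at least $\delta/2$, and bounds the multiplicity of each corner by $M^3$ --- at most $M$ admissible values of $d$ and at most $M^2$ of $\bu$. Your cyclic variant --- embedding $[N]^2$ into $\mathbb{Z}_{N'}^2$ with $N'\in[5N,10N]$ prime, averaging over all affine images of $[M]^2$ with invertible step, and double-counting, with the wrap-around bijection justified by $N'>2N$ --- does work, and the prime modulus makes the step-recovery $s=zk^{-1}$ clean. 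However, your stated motivation for it is mistaken: the direct integer argument does not suffer a $\tau(d)$-type loss. For a fixed corner of side $z$, a grid $\bu+d\cdot[M]^2$ can contain it only if $k:=|z|/d$ is an integer in $[1,M-1]$, and distinct $d$ give distinct $k$, so at most $M-1$ values of $d$ occur regardless of how many divisors $z$ has; this is exactly the paper's count, and the resulting loss of $M^{O(1)}$ (doubly exponential in $1/\delta$ but independent of $N$) is harmless. Two small repairs to your write-up: Theorem~\ref{thm:shkredov} may produce corners with negative step $k$ inside the pulled-back grid, so the per-corner multiplicity should be $2\sum_{k=1}^{M-1}(M-k)^2$ rather than $\sum_{k=1}^{M-1}(M-k)^2$ (a harmless factor of $2$); and you should record that $M<N'$, needed for injectivity of $\phi$, which follows from $N\ge\exp\exp(\delta^{-c'})$ once $c'$ is sufficiently large in terms of $c$. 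With those trivial fixes your argument is complete; what the cyclic embedding buys is mainly exact translation-invariance of the averaging, at the cost of Bertrand's postulate and wrap-around bookkeeping that the paper's shorter direct count avoids.
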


\begin{proof}
Let $M:=\exp \exp ((\delta/2)^c)$, where $c$ is the constant from Theorem~\ref{thm:shkredov}.  For each $1 \leq d \leq N/(3M)$ and $\bu \in [-d(M-1),N-1]^2$, define the set
$$\lambda(d,\bu):= A \cap (\bu+d \cdot [M]^2).$$
For fixed $d$, each element of $A$ is contained in exactly $M^2$ of the sets $\lambda(d,\bu)$, so $$\sum_\bu |\lambda(d,\bu)|=M^2|A|=\delta M^2 N^2.$$
Say that the pair $(d,\bu)$ is \emph{good} if $|\lambda(d,\bu)| \geq (\delta/2)M^2$, and let $X(d)$ denote the number of $\bu$'s such that $(d,\bu)$ is good.  We can bound
$$\delta M^2 N^2=\sum_\bu |\lambda(d,\bu)| \leq X(d)M^2+(N+d(M-1))^2 \cdot (\delta/2)M^2,$$
and so (for instance)
$$X(d) \geq \delta N^2-(\delta/2)(N+N/3)^2=(\delta/9)N^2.$$
Hence the total number of good pairs $(d,\bu)$ (for all $d$) is at least $(\delta/(27M))N^3$.  Theorem~\ref{thm:shkredov} tells us that if $(d,\bu)$ is good, then $\lambda(d,\bu)$ contains a nontrivial corner.  Finally, each nontrivial corner is contained in at most $M^3$ such sets $\lambda(d,\bu)$ (there are at most $M$ choice of $d$ and then at most $M^2$ choices of $\bu$), so we conclude that the number of nontrivial corners in $A$ is at least
$$\frac{\delta}{27M^4} N^3 \geq (\exp(- \exp (\delta^{-c'})))N^3$$
with a suitable choice of $c'>0$.
\end{proof}

  \bibliography{library}
\bibliographystyle{plain}

\end{document}